\def\todaysdate{30\textsuperscript{th} December 2024}
\definecolor{lightblue}{rgb}{0.8,0.8,1}
\numberwithin{equation}{section}
\numberwithin{figure}{section}
\declaretheoremstyle[
  spaceabove=\topsep,
  spacebelow=\topsep,
  headpunct=,
  numbered=no,
  postheadspace=1ex,
  headfont=\normalfont\bfseries,
  bodyfont=\normalfont\itshape,
]{italic}
\declaretheoremstyle[
  spaceabove=\topsep,
  spacebelow=\topsep,
  headpunct=,
  numbered=no,
  postheadspace=1ex,
  headfont=\normalfont\bfseries,
  bodyfont=\normalfont\upshape,
]{upright}
\declaretheorem[style=italic,name=Theorem,numbered=yes,numberwithin=section]{thm}
\declaretheorem[style=italic,name=Lemma,numbered=yes,numberlike=thm]{lem}
\declaretheorem[style=italic,name=Proposition,numbered=yes,numberlike=thm]{prop}
\declaretheorem[style=italic,name=Corollary,numbered=yes,numberlike=thm]{coro}
\declaretheorem[style=italic,name=Theorem,numbered=yes,numberwithin=section]{athm}
\declaretheorem[style=italic,name=Corollary,numbered=yes,numberlike=athm]{acoro}
\declaretheorem[style=upright,name=Definition,numbered=yes,numberlike=thm]{defn}
\declaretheorem[style=upright,name=Remark,numbered=yes,numberlike=thm]{rmk}
\declaretheorem[style=upright,name=Example,numbered=yes,numberlike=thm]{eg}
\declaretheorem[style=upright,name=Examples,numbered=yes,numberlike=thm]{egs}
\declaretheorem[style=upright,name=Notation,numbered=yes,numberlike=thm]{notation}
\declaretheorem[style=upright,name=Convention,numbered=yes,numberlike=thm]{convention}
\declaretheorem[style=upright,name=Construction,numbered=yes,numberlike=thm]{construction}
\renewcommand*{\@seccntformat}[1]{\upshape\csname the#1\endcsname.\hspace{1ex}}
\renewcommand*{\section}{\@startsection{section}{1}{\z@}%
	{2.5ex \@plus 1ex \@minus 0.2ex}%
	{1.5ex \@plus 0.2ex}%
	{\normalfont\Large\bfseries}}
\renewcommand*{\subsection}{\@startsection{subsection}{2}{\z@}%
	{2.5ex \@plus 1ex \@minus 0.2ex}%
	{1.5ex \@plus 0.2ex}%
	{\normalfont\large\bfseries}}
\renewcommand*{\subsubsection}{\@startsection{subsubsection}{3}{\z@}%
	{2.5ex \@plus 1ex \@minus 0.2ex}%
	{1.5ex \@plus 0.2ex}%
	{\normalfont\normalsize\bfseries}}
\newcommand*{\subsubsubsection}{\@startsection{paragraph}{4}{\z@}%
	{2.5ex \@plus 1ex \@minus 0.2ex}%
	{1.5ex \@plus 0.2ex}%
	{\normalfont\normalsize\bfseries}}
\newcommand*{\subsubsubsubsection}{\@startsection{paragraph}{5}{\z@}%
	{2.5ex \@plus 1ex \@minus 0.2ex}%
	{1.5ex \@plus 0.2ex}%
	{\normalfont\normalsize\bfseries}}
\newcommand{\Diff}{\mathrm{Diff}}
\newcommand{\Aut}{\mathrm{Aut}}
\newcommand{\B}{\mathbf{B}}
\newcommand{\F}{\mathrm{F}}
\newcommand{\Fct}{\mathbf{Fct}}
\newcommand{\MCGo}{\mathbf{\Gamma}}
\newcommand{\MCGno}{\boldsymbol{\mathcal{N}}}
\newcommand{\Sym}{\mathfrak{S}}
\newcommand{\Hom}{\mathrm{Hom}}
\newcommand{\MCG}{\mathrm{MCG}}
\newcommand{\id}{\mathrm{id}}
\newcommand{\bw}{\mathbf{w}}
\newcommand{\bv}{\mathbf{v}}
\newcommand{\Image}{\mathrm{Im}}
\newcommand{\mediumoplus}{\ensuremath{\mathbin{\textstyle\bigoplus}}}
\newcommand{\M}{\cM_{2}}
\newcommand{\rfl}{\diamond}
\newcommand{\incl}[3][right]%
{%
\draw[<-,>=#1 hook] #2 to ($ #2!0.5!#3 $);
\draw[->,>=stealth'] ($ #2!0.5!#3 $) to #3;%
}
\tikzset{
  Isom/.style={
    draw=none,
    every to/.append style={
      edge node={node [sloped, allow upside down, auto=false]{$\cong$}}}
  }
}
\newenvironment{itemizeb}%
{\begin{compactitem}

}%
{\end{compactitem}}
\newcommand{\cA}{\mathcal{A}}
\newcommand{\cC}{\mathcal{C}}
\newcommand{\cD}{\mathcal{D}}
\newcommand{\cG}{\mathcal{G}}
\newcommand{\cL}{\mathcal{L}}
\newcommand{\cM}{\mathcal{M}}
\newcommand{\cO}{\mathcal{O}}
\newcommand{\cP}{\mathcal{P}}
\newcommand{\cQ}{\mathcal{Q}}
\newcommand{\cS}{\mathcal{S}}
\newcommand{\cW}{\mathcal{W}}
\newcommand{\bA}{\mathbb{A}}
\newcommand{\bC}{\mathbb{C}}
\newcommand{\bD}{\mathbb{D}}
\newcommand{\bI}{\mathbb{I}}
\newcommand{\bK}{\mathbb{K}}
\newcommand{\bL}{\mathbb{L}}
\newcommand{\bM}{\mathbb{M}}
\newcommand{\bN}{\mathbb{N}}
\newcommand{\bQ}{\mathbb{Q}}
\newcommand{\bS}{\mathbb{S}}
\newcommand{\bT}{\mathbb{T}}
\newcommand{\bU}{\mathbb{U}}
\newcommand{\bV}{\mathbb{V}}
\newcommand{\bW}{\mathbb{W}}
\newcommand{\bZ}{\mathbb{Z}}
\newcommand{\tn}{\mathtt{n}}
\newcommand{\tm}{\mathtt{m}}
\newcommand{\one}{\mathtt{1}}
\newcommand{\two}{\mathtt{2}}
\newcommand{\zero}{\mathtt{0}}
\newcommand{\sS}{\mathscr{S}}
\newcommand{\Identity}{\mathrm{Id}}
\newcommand{\wdeg}{\mathrm{wdeg}}
\newcommand{\sdeg}{\mathrm{sdeg}}
\newcommand{\rM}{\mathscr{M}}
\newcommand{\rG}{\mathscr{G}}
\newcommand{\tM}{\mathtt{M}}
\newcommand{\rT}{\mathscr{T}}
\newcommand{\Beta}{\boldsymbol{\beta}}
\newcommand{\dv}{^{\dagger}}
\newcommand{\unt}{u}
\newcommand{\LB}{\mathfrak{LB}}
\newcommand{\LBu}{\mathfrak{LB}^{u}}
\newcommand{\fLu}{\mathfrak{L}^{u}}
\newcommand{\vrtcl}{v}
\newcommand{\fLv}{\mathfrak{L}^{\vrtcl}}
\newcommand{\fLuv}{\mathfrak{L}^{\unt,\vrtcl}}
\newcommand{\fL}{\mathfrak{L}}
\newcommand{\BM}{\mathrm{BM}}
\newcommand{\tw}{\mathrm{tw}}
\newcommand{\LCS}{\varGamma}
\newcommand{\N}{\mathscr{N}}
\newcommand{\obj}{\mathrm{Obj}}
\newcommand{\longhookrightarrow}{\lhook\joinrel\longrightarrow}
\NewDocumentCommand{\covr}{O{\bullet}}{%
\ensuremath{\mathrm{Cov}_{#1}}%
}
\NewDocumentCommand{\topr}{O{\bullet}}{%
\ensuremath{\mathrm{Top}_{#1}}%
}
\NewDocumentCommand{\lmod}{O{\bullet}}{%
\ensuremath{{}\textrm{-}\mathrm{Mod}}%
}
\newcommand{\Ring}{\ensuremath{\mathsf{Ring}}}
\newcommand{\Alg}{\ensuremath{\textrm{-}\mathsf{Alg}}}
\renewcommand{\geq}{\geqslant}
\renewcommand{\leq}{\leqslant}
\renewcommand{\footnoterule}{%
  \kern -3pt
  \hrule width \textwidth height 0.4pt
  \kern 2.6pt
}
\begin{document}
\title{\Large\bfseries Polynomiality of surface braid and mapping class group representations}
\author{\normalsize Martin Palmer and Arthur Souli{\'e}}
\date{\normalsize\todaysdate}
\maketitle
{\makeatletter
\renewcommand*{\BHFN@OldMakefntext}{}
\makeatother
\footnotetext{2020 \textit{Mathematics Subject Classification}: Primary: 18A22, 20C07, 20C12, 20F36, 57K20; Secondary: 18A25, 18M15, 20J05, 55N25, 55R80, 57M07, 57M10.}
\footnotetext{\emph{Key words and phrases}: homological representations, polynomial functors, surface braid groups, mapping class groups, configuration spaces, homology with local coefficients, Borel-Moore homology.}}

\begin{abstract}
We study a wide range of homologically-defined representations of surface braid groups and of mapping class groups of surfaces, including the Lawrence-Bigelow representations of the classical braid groups. These representations naturally come in families, defining homological representation functors on categories associated to surface braid groups or all mapping class groups. We prove that many of these homological representation functors are \emph{polynomial}. This has applications to twisted homological stability and to understanding the structure of the representation theory of the associated families of groups. Our polynomiality results are consequences of more fundamental results establishing relations amongst the coherent representations that we consider via short exact sequences of functors. As well as polynomiality, these short exact sequences also have applications to understanding the kernels of the homological representations under consideration.
\end{abstract}

\section*{Introduction}

The representation theory of surface braid groups and of mapping class groups has been the subject of intensive study for several decades, and continues to be so; see for example the survey of Birman and Brendle \cite[\S 4]{BirmanBrendlesurvey} or the expository article of Margalit \cite{Margalit}. These groups naturally come in families -- we will consider the following ones, where all surfaces are assumed connected, compact and with one boundary component: the family of surface braid groups $\B_{n}(S)$ for each fixed surface $S$ (in particular the classical braid groups $\B_{n}$ when $S$ is the $2$-disc $\bD$), as well as the two families $\MCGo_{g,1}$ and $\MCGno_{h,1}$ of the mapping class groups of orientable and non-orientable surfaces respectively.

\paragraph*{Homological representation functors.}
One way to make the representation theory of these groups more tractable is to study coherent representations of each family of groups. Here, \emph{coherent representation} means a collection of one representation of each group in the family so that the whole collection of representations is compatible, in a certain sense, with the natural homomorphisms between the groups. This structure is encoded by a functor $\langle \cG,\cM \rangle \to R\lmod$, where $\langle \cG,\cM \rangle$ is a certain category whose automorphism groups are the family of groups in question. The categories $\langle \cG,\cM \rangle$ associated to the families of surface braid groups and mapping class groups of surfaces are described in \S\ref{ss:categorical_framework}. In particular, the objects of $\langle \cG,\cM \rangle$ are always indexed by non-negative integers $\tn$, whose automorphism group is either $\B_{n}(S)$, $\MCGo_{n,1}$ or $\MCGno_{n,1}$, depending on the context. For instance, for the classical braid groups $\B_{n}$, we take $\cG = \cM = \Beta$ where $\Beta$ is the braid groupoid.

The coherent representations of surface braid groups and of mapping class groups that we study in this paper are constructed systematically from natural actions on the homology of configuration spaces on the underlying surface, with coefficients twisted by certain local systems on these configuration spaces. Special cases of this construction recover, for example, the \emph{Lawrence-Bigelow representations} of the classical braid groups \cite{Lawrence1,BigelowHomrep} (Example~\ref{eg:LB_rep}), the \emph{An-Ko representations} of surface braid groups \cite{AnKo} (Example~\ref{eg:An-Ko}) and the \emph{Moriyama representations} of mapping class groups \cite{Moriyama} (Example~\ref{eg:Moriyama}). In general, our construction depends on a choice of an \emph{ordered partition $\lambda$ of a positive integer $k$} (see the definition below), corresponding to the number and partition of points in the configuration space, together with a positive integer $\ell$, corresponding to the depth of an associated lower central series quotient that determines the local system. This produces a functor
\begin{equation}
\label{eq:functor-from-general-construction}
\fL_{(\lambda,\ell)} \colon \langle \cG,\cM \rangle \longrightarrow R\lmod,
\end{equation}
called a \emph{homological representation functor}.
This construction (together with some variants) is described in \S\ref{ss:general_construction}--\S\ref{ss:examples_homological_representations_functors}, where we also explain how it fits into the larger framework of \cite{PSI}; we refer the reader there for a full introduction to this notion and we focus on presenting our results about these functors in the rest of the introduction.
We mention for the sake of accuracy that the target category of \eqref{eq:functor-from-general-construction} must in general be enlarged to the category $R\lmod^{\tw}$ of \emph{twisted} $R$-modules: this has the same objects as $R\lmod$ but morphisms are permitted to act on the underlying ring $R$ as well as on the modules; see \S\ref{sss:twisted-representations}. We will elide this subtlety in the introduction, although we are careful in the rest of the paper about when the target is $R\lmod^{\tw}$ and when we may restrict to $R\lmod$. (One may always compose with the functor $R\lmod^{\tw} \to \bZ\lmod$ that forgets module structures to avoid this twisting.) Homological representation functors have already proved themselves of key use for the questions of the \emph{linearity} of groups (see Example~\ref{eg:LB_rep}) and form a natural pathway for the construction of families of irreducible representations (see the forthcoming work \cite{PSIIi}). This paper aims to establish \emph{polynomiality} properties (see \S\ref{s:notions_polynomiality} for an introduction to these notions) for these functors.

When we are not working in a specific setting, we denote the homological representation functors that we construct by $\fL_{(\lambda,\ell)}$, as in \eqref{eq:functor-from-general-construction} above. When we are working in the setting of surface braid groups on a fixed surface $S$, we write $\fL_{(\lambda,\ell)} = \fL_{(\lambda,\ell)}(S)$. In the special case of classical braid groups ($S = \bD$) we also write $\fL_{(\lambda,\ell)}(\bD) = \LB_{(\lambda,\ell)}$, since they extend the $\mathfrak{L}$awrence-$\mathfrak{B}$igelow representations. In the setting of mapping class groups of orientable surfaces, we write $\fL_{(\lambda,\ell)} = \fL_{(\lambda,\ell)}(\MCGo)$; in the setting of mapping class groups of non-orientable surfaces, we write $\fL_{(\lambda,\ell)} = \fL_{(\lambda,\ell)}(\MCGno)$.

\paragraph*{Short exact sequences of functors.}
Our main result proves the existence of fundamental short exact sequences relating different homological representation functors; see Theorem~\ref{thm:ses}. Our polynomiality results, as well as results establishing other properties of these representations, are corollaries of these.

The short exact sequences depend on a ``translation'' operation $\tau_{\one}$ defined on functors \eqref{eq:functor-from-general-construction}, which is defined precisely in \S\ref{sss:translation_background}. Also, for $k \geq 1$ a positive integer, we recall that an \emph{ordered partition $\lambda\vdash k$} is an ordered $r$-tuple $\lambda = (\lambda_{1},\ldots,\lambda_{r})$ of integers $\lambda_{i} \geq 1$ (for some $r \geq 1$ called the \emph{length} of $\lambda$) such that $k = \sum_{1\leq i \leq r} \lambda_{i}$. (Note that we do not impose the condition that $\lambda_{i}\geq \lambda_{i+1}$.)
Also, for a fixed $\lambda\vdash k$, we write $\lambda[j]$ for the tuple obtained by subtracting $1$ from the $j$th term (and removing the $j$th term entirely, if it is now zero); see also Notation~\ref{not:sets_partitions}. Our main result is the following.

\begin{athm}[{Theorems~\ref{thm:key_SES_classical_braids}, \ref{thm:key_SES_surface_braid_groups} and \ref{thm:key_SES_mapping_class_groups}}]
\label{thm:ses}
For any positive integer $k\geq2$, any ordered partition $\lambda = (\lambda_{1},\ldots,\lambda_{r})\vdash k$ and any positive integer $\ell \geq 1$, there is a short exact sequence
\begin{equation}
\label{eq:key_SES_classical_braids_partitioned-intro}
\begin{tikzcd}
0 \ar[r] & \LB_{(\lambda,\ell)} \ar[r] & \tau_{\one}\LB_{(\lambda,\ell)} \ar[r] & \underset{1\leq j\leq r}{\bigoplus}\tau_{\one}\LB_{(\lambda[j],\ell)} \ar[r] & 0
\end{tikzcd}
\end{equation}
of functors $\langle \Beta , \Beta \rangle \to R\lmod$. There are analogous short exact sequences of functors in the settings of surface braid groups -- see \eqref{eq:key_SES_braid_surface} -- and mapping class groups of surfaces  -- see \eqref{eq:key_SES_MCG_orientable} and \eqref{eq:key_SES_MCG_non_orientable}. In the last case, these short exact sequences are moreover \textbf{split}.
\end{athm}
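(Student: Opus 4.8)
The plan is to derive all three families of short exact sequences from one mechanism: the standard localization long exact sequence in twisted Borel--Moore homology, degenerated to a four-term sequence by the concentration statement of Theorem~\ref{thm:module-structure}. I would spell this out for the classical braid groups, where it is cleanest. Write $\bD_{n}$ for the $n$-punctured disc, so that $\LB_{(\bk,\ell)}$ at the object $\tn$ is $H_{k}^{\BM}(C_{\bk}(\bD_{n});\cL)$ and $\tau_{\one}\LB_{(\bk,\ell)}$ at $\tn$ is the same group for $\bD_{n+1}$, the extra puncture $p_{0}$ being placed near the boundary. The geometric input is the observation that $C_{\bk}(\bD_{n+1}) = C_{\bk}(\bD_{n}\smallsetminus\{p_{0}\})$ is an \emph{open} subspace of $C_{\bk}(\bD_{n})$, with closed complement $Z$ the locus where some point of the configuration sits at $p_{0}$; since the configuration is $\bk$-partitioned, that point lies in a unique block, say the $j$th, and deleting it identifies $Z$ with $\bigsqcup_{1\leq j\leq r} C_{\bk_{j}}(\bD_{n+1})$. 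Recall that for a closed subspace $Z\subseteq X$ with open complement $U$ there is an exact sequence $\cdots\to H_{m}^{\BM}(Z)\to H_{m}^{\BM}(X)\to H_{m}^{\BM}(U)\to H_{m-1}^{\BM}(Z)\to\cdots$, valid with local coefficients. Invoking Theorem~\ref{thm:module-structure}, which forces the twisted Borel--Moore homology of a partitioned configuration space of $m$ points to be concentrated in degree $m$, all the groups outside the relevant single degrees vanish, so this long exact sequence collapses to precisely the sequence \eqref{eq:key_SES_classical_braids_partitioned-intro} evaluated at $\tn$, in which the three functors are realized as $H_{k}^{\BM}(C_{\bk}(\bD_{n});\cL)$, $H_{k}^{\BM}(C_{\bk}(\bD_{n+1});\cL)$ and $\bigoplus_{j} H_{k-1}^{\BM}(C_{\bk_{j}}(\bD_{n+1});\cL)$ respectively.

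The companion step is to identify the coefficient systems: one must check that $\cL$ restricted to the open stratum is the local system defining $\LB_{(\bk,\ell)}$ at level $n{+}1$, and that $\cL$ restricted to each component of $Z$ is the one defining $\LB_{(\bk_{j},\ell)}$ at level $n{+}1$. Since all of these local systems are built from the lower central series of the relevant configuration-space fundamental groups with the \emph{same} parameter $\ell$, this reduces to checking that the homomorphisms on fundamental groups induced by the open inclusion and by the deletion map on $Z$ are compatible with the lower-central-series quotients, which holds by construction. Finally, to promote the pointwise statement to a short exact sequence of functors on $\langle\Beta,\Beta\rangle$ one needs the construction to be natural in $\tn$: equivariance under the automorphism groups $\B_{n}(\bD)$ is immediate once $p_{0}$ is chosen in the region fixed by the relevant diffeomorphisms, and compatibility with the stabilization morphisms $\tn\to\tn{+}\one$ follows from the nested inclusions of configuration spaces on $\bD_{n}$, $\bD_{n+1}$, $\bD_{n+2}$ together with the functoriality of the localization sequence; here I would lean on the formalism of the translation functor $\tau_{\one}$ and of morphisms of homological functors set up in \cite{PSI}. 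The surface braid group case is word-for-word the same with a general surface $S$ in place of $\bD$.

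For the mapping class group cases the mechanism is unchanged --- a localization long exact sequence collapsed by Theorem~\ref{thm:module-structure} --- but the geometric input is now the attachment of a handle (for $\fL_{(\bk,\ell)}(\MCGo)$), respectively of a cross-cap (for $\fL_{(\bk,\ell)}(\MCGno)$), to the surface as the genus, respectively the number of cross-caps, grows. The open/closed decomposition of the relevant configuration space must be set up relative to that attachment, and the new generators of $\pi_{1}$ it contributes must be carried through the coefficient system; this is also where the action on the ground ring enters in the non-orientable case, forcing the target to be $R\lmod^{\tw}$. The extra assertion here is that the resulting short exact sequences are \emph{split}, and I would construct the splitting as a natural transformation coming from a geometric section supported on the newly attached handle or cross-cap --- a feature with no counterpart in the braid-group setting --- and then verify its compatibility with every morphism of the relevant category.

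I expect two points to be where the real work lies. The first is \emph{naturality}: upgrading the pointwise localization sequence to an exact sequence of functors, that is, pinning down $\tau_{\one}$, the connecting map, and (in the mapping class group case) the splitting as natural transformations compatible with all morphisms of $\langle\cG,\cM\rangle$ rather than merely with the automorphism groups. The second is the \emph{coefficient bookkeeping}: matching the restrictions of $\cL$ to the strata with the prescribed lower-central-series local systems and keeping track of the possible action on the ground ring $R$. By contrast, the geometric and homological core --- the open/closed decomposition and the degeneration of the long exact sequence, given Theorem~\ref{thm:module-structure} --- is short.
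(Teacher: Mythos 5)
Your argument has a genuine gap at exactly the point you wave off as holding ``by construction'': the coefficient bookkeeping. The localization sequence you invoke requires a \emph{single} local system on $X = C_{\bk}(\bD_{n})$, restricted to the open stratum $U = C_{\bk}(\bD_{n}\smallsetminus\{p_{0}\}) = C_{\bk}(\bD_{n+1})$ and to the closed stratum $Z$. But for $\ell\geq 2$ the local system defining $\tau_{\one}\LB_{(\bk,\ell)}$ at $\tn$ has non-trivial monodromy around the new puncture $p_{0}$: the quotient $Q_{(\bk,\ell)}(\bD)$ records the winding of each block around the punctures (e.g.\ $Q_{(\bk,2)}(\bD)\cong\bZ^{r'}\times\bZ^{r(r-1)/2}\times\bZ^{r}$, the last factor being exactly these winding numbers). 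By contrast, \emph{any} local system restricted from $C_{\bk}(\bD_{n})$ has trivial monodromy around $p_{0}$, because a loop in which a configuration point encircles $p_{0}$ is null-homotopic in $C_{\bk}(\bD_{n})$ (the point may pass through $p_{0}$ there). Hence no local system on $X$ restricts on $U$ to the one defining $\LB_{(\bk,\ell)}$ at level $n+1$, nor on $Z$ to the ones defining $\LB_{(\bk_{j},\ell)}$; your collapsed sequence computes Borel--Moore homology of $C_{\bk}(\bD_{n+1})$ and of $\bigsqcup_{j}C_{\bk_{j}}(\bD_{n+1})$ with a strictly coarser coefficient system (winding around $p_{0}$ invisible), which is not even a change-of-rings specialisation of the functors in the statement. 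So the identification of the middle and right-hand terms fails in precisely the cases of interest, including the Lawrence--Bigelow case $\ell=2$; only for $\ell=1$ does the argument go through as written.

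There are further problems even granting correct coefficients. The injection produced by localization is the restriction map of an open inclusion, whereas the map in the theorem must be the canonical transformation $i_{\one}\colon\LB_{(\bk,\ell)}\to\tau_{\one}\LB_{(\bk,\ell)}$ (a pushforward along a proper embedding); identifying the two, and hence identifying the quotient with $\delta_{\one}\LB_{(\bk,\ell)}$ --- which is what every application uses --- requires an argument you do not give. Your claim that the surface case is ``word-for-word the same'' also oversells it: the paper only obtains \eqref{eq:key_SES_braid} for the truncated functors $\fL_{(\bk,\ell)\mid\geq2}(S)$ after an atomic correction at $\tn=\one$, the untruncated statement being left as a conjecture (Remark~\ref{rmk:conjsecture_result_without_cut}). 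And in the mapping class group case the closed complement (configurations meeting the new handle or crosscap) is not a partitioned configuration space of $k-1$ points, so Theorem~\ref{thm:module-structure} does not collapse the long exact sequence in the one-step way you describe. The paper's actual proof avoids localization altogether: it computes $\kappa_{\one}$ and $\delta_{\one}$ directly on the explicit free bases of Proposition~\ref{prop:module_structure_BM_homology}, defines the isomorphism $\delta_{\one}\fL\cong\bigoplus_{j}\tau_{\one}\fL_{(\bk_{j},\ell)}$ combinatorially by stripping the first letter of the first word, and verifies equivariance and naturality over the Quillen bracket category by intersection-pairing arguments (Lemmas~\ref{lem:cloud}, \ref{lem:cloud-splitting} and \ref{lem:computations_MCG_braiding}), the basis itself furnishing the splitting in the mapping class group setting.
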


\paragraph*{Polynomiality.}
Our first corollary of Theorem~\ref{thm:ses}, and its analogues for surface braid groups and mapping class groups of surfaces, is that many of the functors \eqref{eq:functor-from-general-construction} are \emph{polynomial} in the senses recalled precisely in \S\ref{s:notions_polynomiality}.

\begin{acoro}
\label{coro:polynomiality}
Fix a positive integer $k \geq 1$, an ordered partition $\lambda\vdash k$ and a positive integer $\ell \geq 1$.
In the setting of the classical braid groups:
\begin{itemizeb}
\item The functor $\LB_{((1),\ell)}$ is strong polynomial of degree $2$ and weak polynomial of degree $1$.
\item For $k\geq 2$, the functor $\LB_{(\lambda,\ell)}$ is both very strong and weak polynomial of degree $k$.
\end{itemizeb}
In the setting of surface braid groups on $S = \Sigma_{g,1} \text{ or } \N_{h,1}$ for $g,h \geq 1$:
\begin{itemizeb}
\item The functor $\fL_{(\lambda,\ell)}(S)$ is weak polynomial of degree $k$ and strong polynomial of degree $k$ or $k+1$. 
\end{itemizeb}
In the setting of mapping class groups of surfaces:
\begin{itemizeb}
\item The functors $\fL_{(\lambda,\ell)}(\MCGo)$ and $\fL_{(\lambda,\ell)}(\MCGno)$ are both split and weak polynomial of degree $k$.
\end{itemizeb}
\end{acoro}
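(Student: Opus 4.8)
The plan is to deduce Corollary~\ref{coro:polynomiality} directly from the short exact sequences of Theorem~\ref{thm:ses} by induction on $k$, using the standard behaviour of the classes of (weak/strong/very strong/split) polynomial functors under extensions, subobjects, quotients and the translation operation $\tau_{\one}$. I would begin by recalling from \S\ref{s:notions_polynomiality} the precise closure properties: that the full subcategory of weak polynomial functors of degree $\leq d$ is a Serre subcategory (closed under subfunctors, quotients and extensions), and that if $0 \to F \to G \to H \to 0$ is exact then $G$ is weak polynomial of degree $\leq d$ whenever $F$ and $H$ are; similarly for the strong/very strong/split variants, together with the fact that $\tau_{\one}$ preserves each polynomiality class and that applying $\tau_{\one}$ does not increase the weak polynomial degree (indeed, the relevant structural maps show $\tau_{\one}F$ is weak polynomial of degree $\leq d$ iff $F$ is). I would also record the base input: the constant-like functor $\LB_{((1),\ell)}$ at the bottom of the induction, whose underlying module by Theorem~\ref{thm:module-structure} is spanned by functions $w$ assigning a non-negative integer to each edge of a model graph $\Gamma$ for $\bD$ summing to $1$ — a rank-(number of edges) free module — from which one checks by hand that it is strong polynomial of degree $2$ and weak polynomial of degree $1$.

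The inductive step runs as follows. Fix $k \geq 2$ and assume the statement for all tuples summing to $k-1$. Given $\bk = (k_1,\dots,k_r)$ summing to $k$, each reduced tuple $\bk_j$ sums to $k-1$, so by induction $\LB_{(\bk_j,\ell)}$ is very strong polynomial and weak polynomial of degree $k-1$ (or, when $k-1 = 1$, strong of degree $2$ and weak of degree $1$ — which is still $\leq$ the degree we are about to claim). Hence $\tau_{\one}\LB_{(\bk_j,\ell)}$ is again very strong polynomial, and weak polynomial of degree $\leq k-1$; the finite direct sum $\bigoplus_{j} \tau_{\one}\LB_{(\bk_j,\ell)}$ inherits these properties. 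Now feed this into \eqref{eq:key_SES_classical_braids_partitioned-intro}: the middle term $\tau_{\one}\LB_{(\bk,\ell)}$ is an extension of the direct sum by $\LB_{(\bk,\ell)}$. The subtle point is the direction of inference — the SES gives information about $\tau_{\one}\LB_{(\bk,\ell)}$ in terms of $\LB_{(\bk,\ell)}$ and the quotient, not immediately about $\LB_{(\bk,\ell)}$ itself — so I would instead argue that $\LB_{(\bk,\ell)}$, being the kernel subfunctor of the map $\tau_{\one}\LB_{(\bk,\ell)} \to \bigoplus_j \tau_{\one}\LB_{(\bk_j,\ell)}$, satisfies: (i) for weak polynomiality, the Serre-subcategory property together with the fact (to be extracted from \S\ref{s:notions_polynomiality}, or by a direct count using the evident filtration/splitting of the $w$-basis by ``how much mass sits on the translated strand'') that $\tau_{\one}\LB_{(\bk,\ell)}$ is weak polynomial of degree exactly $k$, forces $\LB_{(\bk,\ell)}$ to be weak polynomial of degree $\leq k$; and then a lower bound of $k$ is obtained by evaluating on objects and exhibiting a polynomial-of-degree-$k$ growth of ranks via Theorem~\ref{thm:module-structure}. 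For (ii) the ``very strong'' claim, I would use that the translation map $\LB_{(\bk,\ell)} \to \tau_{\one}\LB_{(\bk,\ell)}$ is precisely the first map in the SES, hence injective with cokernel the direct sum of very strong polynomial functors, which is exactly the inductive criterion (from \S\ref{s:notions_polynomiality}) certifying that $\LB_{(\bk,\ell)}$ is very strong polynomial.

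For the surface-braid-group case the argument is identical, replacing $\Beta$ by $\langle \cG,\cM\rangle$ for $S = \Sigma_{g,1}$ or $\N_{h,1}$ and \eqref{eq:key_SES_classical_braids_partitioned-intro} by \eqref{eq:key_SES_braid}; the base case $k=1$ now gives a \emph{very} strong polynomial functor directly (the model graph for a positive-genus surface has more than one edge and the relevant cokernel computation goes through without the degenerate behaviour seen for $\bD$), so the induction yields ``very strong, weak of degree $k$'' uniformly for all $k \geq 1$. For mapping class groups one uses the \textbf{split} short exact sequences \eqref{eq:key_SES_MCG_orientable} and \eqref{eq:key_SES_MCG_non_orientable}: splitness of the SES means the cokernel map admits a section, which is exactly the hypothesis needed to conclude \emph{split} polynomiality (rather than merely very strong) by the inductive criterion, again with weak polynomial degree $k$ by the same rank count. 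The main obstacle I anticipate is bookkeeping the precise definitions and closure lemmas for the four flavours of polynomiality in \S\ref{s:notions_polynomiality} — in particular verifying that ``$F$ injects into $\tau_{\one}F$ with very strong (resp.\ split) polynomial cokernel of degree $< d$'' really is the inductive characterization of ``$F$ very strong (resp.\ split) polynomial of degree $\le d$'', and pinning down the exact weak polynomial degree of $\tau_{\one}\LB_{(\bk,\ell)}$ — rather than anything genuinely hard; once those lemmas are in place the corollary is a short induction.
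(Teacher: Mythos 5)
Your overall route is the paper's own: one reads Theorem~\ref{thm:ses} as a computation of $\kappa_{\one}$ and $\delta_{\one}$ of the functor itself (the first map of \eqref{eq:key_SES_classical_braids_partitioned-intro} \emph{is} $i_{\one}\LB_{(\bk,\ell)}$, so $\kappa_{\one}\LB_{(\bk,\ell)}=0$ and $\delta_{\one}\LB_{(\bk,\ell)}\cong\bigoplus_{j}\tau_{\one}\LB_{(\bk_{j},\ell)}$; your worry about the ``direction of inference'' is a non-issue), and then inducts on $k$ using the closure properties of Proposition~\ref{prop:properties_polynomiality} and the commutation of $\tau_{\one}$, $\delta_{\one}$, $\kappa_{\one}$, with the splitting of \eqref{eq:key_SES_MCG_orientable}--\eqref{eq:key_SES_MCG_non_orientable} giving split injectivity of $i_{\one}$ and hence split polynomiality in the mapping class group case. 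This is exactly what is done in \S\ref{ss:polynomiality_surface_braid_groups}--\S\ref{ss:poly_mcg}.

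There is, however, a genuine gap in your treatment of the \emph{weak} degree, namely the lower bound. You propose to show the weak degree is exactly $k$ by ``exhibiting a polynomial-of-degree-$k$ growth of ranks via Theorem~\ref{thm:module-structure}'', but rank growth gives no lower bound on weak degree: the vertical-type functors of Theorem~\ref{thm:polynomiality_vs_vertical} have underlying modules of exactly the same ranks and yet are weak polynomial of degree $0$ (and not strong polynomial at all), so this step fails as stated. The paper instead obtains the lower bound inside the quotient (stable) category: since $\delta_{\one}$ commutes with $\pi_{\langle \cG,\cM\rangle}$, iterating the short exact sequences shows that $\delta_{\one}^{k}$ applied to the image of the functor is non-null --- for the disc because it contains the summand $\tau_{\one}^{k}\LB_{(0,\ell)}$, an eventually-constant nonzero functor, and for surfaces by an inductive non-vanishing argument. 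A secondary oversimplification is your claim that the surface-braid case is ``identical'': the sequence \eqref{eq:key_SES_braid} is only established for the truncated functors $\fL_{(\bk,\ell)\mid\geq2}(S)$, and its right-hand term is an extension of an atomic functor by $\bigoplus_{j}(\tau_{\one}\fL_{(\bk_{j},\ell)}(S))_{\mid\geq2}$, so deducing very strong polynomiality of $\fL_{(\bk,\ell)}(S)$ itself requires the additional arguments with the exact sequences \eqref{eq:LESkappadelta} proving $\kappa_{\one}\delta_{\one}^{m}\fL_{(\bk,\ell)}(S)=0$ for all $m\leq k$, as carried out in \S\ref{ss:polynomiality_surface_braid_groups}; likewise the classical base case rests on the convention for $\LB_{(0,\ell)}$ of Notation~\ref{nota:LB_{0}} rather than a check on $\LB_{((1),\ell)}$ alone.
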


The indeterminacy between $k$ and $k+1$ for the strong polynomial degree of the functor $\fL_{(\lambda,\ell)}(S)$ is a side effect of the interactions between the $\B_{n}(S)$-representations for $n\leq2$ encoded by this functor; see Remark~\ref{rmk:polynomiality_L_one_functor}.
In each setting, we also define an alternative version of each of the functors $\fL_{(\lambda,\ell)}$, which we call its ``\emph{vertical-type alternative}'' functor, denoted by $\fL^{\vrtcl}_{(\lambda,\ell)}$. (This terminology refers to the shape of the homology cycles representing a basis for the underlying modules of these alternative representations; see Figure~\ref{fig:models-dual}.) In general, these functors have very different polynomiality behaviour:

\begin{athm}
\label{thm:polynomiality_vs_vertical}
Fix a positive integer $k \geq 1$, an ordered partition $\lambda\vdash k$ and a positive integer $\ell \geq 1$.
In the setting of the classical braid groups:
\begin{itemizeb}
\item The functor $\LB^{\vrtcl}_{(\lambda,\ell)}$ is not strong polynomial, but it is weak polynomial of degree $0$.
\end{itemizeb}
In the setting of surface braid groups on $S = \Sigma_{g,1} \text{ or } \N_{h,1}$ for $g,h \geq 1$:
\begin{itemizeb}
\item The functor $\fL^{\vrtcl}_{(\lambda,\ell)}(S)$ is not strong polynomial, but it is weak polynomial of degree $0$.
\end{itemizeb}
In the setting of mapping class groups of surfaces:
\begin{itemizeb}
\item For $\ell \in \{1,2\}$, the functors $\fL^{\vrtcl}_{(\lambda,\ell)}(\MCGo)$ and $\fL^{\vrtcl}_{(\lambda,\ell)}(\MCGno)$ are both split and weak polynomial of degree $k$.
\end{itemizeb}
\end{athm}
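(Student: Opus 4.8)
The plan is to treat the three settings in parallel. In each one the input is the \emph{vertical} analogue of Theorem~\ref{thm:module-structure} --- a preferred basis of the underlying Borel--Moore homology module built from the dual cycles of Figure~\ref{fig:models-dual} --- together with an explicit description of how the stabilisation morphisms of $\langle\cG,\cM\rangle$ act on that basis; from these one reads off the behaviour of the iterated difference functors. Write $\delta_{\one}=\mathrm{coker}(\id\to\tau_{\one})$ and $\kappa_{\one}=\ker(\id\to\tau_{\one})$ for the functors attached to the translation $\tau_{\one}$ of \S\ref{sss:translation_background}, and recall from \S\ref{s:notions_polynomiality} that a functor is strong polynomial of degree $\leq d$ iff $\delta_{\one}^{d+1}$ of it vanishes, is split polynomial of degree $\leq d$ if moreover the successive difference short exact sequences split, and is weak polynomial of degree $\leq d$ iff $\delta_{\one}^{d+1}$ of it is \emph{stably zero}; recall also that a functor all of whose stabilisation maps vanish is stably zero, and that $\delta_{\one}$ agrees with $\tau_{\one}$ on such a functor.

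\emph{Mapping class groups.} For $\ell\in\{1,2\}$ I would establish the vertical counterparts of the split short exact sequences of Theorem~\ref{thm:ses},
\[
0\longrightarrow\fL^{\vrtcl}_{(\bk,\ell)}\longrightarrow\tau_{\one}\fL^{\vrtcl}_{(\bk,\ell)}\longrightarrow\bigoplus_{1\leq j\leq r}\tau_{\one}\fL^{\vrtcl}_{(\bk_{j},\ell)}\longrightarrow 0 ,
\]
with first map the canonical transformation $\id\to\tau_{\one}$, and then argue exactly as in the proof of Corollary~\ref{coro:polynomiality}. Injectivity of the first map gives $\kappa_{\one}\fL^{\vrtcl}_{(\bk,\ell)}=0$ and $\delta_{\one}\fL^{\vrtcl}_{(\bk,\ell)}\cong\bigoplus_{j}\tau_{\one}\fL^{\vrtcl}_{(\bk_{j},\ell)}$; since $\tau_{\one}$ and finite direct sums preserve split polynomiality and the polynomial degree, an induction on $k$ (base case $k=0$: the constant functor, of degree $0$) shows that $\fL^{\vrtcl}_{(\bk,\ell)}(\MCGo)$ and $\fL^{\vrtcl}_{(\bk,\ell)}(\MCGno)$ are split polynomial of degree $k$. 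Weak polynomiality of degree exactly $k$ is then automatic --- split polynomial implies strong, hence weak, of the same degree --- the lower bound coming from the vertical form of Theorem~\ref{thm:module-structure}, which makes the rank of the underlying module a polynomial in the genus of degree precisely $k$.

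\emph{Braid and surface braid groups.} These two cases are parallel --- the surface is fixed and only the number of strands $n$ varies, so the argument for $S=\Sigma_{g,1}$ or $\N_{h,1}$ with $g,h\geq 1$ is the same as for $S=\bD$ --- and, in contrast with the mapping class group case, they do \emph{not} proceed through a short exact sequence of the type in Theorem~\ref{thm:ses}. The key computation is that, on the vertical basis, the canonical map $\fL^{\vrtcl}_{(\bk,\ell)}(\tn)\to\tau_{\one}\fL^{\vrtcl}_{(\bk,\ell)}(\tn)$ is injective, with image the span of the basis cycles that do not meet the newly-created puncture. Hence $\delta_{\one}\fL^{\vrtcl}_{(\bk,\ell)}(\tn)$ is free on the basis cycles that do meet that puncture; but the next stabilisation map carries each such cycle to one meeting a puncture which is no longer the newest, hence lying in the image of the canonical map at the next stage --- so every stabilisation map of $\delta_{\one}\fL^{\vrtcl}_{(\bk,\ell)}$ vanishes. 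Two conclusions follow. First, $\delta_{\one}\fL^{\vrtcl}_{(\bk,\ell)}$ is stably zero, so $\fL^{\vrtcl}_{(\bk,\ell)}$ is weak polynomial of degree $\leq 0$; it is not itself stably zero, since the vertical cycles lying entirely to one side of all the punctures persist under every stabilisation, so its weak degree is exactly $0$. Second, because $\delta_{\one}$ acts as $\tau_{\one}$ on functors with vanishing stabilisation maps, $\delta_{\one}^{m}\fL^{\vrtcl}_{(\bk,\ell)}\cong\tau_{\one}^{m-1}\bigl(\delta_{\one}\fL^{\vrtcl}_{(\bk,\ell)}\bigr)$, which is non-zero for every $m\geq 1$ since $\delta_{\one}\fL^{\vrtcl}_{(\bk,\ell)}$ is non-zero on every object; hence $\fL^{\vrtcl}_{(\bk,\ell)}$ is not strong polynomial of any degree.

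\emph{The main obstacle.} Everything rests on two structural inputs that must be set up first: the vertical analogue of Theorem~\ref{thm:module-structure} (a basis of dual cycles as in Figure~\ref{fig:models-dual}), and an explicit description of the action of the stabilisation morphisms of $\langle\cG,\cM\rangle$ on it. The delicate step in the braid and surface-braid cases is the precise claim that the canonical map hits \emph{exactly} the cycles avoiding the new puncture --- equivalently, that the difference functor is genuinely stably zero, with vanishing structure maps, rather than merely of lower polynomial degree; this is what separates weak polynomiality of degree $0$ from strong polynomiality, and is the crux of the contrast both with the mapping class group case and with the functors $\fL_{(\bk,\ell)}$ of Corollary~\ref{coro:polynomiality}. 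In the mapping class group case the work is instead in proving that the vertical short exact sequences exist and split, and in accounting for the restriction to $\ell\in\{1,2\}$, beyond which the present method does not apply.
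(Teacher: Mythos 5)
Your overall route is the paper's own: identify the image of the canonical map $i_{\one}$, show that the structure maps of $\delta_{\one}\fL^{\vrtcl}_{(\bk,\ell)}(S)$ vanish to get weak degree $0$ together with non-vanishing of all iterated differences, and, in the mapping class group case, establish vertical split short exact sequences and induct on $k$. However, the step you wave at in the braid and surface-braid cases is exactly where the content lies, and your stated justification does not suffice. The structure map of $\tau_{\one}\fL^{\vrtcl}_{(\bk,\ell)}(S)$ on $[\one,\id_{\one\natural\tn}]$ is \emph{not} the naive ``insert the new puncture in the second position'': by \eqref{eq:formula_morphism_id_plus_morphism} it is $\fL^{\vrtcl}_{(\bk,\ell)}(S)(\sigma_{1}^{-1})\circ\fL^{\vrtcl}_{(\bk,\ell)}(S)([\one,\id_{\two\natural\tn}])$ with $\sigma_{1}=b^{\Beta}_{\one,\one}\natural\id_{\tn}$. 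Your argument that the stabilisation ``carries each such cycle to one meeting a puncture which is no longer the newest, hence lying in the image of the canonical map at the next stage'' would, taken literally, apply verbatim to the non-vertical functors $\fL_{(\bk,\ell)}(S)$, where the conclusion is false (that is precisely the content of Theorems~\ref{thm:key_SES_classical_braids} and \ref{thm:key_SES_surface_braid_groups}). The missing ingredient, which is the paper's Lemma~\ref{rel:trivial_morphism_vertical_alternatives_surface_braid_groups}, is that $\sigma_{1}$ acts \emph{trivially} on the image of $\fL^{\vrtcl}_{(\bk,\ell)}(S)([\one,\id_{\two\natural\tn}])$ because the vertical arcs supporting those classes are disjoint from the support of a diffeomorphism representing $\sigma_{1}$ (Figure~\ref{fig:action-of-braiding}); only then does the structure map of $\tau_{\one}$ factor through the image of $i_{\one}$ at the next stage and die in $\delta_{\one}$. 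You flag ``the delicate step'', but you locate it in the identification of the image of $i_{\one}$ (which is immediate from the vertical basis) rather than in this braiding/disjoint-support mechanism.

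In the mapping class group case your plan coincides with the paper's (Theorem~\ref{thm:SES_MCG_alternatives} followed by the induction of \S\ref{ss:poly_mcg}), but three points need repair. First, the short exact sequence you display is the braid-group one: for $\MCGo$ the cokernel of $i_{\one}$ is $\bigl(\bigoplus_{\bk''\in\{\bk-2\}}\tau_{\one}\fL^{\vrtcl}_{(\bk'',\ell)}\bigr)\oplus\bigl(\bigoplus_{1\leq j\leq r}\tau_{\one}\fL^{\vrtcl}_{(\bk_{j},\ell)}\bigr)^{\oplus 2}$, since a new handle contributes two loops, as in \eqref{eq:key_SES_MCG_orientable}; the induction goes through, but the statement you would need to prove is this one. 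Second, the vertical functors must be taken over $\langle(\M^{\pm})\dv,(\M^{\pm})\dv\rangle$, and the splitting rests on the identities \eqref{eq:identity-braiding-or-vertical} and \eqref{eq:identity-braiding-nor-vertical}, whose proof uses the explicit computation of $Q_{(\bk,2)}$ (Lemma~\ref{lem:transformation_groups_MCG_ell_2}); this, and not a generic limitation of the method, is where the restriction $\ell\in\{1,2\}$ enters. Third, your lower bound on the weak degree via ``the rank is a polynomial of degree precisely $k$ in the genus'' is not a valid argument: your own braid-group case shows that polynomial rank growth of degree $k$ is compatible with weak degree $0$. The correct lower bound falls out of the same induction, since iterating the split exact sequences exhibits a summand of $\delta_{\one}^{k}\fL^{\vrtcl}_{(\bk,\ell)}$ isomorphic to a non-zero (translated) constant functor, which does not vanish in the stable category.
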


Corollary~\ref{coro:polynomiality} and Theorem~\ref{thm:polynomiality_vs_vertical} are proven in \S\ref{ss:polynomiality_surface_braid_groups} for the classical braid groups and surface braid groups and in \S\ref{ss:poly_mcg} for mapping class groups of surfaces.
Closely related to the functors $\fL^{\vrtcl}_{(\lambda,\ell)}$ are certain other functors $\fL_{(\lambda,\ell)}^{\vee}$, which are defined on automorphisms by the duals of the representations corresponding to the effect of $\fL_{(\lambda,\ell)}$ on automorphisms; see \S\ref{ss:dual-bases}.
The statement of Theorem~\ref{thm:polynomiality_vs_vertical} also holds for these functors, as we prove in Theorems~\ref{thm:dual_representation_functors_surface_braid_groups} and \ref{thm:dual_representation_functors_mcg}.

\paragraph*{Applications of polynomiality.}

Corollary~\ref{coro:polynomiality} and Theorem~\ref{thm:polynomiality_vs_vertical} have immediate consequences for \emph{twisted homological stability} of surface braid groups and mapping class groups of orientable or non-orientable surfaces. More precisely, it follows from the work of Randal-Williams and Wahl \cite[Theorems~D, I, $5.23$, $5.26$, $5.29$]{RWW} (see Theorem~\ref{thm:homological_stability}) that twisted homological stability holds in each of these settings with coefficients in each functor that is proven in Corollary~\ref{coro:polynomiality} or Theorem~\ref{thm:polynomiality_vs_vertical} to be \emph{strong}, \emph{very strong} or \emph{split} polynomial; see Corollaries~\ref{coro:HS_surface_braids} and \ref{coro:HS_MCG}.

Furthermore, functors of the form $\langle \cG,\cM \rangle \to R\lmod$ that are \emph{weak} polynomial of degree at most $d$ form a category $\mathcal{P}ol_{d}(\langle \cG , \cM \rangle)$ that is \emph{localising} as a subcategory of $\mathcal{P}ol_{d+1}(\langle \cG , \cM \rangle)$ in the sense of \cite[Chapitre~III, \S1]{gabriel}; see Proposition~\ref{prop:properties_polynomiality}. This allows one to define a sequence of quotient categories
\begin{equation}
\label{eq:sequence-of-quotient-categories}
\begin{tikzcd}
\cdots & \mathcal{P}ol_{d}(\langle \cG,\cM \rangle)/\mathcal{P}ol_{d-1}(\langle \cG,\cM \rangle) \ar[l,"{\delta_{\one}(d)}",swap] & \mathcal{P}ol_{d+1}(\langle \cG,\cM \rangle)/\mathcal{P}ol_{d}(\langle \cG,\cM \rangle) \ar[l,"{\delta_{\one}(d+1)}",swap] & \cdots \ar[l,"{\delta_{\one}(d+2)}",swap]
\end{tikzcd}
\end{equation}
where each functor $\delta_{\one}(i)$ is induced by the difference functor defined in \S\ref{sss:translation_background}. It then follows from Corollary~\ref{coro:polynomiality} that:
\begin{acoro}
\label{coro:quotient_categories}
Fix a positive integer $k \geq 1$, an ordered partition $\lambda= (\lambda_{1},\ldots,\lambda_{r})\vdash k$ and a positive integer $\ell \geq 1$. The $(\lambda,\ell)$-Lawrence-Bigelow functor $\LB_{(\lambda,\ell)}$ is a non-trivial element of
\[
\mathcal{P}ol_{k}(\langle \Beta , \Beta \rangle)/\mathcal{P}ol_{k-1}(\langle \Beta , \Beta \rangle).
\]
Moreover, for each $i\in\{1,\ldots,r\}$, the functor $\LB_{(\lambda[i],\ell)}$ is a direct summand of $\delta_{\one}(k)(\LB_{(\lambda,\ell)})$. Similar results hold for the other functors of Corollary~\ref{coro:polynomiality}, which encode representations of $\B_{n}(S)$, $\MCGo_{g,1}$ and $\MCGno_{h,1}$.
\end{acoro}
The notion of polynomiality thus provides a precise organising tool for coherent representations of these families of groups.

\paragraph*{Faithfulness.}
A second application of the short exact sequence \eqref{eq:key_SES_classical_braids_partitioned-intro} of Theorem~\ref{thm:ses} is to deduce the faithfulness of certain homological representations of the classical braid groups:

\begin{acoro}
\label{coro:faithfulness}
For any $\ell\geq 2$ and any ordered partition $\lambda = (\lambda_{1},\ldots,\lambda_{r})\vdash k$ where $\lambda_{i} \geq 2$ for at least one $i \in \{1,\ldots,r\}$, the $\B_{n}$-representation $\LB_{(\lambda,\ell)}(\tn+1)$ is faithful.
\end{acoro}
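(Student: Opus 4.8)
The plan is to use the inclusions of kernels furnished by Corollary~\ref{coro:kernels} to reduce to the single tuple $\bk = (2)$, for which $\LB_{((2),\ell)}(\tn+1)$ is a lift of the Lawrence--Krammer--Bigelow representation, and then to invoke the faithfulness theorem of Bigelow~\cite{bigelow2001braid} and Krammer~\cite{KrammerLK}. First I would record the combinatorial point that $(2) \prec \bk$: if $k_i \geq 2$ for some $i$, take $s = 1$, $k'_1 = 2$ and $j_1 = i$, so that $k'_1 = 2 \leq k_i = k_{j_1}$. Since $\ell \geq 2 \geq 1$, Corollary~\ref{coro:kernels} applies and yields
\[
\mathrm{ker}\bigl(\LB_{(\bk,\ell)}(\tn+1)\bigr) \subseteq \mathrm{ker}\bigl(\LB_{((2),\ell)}(\tn+1)\bigr),
\]
so it suffices to prove that $\LB_{((2),\ell)}(\tn+1)$ is faithful for every $\ell \geq 2$. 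Since this is a representation of $\B_{n+1}$ and the inclusion $\B_n \hookrightarrow \B_{n+1}$ is injective, faithfulness as a $\B_{n+1}$-representation implies faithfulness of its restriction to $\B_n$, so I would argue with the $\B_{n+1}$-representation directly.

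Next I would compare the local system underlying $\LB_{((2),\ell)}(\tn+1)$ with the rank-one $\bZ[q^{\pm1},t^{\pm1}]$-local system $\cL_{\mathrm{LKB}}$ on the two-point configuration space of the $(n+1)$-punctured disc defining the Lawrence--Krammer--Bigelow representation. For $\ell \geq 2$ the defining local system dominates $\cL_{\mathrm{LKB}}$ --- in the conventions of \S\ref{ss:general_construction} the value $\ell = 2$ already sees the abelianization of the relevant fundamental group, and hence the two variables $q$ and $t$, which is the source of the hypothesis $\ell \geq 2$ --- so there is a ring homomorphism under which it maps onto $\cL_{\mathrm{LKB}}$. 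By Theorem~\ref{thm:module-structure} both Borel--Moore homology modules are free on the same combinatorial generating set of functions $w$, so the induced change-of-coefficients map is a $\B_{n+1}$-equivariant surjection from $\LB_{((2),\ell)}(\tn+1)$ onto the Lawrence--Krammer--Bigelow representation of $\B_{n+1}$. That representation is faithful by \cite{bigelow2001braid,KrammerLK} (in the form of \cite[\S4]{BigelowICM}), and a surjection of representations cannot enlarge the kernel, so $\LB_{((2),\ell)}(\tn+1)$ is faithful, which completes the argument.

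I expect the main obstacle to be the bookkeeping in this last step: pinning down exactly which nilpotent quotient of $\pi_1$ of the configuration space underlies $\fL_{((2),\ell)}$ for each $\ell \geq 2$, checking that it surjects onto the two-variable abelianization used by Bigelow and Krammer, and verifying that the resulting change-of-coefficients map intertwines the two $\B_{n+1}$-actions and is surjective. Surjectivity is the crucial point, and it is precisely here that Theorem~\ref{thm:module-structure} does the work, since it becomes transparent once both modules are identified with free modules on the common basis. The remaining ingredients --- the inequality $(2) \prec \bk$ and the passage from $\B_{n+1}$ to $\B_n$ --- are routine.
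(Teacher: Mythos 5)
Your proposal is correct and takes essentially the same route as the paper: reduce via $(2)\prec\bk$ and Corollary~\ref{coro:kernels} to the case $\bk=(2)$, invoke the Bigelow--Krammer faithfulness of $\LB_{((2),2)}(\tn+1)$ as a $\B_{n+1}$-representation, and restrict to $\B_n$. The only difference is that for the passage from $\ell=2$ to general $\ell\geq 2$ the paper simply cites \cite[\S5.2.1.2]{PSI}, whereas you sketch the underlying mechanism directly -- the natural surjection $Q_{((2),\ell)}(\bD)\twoheadrightarrow Q_{((2),2)}(\bD)$ and the resulting equivariant change-of-coefficients surjection onto the Lawrence--Krammer--Bigelow representation, so that the kernel upstairs is contained in the (trivial) kernel downstairs -- which is indeed the argument being referenced.
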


More precisely, the short exact sequence \eqref{eq:key_SES_classical_braids_partitioned-intro} of Theorem~\ref{thm:ses} implies the existence of inclusions between the kernels of different homological representations of the braid groups; see Proposition~\ref{prop:kernels}. Combining this with the celebrated result of Bigelow~\cite{bigelow2001braid} and Krammer~\cite{KrammerLK} on the faithfulness of certain homological representations of the braid groups, we deduce Corollary~\ref{coro:faithfulness}, which tells us that many other homological representations of the braid groups are also faithful.

\paragraph*{Analyticity.}
As a final application, we prove analyticity (and non-polynomiality) of a functor encoding certain quantum representations of the braid groups. 

There is a representation $\bV$ of $\bU_{q}(\mathfrak{sl}_{2})$, the quantum enveloping algebra of the Lie algebra $\mathfrak{sl}_{2}$, defined over the ring $\bL:=\bZ[\mathfrak{s}^{\pm 1},\mathfrak{q}^{\pm 1}]$, introduced by Jackson and Kerler \cite{Jackson_Kerler} and called the \emph{generic Verma module}. The structure of $\bU_{q}(\mathfrak{sl}_{2})$ as a quasitriangular Hopf algebra induces a $\B_{n}$-representation on its $n$th tensor power $\bV^{\otimes n}$, which we call the $n$th \emph{Verma module representation}.

\begin{acoro}[Corollary~\ref{coro:Verma_tauLB}]
\label{coro:analyticity}
There is a functor $\mathfrak{Ver} \colon \langle \Beta,\Beta \rangle \to \bL\lmod$ whose restriction to the automorphism group of the object $\tn$ of $\langle \Beta , \Beta \rangle$ is the $n$th Verma module representation. Moreover, this functor is analytic, i.e.~a colimit of polynomial functors, but it is not polynomial.
\end{acoro}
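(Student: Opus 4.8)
The plan is to realise $\mathfrak{Ver}$ as a countable increasing union of polynomial subfunctors built from the homological representation functors $\LB_{((k),1)}$, whose polynomiality is already recorded in Corollary~\ref{coro:polynomiality}; both analyticity and non-polynomiality will then follow formally. So the first step --- which I expect to be the only substantial one --- is to construct $\mathfrak{Ver}$ and pin down its structure. Writing $v_{0}\in\bV$ for the highest-weight vector, the weight-grading $\bV^{\otimes n}=\bigoplus_{k\geq 0}(\bV^{\otimes n})_{k}$ (where $k$ records the total lowering degree, so that $v_{0}$ sits in degree $0$) is preserved both by the $\B_{n}$-action and by the maps $\bV^{\otimes n}\to\bV^{\otimes(n+1)}$, $v\mapsto v\otimes v_{0}$, and these data should assemble $n\mapsto\bV^{\otimes n}$ into a functor $\mathfrak{Ver}\colon\langle\Beta,\Beta\rangle\to\bL\lmod$. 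The key input, from the homological model for quantum $\mathfrak{sl}_{2}$-representations of braid groups (\cite{Jackson_Kerler}; see also \cite{anghelpalmer} and \cite[\S5]{PSI}), is that each weight summand $(\bV^{\otimes n})_{k}$ is isomorphic to $\tau_{\one}\LB_{((k),1)}(\tn)$, $\B_{n}$-equivariantly and compatibly with the structural morphisms of $\langle\Beta,\Beta\rangle$, so that $\mathfrak{Ver}\cong\bigoplus_{k\geq 0}\tau_{\one}\LB_{((k),1)}$ as functors. (A sanity check: by Theorem~\ref{thm:module-structure} the rank of $\tau_{\one}\LB_{((k),1)}(\tn)$ is $\binom{k+n-1}{n-1}$, which is exactly $\dim_{\bL}(\bV^{\otimes n})_{k}$.) One may alternatively take this direct sum as the \emph{definition} of $\mathfrak{Ver}$ and verify separately that its restriction to $\Aut(\tn)=\B_{n}$ is the $n$th Verma module representation.

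Granting this identification, analyticity is immediate. For $K\geq 0$ set $\mathfrak{Ver}_{\leq K}:=\bigoplus_{k=0}^{K}\tau_{\one}\LB_{((k),1)}$, a subfunctor of $\mathfrak{Ver}$ with $\mathfrak{Ver}=\varinjlim_{K}\mathfrak{Ver}_{\leq K}$. By Corollary~\ref{coro:polynomiality} each $\LB_{((k),1)}$ is (very) strong polynomial, and $\tau_{\one}$ preserves polynomiality (see \S\ref{s:notions_polynomiality}); since a finite direct sum of polynomial functors is polynomial, each $\mathfrak{Ver}_{\leq K}$ is strong polynomial. Hence $\mathfrak{Ver}$ is a colimit of polynomial functors, i.e.\ analytic. (Note that the polynomial degree of $\mathfrak{Ver}_{\leq K}$ grows without bound in $K$, which is exactly what one expects if $\mathfrak{Ver}$ itself is not polynomial.)

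For non-polynomiality, suppose for contradiction that $\mathfrak{Ver}$ is polynomial, hence weak polynomial of some finite degree $d$, which we may take to be $\geq 1$. The subcategory $\mathcal{P}ol_{d}(\langle\Beta,\Beta\rangle)$ is abelian and closed under subobjects (Proposition~\ref{prop:properties_polynomiality}), hence under direct summands, so the summand $\tau_{\one}\LB_{((d+1),1)}$ of $\mathfrak{Ver}$ lies in $\mathcal{P}ol_{d}$; by the short exact sequence $0\to\LB_{((d+1),1)}\to\tau_{\one}\LB_{((d+1),1)}\to\tau_{\one}\LB_{((d),1)}\to 0$ of Theorem~\ref{thm:ses} (with $\bk=(d+1)$ and $\ell=1$), its subfunctor $\LB_{((d+1),1)}$ then also lies in $\mathcal{P}ol_{d}$. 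But $d+1\geq 2$, so by Corollary~\ref{coro:polynomiality} the functor $\LB_{((d+1),1)}$ is weak polynomial of degree exactly $d+1$ --- equivalently, non-trivial in $\mathcal{P}ol_{d+1}(\langle\Beta,\Beta\rangle)/\mathcal{P}ol_{d}(\langle\Beta,\Beta\rangle)$ --- and hence is not in $\mathcal{P}ol_{d}$, a contradiction. Therefore $\mathfrak{Ver}$ is not weak polynomial of any finite degree, in particular not polynomial.

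The main obstacle, as flagged above, is the first step: establishing that $n\mapsto\bV^{\otimes n}$ genuinely underlies a functor on the \emph{whole} category $\langle\Beta,\Beta\rangle$ --- not merely a compatible sequence of braid-group representations --- and identifying it, summand by summand and including the ``add-a-strand'' structural morphisms, with $\bigoplus_{k\geq 0}\tau_{\one}\LB_{((k),1)}$. Once that identification is in place, everything else is a formal consequence of Corollary~\ref{coro:polynomiality}, Theorem~\ref{thm:ses}, and the closure properties of the categories $\mathcal{P}ol_{d}$.
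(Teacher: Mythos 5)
There is a genuine gap, and it sits exactly where you flagged the ``only substantial step'': the identification of the weight spaces. Your claimed isomorphism $(\bV^{\otimes n})_k \cong \tau_{\one}\LB_{((k),1)}(\tn)$ is false. For $\ell=1$ the transformation group $Q_{((k),1)}(\bD)$ is trivial, so $\LB_{((k),1)}$ carries the \emph{trivial} rank-one local system over $\bZ$, and (as recalled in Example~\ref{eg:LB_rep}, following Lawrence) the representations it encodes factor through $\B_n \twoheadrightarrow \Sym_n$. The Verma weight space $V_{n,k}$ is an $\bL$-module carrying a genuinely quantum action (already for $k=1$ it is essentially the Burau representation), so it cannot agree with the $\ell=1$ functor; your rank count $\binom{k+n-1}{n-1}$ is correct but insensitive to $\ell$, which is why it does not detect the error. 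The correct statement --- and the actual content of the paper's proof, via Lemma~\ref{lem:Verma_tau_LB} --- is $V_{n,k} \cong \tau_{\one}\LB_{((k),2)}(\tn)\otimes_{\bK}\bL$, i.e.\ one must take $\ell=2$ (the honest Lawrence--Bigelow local system over $\bK=\bZ[q^{\pm1},t^{\pm1}]$) and then change rings along $\theta\colon\bK\to\bL$, $(q,t)\mapsto(\mathfrak{s}^2,-\mathfrak{q}^{-2})$. Moreover, this identification is the substantive part of the corollary and cannot simply be cited away: the paper derives it from Martel's theorem together with a non-trivial comparison of three homology groups (covariance of Borel--Moore homology for the proper inclusion $C_k(\bD'_n)\hookrightarrow C_k(\bD_{1+n})$, contravariance for the open embedding into the relative pair $(C_k(\bD_n),C^-_k(\bD_n))$, and matching of free bases), which is precisely the work your proposal defers to ``the homological model'' without carrying it out.

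The rest of your argument is sound and would go through once the identification is corrected, since Theorem~\ref{thm:key_SES_classical_braids} and Corollary~\ref{coro:polynomiality} are stated to persist under change of rings: analyticity follows exactly as you say from exhausting $\bigoplus_{k}\tau_{\one}\LB_{((k),2)}\otimes_{\bK}\bL$ by finite truncations, and your non-polynomiality argument (thickness of $\mathcal{P}ol_d$, passing from the summand $\tau_{\one}\LB_{((d+1),\ell)}$ to its subfunctor $\LB_{((d+1),\ell)}$ via the short exact sequence, and invoking that its weak degree is exactly $d+1$) is a legitimate variant of the paper's argument, which instead exhibits a natural embedding $\mathfrak{Ver}\hookrightarrow\delta_{\one}^m\mathfrak{Ver}$ for all $m\geq1$. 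So the formal superstructure is fine; what is missing is the proof, with the correct parameters $(\bk,\ell)=((k),2)$ and the base change $\otimes_{\bK}\bL$, that $n\mapsto\bV^{\otimes n}$ really is the functor $\bigoplus_{k\geq0}\tau_{\one}\LB_{((k),2)}\otimes_{\bK}\bL$ on all of $\langle\Beta,\Beta\rangle$.
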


\begin{rmk}
Theorem~\ref{thm:polynomiality_vs_vertical} and Corollary~\ref{coro:analyticity} illustrate that polynomiality is not an ``automatic'' property of coherent representations, even in cases (such as the first two points of Theorem~\ref{thm:polynomiality_vs_vertical}) where the dimensions of the underlying modules of the representations grow polynomially with $\tn$. See also Remark~\ref{rmk:Magnus_Heisenberg} for more examples of non-polynomial homological representation functors.
\end{rmk}

\paragraph*{Module structure.}
Finally, a key tool to prove Theorem~\ref{thm:ses} is an explicit computation (see Theorem~\ref{thm:module-structure} below) of the underlying module structures of the homological representation functors.
Let $S$ be a compact, connected surface with one boundary component and let $A \subset S$ be either a finite subset of its interior or one point on its boundary. For an ordered partition $\lambda = (\lambda_{1},\ldots,\lambda_{r})\vdash k$, we consider the $\lambda$-partitioned configuration space
\[
C_{\lambda}(S \smallsetminus A) = \{ (x_{1},\ldots,x_{k}) \in (S \smallsetminus A)^{k} \mid x_{i} \neq x_{j} \text{ for } i \neq j \} / \Sym_{\lambda},
\]
where $\Sym_{\lambda} := \Sym_{\lambda_{1}} \times \cdots \times \Sym_{\lambda_{r}} \subseteq \Sym_{k}$. Let $\cL$ be a local system on $C_{\lambda}(S \smallsetminus A)$, defined over a ring $R$, and denote its fibre by $V$. The underlying modules of all of our representations are given by the twisted Borel-Moore homology modules $H_{k}^{\BM}(C_{\lambda}(S \smallsetminus A) ; \cL)$.

\begin{athm}[{Proposition~\ref{prop:module_structure_BM_homology}}]
\label{thm:module-structure}
The twisted Borel-Moore homology $H_{*}^{\BM}(C_{\lambda}(S \smallsetminus A) ; \cL)$ is trivial except in degree $k$. There is an isomorphism of $R$-modules
\begin{equation}
\label{eq:module-structure}
H_{k}^{\BM}(C_{\lambda}(S \smallsetminus A) ; \cL) \;\cong\; \bigoplus_{w} V ,
\end{equation}
where the direct sum on the right-hand side is indexed by the following combinatorial data. Let $\Gamma$ be an embedded graph in $S$ with set of vertices $A$, such that $S$ deformation retracts onto $\Gamma$ relative to $A$; see Figure~\ref{fig:models} for illustrations. There is then one copy of $V$ in the direct sum for each function $w$ assigning to each edge of $\Gamma$ a word in the alphabet $\{1,\ldots,r\}$ so that each letter $i \in \{1,\ldots,r\}$ appears precisely $\lambda_{i}$ times as $w$ runs over all edges of $\Gamma$.
\end{athm}

In each of our examples, $\cL$ will be a rank-$1$ local system, i.e.~$V \cong R$, so Theorem~\ref{thm:module-structure} says that $H_{k}^{\BM}(C_{\lambda}(S \smallsetminus A) ; \cL)$ is a free $R$-module, with a free generating set given by the set of functions $w$ described above.
We note that, in the special case when $\lambda = (k)$, the direct sum in \eqref{eq:module-structure} is indexed by functions $w$ assigning non-negative integers to each edge of $\Gamma$ that sum to $k$.

\begin{rmk}
We refer the reader to \cite[Chap.~V]{bredonsheaf} for a detailed introduction to Borel-Moore homology.
The principal reason why we work with Borel-Moore homology instead of ordinary homology is the structural result of Theorem~\ref{thm:module-structure}. In contrast, the ordinary (co)homology of configuration spaces on surfaces is in general much more complicated, and the few cases in which the computations are known lead to representations that are much harder to work with; see for instance \cite[Th.~$1.4$]{stavrou}.
\end{rmk}

\paragraph*{Outline.}
In \S\ref{s:background}, we explain the categorical framework for the families of groups that we work with (\S\ref{ss:categorical_framework}), construct the functors \eqref{eq:functor-from-general-construction} in this framework (\S\ref{ss:general_construction}) and then discuss this construction in more detail (\S\ref{ss:examples_homological_representations_functors}) in each of our three settings: classical braid groups, surface braid groups and mapping class groups of surfaces. In \S\ref{s:free_generating_sets}, we study the underlying module structure of these representations, proving Theorem~\ref{thm:module-structure}. In \S\ref{s:SES_homol_rep-func} we then construct the short exact sequences of Theorem~\ref{thm:ses}, recalling first the necessary background on \emph{translation}, \emph{difference} and \emph{evanescence} operations on functors (\S\ref{ss:background_preliminaries_SES}), and also prove Corollary~\ref{coro:faithfulness}. Finally, in \S\ref{s:poly_homol_rep-func} we prove our results on polynomiality (Corollary~\ref{coro:polynomiality} and Theorem~\ref{thm:polynomiality_vs_vertical}) and analyticity (Corollary~\ref{coro:analyticity}).

\paragraph*{General notation.}
We denote by $\bN$ the set of non-negative integers. For a small category $\cC$, we use the abbreviation $\mathrm{ob}(\cC)$ to denote the set of objects of $\cC$.
For $\cD$ any category and $\cC$ a small category, we denote by $\Fct(\cC,\cD)$ the category of functors from $\cC$ to $\cD$.
For $X$ a manifold with boundary, $\mathring{X}$ denotes its interior. For an ordered partition $\lambda = (\lambda_{1},\ldots,\lambda_{r}) \vdash k$ and a topological space $Y$, we denote by $C_{\lambda}(Y)$ the $\lambda$-configuration space $\left\{ \left(x_{1},\ldots,x_{k}\right)\in Y^{\times k}\mid x_{i}\neq x_{j}\textrm{ if }i\neq j\right\} /\Sym_{\lambda}$, with $\Sym_{\lambda}:=\Sym_{\lambda_{1}} \times \cdots \times \Sym_{\lambda_{r}}$, where $\Sym_{n}$ denotes the symmetric group on a set of $n$ elements.
Furthermore, the \emph{$\lambda$-partitioned braid group $\B_{\lambda_{1},\ldots,\lambda_{r}}(Y)$ on $k$ strings on $Y$} is the fundamental group $\pi_{1}(C_{\lambda}(Y),c_{0})$, where $c_{0} \in C_{\lambda}(Y)$ is a fixed $\lambda$-partitioned configuration.

For $R$ a non-zero unital ring, we denote by ${R}\lmod$ the category of left $R$-modules. For an $R$-module $M$, we denote by $\mathrm{Aut}_{R}(M)$ the group of $R$-module automorphisms of $M$. When $R=\bZ$, we omit it from the notation as long as there is no ambiguity.
The lower central series of a group $G$ is the descending chain of subgroups $\{ \LCS_{\ell}(G)\} _{\ell\geq 1}$ defined by $\LCS_{1}(G):=G$ and $\LCS_{\ell+1}(G):=[G,\LCS_{\ell}(G)]$, the subgroup of $G$ generated by the commutators $[g,h]$ for $g \in G$ and $h \in \LCS_{\ell}(G)$. For the sake of simplicity, each quotient $G/\LCS_{\ell}(G)$ is denoted by $G/\LCS_{\ell}$.

\paragraph*{Acknowledgements.}
The authors would like to thank Tara Brendle, Brendan Owens, Geoffrey Powell, Oscar Randal-Williams and Christine Vespa for illuminating discussions and questions. They would also like to thank Oscar Randal-Williams for inviting the first author to the University of Cambridge in November 2019, where the authors were able to make significant progress on the present article. In addition, they would like to thank Geoffrey Powell for a careful reading and many valuable comments on an earlier draft of this article.
Finally, the authors thank the anonymous referee for very helpful comments and corrections on earlier versions of this work.

The first author was partially supported by a grant of the Romanian Ministry of Education and Research, CNCS - UEFISCDI, project number PN-III-P4-ID-PCE-2020-2798, within PNCDI III. The second author was partially supported by a Rankin-Sneddon Research Fellowship of the University of Glasgow, by the Institute for Basic Science IBS-R003-D1 and by the ANR Projects ChroK ANR-16-CE40-0003 and AlMaRe ANR-19-CE40-0001-01. The authors were able to make significant progress on the present article thanks to research visits to Glasgow and Bucharest, funded respectively by the School of Mathematics and Statistics of the University of Glasgow and the above-mentioned grant PN-III-P4-ID-PCE-2020-2798.

\tableofcontents

\section{Background}\label{s:background}

This section recollects the construction of homological representation functors introduced in \cite{PSI}; see \S\ref{ss:general_construction}.
We first recall the underlying categorical framework in \S\ref{ss:categorical_framework} and then detail in \S\ref{ss:examples_homological_representations_functors} the outputs of the construction of \cite[\S 2--\S 3]{PSI} for the families of groups studied in this paper.

\subsection{Categorical framework for families of groups}\label{ss:categorical_framework}

We introduce here the categorical framework that is central to this paper to deal with families of groups.

\paragraph*{Preliminaries on categorical tools.}
We refer to \cite{MacLane1} for a complete introduction to the notions of strict monoidal categories and modules over them. We generically denote a strict monoidal category by $(\cC,\natural,\zero)$, where $\cC$ is a category, $\natural$ is the monoidal product and $\zero$ is the monoidal unit. If it is braided, then its braiding is denoted by $b_{A,B}^{\cC} \colon A\natural B\overset{\sim}{\to}B\natural A$ for all objects $A$ and $B$ of $\cC$. A left-module $(\cM,\sharp)$ over a (strict) monoidal category $(\cC,\natural,\zero)$ is a category $\cM$ with a functor $\sharp\colon\cC\times\cM\to\cM$ that is unital and associative. For instance, a monoidal category $(\cC,\natural,\zero)$ is equipped with a (strict) left-module structure over itself, induced by its monoidal product. Each left-module structure $\sharp$ in this paper is defined from some underlying monoidal structure $\natural$ (see \S\ref{ss:categories_examples}), so we abuse notation by using the same symbol $\natural$ for $\sharp$.

Considering the category of (small skeletal strict) braided monoidal groupoids $\mathfrak{BrG}$, there is always an arbitrary binary choice for the convention of the braiding. We may pass from one to the other by the following inversion of the braiding operator. Let $(-)\dv\colon\mathfrak{BrG}\to\mathfrak{BrG}$ be the endofunctor defined on each object $(\cG,\natural,\zero)$ by $(\cG,\natural,\zero)\dv=(\cG,\natural,\zero)$ as a monoidal groupoid but whose braiding is defined by the inverse of that of $(\cG,\natural,\zero)$, i.e.~$b^{\cG\dv}_{A,B}:=(b^{\cG}_{B,A})^{-1}$.

\subsubsection{The Quillen bracket construction}\label{ss:Quillen}

In this section, we describe a useful categorical construction to encode families of groups: the \emph{bracket construction} due to Quillen, which is a particular case of a more general construction described in \cite[p.219]{graysonQuillen}; see also \cite[\S 1]{RWW}.  A reader familiar with \cite{RWW} may skip this subsection.

Throughout \S\ref{s:background}, we fix an object $(\cG,\natural,\zero)$ of $\mathfrak{BrG}$ and a (small, strict) left-module $(\cM,\natural)$ over $\cG$. The \emph{Quillen bracket construction} $\langle \cG,\cM \rangle $ on the left-module $(\cM,\natural)$
over the groupoid $(\cG,\natural,\zero)$ is the category with the same objects as $\cM$ and whose morphisms are given by:
\[
\Hom_{\langle \cG,\cM \rangle }(A,B)=\underset{\cG}{\mathrm{colim}}[\Hom_{\cM}(-\natural A,B)].
\]
Thus, a morphism from $A$ to $B$ in $\langle \cG,\cM \rangle$ is an equivalence class of pairs $(X,\varphi)$, denoted by $[X,\varphi] \colon A\to B$, with $X$ an object of $\cG$ and $\varphi \colon X\natural A\to B$ a morphism in $\cM$, and where $[X,\varphi]\sim [X',\varphi']$ if $X=X'$ and there exists $\chi\in \Aut_{\cG}(X)$ such that $\varphi'\circ (\chi\natural \id_{A})=\varphi$.
Also, for two morphisms $[X,\varphi] \colon A\to B$ and $[Y,\psi] \colon B\to C$ in $\langle \cG,\cM \rangle $, the composition is defined by
\begin{equation}\label{eq:composition_rule}
[Y,\psi]\circ[X,\varphi]=[Y\natural X,\psi\circ(\id_{Y}\natural\varphi)].
\end{equation}
There is a canonical faithful functor $\cM\hookrightarrow\langle \cG,\cM \rangle $ defined as the identity on objects and sending $f\in\Hom_{\cM}(A,B)$ to $[\zero,f]$. 
From now on, we assume that $\cM$ is a groupoid, that $(\cG,\natural,\zero)$ has \emph{no zero divisors} -- i.e.~$X\natural Y\cong\zero$ if and only if $X\cong Y\cong\zero$ for $X,Y\in \obj(\cG)$ -- and that $\mathrm{Aut}_{\cG}(\zero)=\{ \id_{\zero}\} $. These properties are satisfied in each setting that we consider in this paper; see \S\ref{ss:categories_examples}.
Then $\cM$ is the maximal subgroupoid of $\langle \cG,\cM \rangle$ and, when considering elements of $\Hom_{\langle \cG,\cM \rangle}(A,A)$, we abuse notation and write $f$ for $[\zero,f]$ for each $A\in\obj(\cM)$ and $f\in\Aut_{\cM}(A)$; see \cite[Prop.~$1.7$]{RWW}.

\paragraph*{Module structure over $\cG$.}
The following discussion is a direct generalisation of \cite[Prop.~$1.8$]{RWW} to which we refer for further details.
The category $\langle \cG,\cM \rangle$ inherits a (strict) left-module structure over $(\cG,\natural,\zero)$ as follows. The module bifunctor $\natural$ extends to $\langle \cG,\cM \rangle $ with the same assignment on objects, by letting, for $\varphi\in \Aut_{\cG}(X)$ and $[Y,\psi]\in \Hom_{\langle \cG,\cM \rangle }(B,C)$:
\begin{equation}\label{eq:formula_morphism_{i}d_plus_morphism}
\varphi\natural[Y,\psi]=[Y,(\varphi\natural\psi)\circ((b_{X,Y}^{\cG})^{-1}\natural \id_{B})].
\end{equation}

\paragraph*{Extensions along the Quillen bracket construction.}
Note that precomposing by the canonical inclusion $\cM\hookrightarrow\langle \cG,\cM \rangle$ induces the restriction functor $\mathbf{Fct}(\langle \cG,\cM \rangle ,\cC)\to\mathbf{Fct}(\cM,\cC)$.
The following result provides a way to extend a functor out of the category $\cM$ to a functor with $\langle \cG,\cM \rangle$ as source category. Its proof repeats mutatis mutandis that of \cite[Lem.~$1.2$]{soulieLMgeneralised}.
\begin{lem}\label{lem:extend_functor_Quillen}
Let $\cC$ be a category and F an object of $\Fct(\cM,\cC)$. Assume that, for each $X\in\obj(\cG)$ and $A\in\obj(\cM)$, there exists a morphism $\alpha_{X,A} \colon F(A)\to F(X\natural A)$ such that $\alpha_{Y,X\natural A}\circ\alpha_{X,A}=\alpha_{Y\natural X,A}$ for all $Y\in\obj(\cG)$ and $\alpha_{\zero,A}=\id_{F(A)}$.
Then the assignments $F([X,\varphi])=F(\varphi)\circ\alpha_{X,A}$ to all morphisms $[X,\varphi] \colon A\to B$ of $\langle \cG,\cM \rangle$ extend the functor $F\colon\cM\to\cC$ to a functor $F\colon\langle \cG,\cM \rangle\to\cC$ if and only if for all $X\in\obj(\cG)$ and $A\in\obj(\cM)$, and for all $\varphi'\in\Aut_{\cG}(X)$ and $\varphi''\in\Aut_{\cM}(A)$, the following relation holds:
\begin{equation}\label{eq:criterion'}
    F(\varphi'\natural \varphi'') \circ \alpha_{X,A} = \alpha_{X,A} \circ F(\varphi'').
\end{equation}
\end{lem}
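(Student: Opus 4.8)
The statement is that a functor $F \colon \cM \to \cC$ extends along the canonical embedding $\cM \hookrightarrow \langle \cG, \cM \rangle$ via the formula $F([X,\varphi]) = F(\varphi) \circ \alpha_{X,A}$ precisely when the compatibility relation \eqref{eq:criterion'} holds. Since the proof is stated to repeat \cite[Lem.~1.2]{soulieLMgeneralised} \emph{mutatis mutandis}, the plan is to carry out exactly that verification, being careful about the two places where the present setting differs: the module-category bifunctor $\natural$ is used in place of the monoidal product, and the morphisms of $\langle \cG, \cM \rangle$ are equivalence classes of pairs $[X,\varphi]$ rather than honest morphisms, so every construction must be checked to be independent of the representative.

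First I would address \textbf{well-definedness}. A morphism $[X,\varphi] \colon A \to Y$ has many representatives: $[X,\varphi] = [X', \varphi']$ iff there is $g \colon X \to X'$ in $\cG$ with $\varphi' \circ (g \natural \id_A) = \varphi$. I would check that $F(\varphi) \circ \alpha_{X,A}$ is unchanged when $(X,\varphi)$ is replaced by $(X',\varphi')$; this uses the cocycle condition $\alpha_{Y,X\natural A}\circ\alpha_{X,A}=\alpha_{Y\natural X,A}$ together with \eqref{eq:criterion'} applied to the automorphism induced by $g$ (after reducing to the case where $X = X'$ and $g$ is an automorphism, which one may do since $\cM$ is a groupoid, $\cG$ has no zero divisors, and every morphism $A \to Y$ in $\langle\cG,\cM\rangle$ can be written with $Y \cong X \natural A$). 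Note the statement is phrased for morphisms $[X,\varphi]\colon A \to X\natural A$, which is harmless: any $[X,\varphi]\colon A\to Y$ has $Y \cong X \natural A$ since $\cM$ is a groupoid and the isomorphism can be absorbed into $\varphi$.

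Next, \textbf{functoriality}: identities go to identities because $[\zero, \id_A]$ maps to $F(\id_A) \circ \alpha_{\zero,A} = \id_{F(A)}$ by the normalisation hypothesis $\alpha_{\zero,A} = \id_{F(A)}$; and for composability of $[B,\psi]\circ[A,\varphi] = [B\natural A, \psi\circ(\id_B \natural \varphi)]$ one computes $F(\psi \circ (\id_B \natural \varphi)) \circ \alpha_{B\natural A, \mathrm{source}}$ and rewrites it as $F(\psi)\circ\alpha \circ F(\varphi) \circ \alpha$ using the cocycle condition and the naturality of $\alpha$ in the $\cM$-variable (which is what lets $F(\id_B \natural \varphi)$ commute past $\alpha_{B,-}$). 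That this last step requires naturality in the $\cM$-variable is exactly why one needs to know $\alpha$ behaves well under all of $\cM$, and here the hypothesis only gives it under automorphisms; this is the point where \eqref{eq:criterion'} does real work, and it is where I expect the main subtlety to lie — disentangling which instances of compatibility are needed and checking they all follow from \eqref{eq:criterion'} plus the groupoid hypotheses on $\cM$ and $\cG$. Finally, \textbf{necessity} of \eqref{eq:criterion'} is immediate: if $F$ extends to a functor, apply it to the identity $[X, \varphi' \natural \varphi''] = (\varphi' \natural \varphi'') \circ [X, \mathrm{id}]$ in two ways — as the composite with $F$ a functor, and directly via the extension formula — and compare, which yields precisely $F(\varphi'\natural\varphi'')\circ\alpha_{X,A} = \alpha_{X,A}\circ F(\varphi'')$ after using that $F$ restricted to $\cM$ is the original $F$ and that $[X,\varphi'']$ factors as claimed.

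The whole argument is diagram-chasing in $\langle\cG,\cM\rangle$ and $\cC$; the only genuine obstacle is bookkeeping around the equivalence classes and making sure the cocycle/normalisation conditions on $\alpha_{X,A}$ together with \eqref{eq:criterion'} are exactly enough to push $\alpha$ past arbitrary $\cM$-morphisms in the functoriality check. I would not reproduce the calculation in full but cite \cite[Lem.~1.2]{soulieLMgeneralised} for the details, noting that the replacement of the monoidal product by the module bifunctor $\natural$ changes nothing in the formal manipulations.
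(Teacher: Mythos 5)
Your proposal is correct and follows essentially the same route as the paper, which itself gives no independent argument but defers verbatim to \cite[Lem.~1.2]{soulieLMgeneralised}: the same direct verification of well-definedness on equivalence classes, functoriality via the cocycle condition plus \eqref{eq:criterion'}, and the easy necessity direction obtained by comparing the two evaluations of $[X,\varphi'\natural\varphi'']=[X,\id_X\natural\varphi'']$. The only cosmetic remarks are that well-definedness uses \eqref{eq:criterion'} with $\varphi''=\id_A$ alone (the cocycle condition is not needed there), and that the necessity step should make explicit that $\varphi'$ is absorbed by the equivalence relation defining the Quillen bracket.
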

Similarly, we may extend a morphism in $\mathbf{Fct}(\cM,\cC)$ to a morphism in $\mathbf{Fct}(\langle \cG,\cM \rangle ,\cC)$ thanks to the following result, whose proof repeats verbatim that of \cite[Lem.~$1.12$]{soulieLM1}.
\begin{lem}\label{lem:criterionnaturaltransfo}
Let $\cC$ be a category, $F$ and $G$ objects of $\mathbf{Fct}(\langle \cG,\cM \rangle ,\cC)$ and $\eta \colon F\to G$ a natural transformation in $\mathbf{Fct}(\cM,\cC)$. Then $\eta$ is a natural transformation in the category $\mathbf{Fct}(\langle \cG,\cM \rangle ,\cC)$ if and only if for all $A,B\in \mathrm{Ob}(\cM)$ such that $B\cong X\natural A$ with $X\in \mathrm{Ob}(\cG)$:
\begin{equation}
\eta_{B}\circ F([X,\id_{B}])=G([X,\id_{B}])\circ\eta_{A}.\label{eq:criterion''}
\end{equation}
\end{lem}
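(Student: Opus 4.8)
The plan is to verify directly that the family of maps $\{\eta_A\}_{A\in\obj(\cM)}$ constitutes a natural transformation between $F$ and $G$ viewed as functors on $\langle\cG,\cM\rangle$, using that it already does so on the subcategory $\cM$. The key observation is that every morphism of $\langle\cG,\cM\rangle$ factors through morphisms of these two elementary types: morphisms $[\zero,f]$ coming from $\cM$, and morphisms of the form $[X,\id_{X\natural A}]\colon A\to X\natural A$ for $X\in\obj(\cG)$. Indeed, an arbitrary morphism $[X,\varphi]\colon A\to B$ with $\varphi\colon X\natural A\xrightarrow{\sim} B$ in the groupoid $\cM$ can be written as the composite
\[
A \xrightarrow{\;[X,\id_{X\natural A}]\;} X\natural A \xrightarrow{\;[\zero,\varphi]\;} B ,
\]
which one checks equals $[X,\varphi]$ using the composition rule $[\zero,\varphi]\circ[X,\id]=[\zero\natural X,\varphi\circ(\id_\zero\natural\id)]=[X,\varphi]$ (together with the identification of $\zero\natural X$ with $X$).

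First I would record this factorisation carefully. Then the naturality square for a general morphism $[X,\varphi]$ decomposes as the horizontal pasting of two squares: the one for $[X,\id_{X\natural A}]$, which is exactly the hypothesised identity \eqref{eq:criterion''} (with $B$ replaced by $X\natural A$), and the one for $[\zero,\varphi]=\varphi$, which holds because $\eta$ is by assumption a natural transformation in $\Fct(\cM,\cC)$. Since the composite of two commuting squares commutes, this establishes naturality in $\langle\cG,\cM\rangle$ for every morphism, giving the ``if'' direction. The ``only if'' direction is immediate: if $\eta$ is natural on all of $\langle\cG,\cM\rangle$ then in particular its naturality square commutes for the specific morphisms $[X,\id_B]$, which is precisely \eqref{eq:criterion''}.

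The one point requiring mild care is bookkeeping around the strictness conventions and the identification of $\zero\natural X$ with $X$ (and $X\natural\zero$ with $X$), so that the composite $[\zero,\varphi]\circ[X,\id_{X\natural A}]$ really is $[X,\varphi]$ on the nose rather than up to a coherence isomorphism; this is handled by the standing assumptions on $(\cG,\natural,\zero)$ and $\cM$ recalled in \S\ref{ss:Quillen} (strictness, no zero divisors, $\Aut_\cG(\zero)=\{\id_\zero\}$), exactly as in the analogous situation of \cite[Lem.~1.12]{soulieLM1}. I do not expect any genuine obstacle: the proof is a formal diagram-chase, and indeed the statement already asserts it "repeats verbatim" the cited lemma, so the task here is simply to reproduce that short argument in the present notation.
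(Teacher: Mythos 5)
Your proposal is correct and is essentially the argument the paper has in mind: the paper defers to \cite[Lem.~1.12]{soulieLM1}, whose proof is exactly this factorisation of an arbitrary morphism $[X,\varphi]$ as $[\zero,\varphi]\circ[X,\id_{X\natural A}]$ together with naturality on $\cM$ and the criterion \eqref{eq:criterion''}. Your bookkeeping of the composition rule and the strictness/skeletality conventions is accurate, so nothing is missing.
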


\subsubsection{Categories for surface braid groups and mapping class groups}\label{ss:categories_examples}

We now describe the categories associated to the families of groups that we study.
For our purposes, we shall construct each of our skeletal categories synthetically, rather than distilling them from some more natural, larger categories. This will allow us to get a more concrete handle on them for direct calculations.

All of the categories that we consider will be of the form $\langle \cG , \cM \rangle$ for a braided monoidal (small) groupoid $\cG$ and left $\cG$-module (small) groupoid $\cM$, both of them skeletal and strict. Moreover, in each case we also have $\obj(\cG) = \obj(\cM) = \bN$, with the monoidal structure and left action given on objects by addition and both denoted by $\natural$. Thus we just have to describe the groupoids $\cG$ and $\cM$, in each case, at the level of morphisms. This consists in specifying, for all $m,n \in \bN$:
\begin{compactenum}[(1)]
\item\label{groupoid-data-1} groups $\rG_{n}$ and $\rM_{n}$;
\item\label{groupoid-data-2} group homomorphisms $\theta_{m,n} \colon \rG_{m} \times \rG_{n} \to \rG_{m+n}$ that are associative and unital;
\item\label{groupoid-data-3} group homomorphisms $\alpha_{m,n} \colon \rG_{m} \times \rM_{n} \to \rM_{m+n}$ that associate with $\theta_{m,n}$ and are unital;
\item\label{groupoid-data-4} elements $b_{m,n} \in \rG_{m+n}$ that conjugate $\theta_{m,n}(g_{1},g_{2})$ to $\theta_{n,m}(g_{2},g_{1})$ for each $g_{1} \in \rG_{m}$, $g_{2} \in \rG_{n}$.
\end{compactenum}

In fact, it is enough to specify the above for positive $m$ and $n$ (ignoring the unitality conditions) and then set $\rG_{0} = \rM_{0} = \{1\}$, extend $\theta_{m,n}$ and $\alpha_{m,n}$ by unitality and set $b_{0,n} = b_{n,0} = 1 \in \rG_{n}$. This is what we will do in each case. In some cases we will have $\cG = \cM$, which corresponds to $\rG_{n} = \rM_{n}$ and $\theta_{m,n} = \alpha_{m,n}$. In these cases we will therefore not separately specify $\rM_{n}$ and $\alpha_{m,n}$.

The groupoids $\M^{+}$, $\M^{-}$, $\Beta$ and $\Beta^{S}$ that we construct are equivalent to those of \cite[\S 5.6]{RWW} and \cite[\S 3.1]{soulieLMgeneralised}. The Quillen bracket category $\langle \M^{+} , \M^{+} \rangle$ is also similar to the category used in \cite{Ivanov} to index local coefficient systems. We also note here that all of these groupoids have no zero-divisors, since they are skeletal and their underlying monoid of objects is $\bN$.

\begin{figure}
    \centering
    \includegraphics[scale=0.7]{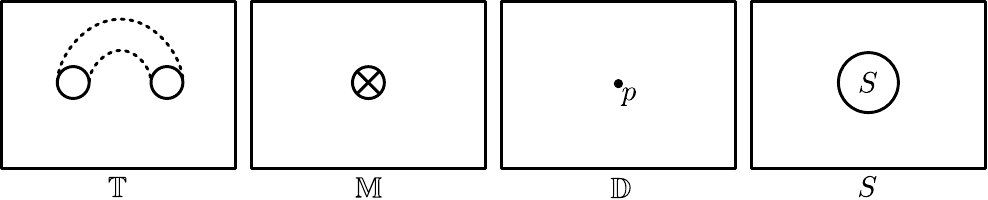}
    \caption{The four building blocks to construct the groupoids $\M^{+}$, $\M^{-}$, $\Beta$ and $\Beta^{S}$.}
    \label{fig:model-surfaces}
    \vspace{1em}
    \includegraphics[scale=0.7]{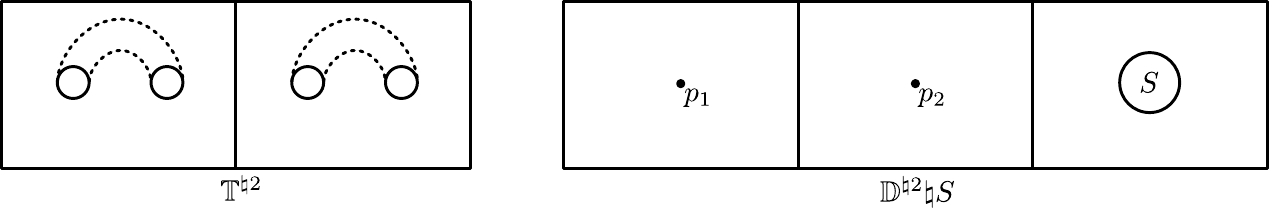}
    \caption{The monoidal structure on $\M^{+}$ and the left action of $\Beta$ on $\Beta^{S}$.}
    \label{fig:model-surfaces-gluing}
    \vspace{1em}
    \includegraphics[scale=0.7]{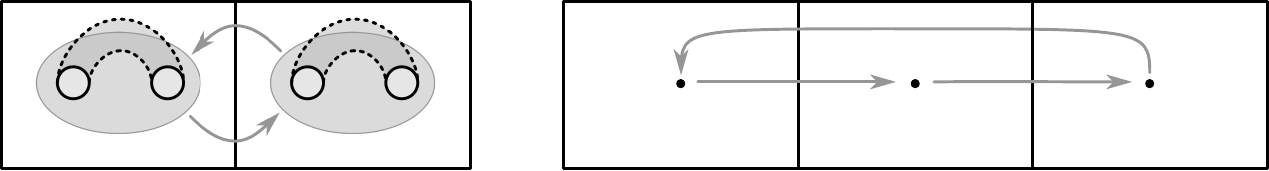}
    \caption{The braidings on $\M^{+}$ and on $\Beta$.}
    \label{fig:model-surfaces-braiding}
\end{figure}

\subsubsubsection{Some injectivity results}
\label{ssss:injectivity-results}

In each of the examples that we construct in \S\ref{sss:category_mcg} and \S\ref{sss:category_surface_braid_groups}, the group homomorphisms \eqref{groupoid-data-2} and \eqref{groupoid-data-3} will be injective. In order to prove this, we collect here some general injectivity results for homomorphisms of mapping class groups and surface braid groups. We use the following general notation for mapping class groups.

\begin{notation}
For a surface $S$, we write $\MCG(S)$ for its mapping class group, namely the group $\pi_{0}(\Diff_\partial(S))$ of isotopy classes of diffeomorphisms of $S$ that fix a neighbourhood of $\partial S$ pointwise. For example, in this notation we have $\MCGo_{g,1} = \MCG(\Sigma_{g,1})$ and $\MCGno_{h,1} = \MCG(\N_{h,1})$. For a non-negative integer $k$, we write $\MCG(S,k)$ for the mapping class group of $S$ with $k$ marked points, namely the group $\pi_{0}(\Diff_\partial(S,k))$ of isotopy classes of diffeomorphisms of $S$ that fix a neighbourhood of $\partial S$ pointwise and also fix a subset $\cP \subset \mathring{S}$ of cardinality $k$ setwise.
\end{notation}

\begin{lem}
\label{lem:injectivity_mcg}
Let $S_{1}$, $S_{2}$ be two compact surfaces, possibly with finitely many punctures, and let $S_{1} \natural S_{2}$ denote their boundary connected sum along specified intervals in their boundaries. Then the homomorphism $\MCG(S_{1}) \times \MCG(S_{2}) \to \MCG(S_{1} \natural S_{2})$ induced by extending diffeomorphisms by the identity is injective.
\end{lem}
\begin{proof}
For $i\in\{1,2\}$, let $f_{i}$ be a diffeomorphism of $S_{i}$ fixing a neighbourhood of its boundary pointwise, and suppose that $f_{1} \natural f_{2}$ is isotopic to the identity on $S_{1} \natural S_{2}$. We must show that $f_{i}$ is isotopic to the identity on $S_{i}$ for $i=1,2$. Let $C$ be any simple proper arc or simple closed curve in $S_{i}$ that does not bound a disc. By the Alexander method \cite[Prop.~$2.8$]{farbmargalit}, it is enough to show that $f_i(C)$ is isotopic to $C$. By a preliminary isotopy, if necessary, we may assume that the endpoints of $C$ (if it is an arc) do not lie on the interval along which the boundary connected sum is taken; thus we may view $C$ as a simple closed curve or simple arc in $S_{1} \natural S_{2}$. Since $f_{1} \natural f_{2}$ is isotopic to the identity, we deduce that $f_i(C)$ is isotopic to $C$ in $S_{1} \natural S_{2}$. Applying any retraction $S_{1} \natural S_{2} \to S_{i}$ to this isotopy, it follows that $f_i(C)$ is homotopic to $C$ in $S_{i}$. By \cite{Epstein} (see also \cite{Baer27,Baer28}), this implies that $f_i(C)$ is isotopic to $C$ in $S_{i}$.
\end{proof}

\begin{coro}
\label{coro:injectivity}
In the setting of Lemma~\ref{lem:injectivity_mcg}, consider the map $C_m(S_{1}) \times C_{n}(S_{2}) \to C_{m+n}(S_{1} \natural S_{2})$ defined by taking unions of configurations. Then the induced homomorphism $\B_m(S_{1}) \times \B_{n}(S_{2}) \to \B_{m+n}(S_{1} \natural S_{2})$ of braid groups is injective.
\end{coro}
\begin{proof}
There is a commutative square
\[
\begin{tikzcd}
\B_m(S_{1}) \times \B_{n}(S_{2}) \ar[rr] \ar[d,hook] && \B_{m+n}(S_{1} \natural S_{2}) \ar[d,hook] \\
\MCG(S_{1},m) \times \MCG(S_{2},n) \ar[rr] && \MCG(S_{1} \natural S_{2},m+n)
\end{tikzcd}
\]
where the vertical maps are the inclusions of the Birman exact sequence (see for instance \cite[Lem.~$4.16$]{farbmargalit} or \cite[Cor.~$1.34$]{PSI}), the top horizontal map is the homomorphism under consideration and the bottom horizontal map is the homomorphism of Lemma~\ref{lem:injectivity_mcg}. Since the bottom horizontal map is injective, by Lemma~\ref{lem:injectivity_mcg}, so is the top one.
\end{proof}

\subsubsubsection{For mapping class groups of surfaces}\label{sss:category_mcg}

The setting for mapping class groups of surfaces corresponds to two braided monoidal groupoids, $\M^{+}$ and $\M^{-}$, which we define by specifying the data \eqref{groupoid-data-1}--\eqref{groupoid-data-4} above. Denote by $\bT$ the one-holed torus, with its boundary identified with a fixed rectangle, as in Figure~\ref{fig:model-surfaces}. For an integer $n\geq 1$, write $\bT^{\natural n}$ for the result of gluing $n$ copies of $\bT$ side by side; see Figure~\ref{fig:model-surfaces-gluing} for the case $n=2$. For the groups \eqref{groupoid-data-1}, we set $\rG_{n}$ to be the mapping class group $\MCG(\bT^{\natural n}) = \pi_{0}(\Diff_{\partial}(\bT^{\natural n}))$, i.e.~the group of isotopy classes of self-diffeomorphisms of $\bT^{\natural n}$ restricting to the identity on a neighbourhood of $\partial (\bT^{\natural n})$. The monoidal structure \eqref{groupoid-data-2} is given by the homomorphisms
\begin{equation}
\label{eq:monoidal-structure-Mplus}
\theta_{m,n} \colon \MCG(\bT^{\natural m}) \times \MCG(\bT^{\natural n}) \longrightarrow \MCG(\bT^{\natural m+n}),
\end{equation}
for $m,n \geq 1$, that send $(\varphi_{1},\varphi_{2})$ to the diffeomorphism of $\bT^{\natural m+n}$ that acts by $\varphi_{1}$ on the left-hand $m$ copies of $\bT$ and by $\varphi_{2}$ on the right-hand $n$ copies of $\bT$. (This is a well-defined diffeomorphism since $\varphi_{1}$ and $\varphi_{2}$ agree on and are the identity in a neighbourhood of the interval in which their domains of definition intersect.) This operation is clearly associative. The homomorphisms \eqref{eq:monoidal-structure-Mplus} are also injective by Lemma~\ref{lem:injectivity_mcg}.

For $m,n \geq 1$, we specify the element $b_{m,n} \in \MCG(\bT^{\natural m+n})$ in \eqref{groupoid-data-4} to act as follows. First, for an interval of integers $A \subseteq \{1,\ldots,m+n\}$, let $S_A \subset \bT^{\natural m+n}$ denote the union of the $\lvert A \rvert$ consecutive copies of $\bT$ indexed by $A$ minus a collar neighbourhood of the boundary of this subsurface. For example, the left-hand side of Figure~\ref{fig:model-surfaces-braiding} illustrates the subsurfaces $S_{\{1\}}$ and $S_{\{2\}}$ in $\bT^{\natural 2}$. The diffeomorphism $b_{m,n}$ is then defined separately in three regions. It restricts to $S_{\{1,\ldots,m\}} \to S_{\{1+n,\ldots,m+n\}}$ by translating $n$ steps to the right, it restricts to $S_{\{m+1,\ldots,m+n\}} \to S_{\{1,\ldots,n\}}$ by translating $m$ steps to the left and it restricts to
\[
\bT^{\natural m+n} \smallsetminus (S_{\{1,\ldots,m\}} \sqcup S_{\{m+1,\ldots,m+n\}}) \longrightarrow \bT^{\natural m+n} \smallsetminus (S_{\{1,\ldots,n\}} \sqcup S_{\{1+n,\ldots,m+n\}})
\]
by a positive half-Dehn twist, i.e.~an anticlockwise half-twist. For example, the left-hand side of Figure~\ref{fig:model-surfaces-braiding} illustrates the diffeomorphism $b_{1,1}$ of $\bT^{\natural 2}$. This completes the definition of the braided monoidal groupoid $\M^{+}$.

The definition of $\M^{-}$ is almost identical, the only difference being that the basic building block is the Möbius band $\bM$ (see Figure~\ref{fig:model-surfaces}) instead of $\bT$.

We denote the mapping class groups $\MCG(\bT^{\natural n})$ and $\MCG(\bM^{\natural n})$ by $\MCGo_{n,1}$ and $\MCGno_{n,1}$ respectively. We thus say that the braided monoidal groupoid $\M^{+}$ encodes the sequence of orientable mapping class groups $\MCGo_{n,1}$ (together with the extra structure of the operations \eqref{eq:monoidal-structure-Mplus} and the braidings $b_{m,n}$); similarly, $\M^{-}$ encodes the sequence of non-orientable mapping class groups $\MCGno_{n,1}$.

\subsubsubsection{For braid groups on surfaces}\label{sss:category_surface_braid_groups}

The setting for surface braid groups corresponds to the braided monoidal groupoid $\Beta$ and the left $\Beta$-module groupoid $\Beta^{S}$ (for a surface $S$), which we define by specifying the data \eqref{groupoid-data-1}--\eqref{groupoid-data-4}. This definition makes sense for any surface $S$ with one boundary component, but we will be particularly interested in the cases $S = \bT^{\natural g}$ and $S = \bM^{\natural h}$, which we denote by $\Sigma_{g,1}$ and $\N_{h,1}$ respectively.

To begin, we recall the definition of partitioned surface braid groups. There are several ways to define these; see for example \cite[\S 6.2--6.3]{DPS} for an overview.
For an ordered partition $\lambda=(\lambda_{1},\ldots,\lambda_{r}) \vdash n$, we recall that the \emph{$\lambda$-partitioned braid group $\B_{\lambda_{1},\ldots,\lambda_{r}}(S)$ on $n$ strings on the surface $S$} is the fundamental group $\pi_{1}(C_{\lambda}(S),c_{0})$ of the $\lambda$-partitioned configuration space $C_{\lambda}(S)$, where $c_{0} \in C_{\lambda}(S)$ is a fixed $\lambda$-partitioned configuration.
The braid groups on the $2$-disc $\bD$ are the \emph{classical braid groups}; we write $\B_{\lambda_{1},\ldots,\lambda_{r}} = \B_{\lambda_{1},\ldots,\lambda_{r}}(\bD)$ in this case.
Full presentations of these groups are recalled in \cite[Prop.~$4.1$]{PSI} (see also \cite{LambropoulouOldenburg, Bellingeripresentations, BellingeriGodelleGuaschi}).

Let us fix a $2$-disc $\bD$ viewed as a rectangle and equipped with a marked point $p$ in its interior as in Figure~\ref{fig:model-surfaces}. For each integer $n\geq 1$ let us denote by $(\bD^{\natural n},p_{1},\ldots,p_{n})$ the result of gluing $n$ copies of $(\bD,p)$ side by side: topologically this is a $2$-disc with a set of $n$ marked points in its boundary. Similarly, if $S$ is a surface with one boundary component that we have identified with a fixed rectangle, as in Figure~\ref{fig:model-surfaces}, then for each $n\geq 1$ we denote by $(\bD^{\natural n} \natural S , p_{1},\ldots,p_{n})$ the result of gluing $n$ copies of $(\bD,p)$ and one copy of $S$ (on the right) side by side, as in Figure~\ref{fig:model-surfaces-gluing}.

For the groups \eqref{groupoid-data-1}, we set $\rG_{n} = \B_{n}(\bD^{\natural n})$ and $\rM_{n} = \B_{n}(\bD^{\natural n} \natural S)$, where the base configuration is $\{p_{1},\ldots,p_{n}\}$ in each case.
The monoidal structure \eqref{groupoid-data-2} is then given by the homomorphisms $\theta_{m,n} \colon \B_{m}(\bD^{\natural m}) \times \B_{m}(\bD^{\natural m}) \to \B_{m+n}(\bD^{\natural m+n})$ induced by the maps
\begin{equation}
\label{eq:monoidal-structure-Beta}
C_m(\bD^{\natural m}) \times C_{n}(\bD^{\natural n}) \longrightarrow C_{m+n}(\bD^{\natural m+n})
\end{equation}
that send a pair of configurations $(c_{1},c_{2})$ to the configuration in $\bD^{\natural m+n}$ that consists of $c_{1}$ in the left-hand $m$ copies of $\bD$ and $c_{2}$ in the right-hand $n$ copies of $\bD$. This operation is clearly associative. The operation \eqref{groupoid-data-3} is given by the homomorphisms $\alpha_{m,n} \colon \B_{m}(\bD^{\natural m}) \times \B_{m}(\bD^{\natural m} \natural S) \to \B_{m+n}(\bD^{\natural m+n} \natural S)$ induced by the maps
\begin{equation}
\label{eq:action-Beta-S}
C_m(\bD^{\natural m}) \times C_{n}(\bD^{\natural n} \natural S) \longrightarrow C_{m+n}(\bD^{\natural m+n} \natural S)
\end{equation}
that send a pair of configurations $(c_{1},c_{2})$ to the configuration in $\bD^{\natural m+n}$ that consists of $c_{1}$ in the left-hand $m$ copies of $\bD$ and $c_{2}$ in the right-hand $n$ copies of $\bD$ together with $S$. The homomorphisms $\theta_{m,n}$ and $\alpha_{m,n}$ are all injective, by Corollary~\ref{coro:injectivity}.

For integers $m,n \geq 1$, we specify the element $b_{m,n} \in \B_{m+n}(\bD^{\natural m+n})$ in \eqref{groupoid-data-4} to be the loop constructed as follows. First, the right-hand $n$ points in the base configuration $\{p_{1},\ldots,p_{m+n}\}$ move vertically upwards in $\bD^{\natural m+n}$; then they move $m$ steps to the left while the left-hand $m$ points in the base configuration move $n$ steps to the right; then the $n$ points in the interior (which are now on the left of the picture) move vertically downwards again. For example, Figure~\ref{fig:model-surfaces-braiding} illustrates the element $b_{2,1} \in \B_{3}(\bD^{\natural 3})$. This completes the definition of the braided monoidal groupoid $\Beta$ and the left $\Beta$-module groupoid $\Beta^{S}$.
To finish, we make explicit some conventions:

\begin{convention}\label{convention:braiding_braids}
For each $n\in\bN$, we denote the punctured disc $\bD^{\natural n} \smallsetminus \{p_{1},\ldots,p_{n}\}$ by $\bD_{n}$, with the punctures ordered from left to right, and we write $\B_{n} = \B_{n}(\bD^{\natural n})=\pi_{0}(\Diff_{\partial}(\bD^{\natural n} , \{p_{1},\ldots,p_{n}\}))$, i.e.~the group of isotopy classes of self-diffeomorphisms of $\bD^{\natural n}$ restricting to the identity on a neighbourhood of $\partial (\bD^{\natural n})$ and fixing the punctures $\{p_{1},\ldots,p_{n}\}$ setwise. We take the convention that the element $\sigma_{i} \in \B_{n}$ is the geometric braid that swaps the points $i$ and $i+1$ \emph{anticlockwise} (in other words it acts locally as $b_{1,1}$); see Figure~\ref{fig:action-of-braiding} for an illustration of $\sigma_{1}$.
\end{convention}

\subsection{Construction of homological representation functors}\label{ss:general_construction}

Here we explain the constructions introduced in \cite{PSI} of homological representation functors for surface braid groups and mapping class groups of surfaces. Namely, we follow the method and ideas of \cite[\S 2--\S 3]{PSI}, but follow a more direct method (although equivalent in the end) to simplify the presentation; see Remark~\ref{rmk:connection_PSI}. A reader familiar with \cite{PSI} may skip this subsection.

\subsubsection{Framework}\label{sss:framework_homological_representation_functor}
Let $\cG$ be one of the small strict braided monoidal groupoids $\{\M^{+},\M^{-},\Beta\}$ introduced in \S\ref{ss:categories_examples} along with any associated $\cG$-module $\cM$ defined there (typically $\M^{+}$, $\M^{-}$ or $\Beta^{S}$ respectively).
Recall from \S\ref{ss:categories_examples} that in each case $\obj(\cG) = \obj(\cM) = \bN$: from now on, we denote the objects of $\cG$ and $\cM$ by non-negative integers $\tn$.
We denote by $\{G_{n}\}_{n\in\bN}$ the family of automorphism groups $\{\Aut_{\cM}(\tn)\}_{\tn\in\obj(\cM)}$ encoded by $\cM$. (These automorphism groups were denoted in \S\ref{ss:categories_examples} by $\rM_{n}$, but henceforth we shall always write $G_{n}$.) They come equipped with canonical homomorphisms $\id_{\one}\natural (-)_{n} \colon G_{n} \to G_{n+1}$ induced by the $\cG$-module structure: precisely, these homomorphisms are $\alpha_{1,n}(\id_\one , -)$ in the notation of \S\ref{ss:categories_examples}. In each of the settings described in \S\ref{ss:categories_examples}, the homomorphisms $\alpha_{m,n}$ are injective (see Lemma~\ref{lem:injectivity_mcg} and Corollary~\ref{coro:injectivity}), and hence so are the maps $\id_{\one}\natural (-)_{n} = \alpha_{1,n}(\id_\one , -)$.
The first key ingredient to define homological representations is to find a family of spaces on which the family $\{G_{n}\}_{n\in\bN}$ acts. In the case of surface braid groups, this will involve considering ``partitioned versions'' of the groups $G_{n}$, which we define in a general context:

\begin{defn}[\emph{Partitioned groups.}]
\label{defn:partitioned}
Let $\mathsf{G}_{k}$ be a group equipped with a surjection $\mathsf{s}_{k} \colon  \mathsf{G}_{k} \twoheadrightarrow \Sym_{k}$. Given an ordered partition $\lambda = (\lambda_{1},\ldots,\lambda_{r}) \vdash k$, for $j \leq r$, we define $t_{j} := \sum_{i \leq j} \lambda_{i}$ (including $t_{0} = 0$). Then the set $\{t_{j-1}+1, \ldots , t_{j}\}$ is referred to as the \emph{$j$th block} of $\lambda$, and $\lambda_{i}$ is called the \emph{size} of the $i$th block. The preimage $\mathsf{G}_{\lambda} := \mathsf{s}_{k}^{-1}(\Sym_{\lambda})$ (where $\Sym_{\lambda}:=\Sym_{\lambda_{1}} \times \cdots \times \Sym_{\lambda_{r}}$) is called the $\lambda$-\emph{partitioned version} of $\mathsf{G}_{k}$. The extremal situations are the discrete partition $\lambda = (1,\ldots,1)$, which corresponds to the \emph{pure version} of the group $\mathsf{G}_{k}$, and the trivial case $\lambda = (k)$, which is simply the group $\mathsf{G}_{k}$ itself. The group $\mathsf{G}_{\lambda}$ fits into the short exact sequence: $1\to\mathsf{G}_{(1,\ldots,1)} \to \mathsf{G}_{\lambda}\to \Sym_{\lambda}\to 1$.
\end{defn}

In all the situations addressed in this paper, the parameter $k$ corresponds to the motion of $k$ points, while the surjection corresponds to the permutations of these points. For the remainder of \S\ref{ss:general_construction}, we consider an ordered partition $\lambda = (\lambda_{1},\ldots,\lambda_{r}) \vdash k$ of an integer $k\geq1$.
Furthermore, for each $n\in\bN$, we consider the surface $\cS_{n}$ defined from the object $\tn\in\obj(\cM)$ as follows:
\begin{itemizeb}
\item When $G_{n}=\B_{n}(S)$ (where $S = \Sigma_{g,1} \text{ or } \N_{h,1}$), we set $\cS_{n} := \bD_{n}\natural S$ and $G_{k,n} := \B_{k,n}(S)$ equipped with the evident surjection $\B_{k,n}(S) \twoheadrightarrow \Sym_{k}$; thus $G_{\lambda,n} = \B_{\lambda_{1},\ldots,\lambda_{r},n}(S)$.
\item When $G_{n}=\MCG(\sS^{\natural n})$ (where $\sS = \bT \text{ or } \bM$, see \S\ref{sss:category_mcg}), we set $\cS_{n}:=(\sS^{\natural n}) \smallsetminus I$, where $I$ denotes the closed interval in $\partial \sS^{\natural n}$ given by the bottom edge of the rectangle in Figure~\ref{fig:model-surfaces-gluing} (see Remark~\ref{rmk:Stavrou} for an explanation of this choice). We also set $G_{k,n} := \MCG(\sS^{\natural n} , k)$ (see \S\ref{ssss:injectivity-results}) equipped with the evident surjection $\MCG(\sS^{\natural n} , k) \twoheadrightarrow \Sym_{k}$; thus $G_{\lambda,n} = \MCG(\sS^{\natural n} , \lambda)$.
\end{itemizeb}
In each case, considering the $\lambda$-configuration space $C_{\lambda}(\cS_{n})$ and its fundamental group $\B_{\lambda_{1},\ldots,\lambda_{r}}(\cS_{n})$, there is a split short exact sequence
\begin{equation}\label{eq:Birman_Fadell_Neuwirth_SES}
\begin{tikzcd}
1 \ar[r] & \B_{\lambda_{1},\ldots,\lambda_{r}}(\cS_{n}) \ar[r] & G_{\lambda,n} \ar[r] & G_{n} \ar[l,bend right=30,dashed] \ar[r] & 1.
\end{tikzcd}
\end{equation}
Moreover, this split short exact sequence is functorial with respect to $n$, in the sense that the maps $\id_{\one}\natural (-)_{n} \colon G_{n} \to G_{n+1}$ lift to maps $G_{\lambda,n} \to G_{\lambda,n+1}$, and these together induce maps of split short exact sequences of the form \eqref{eq:Birman_Fadell_Neuwirth_SES}. In more detail:
\begin{itemizeb}
\item If $G_{n}=\B_{n}(S)$, the sequence \eqref{eq:Birman_Fadell_Neuwirth_SES} is the classical \emph{Fadell-Neuwirth exact sequence} (see for instance \cite[Prop.~$6.15$]{DPS} or \cite[Cor.~$1.30$]{PSI}), and its section, denoted by $s_{(\lambda,n)}$, is induced by the embedding of configuration spaces $C_{n}(\cS'_{n})\hookrightarrow C_{\lambda,n}(\cS_{n})$ defined by considering an isotopy equivalent proper subsurface $\cS'_{n}$ of $\cS_{n}$ and by fixing a $\lambda$-partitioned configuration of $k$ points in $\cS_{n}-\cS'_{n}$ (see also \cite[Prop.~$6.15$]{DPS} or \cite[Cor.~$1.30$]{PSI} for further details).
\item[] We note in passing that the composition of the section $s_{(\lambda,n)}$ with the canonical injection $G_{\lambda,n} \hookrightarrow G_{k+n}$ is equal to $\id_{k}\natural (-)_{n} \colon G_{n}\hookrightarrow G_{k+n}$.
\item[] The lift $G_{\lambda,n} \to G_{\lambda,n+1}$ is defined analogously to $\id_{\one}\natural (-)_{n} \colon G_{n} \to G_{n+1}$, taking the boundary connected sum with a disc containing an additional configuration point, where we specify that this new configuration point belongs to the `$n$' block of the partition $(\lambda,n)$.
\item If $G_{n}=\MCG(\sS^{\natural n})$, the sequence \eqref{eq:Birman_Fadell_Neuwirth_SES} is known as the \emph{Birman short exact sequence} (see for instance \cite[Lem.~$4.16$]{farbmargalit} or \cite[Cor.~$1.34$]{PSI}), and its section is induced by extending diffeomorphisms of $\sS^{\natural n}$ along the inclusion $\sS^{\natural n}\hookrightarrow \bD_{k}\natural \sS^{\natural n}$ by the identity on $\bD_{k}$, as in Lemma~\ref{lem:injectivity_mcg} (see also \cite[Cor.~$1.34$]{PSI} for further details).
\item[] We note that the kernel of \eqref{eq:Birman_Fadell_Neuwirth_SES} in this case is a priori equal to $\B_{\lambda_{1},\ldots,\lambda_{r}}(\sS^{\natural n})$, but removing an interval from the boundary of $\sS^{\natural n}$ does not change its isotopy type and so $\B_{\lambda_{1},\ldots,\lambda_{r}}(\sS^{\natural n})$ is identified with $\B_{\lambda_{1},\ldots,\lambda_{r}}(\sS^{\natural n} \smallsetminus I) = \B_{\lambda_{1},\ldots,\lambda_{r}}(\cS_{n})$.
\item[] The lift $G_{\lambda,n} \to G_{\lambda,n+1}$ is defined exactly like $\id_{\one}\natural (-)_{n} \colon G_{n} \to G_{n+1}$: at the level of diffeomorphisms, it is given by extending by the identity on the new copy of $\sS$.
\end{itemizeb}
Finally, we note that the short exact sequence \eqref{eq:Birman_Fadell_Neuwirth_SES} provides an action (by conjugation) of $G_{n}$ on $\B_{\lambda_{1},\ldots,\lambda_{r}}(\cS_{n})$.

\subsubsection{Twisted representations}
\label{sss:twisted-representations}

Another preliminary is the recollection of the notion of \emph{twisted} representations.

\begin{defn}[\emph{Category of twisted modules.}]
\label{def:cat_twisted_modules}
Let $\bA$ be a non-zero associative unital ring and let $R$ be an associative, unital $\bA$-algebra. The category of \emph{twisted $R$-modules}, denoted by $R\lmod^{\tw}$, is defined as follows. An object of $R\lmod^{\tw}$ is simply a left $R$-module $V$. A morphism $V \to V'$ is an automorphism $\psi \in \Aut_{\bA\Alg}(R)$ of unital $\bA$-algebras together with a morphism $\zeta \colon V \to \psi^{*} (V')$ of left $R$-modules.

We will henceforth set $\bA := \bZ$, so that $\bA$-algebras are just rings. From \S\ref{sss:transformation_groups} onwards, we will typically work with group rings $R := \bZ[Q]$ for a given group $Q$.
\end{defn}

A functor $F\colon\langle \cG,\cM \rangle \to R\lmod^{\tw}$ encodes \emph{twisted} $R$-representations. More precisely, at the level of group representations, it means that the action of $G_{n}$ on the corresponding $R$-module commutes with the $R$-module structure only up to a ``twist'', i.e.~an action $a_{n}\colon G_{n} \to \Aut_{\Ring}(R)$ where $\Ring$ is the category of associative, unital rings. When this action $a_{n}$ is trivial, we recover the classical notion of an $R$-representation (also called \emph{genuine} $R$-representation) of the group $G_{n}$.

Furthermore, the module category $R\lmod$ is by definition the subcategory of $R\lmod^{\tw}$ on the same objects and those morphisms $(\psi,\zeta)$ with $\psi=\id_{R}$, so there is a canonical embedding $R\lmod \subset R\lmod^{\tw}$.
In particular, a representation encoded by a functor $F\colon\langle \cG,\cM \rangle\to R\lmod^{\tw}$ is a \emph{genuine} $R$-representation if and only if $F$ factors through the module subcategory $R\lmod$:
\begin{equation}\label{eq:genuine_representation_functor}
    F\colon\langle \cG,\cM \rangle\longrightarrow R\lmod \longhookrightarrow R\lmod^{\tw}.
\end{equation}
In any case, representations encoded by functors $\langle \cG,\cM \rangle\to R\lmod^{\tw}$ may always be viewed as \emph{genuine} $\bZ$-module representations. Indeed, there is a forgetful functor $R\lmod^{\tw} \to {\bZ}\lmod$, where we forget the $R$-module structure on objects and the $\psi$ component of a morphism $(\psi,\zeta)$ in Definition~\ref{def:cat_twisted_modules}. Hence, we may always form the composite
\begin{equation}\label{eq:twisted_to_genuine_representation_functor}
    \langle \cG,\cM \rangle \longrightarrow R\lmod^{\tw} \longrightarrow {\bZ}\lmod,
\end{equation}
in order to view \emph{twisted} $R$-module representations as \emph{genuine} $\bZ$-module representations.

\subsubsection{Local coefficient systems}\label{sss:transformation_groups}

We now introduce the key parameter to define a homological representation functor. For the remainder of \S\ref{ss:general_construction}, we consider an integer $\ell\geq1$ corresponding to a lower central series index.
For each $n$, we use the short exact sequence \eqref{eq:Birman_Fadell_Neuwirth_SES} to define a group $Q_{(\lambda,\ell,n)}$ so that we have the commutative diagram:
\begin{equation}
\label{eq:input-diagram}
\begin{tikzcd}
1 \ar[r] & \B_{\lambda_{1},\ldots,\lambda_{r}}(\cS_{n}) \ar[d,two heads,"\phi_{(\lambda,\ell,n)}",labels=left] \ar[r] & G_{\lambda,n} \ar[d,two heads] \ar[r] & G_{n} \ar[d,two heads] \ar[l,bend right=25,dashed] \ar[r] & 1 \\
1 \ar[r] & Q_{(\lambda,\ell,n)} \ar[r] & G_{\lambda,n}/\LCS_{\ell} \ar[r] & G_{n}/\LCS_{\ell} \ar[l,bend right=25,dashed] \ar[r] & 1.
\end{tikzcd}
\end{equation}
More precisely, the right-exactness of the quotient $-/\LCS_{\ell}$ gives the right half of the bottom short exact sequence and ensures that the right-hand square of the diagram is commutative; the group $Q_{(\lambda,\ell,n)}$ is defined as the kernel of the surjection $G_{\lambda,n}/\LCS_{\ell} \twoheadrightarrow G_{n}/\LCS_{\ell}$; the map $\phi_{(\lambda,\ell,n)}\colon \B_{\lambda_{1},\ldots,\lambda_{r}}(\cS_{n})\twoheadrightarrow Q_{(\lambda,\ell,n)}$ is uniquely defined by the universal property of $\B_{\lambda_{1},\ldots,\lambda_{r}}(\cS_{n})$ as a kernel and its surjectivity follows from the splitting of the short exact sequence \eqref{eq:Birman_Fadell_Neuwirth_SES} and the fact that the lower central series defines a \emph{functorial quotient of groups} in the sense of \cite[Def.~$2.22$]{PSI}; see \cite[Lem.~$2.24$]{PSI}.
Furthermore, the functoriality of \eqref{eq:Birman_Fadell_Neuwirth_SES} with respect to $n$ and the universal property of $G_{\lambda,n}/\LCS_{\ell}$ and $G_{n}/\LCS_{\ell}$ as cokernels ensure that there exist unique maps $G_{\lambda,n}/\LCS_{\ell}\to G_{\lambda,n+1}/\LCS_{\ell}$ and $G_{n}/\LCS_{\ell}\to G_{n+1}/\LCS_{\ell}$ making the following square commutative:
\[
\begin{tikzcd}
G_{\lambda,n}/\LCS_{\ell} \ar[d] \ar[r, two heads] & G_{n}/\LCS_{\ell} \ar[d] \\
G_{\lambda,n+1}/\LCS_{\ell} \ar[r, two heads] & G_{n+1}/\LCS_{\ell}.
\end{tikzcd}
\]
Hence, by the universal property of a kernel, there exists a canonical map $q_{(\lambda,\ell,n)}\colon Q_{(\lambda,\ell,n)}\to Q_{(\lambda,\ell,n+1)}$ making the obvious diagram commutative. The colimit of the groups $\{(Q_{(\lambda,\ell,n)})\}_{n\in\bN}$ with respect to the maps $q_{(\lambda,\ell,n)}$ is denoted by $Q_{(\lambda,\ell)}$. Let us write $\phi_{(\lambda,\ell)} \colon \B_{\lambda_{1},\ldots,\lambda_{r}}(\cS_{n}) \to Q_{(\lambda,\ell)}$ for the composition of $\phi_{(\lambda,\ell,n)}$ with the map to the colimit. Since the configuration space $C_{\lambda}(\cS_{n})$ is path-connected, locally path-connected and semi-locally simply-connected, the map $\phi_{(\lambda,\ell)}$ defines a regular covering of $C_{\lambda}(\cS_{n})$ with deck transformation group $\Image(\phi_{(\lambda,\ell)}) \subseteq Q_{(\lambda,\ell)}$ by classical covering space theory (see for instance \cite[\S 1.3]{hatcheralgebraic}). Equivalently, it defines a rank-$1$ local system on $C_{\lambda}(\cS_{n})$ with fibre $\bZ[\Image(\phi_{(\lambda,\ell)})]$. We then take the fibrewise tensor product with respect to the inclusion $\bZ[\Image(\phi_{(\lambda,\ell)})] \subseteq \bZ[Q_{(\lambda,\ell)}]$, and thus change the ground ring to $\bZ[Q_{(\lambda,\ell)}]$. By abuse of notation, we denote the resulting rank-$1$ local system on $C_{\lambda}(\cS_{n})$ by the name of its fibre, i.e.~$\bZ[Q_{(\lambda,\ell)}]$.

Moreover, we deduce from \eqref{eq:input-diagram} that the group $G_{n}$ naturally acts by conjugation on the transformation group $Q_{(\lambda,\ell,n)}$. Via the inclusions $\id_{m}\natural (-)_{n} \colon G_{n}\hookrightarrow G_{m+n}$, it also acts (compatibly) by conjugation on $Q_{(\lambda,\ell,N)}$ for each $N\geq n$ and thus on the colimit $Q_{(\lambda,\ell)}$ of this direct system.

\paragraph*{Untwisted local system.} A natural goal (for the purpose of constructing \emph{genuine}, rather than \emph{twisted}, representations) is to choose transformation groups such that the actions of the groups $G_{n}$ on their colimit group are trivial. The optimal way to do this consists in taking the coinvariants of the group $Q_{(\lambda,\ell)}$ under the action of each $G_{n}$.

Namely, we consider for each $n$ the coinvariants $(Q_{(\lambda,\ell,n)})_{G_{n}}$, i.e.~the largest quotient of $Q_{(\lambda,\ell,n)}$ that collapses the orbits of the $G_{n}$-action. Let $Q^{\unt}_{(\lambda,\ell)}$ be the colimit of the groups $\{(Q_{(\lambda,\ell,n)})_{G_{n}}\}_{n\in\bN}$ with respect to the maps $(Q_{(\lambda,\ell,n)})_{G_{n}}\to (Q_{(\lambda,\ell,n+1)})_{G_{n+1}}$ induced by the canonical morphisms $\id_{\one}\natural (-)_{n} \colon G_{n}\hookrightarrow G_{n+1}$.
In particular, there is a canonical surjective morphism $Q_{(\lambda,\ell)} \twoheadrightarrow Q^{\unt}_{(\lambda,\ell)}$. The ``$\unt$'' in the notation stands for \emph{untwisted} since the $G_{n}$-actions for all $n$ on $Q^{\unt}_{(\lambda,\ell)}$ are trivial.

\begin{rmk}
Recall that coinvariants are particular instances of coequalisers (see \cite[\S III.3]{MacLane1} for instance), so they commute with colimits (see \cite[\S IX.8]{MacLane1} for instance). Therefore, the group $Q^{\unt}_{(\lambda,\ell)}$ is isomorphic to the coinvariant group $(Q_{(\lambda,\ell)})_{G_{\infty}}$, where $G_{\infty}$ is the colimit of the groups $\{G_{n}\}_{n\in\bN}$ with respect to the maps $\id_{\one}\natural (-)_{n}$. In particular, the quotient group $Q^{\unt}_{(\lambda,\ell)}$ of $Q_{(\lambda,\ell)}$ is optimal in the sense that any other \emph{untwisted} (i.e.~with trivial $G_{n}$-actions for all $n$) quotient $Q'$ of $Q_{(\lambda,\ell)}$ is a quotient of $Q^{\unt}_{(\lambda,\ell)}$ (in other words, it is the initial untwisted quotient of $Q_{(\lambda,\ell)}$).
\end{rmk}

\subsubsection{Definition of the homological representation functors}\label{sss:def_homological_rep_functors}

We may now define the homological representations and their associated functors. In \S\ref{sss:transformation_groups}, we introduced actions of the group $G_{n}$ on $\pi_{1}(C_{\lambda}(\cS_{n}))$ and on the associated rank-$1$ local system $\bZ[Q_{(\lambda,\ell)}]$, induced by the splittings of \eqref{eq:input-diagram}. Now we define from these a representation
\begin{equation}\label{eq:homological_representation_def}
G_{n}\longrightarrow\Aut_{\bZ[Q_{(\lambda,\ell)}]\lmod^{\tw}} \bigl( H_{k}^{\BM}(C_{\lambda}(\cS_{n});\bZ[Q_{(\lambda,\ell)}]) \bigr)
\end{equation}
using the functoriality of (twisted) Borel-Moore homology (see \cite[Chap.~V, \S 3]{bredonsheaf} for instance). (In fact, a priori, we need more: we need an action up to homotopy of $G_{n}$ on the based space $C_{\lambda}(\cS_{n})$ that induces the action on $\pi_{1}(-)$. However, since $C_{\lambda}(\cS_{n})$ is aspherical, i.e.~a classifying space for its fundamental group, by \cite[Cor.~$2.2$]{FadellNeuwirth1962}, this exists and is unique, so it comes ``for free''.) These combine to define a functor
\begin{equation}\label{eq:homological_representation_functor}
    \fL_{(\lambda,\ell)}\colon\cM\longrightarrow {\bZ[Q_{(\lambda,\ell)}]}\lmod^{\tw}.
\end{equation}
Alternatively, considering instead the untwisted transformation group $Q^{\unt}_{(\lambda,\ell)}$, each group $G_{n}$ acts trivially the rank-$1$ local system $\bZ[Q^{\unt}_{(\lambda,\ell)}]$ and thus the analogous representation to that of \eqref{eq:homological_representation_def} preserves the $\bZ[Q^{\unt}_{(\lambda,\ell)}]$-module structure of $H_{k}^{\BM}(C_{\lambda}(\cS_{n});\bZ[Q^{\unt}_{(\lambda,\ell)}])$. Therefore, the analogue of \eqref{eq:homological_representation_functor} using the untwisted transformation group $Q^{\unt}_{(\lambda,\ell)}$ is a functor
\begin{equation}\label{eq:homological_representation_functor_untwisted}
\fLu_{(\lambda,\ell)}\colon\cM\longrightarrow{\bZ[Q^{\unt}_{(\lambda,\ell)}]}\lmod \subset {\bZ[Q^{\unt}_{(\lambda,\ell)}]}\lmod^{\tw}.
\end{equation}

\begin{notation}\label{notation:generic_notation_star}
We generically denote by $\fL^{\star}_{(\lambda,\ell)}$ the functors \eqref{eq:homological_representation_functor} and \eqref{eq:homological_representation_functor_untwisted} and by ${\bZ[Q^{\star}_{(\lambda,\ell)}]}\lmod^{\tw}$ the associated target categories for simplicity, where $\star$ either stands for the blank space or $\star=\unt$.
\end{notation}

We now extend the functors \eqref{eq:homological_representation_functor} and \eqref{eq:homological_representation_functor_untwisted} along the canonical inclusion $\cM\hookrightarrow\langle \cG,\cM \rangle$ for the source category thanks to Lemma~\ref{lem:extend_functor_Quillen}.
In each situation described in \S\ref{sss:framework_homological_representation_functor}, for each $\tm\in\obj(\cG)$ and $\tn\in\obj(\cM)$, the morphism $[\tm,\id_{\tm\natural\tn}]$ of the category $\langle \cG,\cM \rangle$ corresponds to a proper embedding $\cS_{n}\hookrightarrow \cS_{m+n}$, which in turn induces a map $H_{k}^{\BM}(C_{\lambda}(\cS_{n});\bZ[Q^{\star}_{(\lambda,\ell)}])\to H_{k}^{\BM}(C_{\lambda}(\cS_{m+n});\bZ[Q^{\star}_{(\lambda,\ell)}])$, which we denote by $\iota_{\tm,\tn}$.

\begin{lem}\label{lem:extension_Quillen_source_homological_rep_functors}
Assigning $\fL^{\star}_{(\lambda,\ell)}([\tm,\id_{\tm\natural\tn}])$ to be $\iota_{\tm,\tn}$ for each $\tm\in\obj(\cG)$ and $\tn\in\obj(\cM)$, we extend the functor $\fL^{\star}_{(\lambda,\ell)}\colon\cM\to{\bZ[Q^{\star}_{(\lambda,\ell)}]}\lmod^{\tw}$ to a functor $\fL^{\star}_{(\lambda,\ell)}\colon\langle \cG,\cM \rangle\to{\bZ[Q^{\star}_{(\lambda,\ell)}]}\lmod^{\tw}$.
\end{lem}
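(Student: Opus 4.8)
The plan is to apply Lemma~\ref{lem:extend_functor_Quillen} to the functor $F := \fL^{\star}_{(\bk,\ell)}\colon\cM\to{\bZ[Q^{\star}_{(\bk,\ell)}]}\lmod^{\tw}$ constructed in \S\ref{sss:def_homological_rep_functors}, taking as the auxiliary morphisms $\alpha_{X,A}\colon F(A)\to F(X\natural A)$ precisely the maps $\iota_{X,A}$ on twisted Borel--Moore homology induced by the proper embeddings $\cS_{A}\hookrightarrow\cS_{X\natural A}$ corresponding to the morphisms $[X,\id_{X\natural A}]$ of $\langle\cG,\cM\rangle$. Since Lemma~\ref{lem:extend_functor_Quillen} then yields a functor $\langle\cG,\cM\rangle\to{\bZ[Q^{\star}_{(\bk,\ell)}]}\lmod^{\tw}$ with $F([\tm,\id_{\tm\natural\tn}])=F(\id)\circ\alpha_{\tm,\tn}=\iota_{\tm,\tn}$, this is exactly the extension asserted in the statement. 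It therefore remains to verify the two hypotheses of Lemma~\ref{lem:extend_functor_Quillen}: the cocycle relations for the family $(\alpha_{X,A})$, and the commutation criterion~\eqref{eq:criterion'}.

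The cocycle relations are essentially formal. One has $\alpha_{\zero,A}=\id_{F(A)}$ because $\zero\natural A=A$ strictly and the associated embedding is the identity. For the relation $\alpha_{Y,X\natural A}\circ\alpha_{X,A}=\alpha_{Y\natural X,A}$: because the monoidal structure on $\cG$ and the module structure of $\cM$ over it have been strictified, the embeddings $\cS_{A}\hookrightarrow\cS_{X\natural A}\hookrightarrow\cS_{Y\natural X\natural A}$ literally compose to the embedding $\cS_{A}\hookrightarrow\cS_{(Y\natural X)\natural A}$; as these induce proper maps of configuration spaces, and as the local systems $\bZ[Q^{\star}_{(\bk,\ell)}]$ on the various configuration spaces are all pulled back from one another — this being built into the construction of $Q^{\star}_{(\bk,\ell)}$ as a colimit in \S\ref{sss:transformation_groups} — functoriality of the pushforward on twisted Borel--Moore homology for proper maps gives the required identity.

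The heart of the argument is the criterion~\eqref{eq:criterion'}, i.e.~that $F(\varphi'\natural\varphi'')\circ\iota_{X,A}=\iota_{X,A}\circ F(\varphi'')$ for all $\varphi'\in\Aut_{\cG}(X)$ and $\varphi''\in\Aut_{\cM}(A)$. I would factor $\varphi'\natural\varphi''=(\varphi'\natural\id_A)\circ(\id_X\natural\varphi'')$ and treat the two factors separately. The automorphism $\id_X\natural\varphi''$ — the image of $\varphi''$ under the canonical injection $\Aut_{\cM}(A)\hookrightarrow\Aut_{\cM}(X\natural A)$ — admits a representative supported in the subsurface $\cS_{A}\subseteq\cS_{X\natural A}$ which restricts there to a representative of $\varphi''$; consequently the square expressing that $\iota_{X,A}$ intertwines $F(\varphi'')$ and $F(\id_X\natural\varphi'')$ commutes, so $F(\id_X\natural\varphi'')\circ\iota_{X,A}=\iota_{X,A}\circ F(\varphi'')$ (in the twisted case the ring-automorphism components also agree, since the $G_n$-conjugation actions on $Q^{\star}_{(\bk,\ell)}$ are compatible with the canonical injections $G_n\hookrightarrow G_{n+1}$, as noted in \S\ref{sss:transformation_groups}). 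On the other hand $\varphi'\natural\id_A$ admits a representative supported in the complementary ``$X$-part'', hence acting as the identity on the image of $C_{\bk}(\cS_{A})$ inside $C_{\bk}(\cS_{X\natural A})$ (and with trivial twist), so $F(\varphi'\natural\id_A)\circ\iota_{X,A}=\iota_{X,A}$. Composing these two identities gives~\eqref{eq:criterion'}, and Lemma~\ref{lem:extend_functor_Quillen} applies.

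I expect the main obstacle to be this last step, specifically the geometric bookkeeping that ``supported in $\cS_A$'' versus ``supported in the complementary part'' really holds with respect to the precise conventions in each of the three settings: isotopies fixing a neighbourhood of the parametrised boundary interval — and of the removed interval $I'$ — for mapping class groups (see Remark~\ref{rmk:Stavrou}), and, for surface braid groups, the identification of the $G_n$-action on $C_{\bk}(\cS_n)$ with the conjugation action coming from~\eqref{eq:Birman_Fadell_Neuwirth_SES} together with the compatibility of the section $s_{(\bk,n)}$ with the monoidal product noted just after that sequence. One should also keep in mind that the $G_n$-actions on the configuration spaces are only canonically defined up to homotopy, so these claims are to be read accordingly, using the asphericity of $C_{\bk}(\cS_n)$ invoked in \S\ref{sss:def_homological_rep_functors}.
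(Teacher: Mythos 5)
Your proposal is correct and follows essentially the same route as the paper: the authors also invoke Lemma~\ref{lem:extend_functor_Quillen} with $\alpha_{\tm,\tn}=\iota_{\tm,\tn}$ and verify the criterion~\eqref{eq:criterion'} by exactly your disjoint-support argument, namely that $f\natural\id_{\tn}$ is supported away from the image of $\iota_{\tm,\tn}$ and hence acts trivially on it, while $\id_{\tm}\natural g$ is supported in the subsurface $\cS_{n}$ and so intertwines $\iota_{\tm,\tn}$ with $\fL^{\star}_{(\bk,\ell)}(g)$, the two identities then combining since $(f\natural\id_{\tn})\circ(\id_{\tm}\natural g)=f\natural g$. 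Your explicit check of the cocycle conditions for the family $\iota_{\tm,\tn}$, which the paper leaves implicit, is a harmless and welcome addition.
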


\begin{proof}
By Lemma~\ref{lem:extend_functor_Quillen}, it is enough to prove that the compatibility relation \eqref{eq:criterion'} is satisfied. We consider $f\in\Aut_{\cG}(\tm)$ and $g\in\Aut_{\cM}(\tn)$, and denote by $\cS'_{m}$ the surface $\bD_{m}$ if $\cG=\Beta^{S}$ and $\sS^{\natural m}$ if $\cG=\M^{\pm}$ so that $\cS_{m+n} \cong \cS'_{m}\natural \cS_{n}$. We note that the image of $\iota_{\tm,\tn}$ consists of homology classes of configurations that are fully supported in the subsurface $\cS_{n}\hookrightarrow \cS_{m+n}$. Then, since the action of $f\natural \id_{\tn}$ is supported in the subsurface $\cS'_{m}\hookrightarrow \cS_{m+n}$, the map $\fL^{\star}_{(\lambda,\ell)}(f\natural \id_{\tn})$ acts trivially on the image of $\iota_{\tm,\tn}$, and so $\fL^{\star}_{(\lambda,\ell)}(f\natural \id_{\tn})\circ \iota_{\tm,\tn}=\iota_{\tm,\tn}$.

Furthermore, the above description of the image of $\iota_{\tm,\tn}$ implies that the action of $\fL^{\star}_{(\lambda,\ell)}(\id_{\tm}\natural g)$ on the image of $\iota_{\tm,\tn}$ is fully determined by the action of $\fL^{\star}_{(\lambda,\ell)}(g)$ on the homology classes of configurations supported in the subsurface $\cS_{n}\hookrightarrow \cS_{m+n}$ (because the action of $\id_{\tm}\natural g$ is supported in this subsurface). Hence $\fL^{\star}_{(\lambda,\ell)}(\id_{\tm}\natural g)\circ \iota_{\tm,\tn}=\iota_{\tm,\tn}\circ \fL^{\star}_{(\lambda,\ell)}(g)$.
Since $\fL^{\star}_{(\lambda,\ell)}(f\natural \id_{\tn})\circ \fL^{\star}_{(\lambda,\ell)}(\id_{\tm}\natural g)=\fL^{\star}_{(\lambda,\ell)}(f\natural g)$ (because of the compatibility of the monoidal structure $\natural$ with respect to composition and the fact that $\fL^{\star}_{(\lambda,\ell)}$ is a functor), we deduce that $\fL^{\star}_{(\lambda,\ell)}(f\natural g)\circ\iota_{\tm,\tn}=\iota_{\tm,\tn}\circ \fL^{\star}_{(\lambda,\ell)}(g)$ and so \eqref{eq:criterion'} is satisfied, which ends the proof.
\end{proof}

\begin{rmk}[\emph{Comparison with \cite{PSI}.}]\label{rmk:connection_PSI}
The way we extend the homological representation functors along the Quillen bracket construction $\langle \cG,\cM \rangle$ in Lemma~\ref{lem:extension_Quillen_source_homological_rep_functors} may seem a little ad hoc since we make an apparently arbitrary choice for this extension.
In \cite{PSI}, there is a more conceptual (although equivalent) method of the construction of the homological representation functors $\fL^{\star}_{(\lambda,\ell)}$. In particular, the fact that these functors are well-defined on the category $\langle \cG,\cM \rangle$ is already encoded in the method of \cite[\S 2--\S 3]{PSI}, and our choice for the morphism $[\tm,\id_{\tm\natural\tn}]$ in Lemma~\ref{lem:extension_Quillen_source_homological_rep_functors} matches with this alternative definition.
We refer to \cite[\S 2--\S 3]{PSI} for further details.
\end{rmk}

Finally, we note that the homological representations obtained with the parameters $\ell\in\{1,2\}$ are always untwisted:
\begin{lem}\label{lem:Qu_ell=2_useless}
There are equalities $Q^{\unt}_{(\lambda,\ell)} = Q_{(\lambda,\ell)}$ and $\fLu_{(\lambda,\ell)} =\fL_{(\lambda,\ell)}$ for $\ell \leq 2$.
\end{lem}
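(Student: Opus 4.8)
The plan is to reduce both equalities to the single observation that, for $\ell \leq 2$, the conjugation action of $G_n$ on the transformation group $Q_{(\bk,\ell,n)}$ is trivial for every $n$. First I would recall precisely how this action arises from \eqref{eq:input-diagram}: the split short exact sequence \eqref{eq:Birman_Fadell_Neuwirth_SES} makes $G_n$ act by conjugation on $\B_\bk(\cS_n)$ via its section, and this action descends along $\phi_{(\bk,\ell,n)}$ to the conjugation action of $G_n$ on $Q_{(\bk,\ell,n)}$; equivalently, $G_n$ acts on the normal subgroup $Q_{(\bk,\ell,n)}$ of $G_{\bk,n}/\LCS_\ell$ by conjugating inside $G_{\bk,n}/\LCS_\ell$ along the splitting of the bottom row of \eqref{eq:input-diagram}. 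Since $\LCS_1(G) = G$ and $\LCS_2(G) = [G,G]$, the quotient $G_{\bk,n}/\LCS_\ell$ is trivial for $\ell = 1$ and abelian (the abelianisation) for $\ell = 2$; in either case its conjugation action on itself is trivial, hence so is the induced $G_n$-action on $Q_{(\bk,\ell,n)}$.

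Granting this, the coinvariants $(Q_{(\bk,\ell,n)})_{G_n}$ — the largest quotient of $Q_{(\bk,\ell,n)}$ collapsing the $G_n$-orbits — coincide with $Q_{(\bk,\ell,n)}$ itself, since every orbit is a singleton. I would then note that these identifications are compatible with the structure maps $q_{(\bk,\ell,n)}$ and with the maps $(Q_{(\bk,\ell,n)})_{G_n} \to (Q_{(\bk,\ell,n+1)})_{G_{n+1}}$ defining the two colimits, so passing to the colimit over $n$ gives $Q^{\unt}_{(\bk,\ell)} = Q_{(\bk,\ell)}$, which is the first equality. The same triviality, now read off at the level of the colimit, shows that each $G_n$ acts trivially on $\bZ[Q_{(\bk,\ell)}]$ — this is immediate because the $G_n$-action on $Q_{(\bk,\ell)}$ is assembled from the (trivial) actions of the $G_N$ on $Q_{(\bk,\ell,N)}$ for $N \geq n$ — so that $\fL_{(\bk,\ell)}$ in fact already takes values in the genuine module category $\bZ[Q_{(\bk,\ell)}]\lmod$.

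For the second equality I would then simply observe that, once $Q^{\unt}_{(\bk,\ell)} = Q_{(\bk,\ell)}$, the functors \eqref{eq:homological_representation_functor} and \eqref{eq:homological_representation_functor_untwisted} have literally the same target category and are both obtained by applying twisted Borel–Moore homology $H^{\BM}_k(C_\bk(\cS_n);-)$ to the same rank-$1$ local system, with the same extension along $\cM \hookrightarrow \langle \cG,\cM\rangle$ supplied by Lemma~\ref{lem:extension_Quillen_source_homological_rep_functors} (the maps $\iota_{\tm,\tn}$ depend only on the embeddings $\cS_n \hookrightarrow \cS_{m+n}$); hence $\fLu_{(\bk,\ell)} = \fL_{(\bk,\ell)}$. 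There is essentially no genuine obstacle in this argument beyond bookkeeping: the only point that warrants a little care is checking that triviality of the $G_n$-action at each finite stage $Q_{(\bk,\ell,n)}$ really does propagate both to the colimit $Q_{(\bk,\ell)}$ and to the associated local system, so that the ``twist'' component of every morphism in the image of $\fL_{(\bk,\ell)}$ is the identity.
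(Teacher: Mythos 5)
Your proposal is correct and follows essentially the same argument as the paper: for $\ell=1$ the transformation group is trivial, for $\ell=2$ the $G_n$-action on $Q_{(\bk,2,n)}$ is trivial because it is induced by conjugation in the abelian group $G_{\bk,n}/\LCS_2$, hence coinvariants change nothing and the equality of functors follows by construction. The extra bookkeeping you flag (compatibility with the colimit and with the local system) is exactly what the paper compresses into ``follows by construction''.
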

\begin{proof}
The result for $\ell=1$ is obvious since $Q_{(\lambda,1)}=0$. For $\ell = 2$, the $G_{n}$-action on $Q_{(\lambda,2,n)}$ is trivial for each $n$, since this is induced by conjugation in the abelian group $G_{\lambda,n}/\LCS_{2}$. Hence the surjection $Q_{(\lambda,\ell)}\twoheadrightarrow Q^{\unt}_{(\lambda,\ell)}$ is an equality. The result for $\fLu_{(\lambda,\ell)}$ then follows by construction.
\end{proof}    

\subsubsection{The vertical-type alternatives}\label{ss:vertical_alternatives}

Finally, we describe an important general modification that we may make in the parameters of the construction. We recall that we consider the configuration space $C_{\lambda}(\cS_{n})$ of $k$ points in a surface $\cS_{n}$, which is obtained from a compact surface by removing finitely many punctures from its interior or by removing a closed interval (equivalently, one puncture) from its boundary. For such surfaces, we introduce the associated notions of \emph{blow-up} and \emph{dual} surfaces:

\begin{defn}[\emph{Dual surfaces.}]
\label{defn:Sdoubleprime}
Consider a finite-type surface $S \smallsetminus \cP$, namely a compact surface $S$ minus a finite subset $\cP \subset S$. Its \emph{blow-up} $\overline{S}$ is then obtained from $S$ by blowing up each $p \in \cP$ to a new boundary component (if $p \in S \smallsetminus \partial S$) or an interval (if $p \in \partial S$). Furthermore, its \emph{dual surface} $\check{S}$ is obtained by removing from $\overline{S}$ the original boundary $\partial (S \smallsetminus \cP)$. Note that $(\overline{S};S \smallsetminus \cP,\check{S})$ is a manifold triad.
\end{defn}

Hence, we may alternatively use the \emph{dual surface} $\check{\cS}_{n}$ instead of $\cS_{n}$ and repeat mutatis mutandis the construction of \S\ref{sss:framework_homological_representation_functor}--\S\ref{sss:def_homological_rep_functors}.
This modification has a deep impact on the module structures of the representations, in particular for the basis we obtain for the modules for surface braid group representations; see \S\ref{ss:free-bases}. We single this variant out by calling it the \emph{vertical-type alternative} as a reference to the shape of the homology classes in the alternative module basis (see Figure~\ref{fig:models-dual}), and we denote it by $\fLv_{(\lambda,\ell)}$ (where ``$\vrtcl$'' stands for ``vertical'').

\subsection{Applications for surface braid groups and mapping class groups}\label{ss:examples_homological_representations_functors}

We now review the application of the construction of \S\ref{ss:general_construction} to produce homological representation functors for classical braid groups (see \S\ref{sss:representations_classical_braid_groups}), surface braid groups (see \S\ref{sss:representations_surface_braid_groups}) and mapping class groups of surfaces (see \S\ref{sss:representations_mapping_class_groups}).
Throughout \S\ref{ss:examples_homological_representations_functors}, we consider an integer $k\geq1$ and an ordered partition $\lambda = (\lambda_{1},\ldots,\lambda_{r}) \vdash k$ and we denote by $r'$ the number of indices $i \leq r$ in $\lambda$ such that $\lambda_{i} \geq 2$.

\subsubsection{Classical braid groups}\label{sss:representations_classical_braid_groups}

We apply the construction of \S\ref{ss:general_construction} in the setting $G_{n}=\B_{n}$, $\cS_{n}=\bD_{n}$ and $\cG=\cM=\Beta$, denoting by $Q_{(\lambda,\ell)} = Q_{(\lambda,\ell)}(\bD)$ the colimit transformation group defined in \S\ref{sss:transformation_groups} with these assignments.
Taking quotients by the $\LCS_{\ell}$ terms for each $\ell\geq1$, the construction of \S\ref{ss:general_construction} provides functors
\begin{equation}\label{def:Lawrence_bigelow_further_ell}
\LB_{(\lambda,\ell)}\colon\langle \Beta, \Beta\rangle \longrightarrow {\bZ[Q_{(\lambda,\ell)}(\bD)]}\lmod^{\tw}
\,\text{ and }\,
\LBu_{(\lambda,\ell)}\colon\langle \Beta, \Beta\rangle \longrightarrow {\bZ[Q^{\unt}_{(\lambda,\ell)}(\bD)]}\lmod,
\end{equation}
which we call the twisted and untwisted \emph{$(\lambda,\ell)$-Lawrence-Bigelow functors}. 

\begin{eg}[\emph{The Lawrence-Bigelow representations \cite{Lawrence1,BigelowHomrep}.}]\label{eg:LB_rep}
This terminology for the above functors comes from the fact that, when $\lambda=(k)$ and $\ell=2$, the functor $\LB_{((k),2)}$ encodes the $k$th family of the Lawrence-Bigelow representations; see \cite[Th.~$3.5$]{PSI}. These representations were originally introduced by Lawrence \cite{Lawrence1} as representations of Hecke algebras and then by Bigelow \cite{BigelowHomrep} via topological methods.
The Burau representations originally introduced in \cite{burau} are encoded by the functor $\LB_{((1),2)}$, while the \emph{Lawrence-Krammer-Bigelow} representations that Bigelow \cite{bigelow2001braid} and Krammer \cite{KrammerLK} independently proved to be faithful are encoded by the functor $\LB_{((2),2)}$; see \cite[\S 3.2.1]{PSI}.
Also, each functor $\LB_{((k),1)}$ corresponds to the trivial specialisation $\bZ[Q_{((k),2)}(\bD)]\twoheadrightarrow \bZ$ of the functor $\LB_{((k),2)}$, and Lawrence \cite[\S 3.4]{Lawrence1} proves that it encodes the representations factoring through $\B_{n}\twoheadrightarrow \Sym_{n}$.
\end{eg}

\begin{rmk}[\emph{Calculations of transformation groups and dependence on $\ell$.}]
\label{rmk:properties_Lawrence-Bigelow_functors}
By \cite[Lem.~$4.3$]{PSI}, we have $Q_{(\lambda,2)}(\bD) \cong \bZ^{r'} \times \bZ^{r(r-1)/2} \times \bZ^{r}$. If $\lambda_{i}\geq3$ for all $1\leq i\leq r$ or $\lambda$ is either $1$ or $(1,1)$, it follows from \cite[Th.~$3.6$]{DPS} that $Q_{(\lambda,\ell)}(\bD)=Q^{\unt}_{(\lambda,2)}(\bD)=Q_{(\lambda,2)}(\bD)$, and a fortiori that $\LB_{(\lambda,\ell)}=\LBu_{(\lambda,\ell)}=\LB_{(\lambda,2)}$ by construction. In contrast, it follows from \cite[Tab.~$2$]{PSIN} that as soon as $\lambda$ is of the form $(2,\lambda')$, $(1,1,1,\lambda')$, $(2,2,\lambda')$ or $(1,2,\lambda')$, then $\LB_{(\lambda,\ell)}\neq \LB_{(\lambda,\ell+1)}$ for each $\ell\geq1$.
Furthermore, when $\lambda=(2,\lambda')$ for $\lambda'$ such that each $\lambda'_{l}\geq3$, the transformation group $Q_{(\lambda,\ell)}(\bD)$ is computed in \cite[Prop.~$4.5$]{PSI}.
We prove in \cite[\S 5]{PSIN} that the representations are untwisted in this case, and a fortiori that $\LB_{(\lambda,\ell)}=\LBu_{(\lambda,\ell)}$. We may also compute the explicit formulas of the $\B_{n}$-actions; see \cite[Tab.~$1$ and Rem.~$4.9$]{PSIN}.    
\end{rmk}

Considering the dual surface $\check{\cS}_{n} = \check{\bD}_{n}$ rather than $\cS_{n} = \bD_{n}$, the construction of \S\ref{ss:vertical_alternatives} defines for each $\ell\geq1$ the \emph{vertical} Lawrence-Bigelow functors $\LB^{\vrtcl}_{(\lambda,\ell)}\colon\langle \Beta, \Beta\rangle \to{\bZ[Q_{(\lambda,\ell)}(\bD)]}\lmod^{\tw}$ and $\LB^{\unt,\vrtcl}_{(\lambda,\ell)}\colon\langle \Beta, \Beta\rangle \to{\bZ[Q^{\unt}_{(\lambda,\ell)}(\bD)]}\lmod$. The properties discussed in Remark~\ref{rmk:properties_Lawrence-Bigelow_functors} for the functors \eqref{def:Lawrence_bigelow_further_ell} are the same for these vertical-type alternatives.

\subsubsection{Braid groups on surfaces different from the disc}\label{sss:representations_surface_braid_groups}

We fix two integers $g\geq1$ and $h\geq1$, and a surface $S$ that is either $\Sigma_{g,1}$ or else $\N_{h,1}$ defined in \S\ref{sss:category_surface_braid_groups}.
We apply the construction of \S\ref{ss:general_construction} in the setting $G_{n}=\B_{n}(S)$, $\cS_{n}=\bD_{n}\natural S$, $\cG=\Beta$ and $\cM=\Beta^{S}$, denoting by $Q_{(\lambda,\ell)} = Q_{(\lambda,\ell)}(S)$ the colimit transformation group defined in \S\ref{sss:transformation_groups} with these assignments.
Taking quotients by the $\LCS_{\ell}$ terms for each $\ell\geq1$, the construction of \S\ref{ss:general_construction} provides homological representation functors, for $S\in\{\Sigma_{g,1},\N_{h,1}\}$:
\begin{equation}\label{eq:ell_nilpotent_hom_rep_braid_surface}
\fL_{(\lambda,\ell)}(S)\colon\langle\Beta,\Beta^{S}\rangle \longrightarrow \bZ[Q_{(\lambda,\ell)}(S)]\lmod^{\tw}
\,\text{ and }\,
\fLu_{(\lambda,\ell)}(S)\colon\langle\Beta,\Beta^{S}\rangle \longrightarrow \bZ[Q^{\unt}_{(\lambda,\ell)}(S)]\lmod.
\end{equation}

\begin{eg}[\emph{The An-Ko representations \cite{AnKo}.}]\label{eg:An-Ko}
For orientable surfaces, the trivial partition $\lambda=(k)$ and $\ell=3$, the $\B_{n}(\Sigma_{g,1})$-representation $\fL_{((k),3)}(\Sigma_{g,1})(\tn) \otimes_{\bZ[Q_{((k),3)}(\Sigma_{g,1})]} \bZ[\B_{k,n}(\Sigma_{g,1})/\LCS_{3}]$ is isomorphic to the one introduced by An and Ko in \cite[Th.~$3.2$]{AnKo}; see \cite[Ex.~$3.6$]{PSI}. The group $Q_{((k),3)}(\Sigma_{g,1})$ is abstractly defined in \cite{AnKo} in terms of group presentations to satisfy certain technical homological constraints, while \cite[\S 4]{BellingeriGodelleGuaschi} explains all of the connections to the third lower central series quotient. On the other hand, the untwisted representations encoded by the functor $\fLu_{((k),3)}(\Sigma_{g,1})$ are specific to \cite[\S 3.2.1]{PSI}.
\end{eg}

\begin{rmk}[\emph{Calculations of transformation groups and dependence on $\ell$.}]
\label{rmk:properties_surface_braid_groups_functors}
We know from \cite[Lem.~$4.3$]{PSI} that $Q_{(\lambda,2)}(S) \cong (\bZ/2)^{r'} \times H_{1}(S;\bZ)^{\times r}$ for $S\in\{\Sigma_{g,1},\N_{h,1}\}$.
If $\lambda_{i}\geq3$ for all $1\leq i\leq r$, it follows from \cite[Th.~$6.52$ and Prop.~$6.62$]{DPS} that $\fL_{(\lambda,\ell)}(S)=\fLu_{(\lambda,\ell)}(S)=\fL_{(\lambda,3)}(S)$ for $\ell\geq 4$. Moreover, we explicitly compute the transformation groups $Q_{(\lambda,3)}(\Sigma_{g,1})$, $Q^{\unt}_{(\lambda,3)}(\Sigma_{g,1})$, $Q_{(\lambda,3)}(\N_{h,1})$ and $Q^{\unt}_{(\lambda,3)}(\N_{h,1})$ in \cite[Prop.~$4.5$]{PSI}. In particular, we deduce that $\fLu_{(\lambda,\ell)}(S)\neq\fL_{(\lambda,\ell)}(S)$ if $\lambda_{i}\geq3$ for all $1\leq i\leq r$.
In contrast, it follows from \cite[Tab.~$2$]{PSIN} that if $\lambda$ is of the form $(2,\lambda')$ or $(1,\lambda')$ (assuming that $S\neq \bM$ for the latter), then $\fL_{(\lambda,\ell)}(S)\neq \fL_{(\lambda,\ell+1)}(S)$ for each $\ell\geq3$.
It is unclear whether $\fL_{(\lambda,\ell)}(S) = \fLu_{(\lambda,\ell)}(S)$ in this situation; see \cite[Rem.~$4.6$]{PSI}.
\end{rmk}

Considering the dual surface $\check{\cS}_{n} = (\bD_{n}\natural S)\,\check{}$ instead of $\cS_{n} = \bD_{n}\natural S$, the construction of \S\ref{ss:vertical_alternatives} defines the vertical homological representation functors $\fLuv_{(\lambda,\ell)}(\Sigma_{g,1})$, $\fLv_{(\lambda,\ell)}(\Sigma_{g,1})$, $\fLuv_{(\lambda,\ell)}(\N_{h,1})$ and $\fLv_{(\lambda,\ell)}(\N_{h,1})$ for each $\ell\geq1$. Their source and target categories are the same as for their non-vertical counterparts, and the properties discussed in Remark~\ref{rmk:properties_surface_braid_groups_functors} for the functors \eqref{eq:ell_nilpotent_hom_rep_braid_surface} are the same for these vertical-type alternatives.

\subsubsection{Mapping class groups of surfaces}\label{sss:representations_mapping_class_groups}

We apply the construction of \S\ref{ss:general_construction} in the setting $G_{n}=\MCGo_{n,1}$ or $\MCGno_{n,1}$, $\cS_{n}=(\sS^{\natural n}) \smallsetminus I$ where $\sS=\bT \text{ or }\bM$ respectively and $\cG=\cM=\M^{+}$ or $\M^{-}$ respectively. We denote by $Q_{(\lambda,\ell)} = Q_{(\lambda,\ell)}(\sS)$ the colimit transformation group defined in \S\ref{sss:transformation_groups} with these assignments.

\begin{rmk}\label{rmk:Stavrou}
A more natural assignment for applying the construction of \S\ref{ss:general_construction} would be to take $\cS_{n} = \sS^{\natural n}$, i.e.~not to remove the subinterval $I \subset \partial \sS^{\natural n}$ given by the bottom edge of the rectangle in Figure~\ref{fig:model-surfaces-gluing}. We do however choose $(\sS^{\natural n}) \smallsetminus I$ instead because it is necessary for applying Theorem~\ref{thm:free_generating_sets_rep} in order to compute the underlying modules of the representations; see \S\ref{ss:free-bases}.

Otherwise, the calculations of the representations using $\cS_{n} = \sS^{\natural n}$ are much more complicated. See for instance the work of Stavrou~\cite[Th.~$1.4$]{stavrou}, who computes the $\MCGo_{n,1}$-representation equivalent to that obtained from the construction of \S\ref{ss:general_construction} with $\cS_{n} = \bT^{\natural n}$, $\ell=1$, taking $\bQ$ as ground ring and using classical homology instead of Borel-Moore homology.
\end{rmk}

Taking quotients by the $\LCS_{\ell}$ terms for each $\ell\geq1$, the construction of \S\ref{ss:general_construction} defines homological representation functors
\begin{equation}\label{eq:hom_rep_functor_rep_mcg_o}
\fL_{(\lambda,\ell)}(\MCGo)\colon\langle\M^{+},\M^{+}\rangle\to \bZ[Q_{(\lambda,\ell)}(\bT)]\lmod^{\tw}
\,\text{ and }\,
\fLu_{(\lambda,\ell)}(\MCGo)\colon\langle\M^{+},\M^{+}\rangle\to \bZ[Q^{\unt}_{(\lambda,\ell)}(\bT)]\lmod,
\end{equation}
\begin{equation}\label{eq:hom_rep_functor_rep_mcg_no_ell}
\fL_{(\lambda,\ell)}(\MCGno)\colon\langle\M^{-},\M^{-}\rangle\to \bZ[Q_{(\lambda,\ell)}(\bM)]\lmod^{\tw}
\,\text{ and }\,
\fLu_{(\lambda,\ell)}(\MCGno)\colon\langle\M^{-},\M^{-}\rangle\to \bZ[Q^{\unt}_{(\lambda,\ell)}(\bM)]\lmod.
\end{equation}

\begin{eg}[\emph{The Moriyama representations \cite{Moriyama}.}]\label{eg:Moriyama}
For orientable surfaces, the discrete partition $\lambda=(1,\ldots,1)$ and $\ell=1$, the functor $\fL_{((1,\ldots,1),1)}(\MCGo)$ encodes the mapping class group representations introduced by Moriyama \cite{Moriyama}; see \cite[Prop.~$3.9$]{PSI} or \cite[Prop.~$2.1$]{Annex}. It is thus called the \emph{$k$th Moriyama functor}. In particular, the representations encoded by the functor $\fL_{((1),1)}(\MCGo)$ are equivalent to the standard representations on $H_{1}(\Sigma_{g,1};\bZ)$, which factor through the symplectic groups $\mathrm{Sp}_{2g}(\bZ)$.
\end{eg}

We record here the computations of the transformation groups for the functors \eqref{eq:hom_rep_functor_rep_mcg_o} and \eqref{eq:hom_rep_functor_rep_mcg_no_ell} when $\ell=2$, the proofs of which are elementary (see \cite[Cor.~$4.9$]{PSI} for instance). They will be of key use later; see Lemma~\ref{lem:computations_MCG_braiding}.

\begin{lem}[{\cite[Cor.~$4.9$]{PSI}, \cite[Prop.~$1.1$, Rem.~$1.3$]{Annex}}]
\label{lem:transformation_groups_MCG_ell_2}
We have $Q_{(\lambda,2)}(\bT) \cong (\bZ/2)^{r'}$ and $Q_{(\lambda,2)}(\bM) \cong (\bZ/2)^{r'}\times(\bZ/2)^{r}$. More precisely:
\begin{itemizeb}
\item Each of the (first) $r'$ $\bZ/2$-summands is generated by the image in the abelianisation of a standard braid generator interchanging two points in the $\rho$-th block of the partition, for each $\rho \in \{1,\ldots, r\}$ such that $\lambda_{\rho}\geq2$. This is known as the \emph{writhe} (modulo $2$) of the $\rho$-th block of strands; see \textup{\cite[Prop.~$1.1$]{Annex}}.
\item The last $r$ $\bZ/2$-summands in $Q_{(\lambda,2)}(\bM)$ measure the number of times that a strand from the $\rho$-th block of the partition passes through a crosscap, for each $\rho \in \{1,\ldots, r\}$; see \textup{\cite[Rem.~$1.3$]{Annex}}.
\end{itemizeb}
\end{lem}

\begin{rmk}[\emph{Further computations of transformation groups and dependence on $\ell$.}]
\label{rmk:properties_MCG_functors}
For orientable surfaces, it follows from \cite[Cor.~$3.8$ and $3.9$]{PSI} and \cite[Prop.~$1.2$]{Annex} that, for all $\ell\geq3$, we have $Q_{(\lambda,\ell)}(\bT)=Q_{(\lambda,2)}(\bT)$. A fortiori, $\fL_{(\lambda,\ell)}(\MCGo)=\fL_{(\lambda,2)}(\MCGo)$ for all $\ell\geq3$ by construction.
On the other hand, for non-orientable surfaces, it is unclear whether $Q_{(\lambda,\ell)}(\bM) = Q_{(\lambda,2)}(\bM)$ for $\ell\geq 3$; if not, the functors $\fL_{(\lambda,\ell)}(\MCGno)$ will give rise to more sophisticated sequences of representations of the mapping class groups of non-orientable surfaces; see \cite[Rem.~$1.4$]{Annex}.
\end{rmk}

Finally, we may consider the dual surface $\check{\cS}_{n} = ((\sS^{\natural n}) \smallsetminus I)\,\check{}$ instead of $\cS_{n} = (\sS^{\natural n}) \smallsetminus I$ (as before, $\sS$ is either $\bT$ or $\bM$). In other words, instead of removing the interval $I$ from the (rectangular) boundary of $\sS^{\natural n}$, we remove the complementary interval, i.e.~the closure of $\partial (\sS^{\natural n})\smallsetminus I$.
However, in this case, we also change our convention on the braiding for the groupoid $\M$ by choosing its opposite:

\begin{convention}
In this setting, we apply the construction of \S\ref{ss:general_construction} taking $\cG=\cM$ to be equal to one of the braided monoidal groupoids $(\M^{+})\dv$ or $(\M^{-})\dv$ (depending on the case, orientable or non-orientable), instead of the braided monoidal groupoids $\M^{+}$ or $\M^{-}$. Recall from the beginning of \S\ref{ss:categorical_framework} that this simply consists in choosing the opposite convention for the braiding.
This purely arbitrary choice is motivated by the construction of short exact sequences; see Theorem~\ref{thm:SES_MCG_alternatives}. These rely on computations explained in \S\ref{ss:mcg_hom_rep_poly_preliminary} that would not be satisfied defining these functors over $\M^{+}$ and $\M^{-}$; see Remarks~\ref{rmk:interaction-with-braiding-dagger} and \ref{rmk:SES_alternative_dagger_MCG}.
\end{convention}

Then, for each $\ell\geq1$, the construction of \S\ref{ss:vertical_alternatives} defines the vertical homological representation functors $\fLv_{(\lambda,\ell)}(\MCGo)\colon\langle(\M^{+})\dv,(\M^{+})\dv\rangle\to \bZ[Q_{(\lambda,\ell)}(\bT)]\lmod^{\tw}$ and $\fLv_{(\lambda,\ell)}(\MCGno)\colon\langle(\M^{-})\dv,(\M^{-})\dv\rangle\to{\bZ[Q_{(\lambda,\ell)}(\bM)]}\lmod^{\tw}$ as well as their untwisted versions $\fLuv_{(\lambda,\ell)}(\MCGo)$ and $\fLuv_{(\lambda,\ell)}(\MCGno)$. The properties of Lemma~\ref{lem:transformation_groups_MCG_ell_2} and Remark~\ref{rmk:properties_MCG_functors} are exactly the same for these vertical-type alternatives.

\section{Module structure}\label{s:free_generating_sets}

The homological representations described above (see \S\ref{sss:def_homological_rep_functors}) are constructed from actions on the twisted Borel-Moore homology of configuration spaces on surfaces. In this section, we study the underlying module structure of these representations.

In \S\ref{ss:isomorphism-criterion} we prove a general criterion implying that the (possibly twisted) Borel-Moore homology of configuration spaces on a given underlying space is isomorphic to the Borel-Moore homology of configuration spaces on a subspace. Roughly, this works when the underlying space has a metric and the subspace is a ``skeleton'' onto which it deformation retracts in a controlled, non-expanding way. See Theorem~\ref{thm:free_generating_sets_rep} for the precise statement and Examples~\ref{eg:examples-BM-homology-lemma} for several examples corresponding to the underlying modules of representations of surface braid groups, mapping class groups, loop braid groups and related groups.

In \S\ref{ss:free-bases} we study several applications of Theorem~\ref{thm:free_generating_sets_rep} in more detail, describing explicit free generating sets for certain Borel-Moore homology modules. In \S\ref{ss:dual-bases} we then describe their ``dual bases'' with respect to certain perfect pairings. These dual bases, together with some diagrammatic reasoning, are used to prove some key lemmas needed in our arguments of \S\ref{s:SES_homol_rep-func}.

In total, this gives us a detailed understanding of the underlying module structure of the surface braid group and mapping class group representations that we consider. One may then attempt to derive explicit formulas for the group action in these models. We shall not pursue this here (beyond the qualitative diagrammatic arguments referred to above), since such explicit formulas are not needed to prove our polynomiality results.

\subsection{An isomorphism criterion for twisted Borel-Moore homology}\label{ss:isomorphism-criterion}

The main goal of this section is to prove the following criterion for an inclusion of metric spaces to induce isomorphisms on the (possibly twisted) Borel-Moore homology of their associated configuration spaces.
This generalises previously-known results described in Examples~\ref{eg:examples-BM-homology-lemma}.

\begin{thm}\label{thm:free_generating_sets_rep}
Let $M$ be a compact metric space with closed subspaces $A \subseteq B \subseteq M$, where $M$ and $B$ are locally compact. Suppose that there exists a strong deformation retraction $h$ of $M$ onto $B$, in other words a map $h \colon [0,1] \times M \to M$ satisfying the following two conditions:
\begin{itemizeb}
\item $h(t,x)=x$ whenever $t=0$ or $x \in B$,
\item $h(1,x) \in B$ for all $x \in M$,
\end{itemizeb}
such that moreover the following two additional conditions hold:
\begin{itemizeb}
\item $h(t,-)$ is non-expanding for all $t$, i.e.~$d(x,y) \geq d(h(t,x),h(t,y))$ for all $x,y \in M$,
\item $h(t,-)$ is a topological self-embedding of $M$ for all $t<1$.
\end{itemizeb}
Then, for all $k \in \bN$ and partitions $\lambda\vdash k$, the inclusion of configuration spaces
\[
C_{\lambda}(B \smallsetminus A) \lhook\joinrel\longrightarrow C_{\lambda}(M \smallsetminus A)
\]
induces isomorphisms on Borel-Moore homology in all degrees and for all local coefficient systems that extend to $C_{\lambda}(M)$.
\end{thm}

The point of this theorem, for the present paper, is that the Borel-Moore homology of the configuration space $C_{\lambda}(M \smallsetminus A)$ is the underlying module of a representation that we are studying, whereas the Borel-Moore homology of its subspace $C_{\lambda}(B \smallsetminus A)$ is easily computable.

\begin{rmk}
\label{rmk:condition-on-local-systems}
The condition that the local coefficient systems under consideration must extend to the larger space $C_{\lambda}(M)$ is automatically satisfied in all of the examples that we shall consider, since in these examples the inclusion $C_{\lambda}(M \smallsetminus A) \hookrightarrow C_{\lambda}(M)$ is a homotopy equivalence. Indeed, this holds whenever $M$ is a manifold and $A \subseteq M$ is a subset of its boundary.
Notice also that the hypotheses on $A$ are rather weak in Theorem~\ref{thm:free_generating_sets_rep}: it is simply any closed subset of $B$; the non-trivial hypothesis is the existence of a controlled deformation retraction of $M$ onto $B$, without reference to $A$. We will apply Theorem~\ref{thm:free_generating_sets_rep} in situations where $M = S$ is a surface that deformation retracts onto an embedded graph $B = \Gamma \subset S$.
\end{rmk}

\begin{proof}[Proof of Theorem~\ref{thm:free_generating_sets_rep}]
For $t \in [0,1]$, we write $h_{t} = h(t,-) \colon M \to M$ and recall that $h_{0} = \id$ and $h_{1}(M)=B$. For $\epsilon > 0$, we define
\[
C_{\epsilon} \coloneqq \bigl\lbrace [c_{1},\ldots,c_{k}] \in C_{\lambda}(M) \mid d(c_{i},c_{j})<\epsilon \text{ for some } i\neq j \text{ or } d(c_{i},a)<\epsilon \text{ for some } a \in A \bigr\rbrace .
\]
For each $t \in [0,1]$, every compact subspace of $C_{\lambda}(h_{t}(M) \smallsetminus A)$ is disjoint from $C_{\epsilon}$ for some $\epsilon > 0$, so we may write its Borel-Moore homology as the inverse limit
\[
H_{*}^{\BM} \bigl( C_{\lambda}(h_{t}(M) \smallsetminus A);\cL \bigr) \cong \underset{\epsilon\to 0}{\mathrm{lim}}\, H_{*} \bigl( C_{\lambda}(h_{t}(M) \smallsetminus A) , C_{\lambda}(h_{t}(M) \smallsetminus A) \cap C_{\epsilon} ; \cL \bigr)
\]
for any local system $\cL$. In particular, it suffices to show that the inclusion of pairs
\begin{equation}
\label{eq:inclusion-of-pairs}
(C_{\lambda}(B \smallsetminus A) , C_{\lambda}(B \smallsetminus A) \cap C_{\epsilon}) \lhook\joinrel\longrightarrow (C_{\lambda}(M \smallsetminus A) , C_{\lambda}(M \smallsetminus A) \cap C_{\epsilon})
\end{equation}
induces isomorphisms on twisted homology in all degrees for all local systems extending to $C_{\lambda}(M)$, for all $\epsilon > 0$. This fits into a diagram of inclusions of pairs of spaces
\begin{equation}
\label{eq:square-of-inclusions}
\begin{tikzcd}
(C_{\lambda}(B \smallsetminus A) , C_{\lambda}(B \smallsetminus A) \cap C_{\epsilon}) \ar[r,hook] \ar[d,hook] & (C_{\lambda}(M \smallsetminus A) , C_{\lambda}(M \smallsetminus A) \cap C_{\epsilon}) \ar[d,hook] \\
(C_{\lambda}(B) , C_{\lambda}(B) \cap C_{\epsilon}) \ar[r,hook] & (C_{\lambda}(M) , C_{\lambda}(M) \cap C_{\epsilon}).
\end{tikzcd}
\end{equation}
The vertical inclusions in \eqref{eq:square-of-inclusions} induce isomorphisms on twisted homology in all degrees by the excision theorem; see \cite[Th.~$2.20$]{hatcheralgebraic} (recalling that excision holds also with local coefficients, see \cite[Th.~$5.13$]{daviskirk} for example).
Hence, abbreviating $C^{t} := C_{\lambda}(h_{t}(M))$ and $C := C^{0}$, it will suffice to show that the inclusion of pairs $(C^1 , C^1 \cap C_{\epsilon}) \hookrightarrow (C , C_{\epsilon})$ induces isomorphisms on twisted homology in all degrees, for all $\epsilon > 0$.

Let us now fix $\epsilon > 0$. The hypothesis that $h_{t} \colon M \to M$ is a topological self-embedding for $t<1$ implies that it induces well-defined maps of configuration spaces that define a strong deformation retraction of $C$ onto $C^{t}$ for any $t<1$. Moreover, the hypothesis that $h_{t}$ is \emph{non-expanding} means that these maps of configuration spaces preserve the subspace $C_{\epsilon}$, so we in fact have a strong deformation retraction of the pair $(C,C_{\epsilon})$ onto the pair $(C^{t} , C^{t} \cap C_{\epsilon})$ for any $t<1$. On the other hand, we cannot conclude the same statement for $t=1$, since $h_{1} \colon M \to M$ is not assumed to be an embedding (and in our key examples it will not be). In order to continue the deformation retraction of configuration spaces, we first pass to a subspace: for any $t<1$, we define
\[
\check{C}^{t} \coloneqq \bigl\lbrace [c_{1},\ldots,c_{k}] \in C^{t} \mid h_{1}(h_{t}^{-1}(c_{i})) \neq h_{1}(h_{t}^{-1}(c_{j})) \text{ for each } i\neq j \bigr\rbrace .
\]
This additional condition precisely ensures that points do not collide if we continue applying the deformation retraction $h_{t}$ to configurations until time $t=1$. Thus there is a strong deformation retraction of the pair $(\check{C}^{t} , \check{C}^{t} \cap C_{\epsilon})$ onto the pair $(C^1 , C^1 \cap C_{\epsilon})$ for any $t<1$. It therefore remains to show that there exists some $t<1$ (depending on $\epsilon$) such that the inclusion
\[
(\check{C}^{t} , \check{C}^{t} \cap C_{\epsilon}) \lhook\joinrel\longrightarrow (C^{t} , C^{t} \cap C_{\epsilon})
\]
induces isomorphisms on twisted homology in all degrees. By excision, it suffices to show that $\check{C}^{t}$ and $C^{t} \cap C_{\epsilon}$ form an open covering of $C^{t}$. It is clear that these are both open subspaces, so we just have to show that there exists some $t<1$ such that $\check{C}^{t} \cup (C^{t} \cap C_{\epsilon}) = C^{t}$, or equivalently such that $C^{t} \smallsetminus \check{C}^{t} \subseteq C_{\epsilon}$.

By continuity of $h$ and compactness of $M$, there exists $\delta < 1$ such that $d(h_\delta(x),h_{1}(x)) < \epsilon / 2$ for all $x \in M$. By the argument so far, it suffices to show that $C^{\delta} \smallsetminus \check{C}^{\delta} \subseteq C_{\epsilon}$. Let $c = [c_{1},\ldots,c_{k}]$ be a configuration in $C^{\delta} \smallsetminus \check{C}^{\delta}$, in other words we have $c_{i} = h_\delta(x_{i})$ for some configuration $[x_{1},\ldots,x_{k}]$ in $C = C_{\lambda}(M)$ and $h_{1}(x_{i}) = h_{1}(x_{j})$ for some $i\neq j$. The distance from $c_{i}$ to $c_{j}$ is therefore at most the sum of the distances from $c_{i} = h_\delta(x_{i})$ to $h_{1}(x_{i})$ and from $h_{1}(x_{i}) = h_{1}(x_{j})$ to $h_\delta(x_{j}) = c_{j}$. These latter distances are both less than $\epsilon / 2$ by our choice of $\delta$, so we have $d(c_{i},c_{j}) < \epsilon$ and hence $c \in C_{\epsilon}$. Thus we complete the excision argument in the previous paragraph with $t=\delta$.

In summary, we have proved Theorem~\ref{thm:free_generating_sets_rep} by showing that, in the diagram
\begingroup
\small
\begin{equation*}
\begin{tikzcd}
& (C_{\lambda}(B \smallsetminus A) , C_{\lambda}(B \smallsetminus A) \cap C_{\epsilon}) \ar[r,hook] \ar[dl,hook',"{(*)}",swap] & (C_{\lambda}(M \smallsetminus A) , C_{\lambda}(M \smallsetminus A) \cap C_{\epsilon}) \ar[dr,hook,"{(*)}"] & \\
(C^1 , C^1 \cap C_{\epsilon}) \ar[r,hook,"{(\ddagger)}"] & (\check{C}^{t} , \check{C}^{t} \cap C_{\epsilon}) \ar[r,hook,"{(**)}"] & (C^{t} , C^{t} \cap C_{\epsilon}) \ar[r,hook,"{(\ddagger)}"] & (C , C_{\epsilon}),
\end{tikzcd}
\end{equation*}
\endgroup
the arrows $(*)$ induce isomorphisms on twisted homology in all degrees (by excision), the arrows $(\ddagger)$ are homotopy equivalences and for each $\epsilon > 0$ there exists $t \in (0,1)$ such that the arrow $(**)$ induces isomorphisms on twisted homology in all degrees (again by excision).
\end{proof}
\begin{egs}
\label{eg:examples-BM-homology-lemma}
We describe several examples of nested subspaces $A \subseteq B \subseteq M$ satisfying the hypotheses of Theorem~\ref{thm:free_generating_sets_rep} and the corresponding inclusions of configuration spaces
\begin{equation}
\label{eq:inclusion-of-config-spaces}
C_{\lambda}(B \smallsetminus A) \lhook\joinrel\longrightarrow C_{\lambda}(M \smallsetminus A).
\end{equation}

\begin{itemizeb}
\item (\emph{Configurations on punctured discs.})
Let us first consider the $n$-holed disc $M = \Sigma_{0,n+1}$, let $A$ be the union of the $n$ inner boundary components and let $B$ be the union of $A$ with $n-1$ arcs connecting the consecutive components of $A$. With respect to an appropriate metric, this satisfies the hypotheses of Theorem~\ref{thm:free_generating_sets_rep}. Moreover, since $A$ is part of the boundary of $M$, all local coefficient systems on $C_{\lambda}(M\smallsetminus A)$ extend to $C_{\lambda}(M)$. Thus Theorem~\ref{thm:free_generating_sets_rep} implies that \eqref{eq:inclusion-of-config-spaces} induces isomorphisms on twisted Borel-Moore homology in all degrees. This special case recovers \cite[Lem.~$3.1$]{BigelowHomrep}, which may also be deduced from the work of Kohno in \cite[Th.~$1$]{Kohno_hom_loc_systems_hyperplanes} and \cite[Prop.~$3.2$]{Kohno_one-parameter_family}. In this setting, $M \smallsetminus A$ is the $n$-punctured $2$-disc and $B \smallsetminus A$ is a disjoint union of $n-1$ open arcs.

\item (\emph{Configurations on non-closed surfaces.})
Generalising the previous point, we take $M=S$ to be any compact surface with non-empty boundary and $B=\Gamma \subseteq S$ to be an embedded finite graph onto which it deformation retracts. Choosing an appropriate metric, this satisfies the hypotheses of Theorem~\ref{thm:free_generating_sets_rep}. If we then take $A$ to be any closed subset of $\Gamma$, we conclude that the inclusion \eqref{eq:inclusion-of-config-spaces} induces isomorphisms on twisted Borel-Moore homology in all degrees (for local systems on $C_{\lambda}(S \smallsetminus A)$ that extend to $C_{\lambda}(S)$; this is automatic if $A \subseteq \Gamma \cap \partial S$). In the case $S = \Sigma_{g,1}$, this recovers \cite[Lem.~$3.3$]{AnKo}, \cite[Th.~$6.6$]{anghelpalmer} and \cite[Th.~A(a)]{BlanchetPalmerShaukat}.

\item (\emph{Higher dimensions.})
Extending to higher dimensions, we may take $M$ to be the manifold $W_{g,1} = (S^n \times S^n)^{\sharp g} \smallsetminus \mathring{D}^{2n}$, where $\sharp$ denotes the connected sum. This deformation retracts onto a subspace $B \subseteq W_{g,1}$ that is homeomorphic to $\vee^{2g} S^n$, with the basepoint of the wedge sum corresponding to a point $p$ in the boundary of $W_{g,1}$. Taking $A = \{p\}$, Theorem~\ref{thm:free_generating_sets_rep} then implies that the twisted Borel-Moore homology of configurations in $W_{g,1} \smallsetminus \{p\}$ is given by the twisted Borel-Moore homology of configurations in disjoint unions of Euclidean spaces.
\end{itemizeb}
\end{egs}

\subsection{Free bases}\label{ss:free-bases}

\begin{figure}[tbp]
    \centering
    \begin{subfigure}[b]{0.48\textwidth}
        \centering
        \includegraphics[scale=0.65]{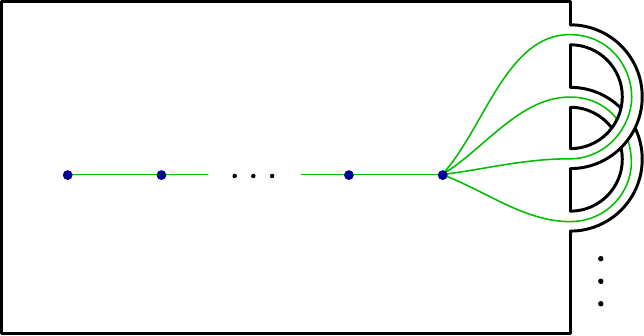}
        \caption{The model for orientable surface braid groups.}
        \label{fig:model-or-braids}
    \end{subfigure}
    \hfill
    \begin{subfigure}[b]{0.48\textwidth}
        \centering
        \includegraphics[scale=0.65]{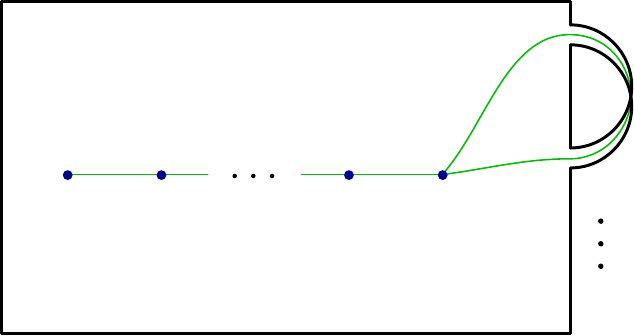}
        \caption{The model for non-orientable surface braid groups.}
        \label{fig:model-nor-braids}
    \end{subfigure}
    \\[3ex]
    \begin{subfigure}[b]{0.48\textwidth}
        \centering
        \includegraphics[scale=0.65]{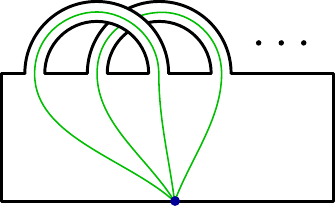}
        \caption{The model for orientable mapping class groups.}
        \label{fig:model-or-mcg}
    \end{subfigure}
    \hfill
    \begin{subfigure}[b]{0.48\textwidth}
        \centering
        \includegraphics[scale=0.65]{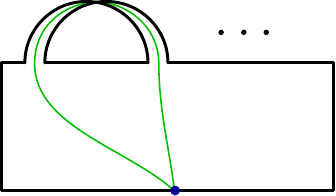}
        \caption{The model for non-orientable mapping class groups.}
        \label{fig:model-nor-mcg}
    \end{subfigure}
    \caption{Four examples of the setting of Theorem~\ref{thm:free_generating_sets_rep} where $S$ is a compact, connected surface with one boundary component, $\Gamma$ is the embedded graph (in green) and $A$ is its set of vertices (blue).}
    \label{fig:models}
\end{figure}

The key setting for the rest of the paper will be the second point of Examples~\ref{eg:examples-BM-homology-lemma}, which we now consider in more detail. Let $S$ be a connected, compact surface with one boundary component, let $\Gamma$ be the embedded graph pictured in Figure~\ref{fig:models} and let $A$ denote the set of vertices of $\Gamma$.

To apply Theorem~\ref{thm:free_generating_sets_rep}, it will be convenient to modify these spaces a little in cases (a) and (b) of Figure~\ref{fig:models}, where the vertices lie in the interior of $S$. In these cases, let $\overline{S}$ be the result of blowing up each vertex of $\Gamma$ to a boundary component (so that the total number of boundary components of $\overline{S}$ is $\lvert A \rvert + 1$), let $\overline{\Gamma}$ be the result of replacing each vertex $v$ of $\Gamma$ with a circle (coinciding with the corresponding new boundary component of $\overline{S}$) subdivided into $\nu(v)$ vertices and $\nu(v)$ edges, where $\nu(v)$ is the valence of $v$, and finally let $\overline{A} \subset \overline{\Gamma}$ be the union of these circles (equivalently, the new boundary components of $\overline{S}$). We clearly have homeomorphisms $\overline{S} \smallsetminus \overline{A} \cong S \smallsetminus A$ and $\overline{\Gamma} \smallsetminus \overline{A} \cong \Gamma \smallsetminus A$. In cases (c) and (d) of Figure~\ref{fig:models}, we simply take $\overline{S}=S$, $\overline{\Gamma}=\Gamma$ and $\overline{A}=A$.

By Theorem~\ref{thm:free_generating_sets_rep}, the inclusion
\begin{equation}
\label{eq:inclusion-of-configuration-spaces}
C_{\lambda}(\Gamma \smallsetminus A) \cong C_{\lambda}(\overline{\Gamma} \smallsetminus \overline{A}) \longhookrightarrow C_{\lambda}(\overline{S} \smallsetminus \overline{A}) \cong C_{\lambda}(S \smallsetminus A)
\end{equation}
induces isomorphisms on Borel-Moore homology for all local coefficient systems on $C_{\lambda}(\overline{S} \smallsetminus \overline{A})$ that extend to $C_{\lambda}(\overline{S})$. But $\overline{A}$ is contained in the boundary of $\overline{S}$ (the purpose of replacing $S,\Gamma,A$ with $\overline{S},\overline{\Gamma},\overline{A}$ was precisely to ensure this) so, by Remark~\ref{rmk:condition-on-local-systems}, the inclusion \eqref{eq:inclusion-of-configuration-spaces} induces isomorphisms on Borel-Moore homology with all local coefficient systems.

The twisted Borel-Moore homology of $C_{\lambda}(S\smallsetminus A)$ may therefore be computed from the twisted Borel-Moore homology of $C_{\lambda}(\Gamma\smallsetminus A)$, where we may now consider $\Gamma$ as an abstract graph (forgetting its embedding into $S$) with vertex set $A$, as depicted in Figure~\ref{fig:graphs-configurations}. Since the complement $\Gamma\smallsetminus A$ is simply the disjoint union of the (open) edges of the graph $\Gamma$, its configuration space $C_{\lambda}(\Gamma\smallsetminus A)$ is a disjoint union of $k$-dimensional open simplices, one for each choice of:
\begin{itemizeb}
\item the number of points that lie on each edge of $\Gamma$;
\item for each edge of $\Gamma$, an ordered list of blocks of $\lambda$, prescribing which blocks of the partition the configuration points that lie on this edge must belong to, as we pass from left to right along the edge (with respect to an arbitrary orientation of the edge, chosen once and for all).
\end{itemizeb}
We summarise this combinatorial information as follows.

\begin{notation}\label{not:set_and_lengths_words}
For a set $X$, write $\tM(X)$ for the free monoid of words on $X$. For a graph $\Gamma$ and an ordered partition $\lambda = (\lambda_{1},\ldots,\lambda_{r}) \vdash k$, denote by $E(\Gamma)$ the set of edges of $\Gamma$ and define $\cW_{\lambda}(\Gamma)$ to be the set of all functions $w \colon E(\Gamma) \to \tM(\{1,\ldots,r\})$ such that each $i = 1,\ldots,r$ appears precisely $\lambda_{i}$ times as a letter in the collection of words $\{ w(e) \mid e \in E(\Gamma) \}$ (thus the total length of these words is $k$). For each word $w(e)$, we denote by $\lvert w(e)\rvert$ its length.
\end{notation}

\begin{figure}[tbp]
    \centering
    \begin{subfigure}[b]{0.48\textwidth}
        \centering
        \includegraphics[scale=0.65]{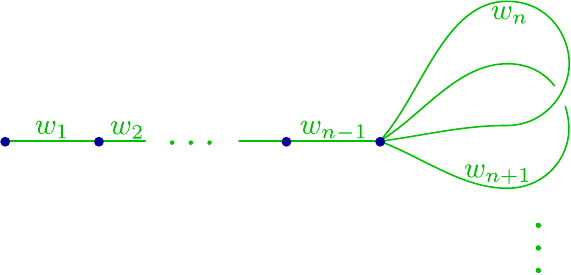}
        \caption{The graph for orientable surface braid groups.}
        \label{fig:graph-or-braids}
    \end{subfigure}
    \hfill
    \begin{subfigure}[b]{0.48\textwidth}
        \centering
        \includegraphics[scale=0.65]{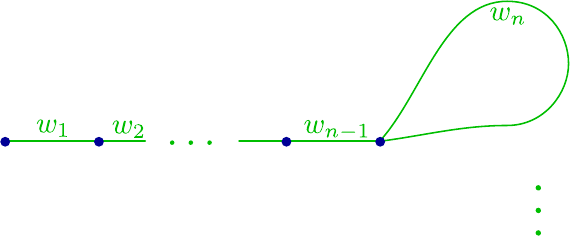}
        \caption{The graph for non-orientable surface braid groups.}
        \label{fig:graph-nor-braids}
    \end{subfigure}
    \\[3ex]
    \begin{subfigure}[b]{0.48\textwidth}
        \centering
        \includegraphics[scale=0.65]{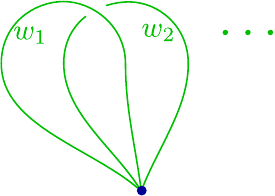}
        \caption{The graph for orientable mapping class groups.}
        \label{fig:graph-or-mcg}
    \end{subfigure}
    \hfill
    \begin{subfigure}[b]{0.48\textwidth}
        \centering
        \includegraphics[scale=0.65]{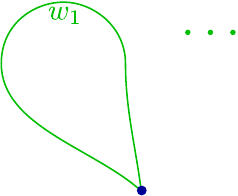}
        \caption{The graph for non-orientable mapping class groups.}
        \label{fig:graph-nor-mcg}
    \end{subfigure}
    \caption{The graphs from Figure~\ref{fig:models}, considered now as abstract graphs and equipped with labels, viewed as generators of the Borel-Moore homology of $C_{\lambda}(\Gamma \smallsetminus A)$; equivalently, by Proposition~\ref{prop:module_structure_BM_homology}, the Borel-Moore homology of $C_{\lambda}(S \smallsetminus A)$.}
    \label{fig:graphs-configurations}
\end{figure}

In this notation, the labelled graphs depicted in Figure~\ref{fig:graphs-configurations} correspond to the different components of the configuration space $C_{\lambda}(\Gamma\smallsetminus A)$ indexed by $\cW_{\lambda}(\Gamma)$.

\begin{eg}\label{eg:set_and_lengths_words}
For example, if $\lambda = (2,1,3,1)$ and the graph $\Gamma$ has four edges enumerated as $E(\Gamma) = \{e_{1},e_{2},e_{3},e_{4}\}$, then the assignment $(w(e_{1}),w(e_{2}),w(e_{3}),w(e_{4})) = (34,12,\varnothing,313)$ specifies an element $w \in \cW_{\lambda}(\Gamma)$, where $\varnothing$ denotes the empty word.
\end{eg}

We summarise this discussion in the following result.

\begin{prop}\label{prop:module_structure_BM_homology}
Let $S$ be a connected, compact surface with one boundary component, let $A$ be either a finite subset of its interior or a single point on its boundary and let $\lambda$ be a partition of a positive integer $k$. Let $\Gamma$ be the abstract graph depicted in Figure~\ref{fig:graphs-configurations}. Then there is a proper map
\begin{equation}
\label{eq:map-from-union-of-open-balls}
\bigsqcup_{w \in \cW_{\lambda}(\Gamma)} \mathring{\Delta}^{k} \longrightarrow C_{\lambda}(S \smallsetminus A),
\end{equation}
where $\mathring{\Delta}^{k}$ denotes the $k$-dimensional open simplex, that induces isomorphisms on Borel-Moore homology in all degrees and with coefficients in any local system $\cL$ on $C_{\lambda}(S\smallsetminus A)$ defined over a ring $R$.
Thus the Borel-Moore homology $H_{*}^{\BM}(C_{\lambda}(S\smallsetminus A);\cL)$ is concentrated in degree $k$ and the $R$-module
\begin{equation}
\label{eq:twisted-BM-homology-group}
H_{k}^{\BM}(C_{\lambda}(S\smallsetminus A);\cL)
\end{equation}
decomposes as a direct sum of $\lvert \cW_{\lambda}(\Gamma) \rvert$ copies of the fibre of $\cL$.
\end{prop}

\begin{notation}\label{nota:tail_and_wedge}
It will be convenient later to fix some standard notation for the different parts of the graphs $\Gamma$ appearing in Proposition~\ref{prop:module_structure_BM_homology} and depicted in Figure~\ref{fig:graphs-configurations}. In cases (a) and (b), assuming that there are $n$ punctures, i.e.~$\lvert A \rvert = n$, let us write $\bI_{n}$ for the linear (or ``tail'') part of the graph, which is a linear graph with $n$ vertices and $n-1$ edges. When the surface $S$ is orientable (cases (a) and (c)), we write $\bW^{\Sigma}_{g}$ for the ``wedge'' part of the graph, which is a graph with one vertex and $2g$ edges, where $g$ is the genus of $S$. When the surface $S$ is non-orientable (cases (b) and (d)), we write instead $\bW^{\N}_{h}$ for the ``wedge'' part of the graph, which is a graph with one vertex and $h$ edges, where $h$ is the non-orientable genus of $S$. The elements of $\cW_{\lambda}(\Gamma)$ indexing the decomposition of \eqref{eq:twisted-BM-homology-group} will typically be denoted by
\begin{equation}
\label{eq:generator_surface_braid_hom_rep_orientable}
(w_{1},\ldots,w_{n-1},[w_{n},w_{n+1}],\ldots,[w_{n+2g-2},w_{n+2g-1}])
\end{equation}
when $S=\Sigma_{g,1}$ and by
\begin{equation}
\label{eq:generator_surface_braid_hom_rep_non-orientable}
(w_{1},\ldots,w_{n-1},[w_{n}],\ldots,[w_{n+h-1}])
\end{equation}
when $S=\N_{h,1}$. The first $n-1$ terms are the values of $w$ on $\bI_{n}$ and the remaining $2g$ (resp.~$h$) terms in square brackets are the values of $w$ on $\bW^{\Sigma}_{g}$ (resp.~$\bW^{\N}_{h}$).
\end{notation}

Recall from Definition~\ref{defn:Sdoubleprime} the notion of the \emph{dual surface} of a punctured surface, which we will apply to the surfaces depicted in Figure~\ref{fig:models}. In cases (a) and (b), the blow-up $\overline{S}$ is obtained from the punctured surface $S \smallsetminus A$ by blowing up each (interior) puncture in $A$ to a new boundary component and the dual surface $\check{S}$ is given by removing the original boundary component $\partial S$ from $\overline{S}$ but keeping the $\lvert A \rvert$ new boundary components.
In cases (c) and (d), the blow-up $\overline{S}$ simply replaces the single boundary puncture $A$ in $\partial S$ with a closed interval and the dual surface $\check{S}$ is the union of the interior of $S$ with this closed interval in the boundary of $\overline{S}$. We may also take the dual of the graph $\Gamma$:

\begin{notation}
\label{notation:dual-graph}
We denote by $\check{\Gamma}$ the embedded dual graph of $\Gamma$ as illustrated in Figure~\ref{fig:models-dual}. (For the purposes of this description of $\check{\Gamma}$, each collection of parallel green arcs in Figure~\ref{fig:models-dual} labelled by $w_i$ is to be considered as a single edge; the collections of parallel arcs will become relevant only later in \S\ref{ss:dual-bases}, for Definition~\ref{def:dual-basis-elements}.) Precisely, $\check{\Gamma}$ is an embedded disjoint union of edges, with one edge $e'$ for each edge $e$ of $\Gamma$, intersecting $e$ transversely exactly once and disjoint from the other edges of $\Gamma$, and with its endpoints on the boundary of $S$.
\end{notation}

The twisted Borel-Moore homology $H_{*}^{\BM}(C_{\lambda}(\check{S});\cL)$, considered in \S\ref{ss:vertical_alternatives} to define vertical-type alternatives, has an explicit description as a module similar to that of $H_{*}^{\BM}(C_{\lambda}(S \smallsetminus A);\cL)$ in Proposition~\ref{prop:module_structure_BM_homology}. This is another direct application of Theorem~\ref{thm:free_generating_sets_rep}, this time applied to the triple $(M',B',A')$ where we set $M' = \overline{S}$, $A' = \partial(S \smallsetminus A)$ (so that $M' \smallsetminus A' = \check{S}$) and $B' = A' \cup \check{\Gamma} = \partial(S \smallsetminus A) \cup \check{\Gamma}$. We record this as follows:

\begin{prop}
\label{lem:module_structure_BM_homology-check}
Let $S$ and $A$ be as in Proposition~\ref{prop:module_structure_BM_homology} and define $\check{S}$ as in Definition~\ref{defn:Sdoubleprime}. Let $\lambda$ be a partition of a positive integer $k$. Then the statement of Proposition~\ref{prop:module_structure_BM_homology} holds verbatim with $S \smallsetminus A$ replaced with $\check{S}$, and the graph $\Gamma$ replaced with its dual $\check{\Gamma}$.
\end{prop}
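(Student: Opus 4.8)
The plan is to deduce the proposition from Lemma~\ref{lem:free_generating_sets_rep}, applied to the blow-up $\overline{S}$ equipped with a different ``skeleton'' than the one used in the proof of Proposition~\ref{prop:module_structure_BM_homology}. Precisely, I would apply Lemma~\ref{lem:free_generating_sets_rep} to the triple $(M',B',A')$ in which $M' = \overline{S}$, $A' = \partial(S\smallsetminus A)$, and $B'$ is the union of $A'$ with the green arcs drawn in Figure~\ref{fig:models-dual}; by Definition~\ref{defn:Sdoubleprime} we then have $M'\smallsetminus A' = \check{S}$, so an isomorphism criterion for the inclusion $C_{\bk}(B'\smallsetminus A')\longhookrightarrow C_{\bk}(\check{S})$ is exactly what is needed.

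First I would verify the hypotheses of Lemma~\ref{lem:free_generating_sets_rep}. That $\overline{S}$ is a compact (hence locally compact) surface, that $A'\subseteq B'\subseteq\overline{S}$ are closed and that $B'$ is locally compact is immediate once a metric on $\overline{S}$ is fixed. The substantive point --- and the main, though routine, obstacle --- is to exhibit a strong deformation retraction $h$ of $\overline{S}$ onto $B'$ that is non-expanding for every $t$ and a topological self-embedding of $\overline{S}$ for every $t<1$. This is carried out exactly as for the non-dual skeleton in the proof of Proposition~\ref{prop:module_structure_BM_homology}: the green arcs of Figure~\ref{fig:models-dual} together with $A'$ cut $\overline{S}$ into half-open regions, each retracting onto its portion of $B'$, and one chooses the metric so that $B'$ is taut and these complementary regions are standard, which makes $h$ simultaneously non-expanding and an embedding for $t<1$. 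I also note that $A'=\partial(S\smallsetminus A)$ lies in the boundary of the manifold $\overline{S}$, so the inclusion $C_{\bk}(\check{S})=C_{\bk}(M'\smallsetminus A')\hookrightarrow C_{\bk}(M')=C_{\bk}(\overline{S})$ is a homotopy equivalence; hence, by Remark~\ref{rmk:condition-on-local-systems}, \emph{every} local system on $C_{\bk}(\check{S})$ extends to $C_{\bk}(\overline{S})$, so Lemma~\ref{lem:free_generating_sets_rep} applies with arbitrary coefficients.

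Finally I would identify the source $C_{\bk}(B'\smallsetminus A')$. Deleting $A'=\partial(S\smallsetminus A)$ from $B'$ removes precisely the endpoints of the green arcs, so $B'\smallsetminus A'$ is a disjoint union of open arcs --- equivalently, the complement of the vertex set in the abstract ``dual graph'' $\Gamma$ of Figure~\ref{fig:models-dual}. Consequently $C_{\bk}(B'\smallsetminus A')$ is a disjoint union of open $k$-simplices $\mathring{\Delta}^k$, one for each way of recording how many points of the configuration lie on each edge, which blocks of $\bk$ they belong to, and in what order along the edge; this is the same bookkeeping as in the proof of Proposition~\ref{prop:module_structure_BM_homology}, so it is packaged by functions $w$ from the edges of $\Gamma$ to words on the blocks of $\bk$ whose total content over all edges is $\bk$. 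Composing the inclusions of each open simplex into $C_{\bk}(B'\smallsetminus A')$ with the map into $C_{\bk}(\check{S})$ produces the required map $\bigsqcup_w\mathring{\Delta}^k\to C_{\bk}(\check{S})$; the isomorphism supplied by Lemma~\ref{lem:free_generating_sets_rep} then shows that $H_*^{\BM}(C_{\bk}(\check{S});\cL)$ vanishes outside degree $k$ and, in degree $k$, decomposes as $\bigoplus_w V$ with $V$ the fibre of $\cL$ --- which is the statement of Proposition~\ref{prop:module_structure_BM_homology} with $S\smallsetminus A$ replaced by $\check{S}$ and Figure~\ref{fig:graphs-configurations} by Figure~\ref{fig:models-dual}.
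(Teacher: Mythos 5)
Your proposal is correct and follows exactly the paper's argument: the paper likewise proves this proposition as a direct application of Lemma~\ref{lem:free_generating_sets_rep} to the triple $(M',B',A')$ with $M'=\overline{S}$, $A'=\partial(S\smallsetminus A)$ and $B'$ the union of $A'$ with the green arcs of Figure~\ref{fig:models-dual}, so that $M'\smallsetminus A'=\check{S}$ and $C_{\bk}(B'\smallsetminus A')$ is a disjoint union of open $k$-simplices indexed by the same combinatorial data. Your additional verification of the metric/retraction hypotheses and of the extension of local systems (via Remark~\ref{rmk:condition-on-local-systems}) just spells out details the paper leaves implicit.
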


\begin{rmk}
\label{rmk:dual-graph}
The embedded graphs $\Gamma$ and $\check{\Gamma}$ are dual in the sense that each edge of $\Gamma$ intersects exactly one edge of $\check{\Gamma}$ and vice versa. This determines a bijection $E(\Gamma) \cong E(\check{\Gamma})$, which induces a bijection $\cW_{\lambda}(\Gamma) \cong \cW_{\lambda}(\check{\Gamma})$ (see Notation~\ref{not:set_and_lengths_words}) and hence, by Propositions~\ref{prop:module_structure_BM_homology} and \ref{lem:module_structure_BM_homology-check}, a module isomorphism between the twisted Borel-Moore homology of $C_{\lambda}(S \smallsetminus A)$ and the twisted Borel-Moore homology of $C_{\lambda}(\check{S})$.
\end{rmk}

\subsection{Dual bases}\label{ss:dual-bases}

\begin{figure}[tb]
    \centering
    \begin{subfigure}[b]{0.48\textwidth}
        \centering
        \includegraphics[scale=0.65]{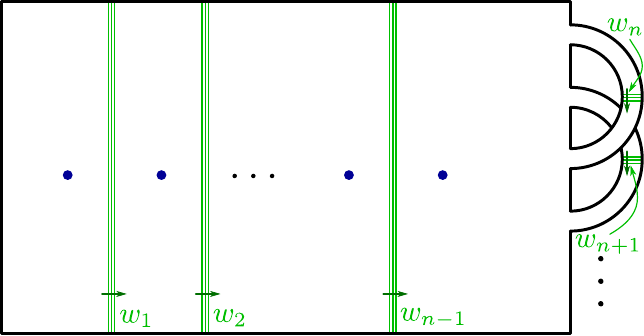}
        \caption{Dual basis for orientable surface braid groups.}
        \label{fig:model-or-braids-dual}
    \end{subfigure}
    \hfill
    \begin{subfigure}[b]{0.48\textwidth}
        \centering
        \includegraphics[scale=0.65]{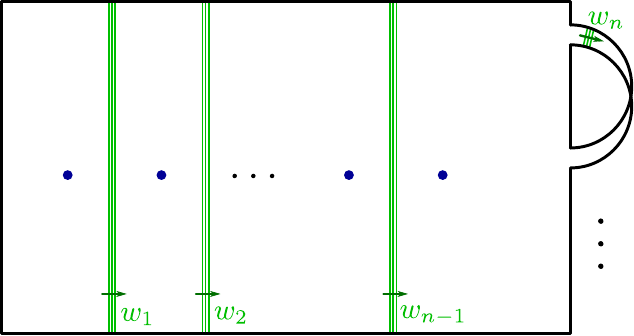}
        \caption{Dual basis for non-orientable surface braid groups.}
        \label{fig:model-nor-braids-dual}
    \end{subfigure}
    \\[3ex]
    \begin{subfigure}[b]{0.48\textwidth}
        \centering
        \includegraphics[scale=0.65]{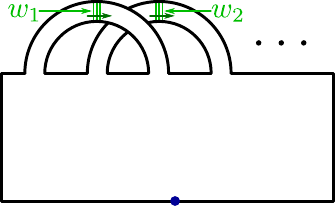}
        \caption{Dual basis for orientable mapping class groups.}
        \label{fig:model-or-mcg-vertical-dual}
    \end{subfigure}
    \hfill
    \begin{subfigure}[b]{0.48\textwidth}
        \centering
        \includegraphics[scale=0.65]{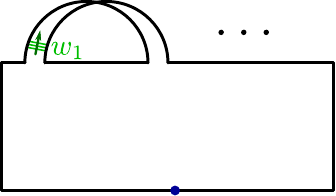}
        \caption{Dual basis for non-orientable mapping class groups.}
        \label{fig:model-nor-mcg-vertical-dual}
    \end{subfigure}
    \caption{A linearly independent set of elements of $H_{k}(C_{\lambda}(S \smallsetminus A) , \partial ; \cL\otimes\cO)$ whose span is isomorphic to the dual of $H_{k}^{\BM}(C_{\lambda}(S \smallsetminus A); \cL)$ via the perfect pairing \eqref{eq:perfect-pairing}. See Definition~\ref{def:dual-basis-elements}.}
    \label{fig:models-dual}
\end{figure}

We now describe, using Poincar{\'e}-Lefschetz duality, a perfect pairing between the $R$-module $\eqref{eq:twisted-BM-homology-group} = H_{k}^{\BM}(C_{\lambda}(S\smallsetminus A);\cL)$ and another naturally-defined homology $R$-module, for which we describe a ``dual'' basis. In order to apply Poincar{\'e}-Lefschetz duality, we consider the orientation local system $\cO$ of the manifold $C_{\lambda}(S \smallsetminus A)$; in particular, when $S$ is orientable, $\cO$ is the trivial local system $\bZ$.

Let us now consider the relative homology group $H_{k}(C_{\lambda}(S \smallsetminus A) , \partial ; \cL\otimes\cO)$, where $\partial$ is an abbreviation of $\partial C_{\lambda}(S \smallsetminus A)$, the boundary of the topological manifold $C_{\lambda}(S \smallsetminus A)$, which consists of all configurations that non-trivially intersect the boundary of $S \smallsetminus A$. In cases (c) and (d) of Figure~\ref{fig:models} (i.e.~for the mapping class group representations), we implicitly make a small modification here: we replace $A$, which is a single point in $\partial S$, with a small closed interval in $\partial S$ and we correspondingly replace $S \smallsetminus A$ with the closure in $S$ of the complement of this small closed interval. In other words, similarly to the modification that we made in cases (a) and (b) in \S\ref{ss:free-bases}, we are \emph{blowing up} the (unique) vertex of the graph $\Gamma$ on $\partial S$.

For this subsection, we assume that $\cL$ is a \emph{rank-one} local system; i.e.~its fibre over each point is a free module of rank one over the ground ring $R$. In this case, the direct sum decomposition of Proposition~\ref{prop:module_structure_BM_homology} corresponds to a free basis for $H_{k}^{\BM}(C_{\lambda}(S \smallsetminus A); \cL)$ over $R$. There is a naturally corresponding set of elements of the relative homology group $H_{k}(C_{\lambda}(S \smallsetminus A) , \partial ; \cL\otimes\cO)$, depicted in Figure~\ref{fig:models-dual}, indexed by the set $\cW_{\lambda}(\check{\Gamma})$ (see Notations~\ref{not:set_and_lengths_words} and \ref{notation:dual-graph}).

\begin{defn}
\label{def:dual-basis-elements}
Consider one of the labelled graphs of Figure~\ref{fig:models-dual}, whose labels specify an element $w \in \cW_{\lambda}(\Gamma)$; in particular, each collection of parallel arcs is labelled by a word $w_{i}$ in the monoid $\tM(\{1,\ldots,r\})$. This first of all indicates that the number of parallel arcs in the collection is $\lvert w_{i} \rvert$, and each individual arc in the collection inherits a label which is the corresponding letter (in $\{1,\ldots,r\}$) of this word (written in the direction specified by the small perpendicular arrows in Figure~\ref{fig:models-dual}). The relative homology class in $H_{k}(C_{\lambda}(S \smallsetminus A) , \partial ; \cL\otimes\cO)$ depicted by this figure is represented by the relative cycle given by the subspace of configurations where exactly one point lies on each arc and this point belongs to the block of $\lambda$ specified by the label of the arc.
\end{defn}

\begin{convention}
The bijection $\cW_{\lambda}(\Gamma) \cong \cW_{\lambda}(\check{\Gamma})$ of Remark~\ref{rmk:dual-graph} gives a bijection between (the indexing set of) a basis for the free $R$-module $H_{k}^{\BM}(C_{\lambda}(S \smallsetminus A); \cL)$ (by Proposition~\ref{prop:module_structure_BM_homology}) and the set of elements of $H_{k}(C_{\lambda}(S \smallsetminus A) , \partial ; \cL\otimes\cO)$ described in Definition~\ref{def:dual-basis-elements}. We will henceforth implicitly identify these two indexing sets via this bijection.
\end{convention}

As explained in \cite[Th.~A]{anghelpalmer} (in the orientable setting, which generalises verbatim to the non-orientable setting in the presence of the orientation local system $\cO$), the relative cap product and Poincar{\'e}-Lefschetz duality induce a pairing
\begin{equation}
\label{eq:pairing}
H_{k}^{\BM}(C_{\lambda}(S \smallsetminus A); \cL) \otimes_R H_{k}(C_{\lambda}(S \smallsetminus A) , \partial ; \cL\otimes\cO) \longrightarrow R
\end{equation}
whose evaluation on a basis element of $H_{k}^{\BM}(C_{\lambda}(S \smallsetminus A); \cL)$ (from Proposition~\ref{prop:module_structure_BM_homology}) together with an element of Definition~\ref{def:dual-basis-elements} is equal to $1$ if the two elements are indexed by the same $w \in \cW_{\lambda}(\Gamma)$ and equal to $0$ otherwise. It follows that the submodule spanned by the elements of Definition~\ref{def:dual-basis-elements} is freely spanned by them (i.e.~they are linearly independent), and the pairing \eqref{eq:pairing} restricts to a \emph{perfect pairing} when we restrict to this submodule of the right-hand factor of its domain.

\begin{notation}\label{not:second_alternative}
We write $H_{k}^{\partial}(C_{\lambda}(S \smallsetminus A); \cL\otimes\cO)$ for the $R$-submodule of $H_{k}(C_{\lambda}(S \smallsetminus A) , \partial ; \cL\otimes\cO)$ (freely) spanned by the elements defined in Definition~\ref{def:dual-basis-elements}. In general, for a module $W$ over a ring $R$, we denote by $W^{\vee}$ its linear dual $R$-module $\Hom_{R}(W,R)$.
\end{notation}

With this notation, the discussion above may be summarised as follows.

\begin{coro}
\label{coro:perfect-pairing}
Let $S$ be a connected, compact, orientable surface with one boundary component, let $A$ be either a finite subset of its interior or a closed interval in its boundary and let $\lambda$ be a partition of a positive integer $k$. Choose any rank-one local system $\cL$ on $C_{\lambda}(S\smallsetminus A)$ defined over a ring $R$. Then the $R$-module $H_{k}^{\partial}(C_{\lambda}(S \smallsetminus A); \cL\otimes\cO)$ is freely generated over $R$ by the elements of Definition~\ref{def:dual-basis-elements}, indexed by $\cW_{\lambda}(\Gamma)$. Moreover, there is a perfect pairing
\begin{equation}
\label{eq:perfect-pairing}
H_{k}^{\BM}(C_{\lambda}(S \smallsetminus A); \cL) \otimes_R H_{k}^{\partial}(C_{\lambda}(S \smallsetminus A); \cL\otimes\cO) \longrightarrow R,
\end{equation}
given by the relative cap product and Poincar{\'e}-Lefschetz duality, whose matrix with respect to the two bases that we have described is the identity matrix. In particular, we have
\begin{equation}
\label{eq:isomorphism-with-dual}
H_{k}^{\partial}(C_{\lambda}(S \smallsetminus A); \cL\otimes\cO) \;\cong\; \left( H_{k}^{\BM}(C_{\lambda}(S \smallsetminus A); \cL) \right)^{\vee}.
\end{equation}
\end{coro}

The perfect pairing \eqref{eq:perfect-pairing} and the dual basis described in Definition~\ref{def:dual-basis-elements} (and illustrated in Figure~\ref{fig:models-dual}) will be used in some diagrammatic proofs in the next section, including in the proof of the ``Cloud lemma'' (see Lemma~\ref{lem:cloud}).

\paragraph*{Relation to the vertical-type alternative.}

The dual of the representation $H_{k}^{\BM}(C_{\lambda}(S \smallsetminus A); \cL)$ and its vertical-type alternative $H_{k}^{\BM}(C_{\lambda}(\check{S});\cL)$ are closely related via the isomorphism \eqref{eq:isomorphism-with-dual}, as we now explain. For the purpose of this paragraph, we shall abbreviate these representations as $V := H_{k}^{\BM}(C_{\lambda}(S \smallsetminus A); \cL)$ and $\check{V} := H_{k}^{\BM}(C_{\lambda}(\check{S});\cL)$.
Applying Corollary~\ref{coro:perfect-pairing} and its analogue when $S \smallsetminus A$ is replaced by $\check{S}$ (which is also part of \cite[Thm.~A]{anghelpalmer}), it follows from \cite[Thm.~B]{anghelpalmer} that we have embeddings of representations
\begin{equation}
\label{eq:embeddings-duals}
V^{\vee} \longhookrightarrow \check{V} \qquad\text{and}\qquad (\check{V})^{\vee} \longhookrightarrow V
\end{equation}
under the mild assumption that the local coefficient system $\cL$ is \emph{$u$-homogeneous} for $u \in R^\times$ such that the quantum factorials $[n]_u!$ are non-zero-divisors for all $n\geq 1$; see \cite[Def.~$2.14$]{anghelpalmer}. In this setting, if $R$ is an integral domain, we have isomorphisms
\begin{equation}
\label{eq:iso-duals}
V^{\vee} \otimes \F(R) \cong \check{V} \otimes \F(R) \qquad\text{and}\qquad (\check{V})^{\vee} \otimes \F(R) \cong V \otimes \F(R)
\end{equation}
after tensoring \eqref{eq:embeddings-duals} with the field of fractions $\F(R)$.

\begin{eg}\label{eg:LB_dual}
This applies in particular to the representations $V = \LB_{((k),2)}(n)$ and their vertical-type alternatives $\LB^{\vrtcl}_{((k),2)}(n)$ described in \S\ref{ss:vertical_alternatives}, where the ground ring $R$ is $\bZ[q^{\pm 1}]$ if $k=1$ and $\bZ[t^{\pm 1},q^{\pm 1}]$ if $k 
\geq 2$. In this example, the local coefficient system $\cL$ is $1$-homogeneous if $k=1$ and $t$-homogeneous if $k \geq 2$. We therefore have embeddings
\[
\LB_{((k),2)}(n)^{\vee} \longhookrightarrow \LB^{\vrtcl}_{((k),2)}(n) \qquad\text{and}\qquad \LB^{\vrtcl}_{((k),2)}(n)^{\vee} \longhookrightarrow \LB_{((k),2)}(n)
\]
that become isomorphisms after tensoring with the field of rational functions $\F(R) = \bQ(q) \text{ or } \bQ(t,q)$.
The representations $\LB^{\vrtcl}_{((k),2)}(n)$ are part of a functor $\LB^{\vrtcl}_{((k),2)}$ defined on $\langle \Beta,\Beta \rangle$, described in \S\ref{ss:vertical_alternatives}. The dual representations $\LB_{((k),2)}(n)^{\vee}$ may similarly be extended to a functor defined on $\langle \Beta,\Beta \rangle$ (see the end of \S\ref{ss:SES_surface_braids_alternative}), so the left-hand embedding above may be thought of as an embedding of representations of the category $\langle \Beta,\Beta \rangle$ (that becomes an isomorphism after composing with $- \otimes \F(R) \colon R\lmod \to \F(R)\lmod$). One may similarly upgrade the right-hand embedding to an embedding of representations of $\langle \Beta,\Beta \rangle$.
\end{eg}

\subsection{Tethers}
\label{ss:tethers}

In the case when the local system $\cL$ has rank one (i.e.~its fibres are free modules of rank one over the ground ring), the homology module $H_{k}^{\BM}(C_{\lambda}(S\smallsetminus A);\cL)$ is free of rank $\lvert \cW_{\lambda}(\Gamma) \rvert$, by Proposition~\ref{prop:module_structure_BM_homology}. However, the map \eqref{eq:map-from-union-of-open-balls} does not quite specify a free generating set: each open simplex $\mathring{\Delta}^{k}$ in its domain has a fundamental class in Borel-Moore homology, but in order to push it forward to an element of $H_{k}^{\BM}(C_{\lambda}(S\smallsetminus A);\cL)$, we need to choose a lift of the proper map $\mathring{\Delta}^{k} \to C_{\lambda}(S \smallsetminus A)$ to a map of local systems (where $\mathring{\Delta}^{k}$ is equipped with a trivial local system). To finish this section, we explain how to specify this choice via \emph{tethers}.

\begin{defn}
\label{defn:tether}
Let $f \colon X \to Y$ be a map of spaces and choose $x_{0} \in X$ and $y_{0} \in Y$. A \emph{tether} is a path in $Y$, up to endpoint-preserving homotopy, from $y_{0}$ to $f(x_{0})$.
\end{defn}

\begin{lem}
\label{lem:tethers-lifting}
Let $X$ be a simply-connected space and let $Y$ be a path-connected space admitting a universal cover, both equipped with basepoints $x_{0} \in X$ and $y_{0} \in Y$. Let $f \colon X \to Y$ be a map, not necessarily preserving basepoints, and let $\xi$ be a bundle of $R$-modules over $Y$. Denote by $V$ the fibre of $\xi$ over $y_{0}$ and denote by $\tau$ the trivial bundle of $R$-modules over $X$ with fibre $V$.
\begin{itemizeb}
\item Each morphism $\rT_{0} \colon y_{0} \to f(x_{0})$ in the fundamental groupoid $\Pi_{1}(Y)$ determines a lift of $f$ to a bundle map $\tau \to \xi$.
\item The two bundle maps $\tau \to \xi$ determined by two morphisms $\rT_{0},\rT'_{0} \colon y_{0} \to f(x_{0})$ differ by the automorphism of the trivial bundle $\tau$ given by the monodromy $\xi(\rT_{0}^{-1} \circ \rT'_{0}) \in \mathrm{Aut}_R(V)$.
\end{itemizeb}
\end{lem}
\begin{proof}
By unique path lifting, bundles of $R$-modules over $X$ (resp.~$Y$) are in one-to-one correspondence with functors $\Pi_{1}(X) \to R\lmod$ (resp.~$\Pi_{1}(Y) \to R\lmod$). Under this identification, a bundle map $\tau \to \xi$ lifting $f$ corresponds (uniquely) to a natural transformation $T \colon \tau \to \xi \circ \Pi_{1}(f)$. Since $\tau$ is the trivial functor at $V \in R\lmod$, this is determined by specifying, for each $x \in X$, a homomorphism $T_x \colon V \to \xi(f(x))$. Since $X$ is simply-connected, there is a unique morphism $x_{0} \to x$ in $\Pi_{1}(X)$, which we denote by $\gamma_{x_{0},x}$. We then define
\[
T_x := \xi \bigl( \Pi_{1}(f)(\gamma_{x_{0},x}) \circ \rT_{0} \bigr) \colon V = \xi(y_{0}) \longrightarrow \xi(f(x)).
\]
It is straightforward to check that this defines a natural transformation $T \colon \tau \to \xi \circ \Pi_{1}(f)$, using the fact that $\Pi_{1}(X)$ is an indiscrete groupoid (i.e.~it has a unique morphism between any two objects).

Replacing $\rT_{0}$ with $\rT'_{0}$ in the construction of the bundle map (natural transformation) has the effect of precomposing with the automorphism $\xi(\rT_{0}^{-1} \circ \rT'_{0})$ of $V$, which we may view as an automorphism of the trivial bundle $\tau$.
\end{proof}

\begin{rmk}
\label{rmk:tethers-lifting}
In the terminology of Definition~\ref{defn:tether}, the first point of Lemma~\ref{lem:tethers-lifting} says that, if $X$ is simply-connected and $Y$ is path-connected admitting a universal cover, then for any bundle $\xi \colon E \to Y$ of $R$-modules, a choice of tether determines a lift of $f \colon X \to Y$ to a bundle map of the form $X \times E|_{y_{0}} \to E$.
\end{rmk}

\begin{construction}
\label{construction:tethers-basis}
In the setting of Proposition~\ref{prop:module_structure_BM_homology}, we may consider the restriction of the map \eqref{eq:map-from-union-of-open-balls} to any open simplex of its domain, which we denote by $\eta_{\bw} \colon \mathring{\Delta}^{k} \to C_{\lambda}(S \smallsetminus A)$ for $\bw \in \cW_{\lambda}(\Gamma)$. Let us choose, once and for all, a base configuration $c_{0} \in C_{\lambda}(S \smallsetminus A)$ contained in the boundary $\partial(S \smallsetminus A)$. We take the barycentre $b_{0} \in \mathring{\Delta}^k$ to be the basepoint of the open simplex. Finally, we choose a tether for $\eta_{\bw}$ (with respect to $b_{0}$ and $c_{0}$), namely a path $\rT_{\bw}$ in $C_{\lambda}(S \smallsetminus A)$, up to endpoint-preserving homotopy, from $c_{0}$ to the image $\eta_{\bw}(b_{0})$ of the barycentre of the open simplex. By the first point of Lemma~\ref{lem:tethers-lifting} (and Remark~\ref{rmk:tethers-lifting}), this determines a lift of $\eta_{\bw}$ to a map of local systems $\mathring{\Delta}^{k} \times V \to \cL$, where $V = \cL|_{c_{0}}$ denotes the fibre of $\cL$ over $c_{0}$. Since $\eta_{\bw}$ is a proper map, we obtain an induced map on twisted Borel-Moore homology
\begin{equation}
\label{eq:induced-homomorphism-tether}
(\eta_{\bw},\rT_{\bw})_* \colon H_k^{\BM}(\mathring{\Delta}^{k} ; V) \longrightarrow H_k^{\BM}(C_{\lambda}(S \smallsetminus A) ; \cL).
\end{equation}
In particular, if $\cL$ is a local system of rank one, we may identify $V$ with the ground ring $R$ and the left-hand side of \eqref{eq:induced-homomorphism-tether} has a canonical generator given by the fundamental class $[\mathring{\Delta}^{k}]$. In this setting, we may define
\begin{equation}
\label{eq:explicit-basis-element}
e_{\bw} := (\eta_{\bw},\rT_{\bw})_*([\mathring{\Delta}^{k}]) \in H_k^{\BM}(C_{\lambda}(S \smallsetminus A) ; \cL).
\end{equation}
By Proposition~\ref{prop:module_structure_BM_homology}, the set $\{ e_{\bw} \mid \bw \in W_{\lambda}(\Gamma) \}$ is a free basis for $H_k^{\BM}(C_{\lambda}(S \smallsetminus A) ; \cL)$ over $R$.
\end{construction}

\begin{notation}\label{notation:basis_notation}
For brevity, we will often denote this homology class simply by $\bw$ instead of $e_{\bw}$, whenever this would not lead to confusion.
\end{notation}

\begin{rmk}
\label{rmk:tethers-basis-choices}
To be explicit, the choices involved in Construction~\ref{construction:tethers-basis} are the base configuration $c_{0} \in C_{\lambda}(S \smallsetminus A)$, the identification $\cL|_{c_{0}} \cong R$ and one tether $\rT_{\bw}$ for each $\bw \in W_{\lambda}(\Gamma)$. We will specify these choices later, when they are needed to do explicit computations; see for example Figures~\ref{fig:cloud-lemma}, \ref{fig:basis-alternative-picture} and \ref{fig:basis-alternative-picture-vertical} for illustrations of tethers $\rT_{\bw}$.
\end{rmk}

\begin{lem}
\label{lem:changing-tether}
In the setting of Construction~\ref{construction:tethers-basis} with $\cL$ a local system of rank one, changing the choice of tether $\rT_{\bw}$ has the effect of multiplying the homology class \eqref{eq:explicit-basis-element} by a unit of the ground ring $R$. More precisely, for two tethers $\rT_{\bw}$ and $\rT'_{\bw}$, we have
\[
(\eta_{\bw},\rT'_{\bw})_*([\mathring{\Delta}^{k}]) \;=\; \mu(\rT'_{\bw} . \bar{\rT}_{\bw}) (\eta_{\bw},\rT_{\bw})_*([\mathring{\Delta}^{k}]),
\]
where $\bar{\rT}_{\bw}$ denotes the reverse of the path $\rT_{\bw}$, so that $\rT'_{\bw} . \bar{\rT}_{\bw}$ is an element of $\pi_{1}(C_{\lambda}(S \smallsetminus A) , c_{0})$, and $\mu \colon \pi_{1}(C_{\lambda}(S \smallsetminus A) , c_{0}) \to R^\times$ denotes the monodromy of $\cL$ at $c_{0}$, using the identification $\cL|_{c_{0}} \cong R$.
\end{lem}
\begin{proof}
By the second point of Lemma~\ref{lem:tethers-lifting}, the two maps of local systems $\mathring{\Delta}^{k} \times V \to \cL$ differ by precomposition by the automorphism $\xi(\rT_{\bw}^{-1} \circ \rT'_{\bw}) \in \mathrm{Aut}_R(V)$. Here, $\xi$ is the local system $\cL$ viewed as a functor out of the fundamental groupoid, so its restriction to the automorphism group of $y_{0} = c_{0}$ is the monodromy action $\mu$. We may therefore rewrite this automorphism as $\mu(\rT'_{\bw} . \bar{\rT}_{\bw})$, where we have also switched to the usual notation convention for composition of paths, which goes from left to right and where $\bar{\phantom{\rT}}$ denotes the reverse of a path. Since we have $V = R$, this automorphism lies in $\mathrm{Aut}_R(R) = R^\times$ and it therefore acts via the canonical action of $R^\times$ on the $R$-module of maps of local systems from $\mathring{\Delta}^{k} \times R$ to $\cL$. Hence the two maps of local systems differ by multiplication by the scalar $\mu(\rT'_{\bw} . \bar{\rT}_{\bw}) \in R^\times$. It follows that the same is true for the induced maps \eqref{eq:induced-homomorphism-tether} on twisted Borel-Moore homology and thus in particular for the images \eqref{eq:explicit-basis-element} of $[\mathring{\Delta}^k]$ under these maps.
\end{proof}

\begin{construction}
\label{construction:tethers-basis-dual}
One may similarly specify a free generating set of the dual homology module $H_{k}^{\partial}(C_{\lambda}(S \smallsetminus A); \cL\otimes\cO)$ (see Corollary~\ref{coro:perfect-pairing}). In this case, rather than a proper map from an open $k$-simplex, each $\bw \in \cW_{\lambda}(\Gamma)$ indexes a map from a closed $k$-cube taking its boundary to the boundary of the manifold $C_{\lambda}(S \smallsetminus A)$. The construction is parallel to that of Construction~\ref{construction:tethers-basis}, using the same base configuration $c_{0} \in C_{\lambda}(S \smallsetminus A)$ as before, an identification $(\cL\otimes\cO)|_{c_{0}} \cong R$ and one tether $\rT_{\bw}$ for each $\bw \in W_{\lambda}(\Gamma)$. Since we have already chosen an identification $\cL|_{c_{0}} \cong R$ (see Remark~\ref{rmk:tethers-basis-choices}), an identification $(\cL\otimes\cO)|_{c_{0}} \cong R$ is determined by an identification $\cO|_{c_{0}} \cong R$, in other words a local orientation of the manifold $C_{\lambda}(S \smallsetminus A)$ at $c_{0}$.
\end{construction}

\section{Short exact sequences}\label{s:SES_homol_rep-func}

In this section, we construct the fundamental short exact sequences for homological representation functors of Theorem~\ref{thm:ses}. We start by recalling the categorical background of these short exact sequences in \S\ref{ss:background_preliminaries_SES}. Then we construct the short exact sequences for the functors of surface braid groups in \S\ref{ss:SES_surface_braid_groups}, and in \S\ref{ss:SES_mcg} for those of mapping class groups of surfaces.
Throughout \S\ref{s:SES_homol_rep-func}, we consider homological representation functors indexed by an ordered partition $\lambda = (\lambda_{1},\ldots,\lambda_{r}) \vdash k$ of an integer $k\geq1$ and by the depth $\ell\geq1$ of a lower central series.

\subsection{Background and preliminaries}\label{ss:background_preliminaries_SES}

This section recollects the key categorical tools that define the setting in which we unearth the short exact sequences of homological representation functors of \S\ref{ss:SES_surface_braid_groups} and \S\ref{ss:SES_mcg}. We also prepare the work of these sections with an important foreword in \S\ref{ss:preliminaries_SES}.

\subsubsection{Short exact sequences induced from the categorical framework}\label{sss:translation_background}

We recollect here the notions and first properties of \emph{translation}, \emph{difference} and \emph{evanescence} functors, which give rise to the key natural short exact sequences that we study for homological representation functors in \S\ref{ss:SES_surface_braid_groups} and \S\ref{ss:SES_mcg}.
The following definitions and results extend verbatim to the present slightly larger framework from the previous literature on this topic; see \cite{DV3}, \cite[\S 3]{soulieLM1} and \cite[\S 4]{soulieLMgeneralised} for instance. The various proofs are straightforward generalisations of this previous work. Hence the content of \S\ref{sss:translation_background} is just stated, without detailed justification, and we refer the reader to these sources for a comprehensive introduction to the following material.

For the remainder of \S\ref{ss:background_preliminaries_SES}, we fix an abelian category $\cA$, a strict left-module groupoid $(\cM,\natural)$ over a braided strict monoidal groupoid $(\cG,\natural,\zero)$, such that $(\cG,\natural,\zero)$ has no zero divisors and that $\mathrm{Aut}_{\cG}(\zero)=\{ \id_{\zero} \}$. We also assume that $\cM$ and $\cG$ are both small and skeletal, have the same set of objects identified with the non-negative integers $\bN$ with the standard notation $\tn$ to denote these objects as in \S\ref{sss:framework_homological_representation_functor}, and that both the monoidal and module structures $\natural$ are given on objects by addition.
In particular, this is consistent with the fact that $\zero$ is the unit for the monoidal structure of $\cG$. One quickly checks that all of the examples of $\cM$ and $\cG$ defined in \S\ref{ss:examples_homological_representations_functors} satisfy all of these assumptions.

For an object $\tn$ of $\cG$, let $\tau_{\tn}$ be the endofunctor of the functor category $\Fct(\langle \cG,\cM \rangle ,\cA)$ defined by $\tau_{\tn}(F):=F(\tn\natural-)$, called the \emph{translation} functor.
Let $i_{\tn} \colon \Identity\to\tau_{\tn}$ be the natural transformation of $\Fct(\langle \cG,\cM \rangle , \cA)$ defined by precomposition with the morphisms $[\tn,\id_{\tn\natural \tm}] \colon \tm\to \tn\natural \tm$ for each $\tm\in \obj(\cM)$. We define $\delta_{\tn}:=\mathrm{coker}(i_{\tn})$, called the \emph{difference} functor, and $\kappa_{\tn}:=\mathrm{ker}(i_{\tn})$, called the \emph{evanescence} functor; the associated canonical natural inclusion $\kappa_{\tn}\hookrightarrow\Identity$ and natural  projection $\tau_{\tn}\twoheadrightarrow\delta_{\tn}$ are denoted by $\Omega_{\tn}$ and $\Delta_{\tn}$ respectively. We also denote by $\tau_{\tn}^{d}$ and $\delta_{\tn}^{d}$ the $d$-fold iterations $\tau_{\tn}\cdots\tau_{\tn}\tau_{\tn}$ and $\delta_{\tn}\cdots\delta_{\tn}\delta_{\tn}$ respectively. The translation functor $\tau_{\tn}^{d}$ is by definition naturally isomorphic to $\tau_{d\tn}$.

The translation functor $\tau_{\tn}$ is exact and induces the following exact sequence of endofunctors of $\Fct(\langle \cG,\cM \rangle ,\mathcal{A})$:
\begin{equation}\label{eq:ESCaract}
0\longrightarrow\kappa_{\tn}\overset{\Omega_{\tn}}{\longrightarrow}\Identity\overset{i_{\tn}}{\longrightarrow}\tau_{\tn}\overset{\Delta_{\tn}}{\longrightarrow}\delta_{\tn}\longrightarrow0.
\end{equation}
Moreover, for a short exact sequence $0\to F\to G\to H\to 0$ in the category $\Fct(\langle \cG,\cM \rangle ,\mathcal{A})$, there is a natural exact sequence defined from the snake lemma:
\begin{equation}\label{eq:LESkappadelta}
0\longrightarrow\kappa_{\tn}F\longrightarrow\kappa_{\tn}G\longrightarrow\kappa_{\tn}H\longrightarrow\delta_{\tn}F\longrightarrow\delta_{\tn}G\longrightarrow\delta_{\tn}H\longrightarrow0.
\end{equation}
Finally, for $\tn,\tm\in\obj(\cG)$, $\tau_{\tn}$ and $\tau_{\tm}$ commute up to natural isomorphism coming from the braiding and they commute with limits and colimits; $\delta_{\tn}$ and $\delta_{\tm}$ commute up to natural isomorphism (induced by the braiding) and they commute with colimits; $\kappa_{\tn}$ and $\kappa_{\tm}$ commute up to natural isomorphism (induced by the braiding) and they commute with limits; and $\tau_{\tn}$ commute with the functors $\delta_{\tm}$ and $\kappa_{\tm}$ up to natural isomorphism.

\subsubsection{Preliminaries for the homological representation functors}\label{ss:preliminaries_SES}

Throughout \S\ref{ss:preliminaries_SES}, we consider any one of the homological representation functors of \S\ref{ss:examples_homological_representations_functors}. Following Notation~\ref{notation:generic_notation_star}, we denote it by $\fL^{\star}_{(\lambda,\ell)}$, with $\bZ[Q^{\star}_{(\lambda,\ell)}]$ the ground ring of the target module category, where $\star$ either stands for the blank space or $\star=\unt$.

\begin{rmk}
The short exact sequences that we exhibit in \S\ref{ss:SES_surface_braid_groups}--\S\ref{ss:SES_mcg} are applications of the exact sequence \eqref{eq:ESCaract}, with $\tn=\one$, to each homological representation functor of \S\ref{ss:examples_homological_representations_functors}. With a little more work, one could deduce analogous (though slightly more complex) results from \eqref{eq:ESCaract} for any object $\tn$. However, only the case $\tn=\one$ will be needed in \S\ref{s:poly_homol_rep-func} to prove our polynomiality results, so we shall not pursue this generalisation here.
\end{rmk}

\paragraph*{Subpartitions.}
Our descriptions of the difference functor $\delta_{\one}\fL^{\star}_{(\lambda,\ell)}$ in \S\ref{ss:SES_surface_braid_groups}--\S\ref{ss:SES_mcg} make key use of some appropriate partitions of $k-1$ obtained from $\lambda\vdash k$.

\begin{notation}\label{not:sets_partitions}
Let $k,k'\geq1$ be integers such that $k\geq k'$. For an ordered partition $\lambda = (\lambda_{1},\ldots,\lambda_{r})\vdash k$, we denote by $\{\lambda - k'\}$ the set of ordered partitions
\[
\left\{(\lambda_{1}-\lambda'_{1},\ldots,\lambda_{r}-\lambda'_{r}) \bigm| 0\leq \lambda'_{i}\leq \lambda_{i} \textrm{ such that } \sum_{1\leq l\leq r}\lambda'_{l}=k'\right\}.
\]
When $k'=1$, we denote by $\lambda[i]$ the element $(\lambda_{1},\ldots, \lambda_{i}-1,\ldots,\lambda_{r})$ of $\{\lambda - 1\}$, for each $1\leq i\leq r$ such that $\lambda_{i}\geq1$.

When $k'=2$, we denote by $\lambda[i,j]$ the element $(\lambda_{1},\ldots, \lambda_{i}-1,\ldots, \lambda_{j}-1,\ldots,\lambda_{r})$ of $\{\lambda - 2\}$, for each $1\leq i < j\leq r$ such that $\lambda_{i}\geq1$ and $\lambda_{j}\geq1$. We similarly denote by $\lambda[i,i]$ the element $(\lambda_{1},\ldots, \lambda_{i}-2,\ldots,\lambda_{r})$ of $\{\lambda - 2\}$, for each $1\leq i\leq r$ such that $\lambda_{i}\geq2$.

For partitions $\lambda = (\lambda_{1},\ldots,\lambda_{r})\vdash k$ and $\lambda' = (\lambda'_{1},\ldots,\lambda'_{s})\vdash k'$, we write $\lambda' \prec \lambda$ if for each $1\leq i\leq s$ we have $\lambda'_{i} \leq \lambda_{j_{i}}$ for $1\leq j_{1} < \cdots < j_{s} \leq r$. This is a partial ordering on tuples of positive integers.
\end{notation}

Furthermore, we deal with partitions where some blocks may be null as follows.
\begin{notation}\label{nota:partition_0_block}
Let $\upsilon = (\upsilon_{1},\ldots,\upsilon_{r})\vdash h$ with $h\geq0$ be a partition such that $0\leq \upsilon_{l}\leq h$ for all $1\leq l\leq r$. We denote by $\overline{\upsilon}$ the partition of $h$ obtained from $\upsilon$ by removing the $0$-blocks. Then, we always identify $\fL^{\star}_{(\upsilon,\ell)}$ with $\fL^{\star}_{(\overline{\upsilon},\ell)}$ since these functors are obviously isomorphic.
\end{notation}

\paragraph*{Twisted functors.}
Let us now consider a homological representation functor $\fL^{\star}_{(\lambda,\ell)} = \fL_{(\lambda,\ell)}$ that is \emph{twisted}, i.e.~where $\fL_{(\lambda,\ell)}\neq \fLu_{(\lambda,\ell)}$ has a category of twisted modules ${\bZ[Q_{(\lambda,\ell)}]}\lmod^{\tw}$ as its target; see Definition~\ref{def:cat_twisted_modules}. (Recall that we sometimes have $\fL_{(\lambda,\ell)}= \fLu_{(\lambda,\ell)}$, e.g.~if $\ell\leq 2$; see Lemma~\ref{lem:Qu_ell=2_useless}.)
The problem arising in this setting is that the category ${\bZ[Q_{(\lambda,\ell)}]}\lmod^{\tw}$ is not abelian (because it lacks a null object), while this is a necessary condition of the categorical framework to introduce the exact sequences of \S\ref{sss:translation_background} and later to define polynomiality in \S\ref{s:notions_polynomiality}.
However, this subtlety does not impact the core ideas we deal with, and we solve this minor issue by adopting the following convention.
\begin{convention}\label{convention:twisted_polynomiality}
Throughout \S\ref{s:SES_homol_rep-func}, when we consider a \emph{twisted} homological representation functor $\fL_{(\lambda,\ell)}$, we always postcompose it by the forgetful functor ${\bZ[Q_{(\lambda,\ell)}]}\lmod^{\tw} \to {\bZ}\lmod$ as in \eqref{eq:twisted_to_genuine_representation_functor}.
A fortiori, the target category of $\fL^{\star}_{(\lambda,\ell)}$ is either ${\bZ}\lmod$ if $\fL^{\star}_{(\lambda,\ell)}$ is \emph{twisted}, and it is ${\bZ[Q_{(\lambda,\ell)}]}\lmod$ otherwise.
Following Notation~\ref{notation:generic_notation_star}, we denote by ${\bZ[Q^{\star}_{(\lambda,\ell)}]}\lmod^{\bullet}$ the target category of $\fL^{\star}_{(\lambda,\ell)}$ under this convention.
\end{convention}

\begin{rmk}\label{rmk:impact_ell}
For the sake of simplicity, we do not fully detail the target categories of the functors of the short exact sequences of Theorems~\ref{thm:key_SES_classical_braids}, \ref{thm:key_SES_surface_braid_groups} and \ref{thm:key_SES_mapping_class_groups}. We however record here that this target is of the form $\bZ[Q_{(\lambda,\ell)}]\lmod$ if $\ell\leq 2$ (see Lemma~\ref{lem:Qu_ell=2_useless}), $\bZ[Q^{\unt}_{(\lambda,\ell)}]\lmod)$ if $\ell\geq 3$ and $\star = u$, and $\bZ\lmod$ if $\ell\geq 3$ and $\star$ is the blank space.
\end{rmk}

\paragraph*{Change of rings operations.}
Finally, we explain some key manipulations of the transformation groups associated to homological representation functors. Let $R$ be an associative unital ring.
For a category $\cC$ and a ring homomorphism $f\colon\bZ[Q]\to R$, the \emph{change of rings operation} on a functor $F\colon \cC\to {\bZ[Q]}\lmod^{*}$ consists in composing with the \emph{induced module functor} $f_{!} \colon {\bZ[Q]}\lmod^{*}\to R\lmod^{*}$, also known as the tensor product functor $R\otimes_{f}-$, where $*$ either stands for the blank space or $*=\tw$.
A key use of the change of rings operations is the following natural modification of the ground rings of homological representation functors with respect to partitions.

We consider partitions $\lambda = (\lambda_{1},\ldots,\lambda_{r})\vdash k$ and $\lambda' = (\lambda_{1}',\ldots,\lambda_{r}')\vdash k'$ such that $\lambda' \prec \lambda$ (see Notation~\ref{not:sets_partitions}).
In the notation of \S\ref{sss:framework_homological_representation_functor}, for each $1\leq i \leq r$ such that $\lambda'_{i} < \lambda_{i}$, there is an evident analogue of the short exact sequence \eqref{eq:Birman_Fadell_Neuwirth_SES} with $G_{\lambda'_{i},n}$ as quotient and $G_{(\lambda_{i}-\lambda'_{i},\lambda'_{i}),n}$ as the middle term. The section $s_{(\lambda_{i}-\lambda'_{i},\lambda'_{i}),n}$ of this short exact sequence provides an injection $G_{\lambda'_{i},n}\hookrightarrow G_{(\lambda_{i}-\lambda'_{i},\lambda'_{i}),n}$. Composing this with the canonical injection $G_{(\lambda_{i}-\lambda'_{i},\lambda'_{i}),n}\hookrightarrow G_{\lambda_{i},n}$, we obtain an injection $G_{\lambda'_{i},n} \hookrightarrow G_{\lambda_{i},n}$. Applying this procedure for each block, we obtain a canonical injection $G_{\lambda',n}\hookrightarrow G_{\lambda,n}$.
Now, for some fixed $\ell\geq1$, we consider the transformation groups $Q^{\star}_{(\lambda,\ell)}$ and $Q^{\star}_{(\lambda',\ell)}$ associated to homological representation functors $\fL^{\star}_{(\lambda,\ell)}$ and $\fL^{\star}_{(\lambda',\ell)}$ respectively.

\begin{lem}\label{lem:canonical_map_transformation_groups}
The canonical injection $G_{\lambda',n}\hookrightarrow G_{\lambda,n}$ induces a well-defined group homomorphism $\cQ_{(\lambda'\to\lambda,\ell)}\colon Q^{\star}_{(\lambda',\ell)} \to Q^{\star}_{(\lambda,\ell)}$. This homomorphism also makes the following square commute (where we temporarily shorten the notation $\B_{\lambda_{1},\ldots,\lambda_{r}}(\cS_{n})$ to $\B_{\lambda}(\cS_{n})$), where the top horizontal arrow is the stabilisation map that adds trivial strands to a braid:
\begin{equation}
\label{eq:phi-and-stabilisation}
\begin{tikzcd}
\B_{\lambda'}(\cS_{n}) \ar[rr] \ar[dd,two heads,"\phi_{(\lambda',\ell)}",swap] && \B_{\lambda}(\cS_{n}) \ar[dd,two heads,"\phi_{(\lambda,\ell)}"] \\
\\
Q_{(\lambda',\ell)} \ar[rr,"{\cQ_{(\lambda'\to\lambda,\ell)}}"] && Q_{(\lambda,\ell)}.
\end{tikzcd}
\end{equation}
\end{lem}
\begin{proof}
The canonical injection $G_{\lambda',n}\hookrightarrow G_{\lambda,n}$ induces the following commutative triangle:
\[\begin{tikzcd}
G_{\lambda',n}/\LCS_{\ell}\ar[d] \ar[r, two heads] &  G_{n}/\LCS_{\ell}\\
G_{\lambda,n}/\LCS_{\ell}  \ar[ur, two heads].
\end{tikzcd}\]
Taking kernels and the colimit as $n\to\infty$, we uniquely define the morphism $\cQ_{(\lambda'\to\lambda,\ell)}$.

The stabilisation map $\B_{\lambda'}(\cS_{n}) \to \B_{\lambda}(\cS_{n})$ along the top of \eqref{eq:phi-and-stabilisation} is the restriction (along the left-hand map of \eqref{eq:Birman_Fadell_Neuwirth_SES}) of the canonical injection $G_{\lambda',n}\hookrightarrow G_{\lambda,n}$. Instead of quotienting by $\LCS_\ell$ and then passing to kernels, we may equivalently (by the universal property of kernels) pass to the kernels first (to obtain the stabilisation map at the top of the diagram) and then pass to the quotients (to obtain $\cQ_{(\lambda'\to\lambda,\ell)}$ at the bottom of the diagram).
\end{proof}

In \S\ref{ss:SES_surface_braid_groups}--\S\ref{ss:SES_mcg}, we will apply a change of rings operation using morphisms of the type $\cQ_{(\lambda'\to\lambda,\ell)}$ and use the following property:

\begin{lem}\label{lem:change_of_rings_operations}
The change of rings operation $(\bZ[\cQ_{(\lambda'\to\lambda,\ell)}])_{!} \colon \bZ[Q^{\star}_{(\lambda',\ell)}]\lmod^{*} \to \bZ[Q^{\star}_{(\lambda,\ell)}]\lmod^{*}$ gives $\fL^{\star}_{(\lambda',\ell)}$ the same ground ring as $\fL^{\star}_{(\lambda,\ell)}$. In the case when $* = \tw$, it also gives $\fL^{\star}_{(\lambda',\ell)}$ the same action of each group $G_{n}$ on $\bZ[Q^{\star}_{(\lambda',\ell)}]$ as $\fL^{\star}_{(\lambda,\ell)}$.
\end{lem}

Hence, the change of rings operation $(\bZ[\cQ_{(\lambda'\to\lambda,\ell)}])_{!}$ allows us to canonically switch the module structure of $\fL^{\star}_{(\lambda',\ell)}$ from $\bZ[Q^{\star}_{(\lambda',\ell)}]$ to $\bZ[Q^{\star}_{(\lambda,\ell)}]$, as well as the potential twisted structure, i.e.~actions of the groups $G_{n}$ on these modules. In particular, we use this type of operation to identify $\fL^{\star}_{(\lambda',\ell)}$ as a summand of the difference functor $\delta_{\one}\fL^{\star}_{(\lambda,\ell)}$ in \S\ref{ss:SES_surface_braid_groups}--\S\ref{ss:SES_mcg}.
This change of ground ring map is the identity in many situations and, regardless, it does not impact the key underlying structures of $\fL^{\star}_{(\lambda',\ell)}$ for our work.
Because these subtleties are minor observations and do not affect the key points of the reasoning, we choose to use the following conventions to simplify the notation:

\begin{convention}\label{conv:transformation_group_summand}
Throughout \S\ref{ss:SES_surface_braid_groups}--\S\ref{ss:SES_mcg}, some change of ground ring operations $(\bZ[\cQ_{(\lambda'\to\lambda,\ell)}])_{!}$ must sometimes be applied in order to properly identify the functor $\fL^{\star}_{(\lambda',\ell)}$ as a summand of $\delta_{\one}\fL^{\star}_{(\lambda,\ell)}$. We note here that this change of rings operation is \emph{non-zero}, i.e.~it does not factor through the zero module. However, we generally keep the notation $\fL^{\star}_{(\lambda',\ell)}$ for this modified functor for the sake of simplicity and to avoid overloading the notation, whenever these change of rings operations are clear from the context.
\end{convention}

The following lemma will be the key point justifying that a change of rings operation does not impact the results of \S\ref{ss:SES_surface_braid_groups}--\S\ref{ss:SES_mcg}; see Corollary~\ref{coro:change_of_ring}.

\begin{lem}\label{lem:change_of_rings_SES}
Let $Q$ be a group, $R$ a ring and $f\colon\bZ[Q]\to R$ a ring homomorphism. We consider a functor $F\colon \langle \cG,\cM \rangle\to {\bZ[Q]}\lmod$ such that $\kappa_{\one}F = 0$ and so that $\delta_{\one}F(\tn)$ is a free $\bZ[Q]$-module for each $\tn\in \obj (\cM)$. Then the exact sequence \eqref{eq:ESCaract} applied to the functor $f_{!}F$ induces the short exact sequence:
\begin{equation}\label{eq:SES_f!F}
0 \longrightarrow f_{!}F \longrightarrow \tau_{\one}(f_{!}F) \longrightarrow \delta_{\one}(f_{!}F)\longrightarrow 0.
\end{equation}

The same statement also holds if $F$ takes values in $\bZ[Q]\lmod^{\tw}$. In this case, the change of rings functor $f_{!}$ takes place at the level of twisted module categories $\bZ[Q]\lmod^{\tw} \to R\lmod^{\tw}$ but, as explained in Convention~\ref{convention:twisted_polynomiality}, all of the statements take place in the category of functors into $\bZ\lmod$, via post-composing with the forgetful functor.
\end{lem}
\begin{proof}
We first note that the change of rings functor $f_{!}$ is right-exact and clearly commutes with the translation functor $\tau_{\one}$, so $\tau_{\one}(f_{!}F)\cong f_{!}(\tau_{\one}F)$ and $\delta_{\one}(f_{!}F)\cong f_{!}(\delta_{\one}F)$. Now, for each $\tn\in \obj (\cM)$, since $\kappa_{\one}F(\tn) = 0$ and the $\bZ[Q]$-module $\delta_{\one}F(\tn)$ is projective, the exact sequence \eqref{eq:ESCaract} applied to the functor $F$ induces a \emph{split} short exact sequence $0 \to F(\tn) \to \tau_{\one}F(\tn) \to \delta_{\one}F(\tn)\to 0$. The result then follows from the fact that the functor $f_{!}$ turns split short exact sequences of $\bZ[Q]$-modules into split short exact sequences of $R$-modules.
\end{proof}

\begin{coro}\label{coro:change_of_ring}
The results of Theorems~\ref{thm:key_SES_classical_braids}, \ref{thm:key_SES_surface_braid_groups}, \ref{thm:key_SES_mapping_class_groups} and \ref{thm:SES_MCG_alternatives} hold after any change of rings operation.
\end{coro}
\begin{proof}
Each one of the short exact sequences \eqref{eq:key_SES_classical_braids_classical}, \eqref{eq:key_SES_braid_surface} and \eqref{eq:key_SES_braid_coro}, the isomorphisms \eqref{eq:key_SES_MCG_orientable} and \eqref{eq:key_SES_MCG_non_orientable} (and their alternatives of Theorem~\ref{thm:SES_MCG_alternatives}) follow from the exact sequence \eqref{eq:ESCaract} applied to some homological representation functor $F$. In each case, it follows from the results on the module structures from \S\ref{ss:free-bases}--\S\ref{ss:dual-bases} that $F(\tn)$, $\tau_{\one}F(\tn)$ and $\delta_{\one}F(\tn)$ are free $\bZ[Q]$-modules for all $\tn\in \obj (\cM)$, and that $\kappa_{\one}F = 0$, so we may apply Lemma~\ref{lem:change_of_rings_SES}.
\end{proof}

\subsection{For surface braid group functors}\label{ss:SES_surface_braid_groups}

We prove in \S\ref{ss:SES_classical_braids} and \S\ref{ss:SES_surface_braids} our results on short exact sequences of Theorem~\ref{thm:ses} for surface braid group functors. The proofs of these results require certain diagrammatic arguments, which we explain first in \S\ref{sss:preliminary_SES_surface_braid_groups}.
For the remainder of \S\ref{ss:SES_surface_braid_groups}, we consider any one of the homological representation functors of \eqref{def:Lawrence_bigelow_further_ell} and \eqref{eq:ell_nilpotent_hom_rep_braid_surface} with the classical (i.e.~non-vertical) setting. Following Notation~\ref{notation:generic_notation_star}, we generically denote it by $\fL^{\star}_{(\lambda,\ell)}(S)$ where $\star$ either stands for the blank space or $\star=\unt$, $S\in\{\bD,\Sigma_{g,1},\N_{h,1}\}$ with $g\geq1$ and $h\geq1$, and the associated transformation group is denoted by $Q^{\star}_{(\lambda,\ell)}(S)$.

\subsubsection{First properties and diagrammatic arguments}\label{sss:preliminary_SES_surface_braid_groups}

We recall that we have introduced model graphs $\bI_{n}$, $\bW^{\Sigma}_{g}$ and $\bW_h^{\N}$ in Notation~\ref{nota:tail_and_wedge}, modelled by Figures~\ref{fig:model-or-braids} and \ref{fig:model-nor-braids}. Let us abbreviate by writing $\bW^{S}=\bW^{\Sigma}_{g}$ if $S=\Sigma_{g,1}$ and $\bW^{S}=\bW_h^{\N}$ if $S=\N_{h,1}$.

\begin{rmk}
For convenience, the diagrammatic arguments below illustrated in Figures~\ref{fig:cloud}--\ref{fig:cloud-splitting} are drawn only with the case $S = \Sigma_{g,1}$. Indeed, only the planar parts of these figures are relevant to the arguments.
\end{rmk}

We follow the notation of \S\ref{ss:free-bases} and consider, for each $\tn\in \obj(\Beta^{S})$, the surface braid group $\B_{n}(S)$-representation $\fL^{\star}_{(\lambda,\ell)}(S)(\tn)= H_{k}^{\BM}(C_{\lambda}(\bD_{n}\natural S);\bZ[Q^{\star}_{(\lambda,\ell)}(S)])$ where $\bZ[Q^{\star}_{(\lambda,\ell)}(S)]$ is a rank-one local system explained in the general construction of \S\ref{ss:general_construction}. Proposition~\ref{prop:module_structure_BM_homology} describes a free basis for the underlying $\bZ[Q^{\star}_{(\lambda,\ell)}(S)]$-module of $\fL^{\star}_{(\lambda,\ell)}(S)(\tn)$ indexed by labellings of the embedded graph $\bI_{n} \vee \bW^{S} \subset S$ by words in the blocks of $\lambda$.
We use the following slight simplification of Notation~\ref{nota:tail_and_wedge} for \S\ref{ss:SES_surface_braid_groups}.

\begin{notation}\label{nota:genus_basis}
Let $g_{S}$ denote the integer $2g$ if $S = \Sigma_{g,1}$ and $h$ if $S = \N_{h,1}$.
We will generically write the basis elements \eqref{eq:generator_surface_braid_hom_rep_orientable} and \eqref{eq:generator_surface_braid_hom_rep_non-orientable} as $(w_{1},\ldots,w_{g_{S}+n-1})$.
Also, choosing an ordering of the edges of the embedded graph $\bI_{n} \vee \bW^{S} \subset \bD_{n}\natural S$, we write $(w_{1},\ldots,w_{g_{S}+n-1})\vdash \lambda$ to indicate that the basis element $(w_{1},\ldots,w_{g_{S}+n-1})$ is associated to the ordered partition $\lambda$.
\end{notation}

The representation $\tau_{\one}\fL^{\star}_{(\lambda,\ell)}(S)(\tn)=H_{k}^{\BM}(C_{\lambda}(\bD_{1+n}\natural S);\bZ[Q^{\star}_{(\lambda,\ell)}(S)])$ has a very similar description as a free module. More precisely, the only difference with respect to $\fL^{\star}_{(\lambda,\ell)}(S)(\tn)$ is that there is one extra edge of the embedded graph $\bI_{1+n} \vee \bW^{S} \subset \bD_{1+n}\natural S$.
We write this as $(w_{0},w_{1},\ldots,w_{g_{S}+n-1}) \vdash \lambda$, where $w_{0}$ is the label of the extra edge. Now, we recall from Lemma~\ref{lem:extension_Quillen_source_homological_rep_functors} that the image of the canonical morphism $[\one,\id_{\one\natural\tn}]\in \langle \Beta,\Beta^{S} \rangle(\tn,\one\natural\tn)$ under $\fL^{\star}_{(\lambda,\ell)}(S)$ is the map $H_{k}^{\BM}(C_{\lambda}(\bD_{n}\natural S);\bZ[Q^{\star}_{(\lambda,\ell)}(S)]) \to H_{k}^{\BM}(C_{\lambda}(\bD_{1+n}\natural S);\bZ[Q^{\star}_{(\lambda,\ell)}(S)])$ induced by the evident inclusion of configuration spaces $C_{\lambda}(\bD_{n}\natural S)\hookrightarrow C_{\lambda}(\bD_{1}\natural(\bD_{n}\natural S))$ coming from the boundary connected sum with the left-most copy of $\bD_{1}$. In terms of the above free generating sets coming from Proposition~\ref{prop:module_structure_BM_homology}, the map $\fL^{\star}_{(\lambda,\ell)}(S)([\one,\id_{\one\natural\tn}])$ is thus the injection defined by
\begin{equation}
\label{eq:image_injection_hom_rep_functor_surface}
(w_{1},\ldots,w_{g_{S}+n-1}) \longmapsto (\varnothing,w_{1},\ldots,w_{g_{S}+n-1}).
\end{equation}
The cokernel $\delta_{\one}\fL^{\star}_{(\lambda,\ell)}(S)(\tn)$ of $i_{\one}(\fL^{\star}_{(\lambda,\ell)}(S))_{\tn}$ may therefore be described as the free $\bZ[Q^{\star}_{(\lambda,\ell)}(S)]$-module generated by all edge-labellings $(w_{0},w_{1},\ldots,w_{g_{S}+n-1}) \vdash \lambda$ of $\bI_{1+n} \vee \bW^{S}$ such that $w_{0}$ is not the empty word. On the other hand, we have $\kappa_{\one}\fL^{\star}_{(\lambda,\ell)}(S)(\tn)=0$.

Furthermore, if $k\geq2$, the direct sum $\bigoplus_{i=1}^r \tau_{\one}\fL^{\star}_{(\lambda[i],\ell)}(S)(\tn)$ has a basis indexed by pairs $(i,(w_{0},w_{1},\ldots,w_{g_{S}+n-1}))$, where $1\leq i\leq r$ and $(w_{0},w_{1},\ldots,w_{g_{S}+n-1}) \vdash \lambda[i]$. There is an evident bijection between the basis for $\delta_{\one}\fL^{\star}_{(\lambda,\ell)}(S)(\tn)$ and this basis for $\bigoplus_{i=1}^r \tau_{\one}\fL^{\star}_{(\lambda[i],\ell)}(S)(\tn)$ given by
\begin{equation}
\label{eq:definition-of-pkln}
(w_{0},w_{1},\ldots,w_{g_{S}+n-1}) \longmapsto (i,(w'_{0},w_{1},\ldots,w_{g_{S}+n-1})),
\end{equation}
where $w_{0} = iw'_{0}$ with $w_{0}$ and $w'_{0}$ words in the monoid $\tM(\{1,\ldots,r\})$ (see Notation~\ref{not:set_and_lengths_words}) while $i$ is seen as a letter in the alphabet $\{1,\ldots,r\}$. Extending by linearity, this bijection determines an isomorphism of free $\bZ[Q^{\star}_{(\lambda,\ell)}(S)]$-modules
\begin{equation}
\label{eq:isom-of-modules_braids}
\delta_{\one}\fL^{\star}_{(\lambda,\ell)}(S)(\tn) \overset{\cong}{\longrightarrow} \bigoplus_{i=1}^r \tau_{\one}\fL^{\star}_{(\lambda[i],\ell)}(S)(\tn).
\end{equation}

\paragraph*{The cloud lemma.}
In order to construct the fundamental short exact sequences of Theorem~\ref{thm:ses} for surface braid groups, we will need to show that the isomorphism of modules \eqref{eq:isom-of-modules_braids} is an isomorphism of representations. (In fact, we will show that it is moreover an isomorphism of functors as $\tn$ varies.) Assuming that $\tn\geq \two$, the key ingredient to prove this is Lemma~\ref{lem:cloud} below, which is pictorially summarised in Figure~\ref{fig:cloud}. To give the precise statement of the lemma, we first have to describe precisely what this figure is illustrating.

\begin{defn}
\label{defn:nu}
Given a word $w$ in the monoid $\tM(\{1,\ldots,r\})$, write $\nu(w) = (\nu(w)_1 , \ldots , \nu(w)_r)$, where $\nu(w)_i$ is the number of copies of the letter $i$ in $w$.
\end{defn}

\begin{defn}
\label{defn:varsigma}
Suppose we are given a properly-embedded, oriented open arc $\alpha$ in the surface $\bD_{1+n}\natural S$ equipped with a path from some point on $\alpha$ to the basepoint of $\bD_{1+n}\natural S$. Suppose further that we are also given a word $w$ as in Definition~\ref{defn:nu}. We then define $\varsigma(\alpha,w)$ to be the Borel-Moore cycle on $C_{\nu(w)}(\bD_{1+n}\natural S)$ given by the open singular simplex consisting of all configurations that lie on $\alpha$ and whose labels spell the word $w$ following the orientation of $\alpha$. As explained in \S\ref{ss:tethers}, in order for this to specify a Borel-Moore cycle for twisted homology, one needs to specify also a tether, namely a path from the base configuration to a configuration lying on $\alpha$. The base configuration consists of $\lvert w \rvert$ points contained in the boundary of $\bD_{1+n}\natural S$ close to the basepoint of the surface, and we specify the tether to be the path of configurations that follows $\lvert w \rvert$ paths parallel to the given path from $\alpha$ to the basepoint.
\end{defn}

The left-hand side of Figure~\ref{fig:cloud} depicts a Borel-Moore cycle on the partitioned configuration space $C_{\lambda}(\bD_{1+n}\natural S)$, representing an element of $\tau_{\one}\fL^{\star}_{(\lambda,\ell)}(S)(\tn) = H_{k}^{\BM}(C_{\lambda}(\bD_{1+n}\natural S);\bZ[Q^{\star}_{(\lambda,\ell)}(S)])$ and thus determining an element of the quotient $\delta_{\one}\fL^{\star}_{(\lambda,\ell)}(S)(\tn)$ of $\tau_{\one}\fL^{\star}_{(\lambda,\ell)}(S)(\tn)$. This cycle is assumed to be of a specific form, depending on the following two choices:
\begin{itemizeb}
\item[(1)] A non-empty word $w_{0}$ in the monoid $\tM(\{1,\ldots,r\})$. This determines a partition $\nu(w_{0})$ (see Definition~\ref{defn:nu}) and hence also $\lambda - \nu(w_{0})$ given by componentwise subtraction: for $1\leq i\leq r$ its $i$th term is $\lambda_{i} - \nu(w_{0})_{i}$. Note that this is non-negative since $\lambda_{i}$ is the number of copies of the letter $i$ in the concatenation $w_{0} w_{1} \cdots w_{g_S + n - 1}$.
\item[(2)] A cycle $\xi$ on $C_{\lambda - \nu(w_{0})}(\bD_{1+n}\natural S)$ supported in the blue shaded region (the ``\emph{cloud}'').
\end{itemizeb}
Denote by $\alpha$ the open green arc depicted on the left-hand side of Figure~\ref{fig:cloud}, equipped with the grey path to the basepoint (on the boundary of the surface). By Definition~\ref{defn:varsigma}, together with the word $w_{0}$, this determines a cycle $\varsigma(\alpha,w_{0})$ on $C_{\nu(w_{0})}(\bD_{1+n}\natural S)$. The product $\xi \times \varsigma(\alpha,w_{0})$ is then a cycle on $C_{\lambda}(\bD_{1+n}\natural S)$; this is the cycle that we consider on the left-hand side of Figure~\ref{fig:cloud}.

The right-hand side has a similar description, where we decompose the non-empty word $w_{0}$ as $iw'_{0}$. The ``$i$'' component simply says that the element lies in the $i$th summand on the right-hand side of \eqref{eq:isom-of-modules_braids}. The second, pictorial component depicts the Borel-Moore cycle $\xi \times \varsigma(\alpha,w'_{0})$ on $C_{\lambda[i]}(\bD_{1+n}\natural S)$, representing an element of $\tau_{\one}\fL^{\star}_{(\lambda[i],\ell)}(S)(\tn)$.

\begin{figure}[ht]
    \centering
    \includegraphics[scale=0.5]{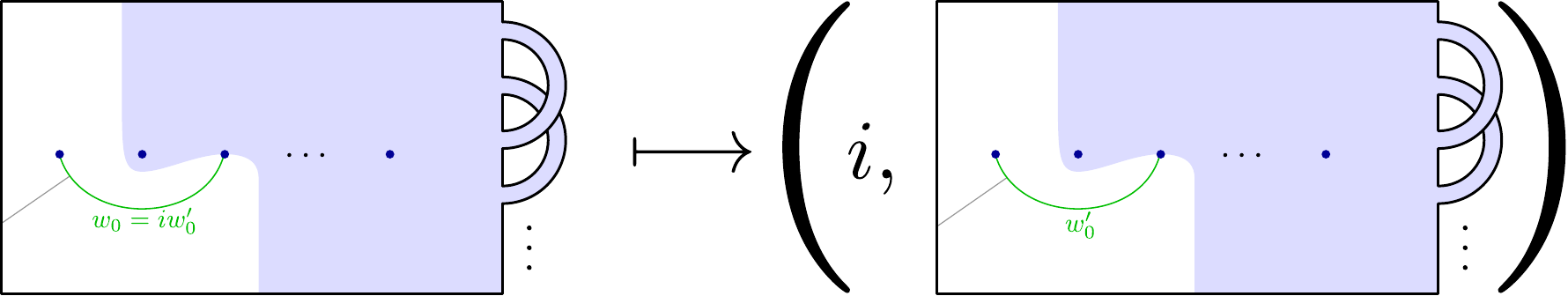}
    \caption{The Borel-Moore cycles considered in Lemma~\ref{lem:cloud}.}
    \label{fig:cloud}
\end{figure}

With these descriptions and notation, we may now give the precise statement of the lemma.

\begin{lem}[``\emph{Cloud lemma}''.]
\label{lem:cloud}
For any choices of $w_{0} = iw'_{0}$ and $\xi$ as above, the module isomorphism \eqref{eq:isom-of-modules_braids} sends the element $[\xi \times \varsigma(\alpha,w_{0})]$ to the element $(i,[\xi \times \varsigma(\alpha,w'_{0})])$.
\end{lem}

\begin{proof}
Let us write $\bw = (w_{0},w_{1},\ldots,w_{g_{S}+n-1}) \vdash \lambda$ and define an operation $(-)^{\rfl}$ on such tuples of words by setting $\bw^{\rfl} = (w^{\rfl}_{0},w_{1},\ldots,w_{g_{S}+n-1})$, where $w_{0} = jw^{\rfl}_{0}$; in other words the operation $(-)^{\rfl}$ removes the first letter of the first word of $\bw$. Note that $\bw^{\rfl} \vdash \lambda[j]$.

For the sake of clarity, we prefer in this proof to denote by $e_{\bw}$ (rather than $\bw$, see Notation~\ref{notation:basis_notation}) the standard basis element corresponding to the tuple $\bw$, depicted in Figure~\ref{fig:graphs-configurations}. Then we denote by $e'_{\bw}$ the corresponding dual basis element depicted in Figure~\ref{fig:models-dual}. As explained in \S\ref{ss:tethers}, to specify these fully, we must choose a tether for each $e'_{\bw}$: we choose these tethers to be paths to the base configuration (in the boundary of the surface) given by sending all of the configuration points along parallel horizontal trajectories, as depicted in Figure~\ref{fig:cloud-lemma}. Using this notation, by definition, the isomorphism \eqref{eq:isom-of-modules_braids} takes $e_{\bw}$ to the element $e_{\bw^{\rfl}}$ in the $j$th summand of the right-hand side.

Let us first decompose the element represented by the left-hand side of Figure~\ref{fig:cloud} as
\begin{equation}
\label{eq:cloud-lemma-LHS}
\mathrm{LHS} := [\xi \times \varsigma(\alpha,w_{0})] = \sum_{\bw \vdash \lambda} \varrho_{\bw} . e_{\bw} = \sum_{j=1}^r \sum_{\substack{\bw \vdash \lambda \\ w_{0} = jw^{\rfl}_{0}}} \varrho_{\bw} . e_{\bw} ,
\end{equation}
where $\varrho_{\bw} = \langle \mathrm{LHS} , e'_{\bw} \rangle \in \bZ[Q^{\star}_{(\lambda,\ell)}(S)]$ is the value of the intersection pairing \eqref{eq:pairing} evaluated on the left-hand side of Figure~\ref{fig:cloud} and the dual basis element $e'_{\bw}$. This is illustrated on the left-hand side of Figure~\ref{fig:cloud-lemma}. From that figure, it is clear that $\varrho_{\bw} = 0$ unless $j=i$, so we may remove the outer sum and set $j=i$ in the formula \eqref{eq:cloud-lemma-LHS}. The image of the element \eqref{eq:cloud-lemma-LHS} under the map \eqref{eq:isom-of-modules_braids} may therefore be written as
\begin{equation}
\label{eq:cloud-lemma-LHS-image}
\sum_{\substack{\bw \vdash \lambda \\ w_{0} = iw^{\rfl}_{0}}} \varrho_{\bw} . (i, e_{\bw^{\rfl}}) .
\end{equation}
On the other hand, the element represented by the right-hand side of Figure~\ref{fig:cloud} decomposes as
\begin{equation}
\label{eq:cloud-lemma-RHS}
\mathrm{RHS} := (i,[\xi \times \varsigma(\alpha,w'_{0})]) = \sum_{\bv \vdash \lambda[i]} \mu_{\bv} . (i,e_{\bv}) ,
\end{equation}
where $\mu_{\bv} = \langle \mathrm{RHS} , e'_{\bv} \rangle  \in \bZ[Q^{\star}_{(\lambda,\ell)}(S)]$ is the value of the intersection pairing \eqref{eq:pairing} evaluated on the right-hand side of Figure~\ref{fig:cloud} and the dual basis element $e'_{\bv}$. This is illustrated on the right-hand side of Figure~\ref{fig:cloud-lemma}. There is clearly a bijection between the two indexing sets of the sums above given by sending $\bw$ to $\bv = \bw^{\rfl}$. Thus, in order to prove that $\eqref{eq:cloud-lemma-LHS-image} = \eqref{eq:cloud-lemma-RHS}$, as desired, it remains to show that we have an equality of coefficients; in other words, we must prove that
\begin{equation}
\label{eq:equality-of-coefficients}
\langle \mathrm{LHS} , e'_{\bw} \rangle = \varrho_{\bw} = \mu_{\bw^{\rfl}} = \langle \mathrm{RHS} , e'_{\bw^{\rfl}} \rangle
\end{equation}
for each $\bw \vdash \lambda$ such that $w_{0} = iw^{\rfl}_{0}$.

To prove the equality \eqref{eq:equality-of-coefficients}, we first recall, in the next few paragraphs below, some details about how the intersection pairings $\langle \mathrm{LHS} , e'_{\bw} \rangle$ and $\langle \mathrm{RHS} , e'_{\bw^{\rfl}} \rangle$ may be computed. This is well-established in the literature, but we include it in order to fix the notation and conventions that we will use in our calculations. For further details, see \cite[\S 2.1]{bigelow2001braid} or \cite[\S 4.3]{PSIN} (when the surface is a disc) or \cite[\S 7]{BlanchetPalmerShaukat} (for more general orientable surfaces).

\emph{\textbf{Calculating intersection pairings in general.}}
We first consider $\langle \mathrm{LHS} , e'_{\bw} \rangle$. We may assume without loss of generality that the Borel-Moore homology class denoted by $\mathrm{LHS}$ (the left-hand side of Figure~\ref{fig:cloud}) is represented, similarly to Definition~\ref{defn:varsigma}, by configuration spaces on a collection of pairwise disjoint, properly embedded, open arcs (each equipped with a tether, i.e.~a path to the basepoint of the surface), one of these being the arc depicted and the others being contained in the shaded ``cloud''. Indeed, this is due to the basis that we have described in \S\ref{s:free_generating_sets}. The dual basis element $e'_{\bw}$ is represented by the cycle given by the red vertical (and horizontal, in the handles) arcs on the left-hand side of Figure~\ref{fig:cloud-lemma} (also equipped with tethers). We assume that these intersect the arcs representing $\mathrm{LHS}$ transversely, in particular at finitely many points. This ensures that the cycles on the configuration space $C_{\lambda}(\bD_{1+n}\natural S)$ representing the homology classes $\mathrm{LHS}$ and $e'_{\bw}$ intersect at finitely many points. We shall generally denote these intersection points by $p$, and we emphasise that each $p$ is a \emph{configuration}, so it consists of $k$ points on $\bD_{1+n}\natural S$.

\begin{figure}[tb]
    \centering
    \includegraphics[scale=0.6]{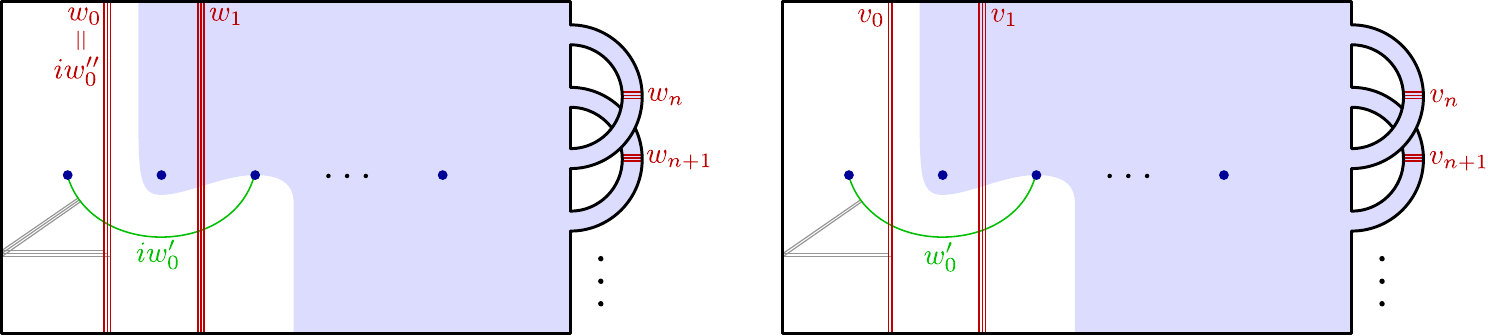}
    \caption{An illustration of the intersection pairings $\varrho_{\bw} = \langle \mathrm{LHS} , e'_{\bw} \rangle$ (left) and $\mu_{\bw^{\rfl}} = \langle \mathrm{RHS} , e'_{\bw^{\rfl}} \rangle$ (right). Here, $\mathrm{LHS}$ and $\mathrm{RHS}$ refer to the Borel-Moore homology classes depicted on the left-hand side and right-hand side of Figure~\ref{fig:cloud} respectively.}
    \label{fig:cloud-lemma}
\end{figure}

The value of the pairing $\langle \mathrm{LHS} , e'_{\bw} \rangle \in \bZ[Q^{\star}_{(\lambda,\ell)}(S)]$ is then a sum of terms $\epsilon_{p} \phi_{(\lambda,\ell)}(\gamma_{p})$ indexed by these (finitely many) intersection configurations $p$. Here $\epsilon_{p} \in \{\pm 1\}$ is a sign, $\gamma_{p}$ is a based loop in the configuration space $C_{\lambda}(\bD_{1+n}\natural S)$ and $\phi_{(\lambda,\ell)} \colon \pi_{1}(C_{\lambda}(\bD_{1+n}\natural S)) \to Q^{\star}_{(\lambda,\ell)}(S)$ is the homomorphism determining the local system in the definition of $\fL^{\star}_{(\lambda,\ell)}$; see \S\ref{sss:transformation_groups}.
The loop $\gamma_{p}$ is constructed as the concatenation of four paths of configurations
\begin{equation}
\label{eq:concatenation}
* \rightsquigarrow x \rightsquigarrow p \rightsquigarrow y \rightsquigarrow * ,
\end{equation}
where $*$ is the base configuration. The first path $* \rightsquigarrow x$ is a tether for the submanifold representing the homology class $\mathrm{LHS}$ (so $x$ is a point on this submanifold), the last path $y \rightsquigarrow *$ is the reverse of a tether for $e'_{\bw}$ (so $y$ is a point on $e'_{\bw}$), the second is any path in $\mathrm{LHS}$ from $x$ to the intersection configuration $p$ and the third is any path in $e'_{\bw}$ from $p$ to $y$.

The intersection pairing $\langle \mathrm{RHS} , e'_{\bw^{\rfl}} \rangle$ has an almost identical description, the only difference being that one vertical red arc (containing a single point in the $i$th block of $\lambda$) has been removed and the green arc labelled by $iw'_{0}$ is now labelled by $w'_{0}$, so its left-most point (in the $i$th block of $\lambda$) has also been removed. The tethers have also been modified correspondingly: one of the parallel grey paths from the basepoint to the green arc has been removed, as has the grey path to the (removed) left-most vertical red arc. This means, a priori, that we use the quotient map $\phi_{\lambda[i],\ell}$ rather than $\phi_{\lambda,\ell}$ and the pairing takes values in $\bZ[Q^{\star}_{(\lambda[i],\ell)}(S)]$ rather than $\bZ[Q^{\star}_{(\lambda,\ell)}(S)]$. However, by Convention~\ref{conv:transformation_group_summand}, we implicitly apply the ring homomorphism $\bZ[\cQ_{(\lambda[i]\to\lambda,\ell)}]$ in order to interpret this as an element of $\bZ[Q^{\star}_{(\lambda,\ell)}(S)]$. Hence in fact we are using the composition $\cQ_{(\lambda[i]\to\lambda,\ell)} \circ \phi_{\lambda[i],\ell}$ instead of $\phi_{\lambda,\ell}$ and the pairing takes values in $\bZ[Q^{\star}_{(\lambda,\ell)}(S)]$.

\emph{\textbf{Calculating the intersection pairings in our setting.}}
To compare the two elements of $\bZ[Q^{\star}_{(\lambda,\ell)}(S)]$ corresponding to the left-hand side and right-hand side of \eqref{eq:equality-of-coefficients}, first notice that there is a bijection of intersection points $\mathrm{RHS} \cap e'_{\bw^{\rfl}} \to \mathrm{LHS} \cap e'_{\bw}$ given by $p \mapsto \bar{p} = \{p_{0}\} \cup p$, where $p_{0} \in \bD_{1+n}\natural S$ is the unique intersection point between the left-most vertical red arc and the curved (green) arc on the left-hand side of Figure~\ref{fig:cloud-lemma}; see the zoomed-in Figure~\ref{fig:cloud-lemma-zoomed}. It therefore suffices to check that we have $\epsilon_{\bar{p}} = \epsilon_{p}$ and $\phi_{(\lambda,\ell)}(\gamma_{\bar{p}}) = \cQ_{(\lambda[i]\to\lambda,\ell)}(\phi_{(\lambda[i],\ell)}(\gamma_{p}))$, where the values with a subscript $\bar{p}$ are computed using the left-hand side of Figure~\ref{fig:cloud-lemma} and those with a subscript $p$ are computed using the right-hand side of Figure~\ref{fig:cloud-lemma}.

The loops $\gamma_{p}$ and $\gamma_{\bar{p}}$ are both constructed as concatenations of four paths of configurations of the form \eqref{eq:concatenation}. Since the configuration $\bar{p}$ is obtained from $p$ by adjoining one point $p_{0} \in \bD_{1+n}\natural S$, it follows that $\gamma_{\bar{p}}$ is obtained from $\gamma_{p}$ by adjoining one loop in $\bD_{1+n}\natural S$ passing through $p_{0}$. This loop is highlighted in Figure~\ref{fig:cloud-lemma-zoomed}. By the commutativity of the square \eqref{eq:phi-and-stabilisation} of Lemma~\ref{lem:canonical_map_transformation_groups} (with $n$ replaced by $n+1$ and setting $\lambda' := \lambda[i]$), the equation $\phi_{\lambda,\ell}(\gamma_{\bar{p}}) = \cQ_{(\lambda[i]\to\lambda,\ell)}(\phi_{\lambda[i],\ell}(\gamma_p))$ that we need to prove will follow if we prove that the stabilisation map along the top of \eqref{eq:phi-and-stabilisation} sends $\gamma_{p}$ to $\gamma_{\bar{p}}$. The stabilisation map simply adjoins a \emph{stationary} point in the boundary of the surface, so what we must show is that adjoining the loop highlighted in Figure~\ref{fig:cloud-lemma-zoomed} to $\gamma_{p}$ is the same, up to homotopy, as adjoining a stationary point in the boundary. In other words, writing $s(\gamma_{p})$ for the stabilisation of $\gamma_{p}$, we must show that the based loops $s(\gamma_{p})$ and $\gamma_{\bar{p}}$ are homotopic in $C_{\lambda}(\bD_{1+n}\natural S)$.

To see this, we will restrict attention to the small (light blue) shaded region on the left of Figure~\ref{fig:cloud-lemma-zoomed}, denoted by $D$. We first note that, at all times during the loops $s(\gamma_{p})$ and $\gamma_{\bar{p}}$, there are exactly $\lvert w_{0} \rvert$ configuration points in $D$ and the other $k - \lvert w_{0} \rvert$ configuration points remain disjoint from $D$. (The number $\lvert w_{0} \rvert$ arises because it is the number of intersection points between the curved (green) arc and the vertical (red) arcs on the right of $D$.) Moreover, the loops $s(\gamma_{p})$ and $\gamma_{\bar{p}}$ are identical outside of $D$, so if we write $s(\gamma_{p})|_{D}$ and $\gamma_{\bar{p}}|_{D}$ for the restrictions of $s(\gamma_{p})$ and $\gamma_{\bar{p}}$ to $\lvert w_{0} \rvert$-point subconfigurations supported on $D$, it will suffice to show that $s(\gamma_{p})|_{D}$ is homotopic to $\gamma_{\bar{p}}|_{D}$ in $C_{\lvert w_{0} \rvert}(D)$. During $\gamma_{\bar{p}}|_{D}$, the configuration points travel in parallel anticlockwise around the boundary of $D$ without `twisting' (i.e.~we may think of the configuration as lying on a small interval, which travels around the boundary of $D$ without rotating).
This is homotopic to the constant loop by shrinking the loop along which the configuration points travel within $D$ until it is constant (here it is important that there is no twisting). During $s(\gamma_{p})|_{D}$, one of the points remains stationary and the other $\lvert w_{0} \rvert - 1$ points travel in parallel anticlockwise around the boundary of $D$ without twisting. This is homotopic to the constant loop for exactly the same reason. Composing these two homotopies, we see that $s(\gamma_{p})|_{D} \simeq \gamma_{\bar{p}}|_{D}$ in $C_{\lvert w_{0} \rvert}(D)$. Extending this to the $\lambda$-partitioned configuration space by a constant homotopy outside of $D$ (i.e.~by simply not modifying the trajectories of the other $k - \lvert w_{0} \rvert$ configuration points), we deduce that $s(\gamma_{p})$ and $\gamma_{\bar{p}}$ are homotopic in $C_{\lambda}(\bD_{1+n}\natural S)$. This completes the proof that we have $\phi_{\lambda,\ell}(\gamma_{\bar{p}}) = \cQ_{(\lambda[i]\to\lambda,\ell)}(\phi_{\lambda[i],\ell}(\gamma_p))$.

\begin{figure}[htb]
    \centering
    \includegraphics[scale=1]{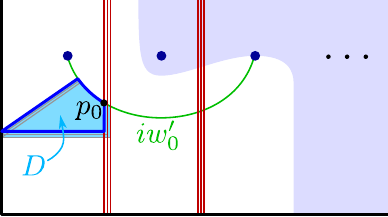}
    \caption{A zoomed-in portion of the left-hand side of Figure~\ref{fig:cloud-lemma}. The loop of configurations $\gamma_{\bar{p}}$ is obtained from the loop of configurations $\gamma_{p}$ by adjoining the loop (in the surface) highlighted in blue.}
    \label{fig:cloud-lemma-zoomed}
\end{figure}

Finally, we recall that the sign $\epsilon_{p}$ is the product of the local signs of the intersections of arcs in the surface at each point of $p = \{p_{1},\ldots,p_{k}\}$ together with an additional sign recording the parity of the permutation of the base configuration induced by $\gamma_{p}$. The local sign of the intersection at $p_{0}$ is $+1$, so adjoining $p_{0}$ does not change the product of the local signs. In addition, as a consequence of the paragraph above, the permutation induced by $\gamma_{\bar{p}}$ is obtained from the permutation induced by $\gamma_{p}$ by adjoining a fixed point; in particular they both have the same parity. Thus $\epsilon_{\bar{p}} = \epsilon_{p}$.
\end{proof}

\paragraph*{A decomposition property.}
We also will need the following (more elementary) diagrammatic fact in \S\ref{ss:SES_classical_braids}--\S\ref{ss:SES_surface_braids}. This is an identity, depicted in Figure~\ref{fig:cloud-splitting}, taking place in the Borel-Moore homology group $\fL^{\star}_{(\lambda,\ell)}(S)(\tn) = H_{k}^{\BM}(C_{\lambda}(\bD_{n}\natural S);\bZ[Q^{\star}_{(\lambda,\ell)}(S)])$.

The left-hand side of the figure represents a Borel-Moore cycle on $C_{\lambda}(\bD_{n}\natural S)$ defined as follows. Choose a word $w$ in the monoid $\tM(\{1,\ldots,r\})$ and a cycle $\xi$ on $C_{\lambda - \nu(w)}(\bD_{n}\natural S)$ supported in the blue shaded region. (Recall from Definition~\ref{defn:nu} the notation $\nu(w)$.) If we denote by $\alpha$ the green arc on the left-hand side of Figure~\ref{fig:cloud-splitting}, then the Borel-Moore cycle is $\xi \times \varsigma(\alpha,w)$ in the notation of Definition~\ref{defn:varsigma}.

The right-hand side of the figure represents a sum of Borel-Moore cycles, taken over all possible decompositions $w = w_{1}w_{2}$ of the word $w$ as a concatenation of two words. The Borel-Moore cycle in this sum corresponding to $(w_{1},w_{2})$ is $\xi \times \varsigma(\alpha_1,w_1) \times \varsigma(\alpha_2,w_2)$, where $\alpha_1 , \alpha_2$ are the two green arcs depicted on the right-hand side of Figure~\ref{fig:cloud-splitting}.

\begin{figure}[tb]
    \centering
    \includegraphics[scale=0.5]{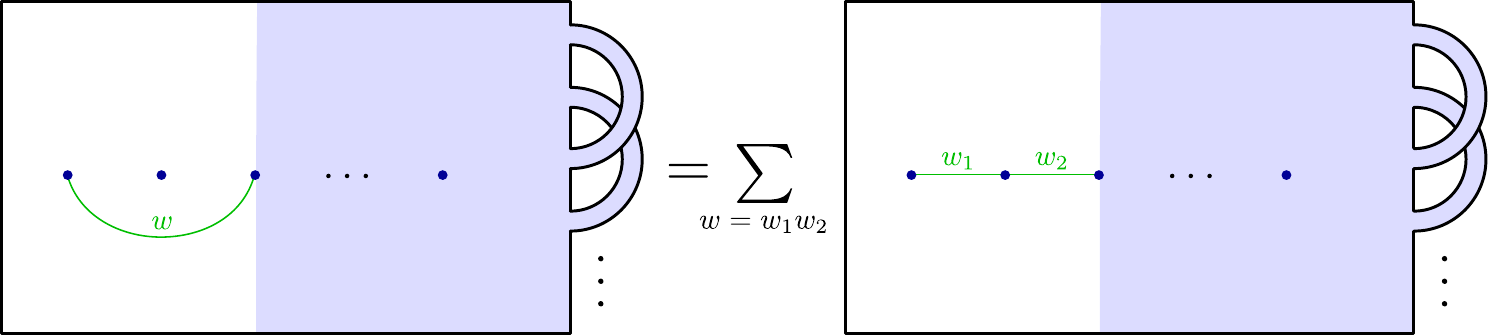}
    \caption{The identity of Lemma~\ref{lem:cloud-splitting} in the orientable case. The identity in the non-orientable case is the obvious analogue; only the left-hand side of each diagram, which is planar, is important.}
    \label{fig:cloud-splitting}
\end{figure}

\begin{lem}
\label{lem:cloud-splitting}
For any choices of a cycle $\xi$ and a word $w$, we have the relation
\[
[\xi \times \varsigma(\alpha,w)] \;\; = \sum_{w=w_{1}w_{2}}[\xi \times \varsigma(\alpha_1,w_1) \times \varsigma(\alpha_2,w_2)]
\]
in $\fL^{\star}_{(\lambda,\ell)}(S)(\tn) = H_{k}^{\BM}(C_{\lambda}(\bD_{n}\natural S);\bZ[Q^{\star}_{(\lambda,\ell)}(S)])$.
\end{lem}

\begin{proof}
We follow the notation of the proof of Lemma~\ref{lem:cloud}. The result follows immediately by verifying that each side of the equation evaluates to the same element of the ground ring when applying the intersection pairing $\langle - , e'_{\bv} \rangle$ with the dual basis element $e'_{\bv}$ for each $\bv = (v_{1},v_{2},v_{3}\ldots) \vdash \lambda$. (Details of how these intersection pairings are computed are explained in the proof of Lemma~\ref{lem:cloud} above.)
\end{proof}

\subsubsection{For classical braid groups}\label{ss:SES_classical_braids}

Our aim here is to prove Theorem~\ref{thm:ses} for each $(\lambda,\ell)$-Lawrence-Bigelow functor $\LB_{(\lambda,\ell)}$ and its untwisted version $\LBu_{(\lambda,\ell)}$ of \S\ref{sss:representations_classical_braid_groups}.
The arguments for this are exactly the same for both $\LB_{(\lambda,\ell)}$ and $\LBu_{(\lambda,\ell)}$. So, following \S\ref{sss:representations_classical_braid_groups} and \S\ref{sss:preliminary_SES_surface_braid_groups}, we henceforth speak of the functor $\fL^{\star}_{(\lambda,\ell)}(\bD)=\LB^{\star}_{(\lambda,\ell)}$ where $\star$ either stands for the blank space or $\star=\unt$.

From now on in \S\ref{ss:SES_classical_braids}, we assume that $k\geq2$. For any $\tn\in\obj(\Beta)$, we recall from the preliminary study of \S\ref{sss:preliminary_SES_surface_braid_groups} that $\kappa_{\one}\LB^{\star}_{(\lambda,\ell)}(\tn)=0$ and $\delta_{\one}\LB^{\star}_{(\lambda,\ell)}(\tn)$ is the free $\bZ[Q^{\star}_{(\lambda,\ell)}(\bD)]$-module with basis given by the tuples $(w_{0},w_{1},\ldots,w_{n-1})\vdash \lambda$ such that $\vert w_{0}\vert\geq 1$, which identifies as a $\bZ[Q^{\star}_{(\lambda,\ell)}(\bD)]$-module to the direct sum $\bigoplus_{1\leq j\leq r} \tau_{\one}\LB^{\star}_{(\lambda[j],\ell)}(\tn)$ via the isomorphism \eqref{eq:isom-of-modules_braids} (if $\tn\geq\one$; it is trivial otherwise). From now on, we denote this isomorphism by $(\mathfrak{p}_{(\lambda,\ell)})_{\tn}$, setting $(\mathfrak{p}_{(\lambda,\ell)})_{\zero}$ to be the null morphism. We now prove the main result of \S\ref{ss:SES_classical_braids}:

\begin{thm}\label{thm:key_SES_classical_braids}
For each $\lambda\vdash k\geq2$ and $\ell\geq1$, the exact sequence \eqref{eq:ESCaract} induces the short exact sequences:
\begin{equation}
\label{eq:key_SES_classical_braids_classical}
\begin{tikzcd}
0 \ar[r] & \LB^{\star}_{(\lambda,\ell)} \ar[r] & \tau_{\one}\LB^{\star}_{(\lambda,\ell)} \ar[r] & \underset{1\leq j\leq r}{\bigoplus}\tau_{\one}\LB^{\star}_{(\lambda[j],\ell)} \ar[r] & 0
\end{tikzcd}
\end{equation}
in $\Fct(\langle \Beta, \Beta\rangle ,{\bZ[Q^{\unt}_{(\lambda,\ell)}(S)]}\lmod)$ if $\star = u$, and in $\Fct(\langle \Beta, \Beta \rangle ,{\bZ[Q_{(\lambda,\ell)}(S)]}\lmod^{\bullet})$ if $\star$ is the blank space.
\end{thm}

\begin{proof}
As a consequence of the preliminary study above, we already know that the evaluation of \eqref{eq:key_SES_classical_braids_classical} at each object $\tn$ is a short exact sequence of $\bZ[Q^{\star}_{(\lambda,\ell)}(\bD)]$-modules. Hence the key point is to prove the compatibility with respect to the morphisms of $\langle \Beta, \Beta\rangle$. To achieve this, it suffices to show that the $\bZ[Q^{\star}_{(\lambda,\ell)}(\bD)]$-module isomorphisms $\{(\mathfrak{p}_{(\lambda,\ell)})_{\tn}\}_{\tn\in\obj(\Beta)}$ define an isomorphism $\mathfrak{p}_{(\lambda,\ell)}\colon\delta_{\one}\LB^{\star}_{(\lambda,\ell)}\overset{\sim}{\to}\bigoplus_{1\leq j\leq r}\tau_{\one}\LB^{\star}_{(\lambda[j],\ell)}$ in $\Fct(\langle \Beta, \Beta\rangle ,{\bZ[Q^{\star}_{(\lambda,\ell)}(\bD)]}\lmod^{\bullet})$.

As a first step, we prove that assembling these module isomorphisms defines an isomorphism in the category $\Fct(\Beta,{\bZ[Q^{\star}_{(\lambda,\ell)}(\bD)]}\lmod^{\bullet})$, in other words that each $(\mathfrak{p}_{(\lambda,\ell)})_{\tn}$ is a $\B_{n}$-module morphism. The braid group $\B_{n}$ being trivial for $n\in\{0,1\}$, we consider $\tn\geq\two$ and prove the commutation of $(\mathfrak{p}_{(\lambda,\ell)})_{\tn}$ with respect to the action of any Artin generator $\sigma_{i}$ of $\B_{n}$ with $1\leq i\leq n-1$.
Since $\tau_{\one}$ is the left-to-right translation-by-one operator, the morphisms $\tau_{\one}\LB^{\star}_{(\lambda[j],\ell)}(\sigma_{i})$ for all $1\leq j\leq r$ and $\delta_{\one}\LB^{\star}_{(\lambda,\ell)}(\sigma_{i})$ are defined by the action of the generator $\sigma_{i+1}$ on the Borel-Moore homology classes supported on the embedded graph $\bI_{1+n}$; see Figure~\ref{fig:model-or-braids} with $g=0$.

For $i=1$, we use Figure~\ref{fig:cloud} (with $g=0$) to illustrate the actions of $\tau_{\one}\LB^{\star}_{(\lambda[j],\ell)}(\sigma_{1})$ (the right-hand side of Figure~\ref{fig:cloud}) and of $\delta_{\one}\LB^{\star}_{(\lambda,\ell)}(\sigma_{1})$ (the left-hand side of Figure~\ref{fig:cloud}) on the basis elements $(w'_{0},\ldots,w_{n})$ and $(j w'_{0},\ldots,w_{n})$. Namely, by the choice of Convention~\ref{convention:braiding_braids}, the labelled green arc is the image of the left-most edge of the graph $\bI_{1+n}$, which we will denote by $(1,2)$ (using the left-to-right enumeration of its vertices), under the action of $\sigma_{2}$, which swaps the points $2$ and $3$ anticlockwise, while the images of the other edges are concentrated in the blue shaded region. It thus follows from Lemma~\ref{lem:cloud} and the definition of $(\mathfrak{p}_{(\lambda,\ell)})_{\tn}$ that $(\mathfrak{p}_{(\lambda,\ell)})_{\tn}\circ \delta_{\one}\LB^{\star}_{(\lambda,\ell)}(\tn)(\sigma_{1}) =(\bigoplus_{1\leq j\leq r}\tau_{\one}\LB^{\star}_{(\lambda[j],\ell)}(\sigma_{1}))\circ (\mathfrak{p}_{(\lambda,\ell)})_{\tn}$.

For $i\geq 2$, we note that the action of $\sigma_{i+1}$ is supported in the subsurface containing the right-most subgraph $\bI_{n}\subset\bI_{1+n}$ (disjoint from the left-most edge $(1,2)$). (This subsurface is the one whose image under $\sigma_{1}$ is the blue shaded region of Figure~\ref{fig:cloud-lemma}.) Hence the generator $\sigma_{i+1}$ acts trivially on the parts of the cycles representing the basis elements $(w'_{0},\ldots,w_{n-1})$ and $(jw'_{0},\ldots,w_{n-1})$ corresponding to the words $w'_{0}$ and $jw'_{0}$.
It thus follows from the definition of $(\mathfrak{p}_{(\lambda,\ell)})_{\tn}$ (see \eqref{eq:definition-of-pkln}) that we have $(\mathfrak{p}_{(\lambda,\ell)})_{\tn}\circ \delta_{\one}\LB^{\star}_{(\lambda,\ell)}(\tn)(\sigma_{i}) =(\bigoplus_{1\leq j\leq r}\tau_{\one}\LB^{\star}_{(\lambda[j],\ell)}(\sigma_{i}))\circ (\mathfrak{p}_{(\lambda,\ell)})_{\tn}$ for $i\geq2$. We have thus verified that $\mathfrak{p}_{(\lambda,\ell)}$ is a natural transformation in $\Fct(\Beta,{\bZ[Q^{\star}_{(\lambda,\ell)}(\bD)]}\lmod^{\bullet})$.

Now, by Lemma~\ref{lem:criterionnaturaltransfo}, it suffices to check relation \eqref{eq:criterion''} in order to prove that $\mathfrak{p}_{(\lambda,\ell)}$ extends to a natural transformation in $\Fct(\langle \Beta, \Beta\rangle,{\bZ[Q^{\star}_{(\lambda,\ell)}(\bD)]}\lmod^{\bullet})$. We consider an object $\tn\geq\one$, the proof being trivial for $\tn=\zero$. We note from the formula \eqref{eq:formula_morphism_{i}d_plus_morphism}, from the composition rule \eqref{eq:composition_rule}, from the fact that $\natural$ is a strict monoidal structure and from the functoriality of $\LB^{\star}_{(\lambda,\ell)}$, that $\tau_{\one}\LB^{\star}_{(\lambda,\ell)}([\one,\id_{\one\natural\tn}])=\LB^{\star}_{(\lambda,\ell)}(\sigma_{1}^{-1})\circ\LB^{\star}_{(\lambda,\ell)}([\one,\id_{\two\natural\tn}])$. (Here, we recall from \S\ref{sss:category_surface_braid_groups} that we have the canonical identification $\sigma_{1}=b_{\one,\one}^{\Beta}\natural \id_{\tn}$ defined by the braiding $b_{\one,\one}^{\Beta} \colon \one\natural \one\cong \one\natural \one$.)

The map $\LB^{\star}_{(\lambda,\ell)}([\one,\id_{\two\natural\tn}])$ clearly has a similar description to that of $\LB^{\star}_{(\lambda,\ell)}([\one,\id_{\one\natural\tn}])$ in \S\ref{sss:preliminary_SES_surface_braid_groups}, simply by adding a right-most extra edge and so replacing $n$ with $n+1$ in the defining assignment \eqref{eq:image_injection_hom_rep_functor_surface}. Specifically, the map $\LB^{\star}_{(\lambda,\ell)}([\one,\id_{\two\natural\tn}])$ is the morphism induced by the embedding of $\bI_{1+n}$ into $\bI_{2+n}$ defined by sending each edge $(i,i+1)$ for $1\leq i \leq n $ to the edge $(i+1,i+2)$.
Hence there are no configuration points on the left-most edge $(1,2)$ of $\bI_{2+n}$ in the image of $\LB^{\star}_{(\lambda,\ell)}([\one,\id_{\two\natural\tn}])$. Also, the morphism $\LB^{\star}_{(\lambda,\ell)}(\sigma_{1}^{-1})$ is defined by the action of $\sigma_{1}^{-1}$ on $\bI_{2+n}$, so the image of $(2,3)$ is the green arc on the left-hand side of Figure~\ref{fig:cloud-splitting}.
For each $1\leq j\leq r$, we thus deduce from Lemma~\ref{lem:cloud-splitting} that $((\mathfrak{p}_{(\lambda,\ell)})_{\one\natural\tn}\circ\delta_{\one}\LB^{\star}_{(\lambda,\ell)}([\one,\id_{\one\natural\tn}]))(j w'_{0},\ldots,w_{n})$ and $\tau_{\one}\LB^{\star}_{(\lambda[j],\ell)}(\tn)([\one,\id_{\one\natural\tn}])(w'_{0},\ldots,w_{n})$ are both equal to $\sum_{u_{0}v_{0}=w'_{0}}(u_{0},v_{0},w_{1},\ldots,w_{n})$.
Therefore, it follows from the definition of $(\mathfrak{p}_{(\lambda,\ell)})_{\tn}$ and an evident induction on $\tm$ (the base case $\tm = \one$ being the previous paragraph) that $(\bigoplus_{1\leq j\leq r}\tau_{\one}\LB^{\star}_{(\lambda[j],\ell)}([\tm,\id_{\tm\natural\tn}]))\circ (\mathfrak{p}_{(\lambda,\ell)})_{\tn}$ is equal to $(\mathfrak{p}_{(\lambda,\ell)})_{\tm\natural\tn}\circ\delta_{\one}\LB^{\star}_{(\lambda,\ell)}([\tm,\id_{\tm\natural\tn}])$.
Hence relation \eqref{eq:criterion''} is satisfied for each $\tn\in\obj(\Beta)$, so Lemma~\ref{lem:criterionnaturaltransfo} implies that $\mathfrak{p}_{(\lambda,\ell)}$ is a natural isomorphism on $\langle \Beta, \Beta\rangle$, as desired.

For the above arguments, we stress that, whenever we deal with a \emph{twisted} homological representation functor, the action of $\delta_{\one}\LB^{\star}_{(\lambda,\ell)}$ on the ground ring $\bZ[Q^{\star}_{(\lambda,\ell)}(\bD)]$ does not affect any point of the reasoning, since $\bigoplus_{1\leq j\leq r}\tau_{\one}\LB^{\star}_{(\lambda[j],\ell)}$ is automatically equipped with the same action via the implicit change of rings of Convention~\ref{conv:transformation_group_summand} for each summand; see Lemma~\ref{lem:change_of_rings_operations}.
\end{proof}

\begin{rmk}\label{rmk:no_clear_splitting_SES_calssical_braid_groups}
There is no obvious splitting for the short exact sequences of functors \eqref{eq:key_SES_classical_braids_classical}.
For instance, there is an obvious splitting of \eqref{eq:key_SES_classical_braids_classical} at the level of modules, given by sending each basis element $(w_{0},\ldots,w_{n})\in \tau_{\one}\LB_{(\lambda[j],\ell)}(\tn)$ for $1\leq j\leq r$ to $(j w_{0},\ldots,w_{n})$. However, one may check that this splitting does not commute with the action of $\sigma_{1}$; in particular it is not a natural transformation of functors on $\langle \Beta, \Beta\rangle$.
\end{rmk}

\paragraph*{Applications for kernels of representations.}
A remarkable consequence of the short exact sequences of Theorem~\ref{thm:key_SES_classical_braids} is the following inclusion \eqref{eq:inclusion_kernels} of kernels of homological representations encoded by Lawrence-Bigelow functors, which then allows us to prove Corollary~\ref{coro:faithfulness}.

We note that, by construction, any $\LB_{(\lambda,\ell)}(\tn+1)$ is a representation of $\B_{n+1}$, which we may consider as a representation of $\B_{n}$ by restriction.

\begin{prop}
\label{prop:kernels}
Whenever $\lambda' \prec \lambda$ (see Notation~\ref{not:sets_partitions}) and $\ell\geq1$, we have an inclusion
\begin{equation}
\label{eq:inclusion_kernels}
\mathrm{ker}\bigl(\LB_{(\lambda,\ell)}(\tn+1)\bigr) \subseteq \mathrm{ker}\bigl(\LB_{(\lambda',\ell)}(\tn+1)\bigr)
\end{equation}
of kernels of $\B_{n}$-representations.
\end{prop}
\begin{proof}
By Theorem~\ref{thm:key_SES_classical_braids}, there is an epimorphism of functors $\tau_{\one}\LB_{(\lambda,\ell)} \twoheadrightarrow \tau_{\one}\LB_{(\lambda[j],\ell)}$ for any $1\leq j\leq r$. Repeating this finitely many times, we therefore obtain an epimorphism $\tau_{\one}\LB_{(\lambda,\ell)} \twoheadrightarrow \tau_{\one}\LB_{(\lambda',\ell)}$. Restricting to the automorphism group of the object $\tn$, we obtain a surjection of $\B_{n}$-representations $\LB_{(\lambda,\ell)}(\tn+1) \twoheadrightarrow \LB_{(\lambda',\ell)}(\tn+1)$, which implies the claimed inclusion of kernels.
\end{proof}

\begin{rmk}
Inclusions of kernels of representations of $\B_{n}(S)$, $\MCGo_{g,1}$ and $\MCGno_{h,1}$ analogous to those of Proposition~\ref{prop:kernels} follow by the same reasoning from the short exact sequences \eqref{eq:key_SES_braid_surface}, \eqref{eq:key_SES_MCG_orientable} and \eqref{eq:key_SES_MCG_non_orientable}.
\end{rmk}

\begin{proof}[Proof of Corollary~\ref{coro:faithfulness}]
By hypothesis we have $(2) \prec \lambda$, so it suffices by Proposition~\ref{prop:kernels} to prove that $\LB_{((2),\ell)}(\tn+1)$ is faithful as a $\B_{n}$-representation. For $\ell = 2$, it is proven in \cite{bigelow2001braid} (see also \cite{KrammerLK}) that $\LB_{((2),2)}(\tn+1)$ is faithful as a $\B_{n+1}$-representation and hence, by restriction, also as a $\B_{n}$-representation. In \cite[Rem.~$4.8$]{PSIN}, it is explained how to deduce from this that $\LB_{((2),\ell)}(\tn+1)$ is faithful for all $\ell\geq 2$.
\end{proof}

\subsubsection{For braid groups on surfaces different from the disc}\label{ss:SES_surface_braids}

We deal here with the short exact sequences for the homological representation functors $\fL_{(\lambda,\ell)}(\Sigma_{g,1})$, $\fLu_{(\lambda,\ell)}(\Sigma_{g,1})$, $\fL_{(\lambda,\ell)}(\N_{h,1})$ and $\fLu_{(\lambda,\ell)}(\N_{h,1})$ of \S\ref{sss:representations_surface_braid_groups}; see Theorem~\ref{thm:key_SES_surface_braid_groups}.
The arguments for our work in this section are analogous regardless of which of the homological representation functors amongst this list we consider. For the sake of simplicity and to avoid repetition, we thus pool the key steps and common arguments for the remainder of \S\ref{ss:SES_surface_braids}, only emphasising the (minor) differences when necessary. Following \S\ref{sss:representations_surface_braid_groups} and \S\ref{sss:preliminary_SES_surface_braid_groups}, we use the standard notation $\fL^{\star}_{(\lambda,\ell)}(S)$, where $\star$ either stands for the blank space or $\star=\unt$, $S$ is either $\Sigma_{g,1}$ or $\N_{h,1}$ with $g,h\geq1$, $Q^{\star}_{(\lambda,\ell)}(S)$ is the associated transformation group and $\Beta^{S}$ is the associated groupoid.

Furthermore, it will be convenient to consider various ``cut versions'' of these homological representation functors defined on $\langle \Beta , \Beta^{S} \rangle$. In fact, this definition makes sense more generally:

\begin{defn}[\emph{Cut functors.}]
\label{defn:cut-functors}
Let $\cC$ be a category whose objects form a totally-ordered set and in which there are no morphisms $a \to b$ if $a>b$. For such a category $\cC$, a functor $F \colon \cC \to R\lmod$ and an object $c$ of $\cC$, we define the truncation $F_{\mid\geq c} \colon \cC \to R\lmod$ on objects by $F_{\mid\geq c}(a) = F(a)$ for $a\geq c$ and $F_{\mid\geq c}(a) = 0$ for $a<c$ and on morphisms by $F_{\mid\geq c}(f) = F(f)$ if the domain of $f$ is $\geq c$ and $F_{\mid\geq c}(f) = 0$ otherwise.
\end{defn}

In the case of $\cC = \langle \Beta , \Beta^{S} \rangle$, the objects form the totally-ordered set $\bN$. This ``cut'' alteration is negligible for our later study of polynomiality (see the proof of Corollary~\ref{coro:polynomiality} for surface braid groups in \S\ref{ss:polynomiality_surface_braid_groups}), while the ``cut'' subfunctors are much more convenient to deal with (see Remark~\ref{rmk:conjsecture_result_without_cut}).

From now on in \S\ref{ss:SES_surface_braids}, we assume that $k\geq2$. We recall from the preliminary study of \S\ref{sss:preliminary_SES_surface_braid_groups} that $\kappa_{\one}\fL^{\star}_{(\lambda,\ell)}(S)(\tn)=0$ and that $\delta_{\one} \fL^{\star}_{(\lambda,\ell)}(S)(\tn)$ is the free $\bZ[Q^{\star}_{(\lambda,\ell)}(S)]$-module with basis given by the tuples of words $(w_{0},w_{1},\ldots,w_{g_{S}+n-1})\vdash \lambda$ such that $\vert w_{0}\vert\geq 1$ for each $\tn\in\obj(\Beta^{S})$. For $\tn\geq \two$, we denote by $(\mathfrak{p}_{(\lambda,\ell)})_{\tn}$ the $\bZ[Q^{\star}_{(\lambda,\ell)}(S)]$-module isomorphism \eqref{eq:isom-of-modules_braids}, which may be written as
\begin{equation}
\label{eq:isom-of-modules-truncated}
\delta_{\one}(\fL^{\star}_{(\lambda,\ell)}(S)_{\mid\geq2})(\tn) \overset{\cong}{\longrightarrow} \underset{1\leq j\leq r}{\bigoplus} (\tau_{\one}\fL^{\star}_{(\lambda[j],\ell)}(S))_{\mid\geq2}(\tn)
\end{equation}
since the truncations do not make any difference when $\tn\geq \two$.
We also set $(\mathfrak{p}_{(\lambda,\ell)})_{\zero}$ to be the trivial morphism; this gives an isomorphism of the form \eqref{eq:isom-of-modules-truncated} for $\tn=\zero$. However, for $\tn=\one$ there is no isomorphism of the form \eqref{eq:isom-of-modules-truncated}, since the right-hand side is zero, whereas we have $\delta_{\one}(\fL^{\star}_{(\lambda,\ell)}(S)_{\mid\geq2})(\one) \cong (\tau_{\one}\fL^{\star}_{(\lambda,\ell)}(S)_{\mid\geq2})(\one)$ as $\B_{1}(S)$-representations over $\bZ[Q^{\star}_{(\lambda,\ell)}(S)]$.

Our first goal in this section is to promote \eqref{eq:isom-of-modules-truncated} to a natural isomorphism of functors on $\langle \Beta , \Beta^{S} \rangle$, so we first need to correct the right-hand side on the object $\tn=\one$. To do this, we choose a certain extension of functors, via the following lemma:

\begin{lem}
\label{lem:extension-of-functors}
Let $\cM$ be a module over a braided monoidal groupoid $\cG$ that on objects is given by the monoid $\bN$ as a module over itself.
Let $F,G \colon \langle \cG , \cM \rangle \to R\lmod$ be two functors with $F(n)=0$ for $n\leq c-1$ and $G(n)=0$ for $n\geq c$ for an integer $c\geq 1$. Then there is a one-to-one correspondence between extensions of $G$ by $F$, i.e.~short exact sequences $0 \to F \to {?} \to G \to 0$, and morphisms $G(c-1) \to F(c)$ in $R\lmod$, given by evaluating the extension functor at $[\one,\id_c]$.
\end{lem}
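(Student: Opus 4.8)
The claim is essentially a ``no-obstruction'' statement for extensions: given functors $F, G \colon \langle \cG, \cM \rangle \to R\lmod$ supported on complementary truncations of $\bN$ (with $F$ vanishing below $c$ and $G$ vanishing at $c$ and above), an extension is pinned down by a single linear map $G(c-1) \to F(c)$. The plan is to build the correspondence explicitly in both directions and check they are mutually inverse. First I would show how an extension determines a map. Given a short exact sequence $0 \to F \to E \to G \to 0$, evaluate at the object $c$: since $F(c) \hookrightarrow E(c)$ and $G(c) = 0$, we get $E(c) \cong F(c)$; evaluate at $c-1$: since $F(c-1) = 0$, we get $E(c-1) \cong G(c-1)$. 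The morphism $[\one, \id_c] \colon c-1 \to c$ in $\langle \cG, \cM \rangle$ then induces $E([\one, \id_c]) \colon E(c-1) \to E(c)$, which under these identifications is a map $G(c-1) \to F(c)$. This is the assignment in one direction.

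For the reverse direction, given a map $\theta \colon G(c-1) \to F(c)$, I would construct an extension functor $E_\theta$ as follows. On objects, set $E_\theta(n) = F(n)$ for $n \geq c$, $E_\theta(n) = G(n)$ for $n \leq c-1$ (so $E_\theta(n) = F(n) \oplus G(n)$ uniformly, since one summand always vanishes). The structural point is to define $E_\theta$ on morphisms. A morphism $a \to b$ in $\langle \cG, \cM \rangle$ with $a \leq b$ factors (non-uniquely, but compatibly) through the morphisms $[\one, \id]$; more precisely, every morphism $a \to a'$ is of the form $[\tm, \varphi]$. If $a, b$ are both $\geq c$ or both $\leq c-1$, use $F$ or $G$ respectively. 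The only non-trivial case is $a \leq c-1 < c \leq b$: here I would define $E_\theta([\tm, \varphi])$ to be the composite
\[
G(a) \xrightarrow{\ G([\tm', \id])\ } G(c-1) \xrightarrow{\ \theta\ } F(c) \xrightarrow{\ F([\tm'', \varphi''])\ } F(b),
\]
where the morphism $a \to b$ is split at the object $c-1$ and then $c$ using that $\cM$'s objects are $\bN$ (so $a \to b$ passes through every intermediate object), as $[\tm, \varphi] = [\tm'', \varphi''] \circ [\one, \id_c] \circ [\tm', \id]$ for suitable factorisation with $b = \tm'' \natural c$ and $c = \one \natural (c-1) = \one \natural \tm' \natural a$. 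One must check this is well-defined (independent of the factorisation) and functorial; this uses that $\cM$ is skeletal with objects $\bN$, that $\natural$ is addition on objects, and the compatibility of $\theta$ with the automorphism-group actions —- but in fact $\theta$ is an \emph{arbitrary} linear map, so functoriality here reduces to checking that the action of $\Aut(a)$ on $G(a)$ and of $\Aut(b)$ on $F(b)$ are intertwined appropriately through $\theta$; since the relevant automorphisms (those fixing the decomposition, coming from $\cG$) act trivially on the image of the canonical inclusions in both $F$ and $G$ by the structure of $\langle \cG, \cM \rangle$, this works for any $\theta$. I would verify the criterion of Lemma~\ref{lem:extend_functor_Quillen} (or rather check the analogue of \eqref{eq:criterion'}) to confirm $E_\theta$ is a bona fide functor on $\langle \cG, \cM \rangle$, and that the evident inclusion $F \to E_\theta$ and projection $E_\theta \to G$ are natural transformations.

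Finally I would check the two constructions are mutually inverse: starting from $\theta$, build $E_\theta$, then extract its map by evaluation at $[\one, \id_c]$ — this returns $\theta$ by construction (since for $a = c-1$, $b = c$ the factorisation is trivial and $E_\theta([\one, \id_c]) = \theta$). Conversely, starting from an extension $E$, extract $\theta = E([\one, \id_c])$ (under the identifications above) and check $E_\theta \cong E$: both have the same underlying objectwise modules, the same $F$- and $G$-parts, and the ``crossing'' maps of $E$ are determined by $\theta$ together with the $F$- and $G$-structures, precisely because any crossing morphism $a \to b$ factors through $[\one, \id_c]$ and $E$ is a functor. The main obstacle I expect is the well-definedness and functoriality check for $E_\theta$ on morphisms — specifically, verifying that the ``crossing'' formula does not depend on the chosen factorisation through the object $c-1 \to c$, and that it composes correctly with morphisms on either side. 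This is where the hypotheses that the object monoid is $\bN$ (so factorisations are essentially unique up to automorphism) and that $F, G$ are genuine functors on $\langle \cG, \cM \rangle$ (encoding the compatibility \eqref{eq:criterion'}) are essential; the freedom of $\theta$ being arbitrary is what forces one to check that no automorphism obstruction arises, which holds because the canonical inclusions $i_{\tn}$ land in the part of $F(b)$, resp.\ $G(a)$, on which the ambient $\cG$-automorphisms act as they act on $F(c)$, resp.\ $G(c-1)$.
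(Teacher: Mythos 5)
Your overall route is the same as the paper's: on the subgroupoid $\cM$ the extension is forced because $F$ and $G$ have disjoint supports, every morphism of $\langle\cG,\cM\rangle$ factors as an automorphism composed with canonical morphisms $[\tm,\id]$, and the cocycle property reduces the remaining data to the single value $\theta$ on $[\one,\id_c]$. Your forward direction (an extension determines $\theta$, and is determined by it) is correct and is exactly the paper's argument via Lemma~\ref{lem:extend_functor_Quillen}.

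The gap is in the converse, where you assert that an \emph{arbitrary} $R$-linear $\theta\colon G(c-1)\to F(c)$ yields a functor because ``the relevant automorphisms act trivially on the image of the canonical inclusions''. That triviality statement only concerns the images of the canonical maps $F([\tm,\id])$ and $G([\tm,\id])$ (it comes from $[\tm,\varphi\circ(g\natural\id)]=[\tm,\varphi]$ for $g\in\Aut_{\cG}(\tm)$), and it says nothing about the image of $\theta$, since the relevant canonical map into $F(c)$ has source $F(c-1)=0$. In fact functoriality genuinely constrains $\theta$: in $\langle\cG,\cM\rangle$ one has $(\varphi'\natural\varphi'')\circ[\one,\id_c]=[\one,\id_c]\circ\varphi''$ for $\varphi'\in\Aut_{\cG}(\one)$ and $\varphi''\in\Aut_{\cM}(c-1)$, so any extension $E$ must satisfy $F(\varphi'\natural\varphi'')\circ\theta=\theta\circ G(\varphi'')$, which is precisely \eqref{eq:criterion'} at $(\one,c-1)$; the larger crossing pairs impose further conditions, e.g.\ $F(g\natural\id_{c-1})$ must fix the image of $F([\one,\id])\circ\theta$ pointwise for all $g\in\Aut_{\cG}(\two)$. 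These conditions are non-vacuous whenever the groups $\Aut_{\cM}(c-1)$ or $\Aut_{\cG}(\tm)$ act non-trivially, which is the case in the paper's own application ($\Aut_{\Beta^{S}}(\one)=\B_{1}(S)$). A concrete failure: take $\cG=\cM=\Beta$, $c=1$, $G$ atomic at $\zero$ with value $R$, $F=R[\Hom_{\langle\Beta,\Beta\rangle}(\one,-)]$ and $\theta=\id_{R}$; since $g\circ[\two,\id_{\two}]=[\two,\id_{\two}]$ for all $g\in\B_{2}$, any extension forces the composite $E(\zero)\to E(\two)$ to have image pointwise fixed by $\B_{2}$, which fails for this $F$ and $\theta$, so no extension realises this $\theta$. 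Thus the correspondence is with those $\theta$ satisfying the compatibility \eqref{eq:criterion'} on the crossing pairs (which holds for the specific map used in Definition~\ref{defn:extension-of-atomic}, a composite of natural maps), not with all of $\Hom_{R}(G(c-1),F(c))$; to be fair, the paper's one-sentence treatment of the converse elides the same verification, but your proposed justification of it is not correct as stated, and checking \eqref{eq:criterion'} there is exactly the missing content.
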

\begin{proof}
Since $F$ and $G$ have disjoint support, there is no choice about the action of any such extension on objects and on automorphisms; in other words, there is a unique extension of $G$ by $F$ when restricting the domain to the subgroupoid $\cG$. Lemma~\ref{lem:extend_functor_Quillen} describes the data and conditions required to extend a functor out of $\cG$ to a functor out of $\langle \cG , \cM \rangle$. In light of the requirement that this is an extension of $G$ by $F$, the only remaining choice is the value assigned to the morphism $[\one,\id_c]$; conversely, any such choice determines an extension.
\end{proof}

\begin{defn}[\emph{An extension by an atomic functor.}]
\label{defn:extension-of-atomic}
Denote by $(\tau_{\one}\fL^{\star}_{(\lambda,\ell)}(S)_{\mid\geq2})(\one)$ the ``\emph{atomic}'' functor out of $\langle \Beta , \Beta^{S} \rangle$ whose value on the object $\one$ is $(\tau_{\one}\fL^{\star}_{(\lambda,\ell)}(S)_{\mid\geq2})(\one)=\fL^{\star}_{(\lambda,\ell)}(S)_{\mid\geq2}(\two)$ and whose value on all other objects is the zero module.
Denote by $\widetilde{\bigoplus}_{1\leq j\leq r} (\tau_{\one}\fL^{\star}_{(\lambda[j],\ell)}(S))_{\mid\geq2}$ the unique extension of this atomic functor by the functor $\bigoplus_{1\leq j\leq r}(\tau_{\one}\fL^{\star}_{(\lambda[j],\ell)}(S))_{\mid\geq2}$ whose value on $[\one,\id_{\two}]$ is:
\begin{center}
\[
\begin{tikzcd}
(\tau_{\one}\fL^{\star}_{(\lambda,\ell)}(S)_{\mid\geq2})(\one) \ar[rrrr,"{\tau_{\one}(\fL^{\star}_{(\lambda,\ell)}(S)_{\mid\geq2})([\one,\id_{\two}])}"]
&&&& \tau_{\one}(\fL^{\star}_{(\lambda,\ell)}(S)_{\mid\geq2})(\two) \ar[rr,"{\Delta_{\one}(\fL^{\star}_{(\lambda,\ell)}(S)_{\mid\geq2})(\two)}"]
&& \delta_{\one}(\fL^{\star}_{(\lambda,\ell)}(S)_{\mid\geq2})(\two) \ar[d,"{\eqref{eq:isom-of-modules-truncated}}"] \\
&&&&&& \underset{1\leq j\leq r}{\bigoplus} (\tau_{\one}\fL^{\star}_{(\lambda[j],\ell)}(S))_{\mid\geq2}(\two).
\end{tikzcd}
\]
\end{center}
We also denote by $(\mathfrak{p}_{(\lambda,\ell)})_{1}$ the isomorphism
\begin{equation}\label{eq:isomorphism-in-degree-one}
\begin{tikzcd}
\delta_{\one}(\fL^{\star}_{(\lambda,\ell)}(S)_{\mid\geq2})(\one) \ar[rrrr,"{(\Delta_{\one}(\fL^{\star}_{(\lambda,\ell)}(S)_{\mid\geq2})(\one))^{-1}}"] &&&& (\tau_{\one}\fL^{\star}_{(\lambda,\ell)}(S)_{\mid\geq2})(\one) = \left(\underset{1\leq j\leq r}{\widetilde{\bigoplus}} (\tau_{\one}\fL^{\star}_{(\lambda[j],\ell)}(S))_{\mid\geq2}\right)(\one).
\end{tikzcd}
\end{equation}
\end{defn}

Using the extension of Definition~\ref{defn:extension-of-atomic}, we may now upgrade \eqref{eq:isom-of-modules-truncated} to an isomorphism of functors:

\begin{thm}\label{thm:key_SES_surface_braid_groups}
For any $S=\Sigma_{g,1} \text{ or } \N_{h,1}$, $\lambda\vdash k\geq2$ and $\ell\geq 1$, the exact sequence \eqref{eq:ESCaract} induces a short exact sequence
\begin{equation}
\label{eq:key_SES_braid_surface}
\begin{tikzcd}
0 \ar[r] & \fL^{\star}_{(\lambda,\ell)}(S)_{\mid\geq2} \ar[r] & \tau_{\one}(\fL^{\star}_{(\lambda,\ell)}(S)_{\mid\geq2}) \ar[r] & \underset{1\leq j\leq r}{\widetilde{\bigoplus}} (\tau_{\one}\fL^{\star}_{(\lambda[j],\ell)}(S))_{\mid\geq2} \ar[r] & 0
\end{tikzcd}
\end{equation}
of functors in $\Fct(\langle \Beta, \Beta^{S}\rangle ,{\bZ[Q^{\unt}_{(\lambda,\ell)}(S)]}\lmod)$ if $\star = u$, and in $\Fct(\langle \Beta, \Beta^{S} \rangle ,{\bZ[Q_{(\lambda,\ell)}(S)]}\lmod^{\bullet})$ if $\star$ is the blank space.
\end{thm}
\begin{proof}
The roadmap of this proof is similar to that of Theorem~\ref{thm:key_SES_classical_braids}, whose arguments are reused below for the analogous steps.

As a consequence of the above preliminary study, we already have that the evaluation of \eqref{eq:key_SES_braid_surface} at any $\tn$ is a short exact sequences of $\bZ[Q^{\star}_{(\lambda,\ell)}(S)]$-modules. Hence the key point is to prove the compatibility with respect to the morphisms of $\langle \Beta, \Beta^{S}\rangle$. To achieve this, it suffices to show that the $\bZ[Q^{\star}_{(\lambda,\ell)}(S)]$-module isomorphisms $\{(\mathfrak{p}_{(\lambda,\ell)})_{\tn}\}_{\tn\in\obj(\Beta^{S})}$ assemble to an isomorphism in $\Fct(\langle \Beta, \Beta^{S}\rangle ,{\bZ[Q^{\star}_{(\lambda,\ell)}(S)]}\lmod^{\bullet})$.

As a first step, we prove that assembling these module isomorphisms defines an isomorphism in the category $\Fct(\Beta^{S} ,{\bZ[Q^{\star}_{(\lambda,\ell)}(S)]}\lmod^{\bullet})$, in other words that each $(\mathfrak{p}_{(\lambda,\ell)})_{\tn}$ is a $\B_{n}(S)$-module morphism. We first consider the case where $\tn\geq\two$ and use the classical generating set for $\B_{n}(S)$, extending the Artin generators $\sigma_{i}$ of $\B_{n}$, recalled in \cite[Prop.~$4.1$]{PSI}.
Since $\tau_{\one}$ is the left-to-right translation-by-one operator, for each generator $\rho\in\B_{n}(S)$, the morphisms $(\tau_{\one}\fL^{\star}_{(\lambda[j],\ell)}(S))_{\mid\geq2}(\rho)$ for all $1\leq j\leq r$ and $\delta_{\one}(\fL^{\star}_{(\lambda,\ell)}(S)_{\mid\geq2})(\rho)$ are defined by the action of the generator $\id_{\one}\natural\rho$ on the Borel-Moore homology classes supported on the embedded graph $\bI_{1+n} \vee \bW^{S} \subset \bD_{1+n}\natural S$; see Figures~\ref{fig:model-or-braids} and \ref{fig:model-nor-braids}.

For $\rho = \sigma_{1}$, we prove that $(\mathfrak{p}_{(\lambda,\ell)})_{\tn}\circ \delta_{\one}(\fL^{\star}_{(\lambda,\ell)}(S)_{\mid\geq2})(\sigma_{1}) = \bigl( \widetilde{\bigoplus}_{1\leq j\leq r} (\tau_{\one}\fL^{\star}_{(\lambda[j],\ell)}(S))_{\mid\geq2}(\sigma_{1}) \bigr) \circ (\mathfrak{p}_{(\lambda,\ell)})_{\tn}$ by repeating verbatim the corresponding step in the proof of Theorem~\ref{thm:key_SES_classical_braids} which relies on Lemma~\ref{lem:cloud}.

For $\rho \neq \sigma_{1}$, we note that the action of $\id_{\one}\natural\rho$ is supported on a subsurface containing the right-most subgraph $\bI_{n} \vee \bW^{S} \subset\bI_{1+n} \vee \bW^{S}$ (disjoint from the left-most edge).
Hence the generator $\id_{\one}\natural\rho$ acts trivially on the parts of the cycles representing the basis elements $(w'_{0},\ldots)$ and $(jw'_{0},\ldots)$ corresponding to the words $w'_{0}$ and $jw'_{0}$. Using the (simpler) analogue of Lemma~\ref{lem:cloud} similarly to the case of $\rho = \sigma_{1}$, it follows from the definition of $(\mathfrak{p}_{(\lambda,\ell)})_{\tn}$ (see \eqref{eq:isom-of-modules-truncated} and \eqref{eq:isomorphism-in-degree-one}) that we have $(\mathfrak{p}_{(\lambda,\ell)})_{\tn}\circ \delta_{\one}(\fL^{\star}_{(\lambda,\ell)}(S)_{\mid\geq2})(\rho) = \bigl( \widetilde{\bigoplus}_{1\leq j\leq r} (\tau_{\one}\fL^{\star}_{(\lambda[j],\ell)}(S))_{\mid\geq2}(\rho) \bigr) \circ (\mathfrak{p}_{(\lambda,\ell)})_{\tn}$.

Furthermore, the analogous relations trivially hold also for $\tn=\zero$ (because $\fL^{\star}_{(\lambda,\ell)}(S)_{\mid\geq2}(\zero)=0$) and for $\tn=\one$ (because the isomorphism $(\mathfrak{p}_{(\lambda,\ell)})_{1} = \eqref{eq:isomorphism-in-degree-one}$ is $\B_{1}(S)$-equivariant by construction).
We have thus verified that $\mathfrak{p}_{(\lambda,\ell)}$ is a natural transformation in $\Fct(\Beta^{S} ,{\bZ[Q^{\star}_{(\lambda,\ell)}(S)]}\lmod^{\bullet})$.

Now, by Lemma~\ref{lem:criterionnaturaltransfo}, it suffices to check relation \eqref{eq:criterion''} in order to prove that $\mathfrak{p}_{(\lambda,\ell)}$ extends to a natural transformation in $\Fct(\langle \Beta , \Beta^{S} \rangle,{\bZ[Q^{\star}_{(\lambda,\ell)}(S)]}\lmod^{\bullet})$.
We consider an object $\tn\geq\one$, the proof being trivial for $\tn=\zero$. We note from the formula \eqref{eq:formula_morphism_{i}d_plus_morphism}, from the composition rule \eqref{eq:composition_rule}, from the fact that $\natural$ is a strict monoidal structure and from the functoriality of $\fL^{\star}_{(\lambda,\ell)}(S)_{\mid\geq2}$, that $\tau_{\one}(\fL^{\star}_{(\lambda,\ell)}(S)_{\mid\geq2})([\one,\id_{\one\natural\tn}])=\fL^{\star}_{(\lambda,\ell)}(S)_{\mid\geq2}(\sigma_{1}^{-1})\circ\fL^{\star}_{(\lambda,\ell)}(S)_{\mid\geq2}([\one,\id_{\two\natural\tn}])$, using the canonical identification $b_{\one,\one}^{\Beta}\natural \id_{\tn}=\sigma_{1}$ defined by the braiding $b_{\one,\one}^{\Beta} \colon \one\natural \one\cong \one\natural \one$; see \S\ref{sss:category_surface_braid_groups}.

The map $\fL^{\star}_{(\lambda,\ell)}(S)_{\mid\geq2}([\one,\id_{\two\natural\tn}])$ clearly has a similar description to that of $\fL^{\star}_{(\lambda,\ell)}(S)_{\mid\geq2}([\one,\id_{\one\natural\tn}])$ in \S\ref{sss:preliminary_SES_surface_braid_groups}. Namely, it is the morphism induced by the embedding of $\bI_{1+n}$ into $\bI_{2+n}$ defined by sending each edge $(i,i+1)$ for $1\leq i \leq n $ to the edge $(i+1,i+2)$, and by the identity on the wedge $\bW^{S}$. In other words, it is explicitly obtained by replacing $n$ with $n+1$ in the defining assignment \eqref{eq:image_injection_hom_rep_functor_surface}. 
Then, applying Lemma~\ref{lem:cloud-splitting} with the illustration of Figure~\ref{fig:cloud-splitting} and by an evident induction on $\tm$, the corresponding reasoning in the proof of Theorem~\ref{thm:key_SES_classical_braids} repeats mutatis mutandis here, proving that ${(\mathfrak{p}_{(\lambda,\ell)})}_{\tm\natural\tn}\circ\delta_{\one}(\fL^{\star}_{(\lambda,\ell)}(S)_{\mid\geq2})([\tm,\id_{\tm\natural\tn}])=\left(\widetilde{\bigoplus}_{1\leq j\leq r} (\tau_{\one}\fL^{\star}_{(\lambda[j],\ell)}(S))_{\mid\geq2}([\tm,\id_{\tm\natural\tn}])\right)\circ(\mathfrak{p}_{(\lambda,\ell)})_{\tn}$ for each $\tm\geq1$ and $\tn\geq\two$.
The analogous relation follows in the exact same way for $\tn=\one$, the point being that $\widetilde{\bigoplus}_{1\leq j\leq r} (\tau_{\one}\fL^{\star}_{(\lambda[j],\ell)}(S))_{\mid\geq2}([\one,\id_{\two}]) = \delta_{\one}\fL_{(\lambda,\ell)\mid\geq2}^{\star}([\one,\id_{\two}]) \circ \Delta_{\one}\fL_{(\lambda,\ell)\mid\geq2}^{\star}(\one)$. In addition, this relation also holds trivially for $\tn=\zero$ because $\fL^{\star}_{(\lambda,\ell)}(S)_{\mid\geq2}(\zero)=0$.
Hence relation \eqref{eq:criterion''} is satisfied for each $\tn\in\obj(\Beta^{S})$, so Lemma~\ref{lem:criterionnaturaltransfo} implies that $\mathfrak{p}_{(\lambda,\ell)}$ is a natural isomorphism on $\langle \Beta,\Beta^{S}\rangle$, as desired.

In the above arguments, we stress that, whenever we deal with a \emph{twisted} homological representation functor, the action of $\delta_{\one}(\fL^{\star}_{(\lambda,\ell)}(S)_{\mid\geq2})$ on the ground ring $\bZ[Q_{(\lambda,\ell)}(S)]$ does not affect any point of the reasoning, thanks to the implicit change of rings of Convention~\ref{conv:transformation_group_summand}.
\end{proof}

\begin{coro}\label{coro:key_SES_surface_braid_groups}
For any $S=\Sigma_{g,1} \text{ or } \N_{h,1}$, $\lambda\vdash k\geq2$ and $\ell\geq 1$, the short exact sequence \eqref{eq:key_SES_braid_surface} induces a short exact sequence
\begin{equation}
\label{eq:key_SES_braid_coro}
\begin{tikzcd}
0 \ar[r] & \fL^{\star}_{(\lambda,\ell)}(S)_{\mid\geq2} \ar[r] & (\tau_{\one}\fL^{\star}_{(\lambda,\ell)}(S))_{\mid\geq2} \ar[r] & \underset{1\leq j\leq r}{\bigoplus} (\tau_{\one}\fL^{\star}_{(\lambda[j],\ell)}(S))_{\mid\geq2} \ar[r] & 0
\end{tikzcd}
\end{equation}
of functors in $\Fct(\langle \Beta, \Beta^{S}\rangle ,{\bZ[Q^{\unt}_{(\lambda,\ell)}(S)]}\lmod)$ if $\star = u$, and $\Fct(\langle \Beta, \Beta^{S} \rangle ,{\bZ[Q_{(\lambda,\ell)}(S)]}\lmod^{\bullet})$ if $\star$ is the blank space.
\end{coro}
\begin{proof}
It straightforwardly follows from the definition of $\tau_{\one}$ and from Definition~\ref{defn:cut-functors} that the cut functor $(\tau_{\one}\fL^{\star}_{(\lambda,\ell)}(S))_{\mid\geq2}$ is a subfunctor of $\tau_{\one}(\fL^{\star}_{(\lambda,\ell)}(S)_{\mid\geq2})$: namely, the associated natural transformation $\mathsf{i}\colon  (\tau_{\one}\fL^{\star}_{(\lambda,\ell)}(S))_{\mid\geq2}\hookrightarrow\tau_{\one}(\fL^{\star}_{(\lambda,\ell)}(S)_{\mid\geq2})$ is defined by the identity on the objects $\tn\geq\two$ and $\zero$, while $\mathsf{i}_{\one}:0\to \fL^{\star}_{(\lambda,\ell)}(S)(\two)$ is the zero map. Furthermore, since the objects of $\langle \Beta , \Beta^{S} \rangle$ form the totally-ordered set $\bN$, modifying the natural transformation $i_{\one}\fL^{\star}_{(\lambda,\ell)}(S)$ by assigning the zero map for $\tn\leq\one$ defines a natural transformation $(i_{\one}\fL^{\star}_{(\lambda,\ell)}(S))_{\mid\geq2}\colon \fL^{\star}_{(\lambda,\ell)}(S)_{\mid\geq2} \to(\tau_{\one}\fL^{\star}_{(\lambda,\ell)}(S)_{\mid\geq2})$. We deduce from these definitions that the composite $\mathsf{i}_{\tn}\circ ((i_{\one}\fL^{\star}_{(\lambda,\ell)}(S))_{\mid\geq2})_{\tn}$ is equal to $(i_{\one}(\fL^{\star}_{(\lambda,\ell)}(S)_{\mid\geq2}))_{\tn}$ for $\tn\geq\two$ (which is the left-hand map in the exact sequence \eqref{eq:key_SES_braid_surface}), while it is the zero map for $\tn\leq \one$. The result thus follows from Theorem~\ref{thm:key_SES_surface_braid_groups}, by using the universal property of a cokernel and the definition of ${\widetilde{\bigoplus}}_{1\leq j\leq r} (\tau_{\one}\fL^{\star}_{(\lambda[j],\ell)}(S))_{\mid\geq2}$.
\end{proof}

\begin{rmk}
Similarly to Remark~\ref{rmk:no_clear_splitting_SES_calssical_braid_groups}, we note that there is no obvious splitting for the short exact sequence of functors \eqref{eq:key_SES_braid_surface}.
\end{rmk}

\begin{rmk}\label{rmk:conjsecture_result_without_cut}
We conjecture that Theorem~\ref{thm:key_SES_surface_braid_groups} holds also for the functors $\fL^{\star}_{(\lambda,\ell)}(S)$, i.e., without truncating to the functors $\fL^{\star}_{(\lambda,\ell)}(S)_{\mid\geq2}$ via Definition~\ref{defn:cut-functors}. For the trivial partition $\lambda=(k)$, we have verified this for the functor $\fL_{((k),2)}(\Sigma_{g,1})$ for each $k\geq2$ by using explicit formulas for the action of $\B_{n}(\Sigma_{g,1})$ from \cite{PSIIi}. However, it seems significantly more difficult to prove this in general for any ordered partition $\lambda$.
\end{rmk}

\subsection{For mapping class group functors}\label{ss:SES_mcg}

We construct here the short exact sequences for the functors associated to mapping class groups defined in \S\ref{sss:representations_mapping_class_groups}, i.e.~the functors \eqref{eq:hom_rep_functor_rep_mcg_o} and \eqref{eq:hom_rep_functor_rep_mcg_no_ell}, as well as their vertical-type alternatives, for any $\lambda\vdash k\geq2$ and $\ell\geq1$.
The results in the classical (i.e.~non-vertical) setting are in \S\ref{sss:SES_classical_MCG}, those in the vertical setting in \S\ref{sss:SES_MCG_alternative}, preceded by preliminary work and diagrammatic lemmas in \S\ref{ss:mcg_hom_rep_poly_preliminary}.

The arguments being analogous for orientable and non-orientable surfaces, we pool the key steps and common arguments for these two cases. Following Notation~\ref{notation:generic_notation_star}, we use the generic notation $\sS$ for either $\bT \cong \Sigma_{1,1}$ or $\bM \cong \N_{1,1}$, $\MCG(\sS^{\natural n})$ for either $\MCGo_{n,1}$ or $\MCGno_{n,1}$, $\fL^{\star}_{(\lambda,\ell)}$ for any one of the functors \eqref{eq:hom_rep_functor_rep_mcg_o} and \eqref{eq:hom_rep_functor_rep_mcg_no_ell}, $\fL^{\star,\vrtcl}_{(\lambda,\ell)}$ for the vertical-type alternative, $Q^{\star}_{(\lambda,\ell)}(\sS)$ for the associated transformation group and $\cM$ for either $\M^{+}$ or $\M^{-}$.

\subsubsection{First properties and diagrammatic arguments}\label{ss:mcg_hom_rep_poly_preliminary}

For the purposes of \S\ref{sss:SES_classical_MCG} and \S\ref{sss:SES_MCG_alternative}, we begin by proving qualitative properties of the representations, including a disjoint support argument in the case of boundary connected sums of two surfaces and some calculations of the actions of various braiding actions.

Let us first focus on the classical (i.e.~non-vertical) setting for homological representation functors.
We follow the notation of \S\ref{ss:free-bases} and consider, for each $\tn\in \obj(\cM)$, the mapping class group $\MCG(\sS^{\natural n})$ representation $\fL^{\star}_{(\lambda,\ell)}(\tn)= H_{k}^{\BM}(C_{\lambda}(\sS^{\natural n} \smallsetminus I);\bZ[Q^{\star}_{(\lambda,\ell)}(\sS)])$ where $\bZ[Q^{\star}_{(\lambda,\ell)}(\sS)]$ is the rank-one local system explained in the general construction of \S\ref{ss:general_construction}. We recall that we introduce model graphs $\bW^{\Sigma}_{g}$ and $\bW_h^{\N}$ in Notation~\ref{nota:tail_and_wedge}, which are illustrated in Figures~\ref{fig:model-or-mcg} and \ref{fig:model-nor-mcg}. Let us write $\bW_{n}^{\sS} := \bW^{\Sigma}_{n}$ in the orientable setting and $\bW^{\sS}_{n} := \bW^{\N}_{n}$ in the non-orientable setting.
By Proposition~\ref{prop:module_structure_BM_homology}, the $\bZ[Q^{\star}_{(\lambda,\ell)}(\sS)]$-module $\fL^{\star}_{(\lambda,\ell)}(\tn)$ is free with basis indexed by labellings of the embedded graph $\bW_{n} ^{\sS} \subset \sS^{\natural n}$ by words in the blocks of $\lambda$. In other words, the basis is the set of tuples $\bw$ of the form \eqref{eq:generator_surface_braid_hom_rep_orientable} if $\sS=\bT$ and \eqref{eq:generator_surface_braid_hom_rep_non-orientable} if $\sS=\bM$ (ignoring the initial tuple of words corresponding to the linear part of the graph, which does not exist in the mapping class group setting).

\paragraph*{Boundary connected sums.}
We will use the following general principle for representations of the mapping class group of a surface that splits as a boundary connected sum. Let $\hat{\fL}_{(\lambda,\ell)}$ be either $\fL^{\star}_{(\lambda,\ell)}$ or its vertical-type alternative $\fL^{\star,\vrtcl}_{(\lambda,\ell)}$. We recall from \S\ref{sss:category_mcg} that each object $\tn$ of $\cM$ is the surface $\sS^{\natural n}$.

\begin{lem}
\label{lem:invariance_translation_block_MCG_action}
Let $\tn,\tm\in\obj(\cM)$. Let $\bw$ be a basis element (see Notation~\ref{notation:basis_notation}) of the representation $\hat{\fL}_{(\lambda,\ell)}(\tn\natural\tm)$, using the bases described in \S\ref{s:free_generating_sets} and write the tuple $\bw$ as $\bw = (\bw',\bw'')$, where the entries of $\bw'$ correspond to arcs supported in $\sS^{\natural n}$ and the entries of $\bw''$ correspond to arcs supported in $\sS^{\natural m}$. Then, for $\varphi_{\tm} \in \MCG(\sS^{\natural m})$, the element $\hat{\fL}_{(\lambda,\ell)}(\id_{\tn} \natural \varphi_{\tm})(\bw)$ is a linear combination of basis elements of the form $(\bw',\bv'')$, where $\bv''$ runs over all possible labellings of arcs supported in $\sS^{\natural m}$.
\end{lem}
\begin{proof}
As in the proof of Lemma~\ref{lem:cloud}, we prefer to denote by $e_{\bw}$ (rather than $\bw$) the basis element corresponding to the tuple $\bw$ for the sake of clarity.
Let $e'_{\bv}$ be an arbitrary dual basis element and write $\bv = (\bv',\bv'')$ similarly to the decomposition $\bw = (\bw',\bw'')$. It suffices to show that $\langle \hat{\fL}_{(\lambda,\ell)}(\id_{\tn} \natural \varphi_{\tm})(e_{\bw}) , e'_{\bv} \rangle = 0$ unless $\bv' = \bw'$. To see this, recall that the homology class $e_{\bw}$ is represented by some configurations on embedded arcs in $\sS^{\natural (n+m)}$. Since $\id_{\tn} \natural \varphi_{\tm}$, by construction, is supported in $\sS^{\natural m}$, the homology class $\hat{\fL}_{(\lambda,\ell)}(\id_{\tn} \natural \varphi_{\tm})(e_{\bw})$ may be represented by some configurations on embedded arcs that are identical on the boundary connected summand $\sS^{\natural n}$ to those representing $e_{\bw}$. The intersection pairing with $e'_{\bv}$ must therefore be zero unless $\bv' = \bw'$.
\end{proof}

\paragraph*{Interaction with the braiding.}

To discuss elements of mapping class groups that act by ``braiding'' handles or crosscaps of the surface $\tn=\sS^{\natural n}$, it is convenient to pass, in this section, to a different way of representing $\sS^{\natural n}$ diagrammatically. Instead of a rectangle to which we have glued a finite number of strips (as in, for example, Figure~\ref{fig:models}), we will represent this surface as a rectangle from which we have either erased the interiors of $2g$ discs and glued their boundaries in pairs (when considering $\sS^{\natural g} \cong \Sigma_{g,1}$) or erased the interiors of $h$ discs and glued each resulting boundary component to itself by a degree-$2$ map (when considering  $\sS^{\natural h} \cong \N_{h,1}$).

Each basis element $\bw$ of the representation $\fL^{\star}_{(\lambda,\ell)}(\tn)$ (see Figures~\ref{fig:graph-or-mcg} and \ref{fig:graph-nor-mcg}) looks as illustrated in Figure~\ref{fig:basis-alternative-picture} in this picture, where we have also included explicit choices of \emph{tethers} (see \S\ref{ss:tethers}), i.e.~paths from the base configuration to a point on the submanifold representing the homology class. Similarly, we denote by $\bw^{\vrtcl}$ the basis elements of the vertical-type alternative representations $\fL^{\star,\vrtcl}_{(\lambda,\ell)}(\tn)$ (see Figures~\ref{fig:model-or-mcg-vertical-dual} and \ref{fig:model-nor-mcg-vertical-dual}), which look as illustrated in Figure~\ref{fig:basis-alternative-picture-vertical} in this picture, where again we have included explicit choices of tethers.

\begin{figure}[tbp]
    \centering
    \begin{subfigure}[b]{\textwidth}
        \centering
        \includegraphics[scale=0.65]{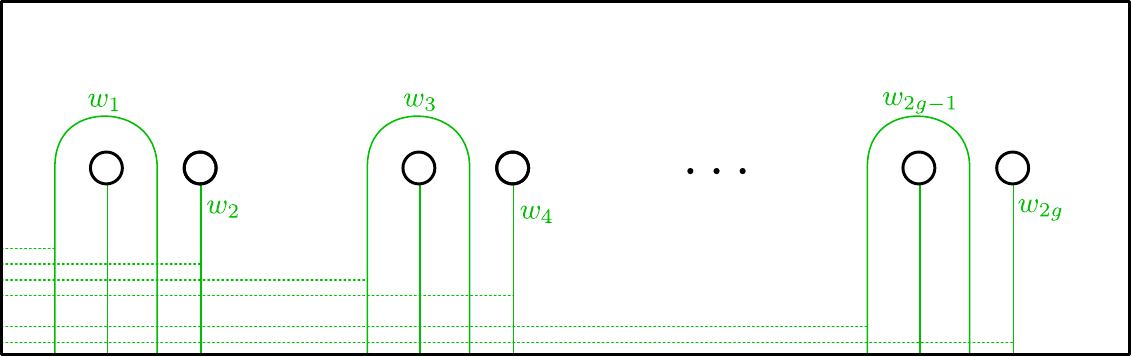}
        \caption{The orientable case.}
        \label{fig:basis-alternative-picture-or}
    \end{subfigure}
    \\[3ex]
    \begin{subfigure}[b]{\textwidth}
        \centering
        \includegraphics[scale=0.65]{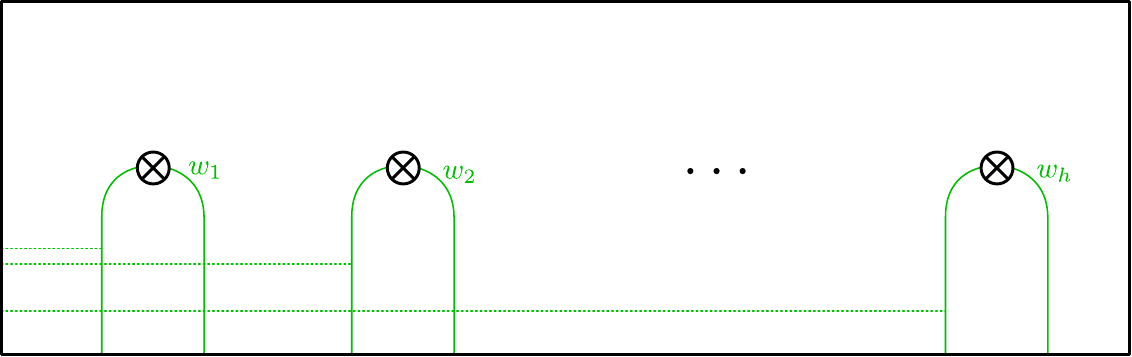}
        \caption{The non-orientable case.}
        \label{fig:basis-alternative-picture-nor}
    \end{subfigure}
    \caption{Another perspective on the basis elements depicted in Figures~\ref{fig:graph-or-mcg} and \ref{fig:graph-nor-mcg}.}
    \label{fig:basis-alternative-picture}
\end{figure}

\begin{figure}[tbp]
    \centering
    \begin{subfigure}[b]{\textwidth}
        \centering
        \includegraphics[scale=0.65]{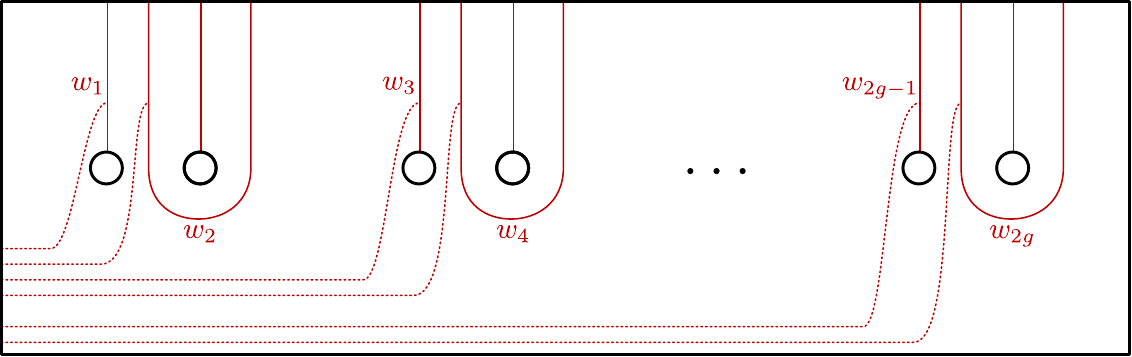}
        \caption{The orientable case.}
        \label{fig:basis-alternative-picture-or-vertical}
    \end{subfigure}
    \\[3ex]
    \begin{subfigure}[b]{\textwidth}
        \centering
        \includegraphics[scale=0.65]{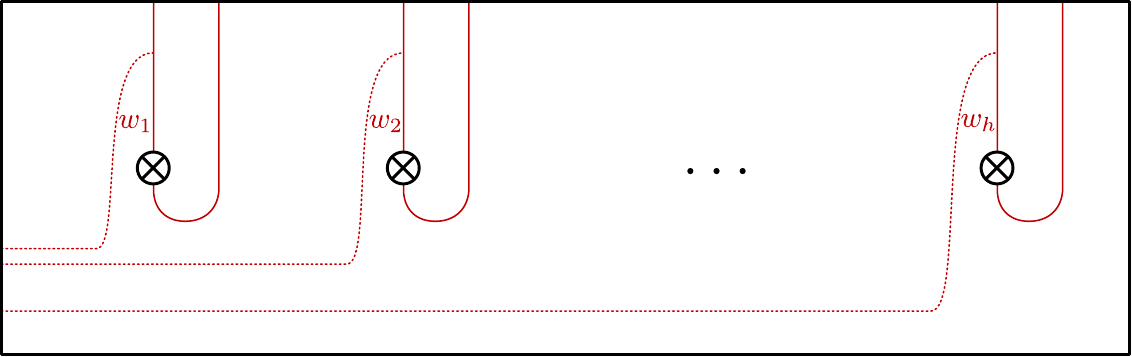}
        \caption{The non-orientable case.}
        \label{fig:basis-alternative-picture-nor-vertical}
    \end{subfigure}
    \caption{Another perspective on the basis elements depicted in Figures~\ref{fig:model-or-mcg-vertical-dual} and \ref{fig:model-nor-mcg-vertical-dual}.}
    \label{fig:basis-alternative-picture-vertical}
\end{figure}

\begin{notation}
We denote by $\sigma_{1} \in \MCG(\sS^{\natural n})$ the mapping class illustrated in Figure~\ref{fig:braiding-mcg}: it braids the left-most two handles if $\sS = \bT$ and it braids the left-most two crosscaps if $\sS = \bM$.
\end{notation}

\begin{figure}[htb]
    \centering
    \includegraphics[scale=0.5]{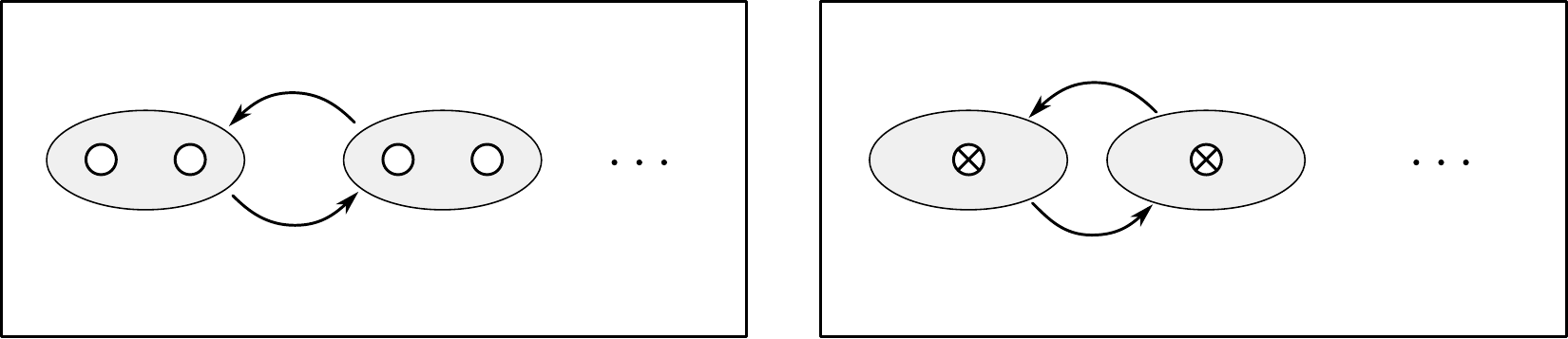}
    \caption{The braiding element $\sigma_{1} \in \MCG(S)$ when $S = \Sigma_{g,1}$ (left) and $S = \N_{h,1}$ (right).}
    \label{fig:braiding-mcg}
\end{figure}

\begin{lem}
\label{lem:computations_MCG_braiding}
The following identities hold in $\fL^{\star}_{(\lambda,\ell)}(\tn)$ for $\ell\geq1$:
\begin{align}
\sigma_{1}^{-1}\left(([\varnothing,\varnothing],[w_{3},w_{4}],\ldots)\right) &= ([w_{3},w_{4}],[\varnothing,\varnothing],\ldots) \label{eq:identity-braiding-or} \\
\sigma_{1}^{-1}\left(([\varnothing],[w_{2}],\ldots)\right) &= ([w_{2}],[\varnothing],\ldots) \label{eq:identity-braiding-nor}
\end{align}
and in $\fL^{\vrtcl}_{(\lambda,\ell')}(\tn)$ for $\ell'\leq 2$ (recall that $\fLv_{(\lambda,\ell')}=\fLuv_{(\lambda,\ell')}$ by Lemma~\ref{lem:Qu_ell=2_useless}):
\begin{align}
\sigma_{1}\left(([\varnothing,\varnothing],[w_{3},w_{4}],\ldots)^{\vrtcl}\right) &= ([w_{3},w_{4}],[\varnothing,\varnothing],\ldots)^{\vrtcl} \label{eq:identity-braiding-or-vertical} \\
\sigma_{1}\left(([\varnothing],[w_{2}],\ldots)^{\vrtcl}\right) &= ([w_{2}],[\varnothing],\ldots)^{\vrtcl} \label{eq:identity-braiding-nor-vertical}
\end{align}
\end{lem}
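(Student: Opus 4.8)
The plan is to verify the four identities diagrammatically, working with the alternative pictures of $\sS^{\natural n}$ introduced just before the statement (a rectangle with handles/crosscaps), where $\sigma_1$ is the element that braids the leftmost two handles (resp.\ crosscaps) as in Figure~\ref{fig:braiding-mcg}. The essential point is the same in all four cases: the basis element on the left-hand side of each identity has \emph{no} configuration points lying on the first handle/crosscap (the relevant words are empty), so under the braiding $\sigma_1^{\pm 1}$ the only effect on the cycle is to interchange the positions of the first two handles/crosscaps. Since the second handle/crosscap may carry configuration points (labelled by $w_3,w_4$ or $w_2$), after the swap these points now sit on what has become the first handle/crosscap, which is exactly the basis element appearing on the right-hand side. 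The remaining handles/crosscaps (the ``$\ldots$'' part) are fixed by $\sigma_1$, so they are unchanged.

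First I would fix, once and for all, the tethers for the standard basis elements as in Figure~\ref{fig:basis-alternative-picture} (and Figure~\ref{fig:basis-alternative-picture-vertical} for the vertical-type alternative), and then track what $\sigma_1^{-1}$ does to the cycle representing $([\varnothing,\varnothing],[w_3,w_4],\ldots)$: it carries the arcs and their tethers supported on the second handle across to the first handle position, up to basepoint-preserving homotopy. Here one uses two things: (i) the image consists of a cycle supported on embedded arcs, so it suffices to check the intersection pairing $\langle \sigma_1^{-1}(-), e'_\bv \rangle$ against each dual basis element $e'_\bv$ (the dual bases of Definition~\ref{def:dual-basis-elements} and Corollary~\ref{coro:perfect-pairing}, valid in the orientable case and, via tensoring with the orientation local system, in the non-orientable case); (ii) the winding-number computation, exactly as recalled in the proof of Lemma~\ref{lem:cloud}: one checks that no winding numbers around the handles/crosscaps or among the points are changed by the braiding, since the swapped handle was empty, so the class $\phi_{(\bk,\ell)}(\gamma_p)$ contributes trivially and the local intersection signs are $+1$. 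This shows the pairing against $e'_\bv$ is $1$ precisely when $\bv = ([w_3,w_4],[\varnothing,\varnothing],\ldots)$ and $0$ otherwise, giving \eqref{eq:identity-braiding-or}; identity \eqref{eq:identity-braiding-nor} is the verbatim non-orientable analogue (using a single crosscap in place of a handle pair).

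For the vertical-type identities \eqref{eq:identity-braiding-or-vertical} and \eqref{eq:identity-braiding-nor-vertical} I would run the same argument with the dual surface $\check{\sS}^{\natural n}$ in place of $\sS^{\natural n}$, using Proposition~\ref{lem:module_structure_BM_homology-check} for the free basis and the ``vertical'' pictures of Figures~\ref{fig:basis-alternative-picture-or-vertical} and~\ref{fig:basis-alternative-picture-nor-vertical}; note that here the relevant convention uses the \emph{opposite} braiding (the groupoids $(\M^{\pm})\dv$), which is precisely why these identities involve $\sigma_1$ rather than $\sigma_1^{-1}$. Restricting to $\ell'\leq 2$ means $\fLv_{(\bk,\ell')}=\fLuv_{(\bk,\ell')}$ by Lemma~\ref{lem:Qu_ell=2_useless}, so the local system is the untwisted one and the winding-number bookkeeping is correspondingly simpler.

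The main obstacle is purely bookkeeping rather than conceptual: one must be careful that the chosen tethers are transported correctly under $\sigma_1^{\pm 1}$ so that no spurious unit scalar (a monomial in $\bZ[Q^{\star}_{(\bk,\ell)}(\sS)]$) appears in front of the right-hand side. As in the proof of Lemma~\ref{lem:cloud}, different tether choices change a class only by a unit, so one has to pin down the tethers of Figures~\ref{fig:basis-alternative-picture} and~\ref{fig:basis-alternative-picture-vertical} and check that, for the \emph{empty}-handle configurations on the left, the braiding carries the chosen tethers to the chosen tethers on the right with no extra winding — this is where drawing the explicit picture is essential, and it is exactly the computation that the figures are designed to make transparent.
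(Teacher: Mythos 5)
Your treatment of the non-vertical identities \eqref{eq:identity-braiding-or} and \eqref{eq:identity-braiding-nor} matches the paper's (the paper simply observes they are clear from the diagrams because the left-most handle/crosscap carries only empty labels), and your overall diagrammatic strategy for the vertical case — track the cycle and its tether under the braiding and rule out a spurious unit scalar — is also the route the paper takes. However, there is a genuine gap in how you propose to close the vertical case. You assert that one only has to check, as ``purely bookkeeping'', that $\sigma_{1}$ carries the chosen tether to the chosen tether ``with no extra winding''. In fact it does not: after applying $\sigma_1$ to $([\varnothing,\varnothing],[w_3,w_4],\ldots)^{\vrtcl}$ one obtains the right-hand side \emph{with a different tether}, and the discrepancy is a genuinely non-trivial based loop in the configuration space (Figures~\ref{fig:identity-braiding-or-vertical-loop} and \ref{fig:identity-braiding-nor-vertical-loop}). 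The identity therefore holds only up to the unit given by the image of this loop in $Q_{(\bk,\ell')}(\sS)$, and the actual content of the proof is to compute that image and show it vanishes. This is exactly where the hypothesis $\ell'\leq 2$ enters: one needs the explicit structure of the quotient from Lemma~\ref{lem:transformation_groups_MCG_ell_2}, namely $Q_{(\bk,2)}(\bT)\cong(\bZ/2)^{r'}$ and $Q_{(\bk,2)}(\bM)\cong(\bZ/2)^{r'}\times(\bZ/2)^{r}$, which only record the writhe of each block modulo $2$ (and, in the non-orientable case, the parity of passes through each crosscap); the discrepancy loop has trivial writhe blockwise and passes \emph{around} crosscaps, hence through them an even number of times, so its image is trivial.

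Your stated reason for restricting to $\ell'\leq 2$ — that $\fLv_{(\bk,\ell')}=\fLuv_{(\bk,\ell')}$ so the local system is untwisted and ``the winding-number bookkeeping is simpler'' — is not the relevant point: untwistedness plays no role here (the non-vertical identities hold for all $\ell\geq1$, twisted or not), and if the tether discrepancy really vanished by bookkeeping alone your argument would equally prove \eqref{eq:identity-braiding-or-vertical} and \eqref{eq:identity-braiding-nor-vertical} for all $\ell'$, whereas the paper explicitly notes that for $\ell'\geq 3$ the equations are only known up to an undetermined unit scalar. So to complete your proof you must identify the discrepancy loop concretely and evaluate it in $Q_{(\bk,\ell')}(\sS)$ using the $\ell'\leq 2$ computation of these groups; the perfect-pairing/dual-basis machinery you invoke is not needed for this step.
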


\begin{proof}
Equations \eqref{eq:identity-braiding-or} and \eqref{eq:identity-braiding-nor} are clear from the diagrams of the basis element (see Figure~\ref{fig:basis-alternative-picture}) and the action of $\sigma_{1}$ (see Figure~\ref{fig:braiding-mcg}), using the facts that the label(s) corresponding to the left-most handle or crosscap are empty and that the tethers are the same up to homotopy.

On the other hand, we see from the diagrams that the left-hand side of equation~\eqref{eq:identity-braiding-or-vertical} is equal to the element illustrated in Figure~\ref{fig:identity-braiding-or-vertical}. This differs from the right-hand side of equation~\eqref{eq:identity-braiding-or-vertical} only in the choice of tether. As explained in Lemma~\ref{lem:changing-tether}, changing the tether has the effect of multiplying the homology class by the unit in the ground ring given by the monodromy action of the based loop of configurations given by the difference between the two tethers. In our setting, the monodromy action is the quotient onto $Q_{(\lambda,\ell')}(\bT)$ followed by its inclusion into $\bZ[Q_{(\lambda,\ell')}(\bT)]^{\times}$. We therefore need to show that this based loop of configurations, which is illustrated in Figure~\ref{fig:identity-braiding-or-vertical-loop}, projects to the trivial element of the group $Q_{(\lambda,\ell')}(\bT)$. This is obvious for $\ell'=1$ since the group $Q_{(\lambda,1)}(\bT)$ is always trivial. For $\ell'=2$, we recall from Lemma~\ref{lem:transformation_groups_MCG_ell_2} that $Q_{(\lambda,2)}(\bT)$ is simply a product of copies of $\bZ/2$, one for each block of $\lambda = (\lambda_{1},\ldots,\lambda_{r})$ with $\lambda_{i} \geq 2$. The projection onto $Q_{(\lambda,2)}(\bT)$ records the writhe (modulo $2$) of each block of strands (in a surface of positive genus the writhe is only well-defined modulo $2$). It is clear that the writhe of the loop of configurations illustrated in Figure~\ref{fig:identity-braiding-or-vertical-loop} is trivial for each block. This establishes equation~\eqref{eq:identity-braiding-or-vertical}. 

We argue similarly for equation~\eqref{eq:identity-braiding-nor-vertical}. (Again, the case $\ell'=1$ being obvious, we just consider $\ell'=2$.) The left-hand side is equal to the element illustrated in Figure~\ref{fig:identity-braiding-nor-vertical}, which differs from the right-hand side only by its choice of tether; the difference between the two tethers forms the based loop of configurations illustrated in Figure~\ref{fig:identity-braiding-nor-vertical-loop}. We therefore just have to show that this projects to the trivial element of the group $Q_{(\lambda,2)}(\bM)$. This time the group $Q_{(\lambda,2)}(\bM)$ is a product of $r'+r$ copies of $\bZ/2$, where $r'$ denotes the number of blocks of $\lambda$ with $\lambda_{i} \geq 2$; see Lemma~\ref{lem:transformation_groups_MCG_ell_2}. The first $r'$ copies of $\bZ/2$, in the projection to $Q_{(\lambda,2)}(\bM)$ of a loop of configurations, record the writhe of each block of strands; see \cite[Prop.~$1.1$]{Annex}. The remaining $r$ copies of $\bZ/2$ record, for each block of strands, the number of times modulo $2$ that a strand from that block passes through a crosscap; \cite[Prop.~$1.1$]{Annex}. As before, it is clear that the writhe of the loop of configurations illustrated in Figure~\ref{fig:identity-braiding-nor-vertical-loop} is trivial for each block; thus the first $r'$ coordinates of its projection to $Q_{(\lambda,2)}(\bM)$ are zero. Moreover, each strand in this loop of configurations passes around a crosscap an integer number of times, which corresponds to passing through a crosscap an even number of times; thus the last $r$ coordinates of its projection to $Q_{(\lambda,2)}(\bM)$ are also zero. This establishes equation~\eqref{eq:identity-braiding-nor-vertical}.
\end{proof}

\begin{rmk}
Equations \eqref{eq:identity-braiding-or} and \eqref{eq:identity-braiding-nor} of Lemma~\ref{lem:computations_MCG_braiding} hold for all $\ell \geq 1$. On the other hand, we used the explicit structure of the quotient group $Q_{(\lambda,2)}(\sS)$ (and the fact that $Q_{(\lambda,1)}(\sS)$ is trivial) to prove equations \eqref{eq:identity-braiding-or-vertical} and \eqref{eq:identity-braiding-nor-vertical}. For $\ell' \geq 3$ the proof shows that these equations hold up to a unit scalar, which is the image in $Q_{(\lambda,\ell')}(\sS)$ of the loops in Figures~\ref{fig:identity-braiding-or-vertical-loop} and \ref{fig:identity-braiding-nor-vertical-loop} for $\sS = \bT$ and $\sS = \bM$ respectively. We do not know whether this scalar is trivial in these cases.
\end{rmk}

\begin{figure}[tbp]
    \centering
    \begin{subfigure}[b]{0.48\textwidth}
        \centering
        \includegraphics[scale=0.5]{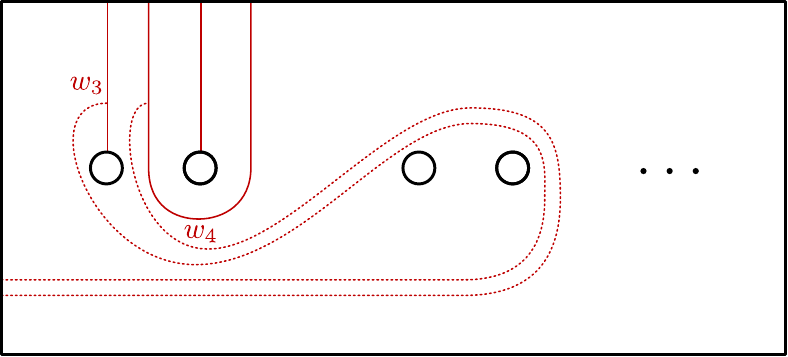}
        \caption{The left-hand side of equation~\eqref{eq:identity-braiding-or-vertical}.}
        \label{fig:identity-braiding-or-vertical}
    \end{subfigure}
    \hfill
    \begin{subfigure}[b]{0.48\textwidth}
        \centering
        \includegraphics[scale=0.5]{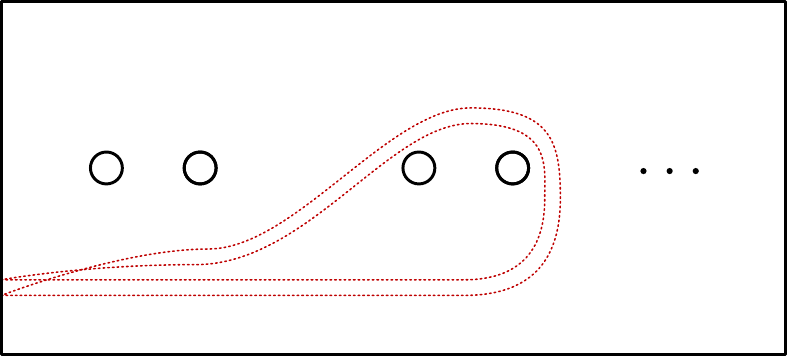}
        \caption{A loop given by the difference of two tethers.}
        \label{fig:identity-braiding-or-vertical-loop}
    \end{subfigure}
    \caption{The left-hand side of equation~\eqref{eq:identity-braiding-or-vertical} differs from the right-hand side of equation~\eqref{eq:identity-braiding-or-vertical} by the scalar in $\bZ[Q_{(\lambda,2)}(\bT)]$ given by the image in $Q_{(\lambda,2)}(\bT)$ of the loop illustrated on the right.}
\end{figure}

\begin{figure}[tbp]
    \centering
    \begin{subfigure}[b]{0.48\textwidth}
        \centering
        \includegraphics[scale=0.5]{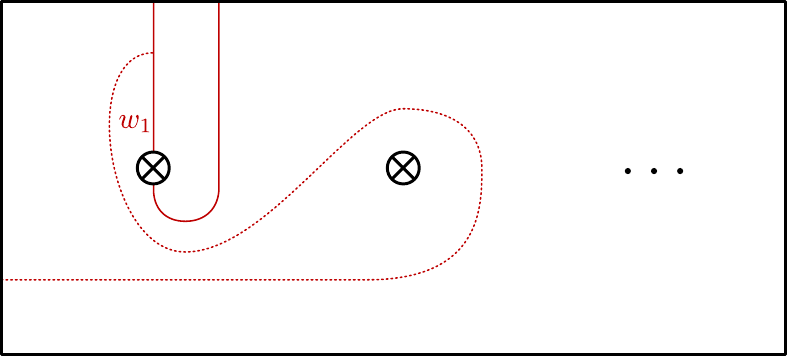}
        \caption{The left-hand side of equation~\eqref{eq:identity-braiding-nor-vertical}.}
        \label{fig:identity-braiding-nor-vertical}
    \end{subfigure}
    \hfill
    \begin{subfigure}[b]{0.48\textwidth}
        \centering
        \includegraphics[scale=0.5]{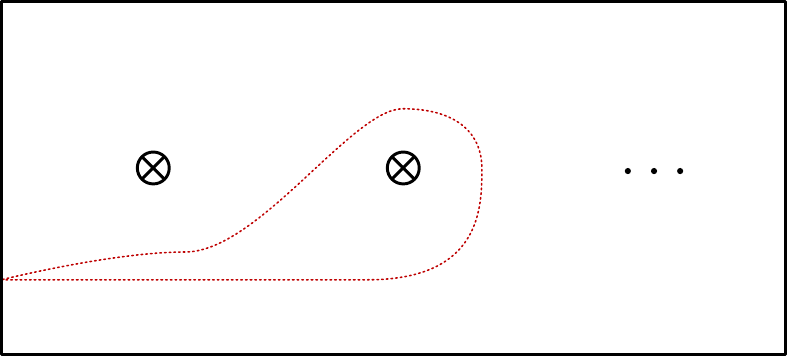}
        \caption{A loop given by the difference of two tethers.}
        \label{fig:identity-braiding-nor-vertical-loop}
    \end{subfigure}
    \caption{The left-hand side of equation~\eqref{eq:identity-braiding-nor-vertical} differs from the right-hand side of equation~\eqref{eq:identity-braiding-nor-vertical} by the scalar in $\bZ[Q_{(\lambda,2)}(\bM)]$ given by the image in $Q_{(\lambda,2)}(\bM)$ of the loop illustrated on the right.}
\end{figure}

\begin{rmk}
\label{rmk:interaction-with-braiding-dagger}
We will view equations \eqref{eq:identity-braiding-or-vertical} and \eqref{eq:identity-braiding-nor-vertical} as being statements about the action of $(\sigma_{1}^{-1})\dv = \sigma_{1}$, where $(-)\dv$ is the operation that inverts the braiding of a braided monoidal category; see the beginning of \S\ref{ss:categorical_framework}.
\end{rmk}

\paragraph*{Preliminary study of the difference functor.}
Similarly to $\fL^{\star}_{(\lambda,\ell)}(\tn)$, the $\MCG(\sS^{\natural n})$-module $\tau_{\one}\fL^{\star}_{(\lambda,\ell)}(\tn) = H_{k}^{\BM}(C_{\lambda}(\sS^{\natural (1+n)} \smallsetminus I);\bZ[Q^{\star}_{(\lambda,\ell)}(\sS)])$ is a free $\bZ[Q^{\star}_{(\lambda,\ell)}(\sS)]$-module with basis indexed by labellings of the embedded graph $\bW_{1+n} ^{\sS} \subset\sS^{\natural (1+n)}$ by words in the blocks of $\lambda$. Recall from Lemma~\ref{lem:extension_Quillen_source_homological_rep_functors} that the image of the morphism $[\one,\id_{\one\natural\tn}]\colon \tn\to\one\natural\tn$ of $\langle \cM,\cM \rangle$ under $\fL^{\star}_{(\lambda,\ell)}$ is the map $H_{k}^{\BM}(C_{\lambda}(\sS^{\natural n} \smallsetminus I);\bZ[Q^{\star}_{(\lambda,\ell)}(\sS)]) \to H_{k}^{\BM}(C_{\lambda}(\sS^{\natural (1+n)} \smallsetminus I);\bZ[Q^{\star}_{(\lambda,\ell)}(\sS)])$
induced by the inclusion of configuration spaces $C_{\lambda}(\sS^{\natural n} \smallsetminus I) \hookrightarrow C_{\lambda}(\sS^{\natural (1+n)} \smallsetminus I)$ coming from the boundary connected sum with the left-most copy of $\sS$. It thus follows that the map $\fL^{\star}_{(\lambda,\ell)}([\one,\id_{\one\natural\tn}])$ is the injection defined on basis elements by adjoining the empty word as the label of the left-most edge, or edges, of $\bW_{1+n}^{\sS}$, explicitly given by 
\begin{equation}
\label{eq:image_injection_hom_rep_functor_mcg}
\begin{cases}
([w_{1},w_{2}],\ldots,[w_{2n-1},w_{2n}])\mapsto ([\varnothing,\varnothing],[w_{1},w_{2}],\ldots,[w_{2n-1},w_{2n}]) & \textrm{if $\sS=\bT$;}\\
([w_{1}],\ldots,[w_{n}])\mapsto ([\varnothing],[w_{1}],\ldots,[w_{n}]) & \textrm{if $\sS=\bM$.}
\end{cases}
\end{equation}
Hence $\kappa_{\one}\fL^{\star}_{(\lambda,\ell)}(\tn)=0$ and $\delta_{\one}\fL^{\star}_{(\lambda,\ell)}(\tn)$ is the free $\bZ[Q^{\star}_{(\lambda,\ell)}(\sS)]$-module with generating set given by the tuples $([w_{0},w'_{0}],[w_{1},w_{2}],\ldots,[w_{2n-1},w_{2n}])$ such that $\vert w_{0} \vert + \vert w'_{0}\vert\geq 1$ if $\sS=\bT$, and the tuples $([w_{0}],[w_{1}],\ldots,[w_{n}])$ such that $\vert w_{0}\vert\geq 1$ if $\sS=\bM$.
Then, we define a $\bZ[Q^{\star}_{(\lambda,\ell)}(\sS)]$-module injection
\begin{equation}\label{eq:Delta'_def}
(\Delta'_{\one}\fL^{\star}_{(\lambda,\ell)})_{\tn}\colon \delta_{\one}\fL^{\star}_{(\lambda,\ell)}(\tn) \longhookrightarrow \tau_{\one}\fL^{\star}_{(\lambda,\ell)}(\tn)
\end{equation}
by sending each generating tuple $\bw=([w_{0}],[w_{1}],\ldots,[w_{n}])$ of $\delta_{\one}\fL^{\star}_{(\lambda,\ell)}(\tn)$ to itself in $\tau_{\one}\fL^{\star}_{(\lambda,\ell)}(\tn)$.

\paragraph*{Difference functor decomposition.}
From now on in \S\ref{ss:mcg_hom_rep_poly_preliminary}, we assume that $k\geq2$. In order to identify $\delta_{\one}\fL^{\star}_{(\lambda,\ell)}(\tn)$ in terms of some $\fL^{\star}_{(\lambda',\ell)}(\tn)$ where $\lambda'\prec \lambda$, we define some $\bZ[Q^{\star}_{(\lambda,\ell)}(\sS)]$-module morphisms depending on $\sS$ as follows.

\textbf{\emph{For $\sS=\bT$.}}
For each object $\tn\geq\zero$, we have to define several injections into $\delta_{\one}\fL^{\star}_{(\lambda,\ell)}(\MCGo)(\tn)$.
First, for each fixed pair of positive integers $(j_{1},j_{2})$ such that $1\leq j_{1} , j_{2}\leq r$, we define an injection $\tau_{\one}\fL^{\star}_{(\lambda[j_{1},j_{2}],\ell)}(\MCGo)(\tn)\hookrightarrow\delta_{\one}\fL^{\star}_{(\lambda,\ell)}(\MCGo)(\tn)$ by mapping each element $([w_{1},w_{2}],\ldots,[w_{2n+1},w_{2n+2}])$ to $([j_{1}w_{1},j_{2}w_{2}],\ldots,[w_{2n+1},w_{2n+2}])$. We denote by $(\mathfrak{i}_{(\lambda,\ell)}(\MCGo))_{\tn}^{\{\lambda - 2\}}$ the direct sum over $1\leq j_{1} , j_{2}\leq r$ of these injections.

Furthermore, we introduce two subfunctors of $\tau_{\one}\fL^{\star}_{(\lambda[j],\ell)}(\MCGo)$ (see Proposition~\ref{prop:tau_on_L_varnothing}), in order to define some more injections into $\delta_{\one}\fL^{\star}_{(\lambda,\ell)}(\MCGo)(\tn)$. For each object $\tm\geq\zero$, let $\tau_{\one}\fL^{\star}_{(\lambda[j],\ell)}(\MCGo)_{[-,\varnothing]} (\tm)$ and $\tau_{\one}\fL^{\star}_{(\lambda[j],\ell)}(\MCGo)_{[\varnothing,-]}(\tm)$ be the free $\bZ[Q^{\star}_{(\lambda,\ell)}(\bT)]$-modules with basis all the generators of $\tau_{\one}\fL^{\star}_{(\lambda[j],\ell)}(\MCGo)(\tm)$ of the form $([w_{1},\varnothing],\ldots,[w_{2m+1},w_{2m+2}])$ and $([\varnothing,w_{2}],\ldots,[w_{2m+1},w_{2m+2}])$ respectively. These modules are direct summands of $\tau_{\one}\fL^{\star}_{(\lambda[j],\ell)}(\MCGo)(\tm)$.
\begin{prop}\label{prop:tau_on_L_varnothing}
By gathering the $\bZ[Q^{\star}_{(\lambda,\ell)}(\bT)]$-modules $\{\tau_{\one}\fL^{\star}_{(\lambda[j],\ell)}(\MCGo)_{[-,\varnothing]} (\tm)\}_{\tm\in\obj(\M^{+})}$ and $\{\tau_{\one}\fL^{\star}_{(\lambda[j],\ell)}(\MCGo)_{[\varnothing,-]}(\tm)\}_{\tm\in\obj(\M^{+})}$ respectively, and with the assignment of the functor $\tau_{\one}\fL^{\star}_{(\lambda[j],\ell)}(\MCGo)$ on the morphisms of $\langle\M^{+},\M^{+}\rangle$, we define functors $\tau_{\one}\fL^{\star}_{(\lambda[j],\ell)}(\MCGo)_{[-,\varnothing]}$ and $\tau_{\one}\fL^{\star}_{(\lambda[j],\ell)}(\MCGo)_{[\varnothing,-]}$ of the category $\Fct(\langle\M^{+},\M^{+}\rangle,{\bZ[Q^{\star}_{(\lambda,\ell)}(\bT)]}\lmod^{\bullet})$. Moreover, these are direct summands of the functor $\tau_{\one}\fL^{\star}_{(\lambda[j],\ell)}(\MCGo)$.
\end{prop}
\begin{proof}
We focus here on the proof for $\tau_{\one}\fL^{\star}_{(\lambda[j],\ell)}(\MCGo)_{[-,\varnothing]}$, that for $\tau_{\one}\fL^{\star}_{(\lambda[j],\ell)}(\MCGo)_{[\varnothing,-]}$ being a mutatis mutandis replication.
For each $\tm\geq\zero$, we denote by $\mathtt{i}_{[-,\varnothing]}(\tm)$ (resp.~$\mathtt{p}_{[-,\varnothing]}(\tm)$) the canonical injection (resp.~projection) associated to the direct summand $\tau_{\one}\fL^{\star}_{(\lambda[j],\ell)}(\MCGo)_{[-,\varnothing]}(\tm)$ of $\tau_{\one}\fL^{\star}_{(\lambda[j],\ell)}(\MCGo)(\tm)$. For all $\varphi \in \MCGo_{m,1}$, it follows from Lemma~\ref{lem:invariance_translation_block_MCG_action} that $\fL^{\star}_{(\lambda[j],\ell)}(\MCGo) (\id_{\one}\natural \varphi)$ does not affect the first two entries of each generator $([w_{1},\varnothing],\ldots,[w_{2m+1},w_{2m+2}])$, so the image of the composite $\tau_{\one}\fL^{\star}_{(\lambda[j],\ell)}(\MCGo) (\varphi)\circ \mathtt{i}_{[-,\varnothing]}(\tm)$ belongs to the module $\tau_{\one}\fL^{\star}_{(\lambda[j],\ell)}(\MCGo)_{[-,\varnothing]}(\tm)$. We deduce that the $\bZ[Q^{\star}_{(\lambda,\ell)}(\bT)]$-modules $\{\tau_{\one}\fL^{\star}_{(\lambda[j],\ell)}(\MCGo)_{[-,\varnothing]} (\tm)\}_{\tm\in\obj(\M^{+})}$ define a functor $\tau_{\one}\fL^{\star}_{(\lambda[j],\ell)}(\MCGo)_{[-,\varnothing]}\colon \M^{+}\to {\bZ[Q^{\star}_{(\lambda,\ell)}(\bT)]}\lmod^{\bullet}$ by assigning the $\MCGo_{m,1}$-module structure of $\tau_{\one}\fL^{\star}_{(\lambda[j],\ell)}(\MCGo)(\tm)$ for each $\tm\geq\zero$.

We now extend the functor $\tau_{\one}\fL^{\star}_{(\lambda[j],\ell)}(\MCGo)_{[-,\varnothing]}$ along the inclusion $\M^{+}\hookrightarrow \langle\M^{+},\M^{+}\rangle$. Recall from \eqref{eq:formula_morphism_{i}d_plus_morphism} that $\tau_{\one}\fL^{\star}_{(\lambda[j],\ell)}(\MCGo)([\one,\id_{\one \natural \tm}])=\fL^{\star}_{(\lambda[j],\ell)}(\MCGo)(\sigma_{1}^{-1})\circ\fL^{\star}_{(\lambda[j],\ell)}(\MCGo)([\one,\id_{\two \natural \tm}])$, where $\sigma_{1}=b^{\M^{+}}_{\one,\one}\natural \id_{\tm}$ and $b^{\M^{+}}_{\one,\one}$ is the braiding of $\M^{+}$ (see Figure~\ref{fig:braiding-mcg}). Note that $\fL^{\star}_{(\lambda[j],\ell)}(\MCGo)([\one,\id_{\two \natural \tm}])$ has a similar description to that of $\fL^{\star}_{(\lambda[j],\ell)}(\MCGo)([\one,\id_{\one\natural\tm}])$, by simply replacing $n$ with $n+1$ in the defining assignment \eqref{eq:image_injection_hom_rep_functor_mcg}. Then, we deduce that $(\fL^{\star}_{(\lambda[j],\ell)}(\MCGo)([\one,\id_{\two \natural \tm}]))([w_{1},\varnothing],\ldots,[w_{2m+1},w_{2m+2}])$ is equal to $([\varnothing,\varnothing],[w_{1},\varnothing],\ldots,[w_{2m+1},w_{2m+2}])$ for each element of $\tau_{\one}\fL^{\star}_{(\lambda[j],\ell)}(\MCGo)_{[-,\varnothing]} (\tm)$. So it follows from equation \eqref{eq:identity-braiding-or} of Lemma~\ref{lem:computations_MCG_braiding} that $\tau_{\one}\fL^{\star}_{(\lambda[j],\ell)}(\MCGo)([\one,\id_{\one \natural \tm}])([w_{1},\varnothing],\ldots,[w_{2m+1},w_{2m+2}])=([w_{1},\varnothing],[\varnothing,\varnothing],\ldots,[w_{2m+1},w_{2m+2}])$, and thus belongs to $\tau_{\one}\fL^{\star}_{(\lambda[j],\ell)}(\MCGo)_{[-,\varnothing]} (\one \natural\tm)$. Relation \eqref{eq:criterion'} is then automatically satisfied since $\tau_{\one}\fL^{\star}_{(\lambda[j],\ell)}(\MCGo)$ is a functor out of $\langle\M^{+},\M^{+}\rangle$; it thus follows from Lemma~\ref{lem:extend_functor_Quillen} that $\tau_{\one}\fL^{\star}_{(\lambda[j],\ell)}(\MCGo)_{[-,\varnothing]}$ extends to an object of $\Fct(\langle\M^{+},\M^{+}\rangle,{\bZ[Q^{\star}_{(\lambda,\ell)}(\bT)]}\lmod^{\bullet})$.

Finally, a straightforward adaptation of the above reasoning proves that all of the projection maps $\{\mathtt{p}_{[-,\varnothing]}(\tm)\}_{\tm\in\obj(\M^{+})}$ are equivariant with respect to all of the morphisms of the category $\langle\M^{+},\M^{+}\rangle$. Therefore, the subfunctor $\tau_{\one}\fL^{\star}_{(\lambda[j],\ell)}(\MCGo)_{[-,\varnothing]}$ is a direct summand of the functor $\tau_{\one}\fL^{\star}_{(\lambda[j],\ell)}(\MCGo)$.
\end{proof}

Now, for each $1\leq j\leq r$, we define an injection $\tau_{\one}\fL^{\star}_{(\lambda[j],\ell)}(\MCGo)_{[-,\varnothing]}(\tn)\hookrightarrow\delta_{\one}\fL^{\star}_{(\lambda,\ell)}(\MCGo)(\tn)$ by mapping each element $([w_{1},\varnothing],\ldots,[w_{2n+1},w_{2n+2}])$ to $([jw_{1},\varnothing],\ldots,[w_{2n+1},w_{2n+2}])$, and an injection $\tau_{\one}\fL^{\star}_{(\lambda[j],\ell)}(\MCGo)_{[\varnothing,-]}(\tn)\hookrightarrow\delta_{\one}\fL^{\star}_{(\lambda,\ell)}(\MCGo)(\tn)$ by mapping each element $([\varnothing,w_{2}],\ldots,[w_{2n+1},w_{2n+2}])$ to $([\varnothing,jw_{2}],\ldots,[w_{2n+1},w_{2n+2}])$.
We denote by $(\mathfrak{i}_{(\lambda,\ell)}(\MCGo))_{\tn}^{\{\lambda - 1\}}$ the direct sum over $1\leq j\leq r$ of these two injections, and by $(\mathfrak{i}_{(\lambda,\ell)}(\MCGo))_{\tn}$ the direct sum of the maps $(\mathfrak{i}_{(\lambda,\ell)}(\MCGo))_{\tn}^{\{\lambda - 1\}}$ and $(\mathfrak{i}_{(\lambda,\ell)}(\MCGo))_{\tn}^{\{\lambda - 2\}}$. Finally, we use the notation $\tau_{\one}\fL^{\star}_{(\lambda[j],\ell)}(\MCGo)_{[\varnothing]}$ for the subfunctor $\tau_{\one}\fL^{\star}_{(\lambda[j],\ell)}(\MCGo)_{[-,\varnothing]}\oplus \tau_{\one}\fL^{\star}_{(\lambda[j],\ell)}(\MCGo)_{[\varnothing,-]}$ of $\tau_{\one}\fL^{\star}_{(\lambda[j],\ell)}(\MCGo)$.

\textbf{\emph{For $\sS=\bM$.}} For each $1\leq j\leq r$ and each $\tn\geq\zero$, we define an injection $\tau_{\one}\fL^{\star}_{(\lambda[j],\ell)}(\MCGno)(\tn)\hookrightarrow\delta_{\one}\fL^{\star}_{(\lambda,\ell)}(\MCGno)(\tn)$ by mapping each element $([w_{1}],\ldots,[w_{n+1}])$ to $([j w_{1}],\ldots,[w_{n+1}])$. We denote by $(\mathfrak{i}_{(\lambda,\ell)}(\MCGno))_{\tn}$ the direct sum over $1\leq j\leq r$ of these injections.

\begin{notation}
To abbreviate and unify our notation, we set $(\mathfrak{i}_{(\lambda,\ell)})_{\tn}$ to be the map $(\mathfrak{i}_{(\lambda,\ell)}(\MCGo))_{\tn}$ when $\sS=\bT$, and the map $(\mathfrak{i}_{(\lambda,\ell)}(\MCGno))_{\tn}$ when $\sS=\bM$. We also denote by $\bigoplus_{\lambda'}\tau_{\one}\fL^{\star}_{(\lambda',\ell)}$ the functor $(\bigoplus_{1\leq j_{1},j_{2}\leq r}\tau_{\one}\fL^{\star}_{(\lambda[j_{1},j_{2}],\ell)}(\MCGo)) \oplus (\bigoplus_{1\leq j\leq r}\tau_{\one}\fL^{\star}_{(\lambda[j],\ell)}(\MCGo)_{[\varnothing]})$ for the case when $\sS=\bT$, and the functor $\bigoplus_{1\leq j\leq r}\tau_{\one}\fL^{\star}_{(\lambda[j],\ell)}(\MCGno)$ for the case when $\sS=\bM$.
\end{notation}

An elementary check of the bases of $\delta_{\one}\fL^{\star}_{(\lambda,\ell)}(\tn)$ and $\bigoplus_{\lambda'}\tau_{\one}\fL^{\star}_{(\lambda',\ell)}(\tn)$ shows that the morphism $(\mathfrak{i}_{(\lambda,\ell)})_{\tn}$ is an isomorphism of (free) $\bZ[Q^{\star}_{(\lambda,\ell)}(\sS)]$-modules for each $\tn\geq\zero$:
\begin{equation}
\label{eq:isom-of-modules_mcg}
(\mathfrak{i}_{(\lambda,\ell)})_{\tn}\colon \bigoplus_{\lambda'}\tau_{\one}\fL^{\star}_{(\lambda',\ell)}(\tn) \overset{\cong}{\longrightarrow} \delta_{\one}\fL^{\star}_{(\lambda,\ell)}(\tn).
\end{equation}

A similar study to the above holds in the vertical setting. Namely, Proposition~\ref{lem:module_structure_BM_homology-check} provides $\fLv_{(\lambda,\ell)}(\tn)$ with the analogous free $\bZ[Q^{\star}_{(\lambda,\ell)}(\sS)]$-module structures, indexed by the set of tuples labelling the embedded ``vertical'' graphs modelled by Figures~\ref{fig:model-or-mcg-vertical-dual} and \ref{fig:model-nor-mcg-vertical-dual}. The above reasoning then repeats mutatis mutandis to provide an analogous isomorphism $(\mathfrak{i}^{\vrtcl}_{(\lambda,\ell)})_{\tn}$ to \eqref{eq:isom-of-modules_mcg}.

\subsubsection{Classical homological representation functors}\label{sss:SES_classical_MCG}

We prove here Theorem~\ref{thm:ses} for the homological representation functors of mapping class groups in the classical (non-vertical) setting. Following our notation so far in \S\ref{ss:SES_mcg}, we make all further reasoning, as much as possible, on the (generic) functor $\fL^{\star}_{(\lambda,\ell)}$, which denotes any one of the functors \eqref{eq:hom_rep_functor_rep_mcg_o} and \eqref{eq:hom_rep_functor_rep_mcg_no_ell}.

\begin{thm}\label{thm:key_SES_mapping_class_groups}
For each ordered partition $\lambda\vdash k\geq2$, each integer $\ell\geq1$ and $\star$ either the blank space or $\star=\unt$, the exact sequence \eqref{eq:ESCaract} induces the following isomorphisms in the functor categories $\Fct(\langle\M^{+},\M^{+}\rangle,{\bZ[Q^{\star}_{(\lambda,\ell)}(\bT)]}\lmod^{\bullet})$ and $\Fct(\langle\M^{-},\M^{-}\rangle,{\bZ[Q^{\star}_{(\lambda,\ell)}(\bM)]}\lmod^{\bullet})$ respectively:
\begin{align}
\begin{split}
\label{eq:key_SES_MCG_orientable}
\tau_{\one}\fL^{\star}_{(\lambda,\ell)}(\MCGo) &\cong \fL^{\star}_{(\lambda,\ell)}(\MCGo) \mediumoplus \left(\underset{1\leq j_{1}, j_{2}\leq r}{\bigoplus}\tau_{\one}\fL^{\star}_{(\lambda[j_{1},j_{2}],\ell)}(\MCGo)\right) \mediumoplus \left(\underset{1\leq j\leq r}{\bigoplus} \tau_{\one}\fL^{\star}_{(\lambda[j],\ell)}(\MCGo)_{[\varnothing]}\right),
\end{split}
\\
\begin{split}
\label{eq:key_SES_MCG_non_orientable}
\tau_{\one}\fL^{\star}_{(\lambda,\ell)}(\MCGno) &\cong \fL^{\star}_{(\lambda,\ell)}(\MCGno) \mediumoplus \left( \underset{1\leq j\leq r}{\bigoplus}\tau_{\one}\fL^{\star}_{(\lambda[j],\ell)}(\MCGno)\right).
\end{split}
\end{align}
\end{thm}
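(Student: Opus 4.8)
The strategy mirrors that of Theorems~\ref{thm:key_SES_classical_braids} and \ref{thm:key_SES_surface_braid_groups}: I will show that the module isomorphisms $(\mathfrak{i}_{(\bk,\ell)})_{\tn}$ of \eqref{eq:isom-of-modules_mcg} assemble into a natural isomorphism of functors on $\langle\cM,\cM\rangle$, and then verify that the module-level splittings $(\Delta'_{\one}\fL^{\star}_{(\bk,\ell)})_{\tn}$ likewise assemble into a functorial splitting of \eqref{eq:ESCaract}, using $\tn=\one$. Since the canonical inclusion $\cM \hookrightarrow \langle\cG,\cM\rangle$ is the identity on objects, this breaks into two stages by Lemmas~\ref{lem:criterionnaturaltransfo} and (implicitly) \ref{lem:criterionnaturaltransfo}: first prove equivariance under each $\MCG(\sS^{\natural n})$, then check Relation~\eqref{eq:criterion''} for the morphisms $[\tm,\id_{\tm\natural\tn}]$.

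First I would establish the $\MCG(\sS^{\natural n})$-equivariance of $(\mathfrak{i}_{(\bk,\ell)})_{\tn}$. Using a standard generating set for the mapping class group (Dehn twists, crosscap slides, and the braiding elements $\sigma_i$), I split the generators into two kinds. For generators $\rho$ whose support is disjoint from the left-most handle/crosscap --- i.e.\ supported in the subsurface containing $\sS^{\natural n} \subset \sS^{\natural(1+n)}$ away from the first strip --- Lemma~\ref{rel:invariance_translation_block_MCG_action} (the boundary-connected-sum disjoint-support argument, applied in the reversed order, with the ``new'' handle on the left playing the role of $\sS^{\natural n}$ and the rest as $\sS^{\natural m}$) shows that $\rho$ does not interact with the first entry $w_1$ (or $w_1,w_2$ in the orientable case) of the tuples, so $(\mathfrak{i}_{(\bk,\ell)})_{\tn}$ commutes with the action trivially. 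The only remaining generator is $\sigma_1$, the braiding of the two left-most handles/crosscaps: here I invoke equations~\eqref{eq:identity-braiding-or} and \eqref{eq:identity-braiding-nor} of Lemma~\ref{lem:computations_MCG_braiding}, which describe exactly how $\sigma_1^{-1}$ swaps the first block of labels with the second (in the orientable case) or with an empty label (in the non-orientable case), matching the combinatorics built into the definition of $(\mathfrak{i}_{(\bk,\ell)})_{\tn}$ and the direct-sum decomposition $\bigoplus_{\bk'}\tau_{\one}\fL^{\star}_{(\bk',\ell)}$. The degenerate cases $\tn = \zero$ (all modules zero) and $\tn=\one$ (handled by construction of the morphisms as trivial) are immediate.

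Next I would verify Relation~\eqref{eq:criterion''}. As in the proofs of Theorems~\ref{thm:key_SES_classical_braids} and \ref{thm:key_SES_surface_braid_groups}, the point is that $\tau_{\one}\fL^{\star}_{(\bk,\ell)}([\one,\id_{\one\natural\tn}])$ factors, via \eqref{eq:formula_morphism_id_plus_morphism} and the identification $b^{\cM}_{\one,\one}\natural\id_{\tn} = \sigma_1$, as $\fL^{\star}_{(\bk,\ell)}(\sigma_1^{-1})\circ\fL^{\star}_{(\bk,\ell)}([\one,\id_{\two\natural\tn}])$; the second map embeds $\bW^{\sS}_{1+n}$ into $\bW^{\sS}_{2+n}$ leaving the left-most handle/crosscap empty, and then $\sigma_1^{-1}$ acts on this empty handle. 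By the relevant case of Lemma~\ref{lem:computations_MCG_braiding} (equations \eqref{eq:identity-braiding-or} and \eqref{eq:identity-braiding-nor}, valid for all $\ell\geq1$) one reads off that both sides of \eqref{eq:criterion''} produce the same tuple of labels, and then induction on $\tm$ (base case $\tm=\one$) closes this out; the cases $\tn\in\{\zero,\one\}$ are again trivial or handled by construction. This gives the isomorphisms \eqref{eq:key_SES_MCG_orientable} and \eqref{eq:key_SES_MCG_non_orientable}. For the splitting claim, I argue that the maps $(\Delta'_{\one}\fL^{\star}_{(\bk,\ell)})_{\tn}$ are sections of $\Delta_{\one}$ at module level by inspection of bases, and that their naturality (over $\langle\cM,\cM\rangle$) follows by the same two-stage argument. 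The twisted case is handled exactly as in Theorem~\ref{thm:key_SES_classical_braids}: the action on the ground ring $\bZ[Q^{\star}_{(\bk,\ell)}(\sS)]$ does not affect anything because each summand $\tau_{\one}\fL^{\star}_{(\bk',\ell)}$ carries the matching action via the implicit change of rings of Convention~\ref{conv:transformation_group_summand} and Observation~\ref{obs:change_of_rings_operations}; and stability under arbitrary change of rings follows from Lemma~\ref{lem:change_of_rings_SES}.

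\textbf{Main obstacle.} The crux is the $\sigma_1$-equivariance step and, more precisely, ensuring that the combinatorial definition of the morphisms comprising $(\mathfrak{i}_{(\bk,\ell)})_{\tn}$ --- in particular the appearance of the ``paired'' summands $\tau_{\one}\fL^{\star}_{(\bk_{j_1,j_2},\ell)}(\MCGo)$ and the two copies of each $\tau_{\one}\fL^{\star}_{(\bk_{j},\ell)}(\MCGo)$ in the orientable case --- precisely matches the action of $\sigma_1^{-1}$ on the two left-most handles. This is where the diagrammatic content of Lemma~\ref{lem:computations_MCG_braiding} is essential and where a genuine verification (as opposed to the routine bookkeeping elsewhere) is required; unlike the braid-group case, the richer structure of $\delta_{\one}$ here (a handle contributes two slots, not one) means the bijection of bases underlying \eqref{eq:isom-of-modules_mcg} is more intricate and must be checked to be $\sigma_1$-equivariant rather than merely a module isomorphism.
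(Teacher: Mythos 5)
Your proposal follows the same route as the paper's proof: the module isomorphisms $(\mathfrak{i}_{(\bk,\ell)})_{\tn}$ of \eqref{eq:isom-of-modules_mcg} and the sections $(\Delta'_{\one}\fL^{\star}_{(\bk,\ell)})_{\tn}$ are promoted to natural transformations on $\langle\cM,\cM\rangle$ in two stages (equivariance over $\cM$, then Relation~\eqref{eq:criterion''} via Lemma~\ref{lem:criterionnaturaltransfo}); the factorisation $\tau_{\one}\fL^{\star}_{(\bk,\ell)}([\one,\id_{\one\natural\tn}])=\fL^{\star}_{(\bk,\ell)}(\sigma_{1}^{-1})\circ\fL^{\star}_{(\bk,\ell)}([\one,\id_{\two\natural\tn}])$ together with equations \eqref{eq:identity-braiding-or} and \eqref{eq:identity-braiding-nor} handles the naturality step; the functorial splitting (together with $\kappa_{\one}\fL^{\star}_{(\bk,\ell)}=0$) yields the stated direct sum decompositions; and the twisted and change-of-rings statements are dispatched exactly as in Theorem~\ref{thm:key_SES_classical_braids}. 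In all essentials this is the paper's argument.

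One correction to your equivariance stage: there is no need (and it is slightly misleading) to single out a braiding generator there. The representation $\delta_{\one}\fL^{\star}_{(\bk,\ell)}(\tn)$ carries the action of $\rho\in\MCG(\sS^{\natural n})$ through $\id_{\one}\natural\rho$, whose support is disjoint from the left-most copy of $\sS$ for \emph{every} $\rho$ --- including $\sigma_{1}\in\MCG(\sS^{\natural n})$, whose translate braids the second and third handles or crosscaps --- so the mirrored form of Lemma~\ref{rel:invariance_translation_block_MCG_action} already covers the whole group and no analogue of the braid-group ``cloud'' analysis is required. Moreover, the identities \eqref{eq:identity-braiding-or} and \eqref{eq:identity-braiding-nor} could not be used at that stage as you propose: they only compute the action of $\sigma_{1}^{-1}$ on classes whose left-most handle or crosscap carries empty labels, whereas the basis elements of $\delta_{\one}\fL^{\star}_{(\bk,\ell)}(\tn)$ have non-empty labels there. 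Those identities belong, as you correctly use them later, to the verification of \eqref{eq:criterion''}, where the image of $\fL^{\star}_{(\bk,\ell)}([\one,\id_{\two\natural\tn}])$ does have empty labels on the newly adjoined summand. This distinction --- that in the mapping class group setting the translated group action never touches the new handle --- is exactly what makes these exact sequences split, in contrast to the surface braid group case, so your proof goes through once the redundant $\sigma_{1}$ special case in the equivariance step is dropped.
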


\begin{proof}
As a consequence of the above preliminary study of \S\ref{ss:mcg_hom_rep_poly_preliminary}, it suffices to show that the $\bZ[Q^{\star}_{(\lambda,\ell)}(\sS)]$-module isomorphisms $\{(\mathfrak{i}_{(\lambda,\ell)})_{\tn}\}_{\tn\in \obj(\cM)}$ (see \eqref{eq:isom-of-modules_mcg}) assemble into an isomorphism $\mathfrak{i}_{(\lambda,\ell)} \colon \bigoplus_{\lambda'}\tau_{\one}\fL^{\star}_{(\lambda',\ell)} \overset{\sim}{\to} \delta_{\one}\fL^{\star}_{(\lambda,\ell)}$ in $\Fct(\langle\cM,\cM\rangle,{\bZ[Q^{\star}_{(\lambda,\ell)}(\sS)]}\lmod^{\bullet})$, while the $\bZ[Q^{\star}_{(\lambda,\ell)}(\sS)]$-module injections $\{(\Delta'_{\one}\fL^{\star}_{(\lambda,\ell)})_{\tn}\}_{\tn\in \obj(\cM)}$ (see \eqref{eq:Delta'_def}) assemble into a natural transformation that is a section of $\Delta_{\one}\fL^{\star}_{(\lambda,\ell)}$.

First, we prove the commutation of $(\mathfrak{i}_{(\lambda,\ell)})_{\tn}$ and of $(\Delta'_{\one}\fL^{\star}_{(\lambda,\ell)})_{\tn}$ with respect to the action of $\MCG(\sS^{\natural n})$.
For each element $\rho$ of $\MCG(\sS^{\natural n})$ and for each direct summand $\tau_{\one}\fL^{\star}_{(\lambda',\ell)}$ of the source of $(\mathfrak{i}_{(\lambda,\ell)})_{\tn}$ (see \eqref{eq:isom-of-modules_mcg}), the morphisms $\delta_{\one}\fL^{\star}_{(\lambda,\ell)}(\rho)$ and $\tau_{\one}\fL^{\star}_{(\lambda',\ell)}(\rho)$ are induced by the actions of $\fL^{\star}_{(\lambda,\ell)}(\id_{\one} \natural \rho)$ and of $\fL^{\star}_{(\lambda',\ell)}(\id_{\one} \natural \rho)$ respectively on the Borel-Moore homology classes supported on the embedded graph $\bW^{\sS}_{1+n} \subset \sS^{\natural (1+n)}$.
It follows from Lemma~\ref{lem:invariance_translation_block_MCG_action} that the actions of $\fL^{\star}_{(\lambda,\ell)}(\id_{\one} \natural \rho)$ and of $\fL^{\star}_{(\lambda',\ell)}(\id_{\one} \natural \rho)$ do not affect the first two (if $\sS=\bT$) or one (if $\sS=\bM$) entries of a tuple corresponding to a generator.
Since $(\mathfrak{i}_{(\lambda,\ell)})_{\tn}$ and $(\Delta'_{\one}\fL^{\star}_{(\lambda,\ell)})_{\tn}$ both only affect the first two (if $\sS=\bT$) or one (if $\sS=\bM$) entries of a tuple by their definitions, we deduce that $\delta_{\one}\fL^{\star}_{(\lambda,\ell)}(\rho) \circ (\mathfrak{i}_{(\lambda,\ell)})_{\tn}= (\mathfrak{i}_{(\lambda,\ell)})_{\tn}\circ\bigoplus_{\lambda'}\tau_{\one}\fL^{\star}_{(\lambda',\ell)}(\rho)$ and $(\Delta'_{\one}\fL^{\star}_{(\lambda,\ell)})_{\tn}\circ \delta_{\one}\fL^{\star}_{(\lambda,\ell)}(\rho) = \tau_{\one}\fL^{\star}_{(\lambda,\ell)}(\rho)\circ(\Delta'_{\one}\fL^{\star}_{(\lambda,\ell)})_{\tn}$.
Therefore, the morphisms $\{(\mathfrak{i}_{(\lambda,\ell)})_{\tn}\}_{\tn\in\obj(\cM)}$ and $\{(\Delta'_{\one}\fL^{\star}_{(\lambda,\ell)})_{\tn}\}_{\tn\in\obj(\cM)}$ define natural transformations $\mathfrak{i}_{(\lambda,\ell)}$ and $\Delta'_{\one}\fL^{\star}_{(\lambda,\ell)}$ in $\Fct(\cM,{\bZ[Q^{\star}_{(\lambda,\ell)}(\sS)]}\lmod^{\bullet})$.

Now, by Lemma~\ref{lem:criterionnaturaltransfo}, it suffices to check relation \eqref{eq:criterion''} in order to prove that $\mathfrak{i}_{(\lambda,\ell)}$ and $\Delta'_{\one}\fL^{\star}_{(\lambda,\ell)}$ are actually natural transformations of functors on $\langle\cM,\cM\rangle$. We consider an object $\tn\geq\one$, the proof being trivial for $\tn=\zero$. We note from the formula \eqref{eq:formula_morphism_{i}d_plus_morphism}, from the composition rule \eqref{eq:composition_rule}, from the fact that $\natural$ is a strict monoidal structure and from the functoriality of $\fL^{\star}_{(\lambda,\ell)}$, that $\tau_{\one}\fL^{\star}_{(\lambda,\ell)}([\one,\id_{\one \natural \tn}])=\fL^{\star}_{(\lambda,\ell)}(\sigma_{1}^{-1})\circ\fL^{\star}_{(\lambda,\ell)}([\one,\id_{\two \natural \tn}])$, where $\sigma_{1}\in \Aut_{\cM}(\sS\natural \sS\natural \sS^{n})$ is the element $b^{\cM}_{\one,\one}\natural \id_{\tn}$ defined by the braiding $b^{\cM}_{\one,\one} \colon \one\natural \one\cong \one\natural \one$ (see Figure~\ref{fig:braiding-mcg}).
Note that $\fL^{\star}_{(\lambda,\ell)}([\one,\id_{\two \natural \tn}])$ has a similar description to that of $\fL^{\star}_{(\lambda,\ell)}([\one,\id_{\one\natural\tn}])$, by simply replacing $n$ with $n+1$ in the defining assignment \eqref{eq:image_injection_hom_rep_functor_mcg}. More precisely, it is the map induced by the embedding of $\bW^{\sS}_{1+n}$ into $\bW^{\sS}_{2+n}$ given by sending the $i$th edge $(\bS^1-\mathrm{pt})_{i}$ to the $(i+2)$nd edge $(\bS^1-\mathrm{pt})_{i+2}$ (if $\sS=\bT$) or the $(i+1)$st edge $(\bS^1-\mathrm{pt})_{i+1}$ (if $\sS=\bM$).
In particular this implies that, in the image of $\fL^{\star}_{(\lambda,\ell)}([\one,\id_{\two \natural \tn}])$, there are no configuration points on the two first edges $(\bS^1-\mathrm{pt})_{1}$ and $(\bS^1-\mathrm{pt})_{2}$ if $\sS=\bT$ or on the first edge $(\bS^1-\mathrm{pt})_{1}$ if $\sS=\bM$. Then the morphism $\fL^{\star}_{(\lambda,\ell)}(\sigma_{1}^{-1})$ corresponds to the action of $\sigma_{1}^{-1}$ on $\bW^{\sS}_{2+n}$. It therefore follows from equations \eqref{eq:identity-braiding-or} and \eqref{eq:identity-braiding-nor} of Lemma~\ref{lem:computations_MCG_braiding} that for any generator $\bw$ of $\bigoplus_{\lambda'}\tau_{\one}\fL^{\star}_{(\lambda',\ell)}(\tn)$, both $\delta_{\one}\fL^{\star}_{(\lambda,\ell)}([\one,\id_{\one \natural \tn}])((\mathfrak{i}_{(\lambda,\ell)})_{\tn}(\bw))$ and $(\mathfrak{i}_{(\lambda,\ell)})_{\one+\tn}(\bigoplus_{\lambda'}\tau_{\one}\fL^{\star}_{(\lambda',\ell)}([\one,\id_{\one \natural \tn}])(\bw))$ are equal to $\mathfrak{o}((\mathfrak{i}_{(\lambda,\ell)})_{\tn}(\bw))$, where $\mathfrak{o}$ denotes the operation
\[
\begin{cases}
([w_1,w_2],[w_3,w_4],\ldots) \longmapsto ([w_1,w_2],[\varnothing,\varnothing],[w_3,w_4],\ldots) & \text{if } \sS=\bT \\
([w_1],[w_2],\ldots) \longmapsto ([w_1],[\varnothing],[w_2],\ldots) & \text{if } \sS=\bM.
\end{cases}
\]

The same arguments using Lemma~\ref{lem:computations_MCG_braiding} also prove that $\tau_{\one}\fL^{\star}_{(\lambda,\ell)}([\one,\id_{\one \natural \tn}])((\Delta'_{\one}\fL^{\star}_{(\lambda,\ell)})_{\tn}(\bw'))$ and $(\Delta'_{\one}\fL^{\star}_{(\lambda,\ell)})_{\one+\tn}(\delta_{\one}\fL^{\star}_{(\lambda,\ell)}([\one,\id_{\one \natural \tn}])(\bw'))$ are both equal to $\mathfrak{o}(\bw')$ for any generator $\bw'$ of $\delta_{\one}\fL^{\star}_{(\lambda,\ell)}(\tn)$.
It then follows from the above equalities and induction on $\tm\geq\one$ that the collections of morphisms $\{(\mathfrak{i}_{(\lambda,\ell)})_{\tn}\}_{\tn\in\obj(\cM)}$ and $\{(\Delta'_{\one}\fL^{\star}_{(\lambda,\ell)})_{\tn}\}_{\tn\in\obj(\cM)}$ commute with the action of $[\tm,\id_{\tm\natural\tn}]$ for each $\tm\geq\one$.
Hence relation \eqref{eq:criterion''} is satisfied for all $\tn\in\obj(\cM)$ and Lemma~\ref{lem:criterionnaturaltransfo} implies that $\mathfrak{i}_{(\lambda,\ell)}$ and $\Delta'_{\one}\fL^{\star}_{(\lambda,\ell)}$ are natural transformations of functors out of $\langle\cM,\cM\rangle$. In particular, we deduce that $\mathfrak{i}_{(\lambda,\ell)}$ is an isomorphism in $\Fct(\langle\cM,\cM\rangle,{\bZ[Q^{\star}_{(\lambda,\ell)}(\sS)]}\lmod^{\bullet})$ between the functors $\delta_{\one}\fL^{\star}_{(\lambda,\ell)}$ and $\bigoplus_{\lambda'}\tau_{\one}\fL^{\star}_{(\lambda',\ell)}$. Also, it follows from the definition of $(\Delta'_{\one}\fL^{\star}_{(\lambda,\ell)})_{\tn}$ (see \eqref{eq:Delta'_def}) that $(\Delta_{\one}\fL^{\star}_{(\lambda,\ell)})_{\tn}\circ (\Delta'_{\one}\fL^{\star}_{(\lambda,\ell)})_{\tn}=\id_{\delta_{\one}\fL^{\star}_{(\lambda,\ell)}(\tn)}$, and so $\Delta'_{\one}\fL^{\star}_{(\lambda,\ell)}$ is a section of $\Delta_{\one}\fL^{\star}_{(\lambda,\ell)}$ in $\Fct(\langle\cM,\cM\rangle,{\bZ[Q^{\star}_{(\lambda,\ell)}(\sS)]}\lmod^{\bullet})$. Since $\kappa_{\one}\fL^{\star}_{(\lambda,\ell)}=0$ (because $i_{\one}(\fL^{\star}_{(\lambda,\ell)})_{\tn}$ is clearly injective for each $\tn\in \cM$; see \eqref{eq:image_injection_hom_rep_functor_mcg}), we deduce that the exact sequence \eqref{eq:ESCaract} is a split short exact sequence, which provides the isomorphisms \eqref{eq:key_SES_MCG_orientable} and \eqref{eq:key_SES_MCG_non_orientable}.

In the above arguments, whenever we deal with a \emph{twisted} homological representation functor, we stress that the action on the ground ring $\bZ[Q^{\star}_{(\lambda,\ell)}(\sS)]$ does not affect any of the above reasoning, since $\fL^{\star}_{(\lambda,\ell)}$ and $\bigoplus_{\lambda'}\tau_{\one}\fL^{\star}_{(\lambda',\ell)}$ are equipped with the same action as $\tau_{\one}\fL^{\star}_{(\lambda,\ell)}$ via the change of rings operation of Convention~\ref{conv:transformation_group_summand}; see Lemma~\ref{lem:change_of_rings_operations}.
\end{proof}

\subsubsection{Vertical-type alternatives}\label{sss:SES_MCG_alternative}

We now deal with the vertical-type alternatives of the homological representation functors for the mapping class groups of surfaces introduced in \S\ref{sss:representations_mapping_class_groups}.
We consider the functors $\fLv_{(\lambda,\ell')}(\MCGo)$ for orientable surfaces and the functors $\fLv_{(\lambda,\ell')}(\MCGno)$ for non-orientable surfaces, only for $\ell'\leq 2$. (We recall from Lemma~\ref{lem:Qu_ell=2_useless} that $\fLv_{(\lambda,\ell')}=\fLuv_{(\lambda,\ell')}$ in this case.) In particular, we do not consider the functors $\fLv_{(\lambda,\ell)}(\MCGno)$ with $\ell\geq3$ for non-orientable surfaces. This is because the proof of Theorem~\ref{thm:SES_MCG_alternatives} below relies on the identities proven in Lemma~\ref{lem:computations_MCG_braiding} using the specific structure of the transformation groups $Q_{(\lambda,2)}(\bT)$ and $Q_{(\lambda,2)}(\bM)$ (see Lemma~\ref{lem:transformation_groups_MCG_ell_2}); in contrast, the groups $Q_{(\lambda,\ell)}(\bM)$ are not known for $\ell\geq 3$. We however conjecture that all of the following arguments, including the results of Theorem~\ref{thm:SES_MCG_alternatives} and Theorem~\ref{thm:polynomiality_vs_vertical}, also hold for $\ell\geq 3$.

\begin{thm}\label{thm:SES_MCG_alternatives}
For each ordered partition $\lambda\vdash k\geq2$, the exact sequence \eqref{eq:ESCaract} induces the analogous isomorphisms to \eqref{eq:key_SES_MCG_orientable} and \eqref{eq:key_SES_MCG_non_orientable} for the functors $\fLv_{(\lambda,1)}(\MCGo)$, $\fLv_{(\lambda,2)}(\MCGo)$, $\fLv_{(\lambda,1)}(\MCGno)$ and $\fLv_{(\lambda,2)}(\MCGno)$.
\end{thm}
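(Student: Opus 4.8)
The plan is to mirror the proof of Theorem~\ref{thm:key_SES_mapping_class_groups} essentially verbatim, replacing the functors $\fL^{\star}_{(\bk,\ell)}$ by their vertical-type alternatives $\fLv_{(\bk,\ell')}$ (for $\ell' \leq 2$) and the module isomorphisms $(\mathfrak{i}_{(\bk,\ell)})_{\tn}$ of \eqref{eq:isom-of-modules_mcg} by the analogous isomorphisms $(\mathfrak{i}^{\vrtcl}_{(\bk,\ell')})_{\tn}$ constructed at the end of \S\ref{ss:mcg_hom_rep_poly_preliminary} using Proposition~\ref{lem:module_structure_BM_homology-check}. Concretely, I would first recall that, by Proposition~\ref{lem:module_structure_BM_homology-check}, each $\fLv_{(\bk,\ell')}(\tn)$ is a free $\bZ[Q_{(\bk,\ell')}(\sS)]$-module with basis indexed by labellings of the ``vertical'' model graphs of Figures~\ref{fig:model-or-mcg-vertical-dual} and \ref{fig:model-nor-mcg-vertical-dual}, and that the morphism $\fLv_{(\bk,\ell')}([\one,\id_{\one\natural\tn}])$ again adjoins the empty word as the label of the left-most arc. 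This gives $\kappa_{\one}\fLv_{(\bk,\ell')} = 0$ and an explicit description of $\delta_{\one}\fLv_{(\bk,\ell')}(\tn)$ together with the module isomorphism $(\mathfrak{i}^{\vrtcl}_{(\bk,\ell')})_{\tn}$ and the splitting candidate $(\Delta'_{\one}\fLv_{(\bk,\ell')})_{\tn}$ sending each generating tuple to itself.

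Next I would verify that these module maps are $\MCG(\sS^{\natural n})$-equivariant for each $\tn$. As in the proof of Theorem~\ref{thm:key_SES_mapping_class_groups}, this follows from the disjoint-support principle of Lemma~\ref{rel:invariance_translation_block_MCG_action} (which was stated for $\hat{\fL}_{(\bk,\ell)}$ being either $\fL^{\star}_{(\bk,\ell)}$ or $\fL^{\star,\vrtcl}_{(\bk,\ell)}$, so it applies directly): the action of any $\rho \in \MCG(\sS^{\natural n})$ fixes the entries of a generating tuple corresponding to the left-most handle (if $\sS = \bT$) or crosscap (if $\sS = \bM$), which are precisely the entries that $(\mathfrak{i}^{\vrtcl}_{(\bk,\ell')})_{\tn}$ and $(\Delta'_{\one}\fLv_{(\bk,\ell')})_{\tn}$ touch. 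Hence these assemble into natural transformations of functors on $\cM$. To promote them to functors on $\langle\cM,\cM\rangle$ I would invoke Lemma~\ref{lem:criterionnaturaltransfo}, so the one relation that must be checked is \eqref{eq:criterion''} for the morphism $[\one,\id_{\one\natural\tn}]$. Using \eqref{eq:formula_morphism_id_plus_morphism} one has $\tau_{\one}\fLv_{(\bk,\ell')}([\one,\id_{\one\natural\tn}]) = \fLv_{(\bk,\ell')}(\sigma_{1}^{-1})\circ\fLv_{(\bk,\ell')}([\one,\id_{\two\natural\tn}])$; but by Remark~\ref{rmk:interaction-with-braiding-dagger}, in the vertical setting we work over $(\cM)\dv$, so the relevant action is that of $(\sigma_{1}^{-1})\dv = \sigma_{1}$, which is exactly the action computed in equations \eqref{eq:identity-braiding-or-vertical} and \eqref{eq:identity-braiding-nor-vertical} of Lemma~\ref{lem:computations_MCG_braiding}. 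Feeding those identities into the same bookkeeping as in Theorem~\ref{thm:key_SES_mapping_class_groups} shows both sides of \eqref{eq:criterion''} agree on each generating tuple, completing the induction on $\tm$.

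Having established that $\mathfrak{i}^{\vrtcl}_{(\bk,\ell')}$ is a natural isomorphism and that $\Delta'_{\one}\fLv_{(\bk,\ell')}$ is a natural section of $\Delta_{\one}\fLv_{(\bk,\ell')}$, the exact sequence \eqref{eq:ESCaract} (with $\kappa_{\one}\fLv_{(\bk,\ell')} = 0$) becomes a split short exact sequence, yielding the analogues of \eqref{eq:key_SES_MCG_orientable} and \eqref{eq:key_SES_MCG_non_orientable}. The stability under change of rings follows from Lemma~\ref{lem:change_of_rings_SES}, exactly as before; and since $\fLv_{(\bk,\ell')} = \fLuv_{(\bk,\ell')}$ for $\ell' \leq 2$ by Lemma~\ref{lem:Qu_ell=2_useless}, the untwisted versions are covered simultaneously. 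The main obstacle — and the reason the theorem is restricted to $\ell' \in \{1,2\}$ — is the braiding computation of Lemma~\ref{lem:computations_MCG_braiding}: equations \eqref{eq:identity-braiding-or-vertical} and \eqref{eq:identity-braiding-nor-vertical} rely on knowing that the ``tether-difference'' loops of Figures~\ref{fig:identity-braiding-or-vertical-loop} and \ref{fig:identity-braiding-nor-vertical-loop} project to the trivial element of $Q_{(\bk,2)}(\bT)$ respectively $Q_{(\bk,2)}(\bM)$, which in turn uses the explicit product-of-$\bZ/2$ structure of these groups from Lemma~\ref{lem:transformation_groups_MCG_ell_2}; for $\ell \geq 3$ the groups $Q_{(\bk,\ell)}(\bM)$ are not known, so only this step genuinely prevents the generalisation, and I would flag (as the authors do) that the result is conjectured to persist for all $\ell$.
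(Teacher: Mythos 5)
Your proposal is correct and follows essentially the same route as the paper: it repeats the proof of Theorem~\ref{thm:key_SES_mapping_class_groups} mutatis mutandis, using Lemma~\ref{rel:invariance_translation_block_MCG_action} for the $\MCG(\sS^{\natural n})$-equivariance and equations \eqref{eq:identity-braiding-or-vertical}--\eqref{eq:identity-braiding-nor-vertical} of Lemma~\ref{lem:computations_MCG_braiding} (with the $\dv$ convention on the braiding) for the extension to $\langle\cM\dv,\cM\dv\rangle$. Your identification of the braiding computation as the sole obstruction to $\ell\geq 3$ also matches the paper's discussion.
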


\begin{proof}
We fix $\ell'\in\{1,2\}$. The analogous isomorphisms to \eqref{eq:key_SES_MCG_orientable} and \eqref{eq:key_SES_MCG_non_orientable} for the vertical-type alternatives follow mutatis mutandis from the proof of Theorem~\ref{thm:key_SES_mapping_class_groups} by defining analogues $(\mathfrak{i}^{\vrtcl}_{(\lambda,\ell')})_{\tn}$ and $(\Delta'_{\one}\fLv_{(\lambda,\ell')})_{\tn}$ to the morphisms $(\mathfrak{i}_{(\lambda,\ell')})_{\tn}$ and $(\Delta'_{\one}\fL_{(\lambda,\ell')})_{\tn}$ for each $\tn\in\obj(\cM)$. The proof that these define natural transformations $\mathfrak{i}^{\vrtcl}_{(\lambda,\ell')}$ and $\Delta'_{\one}\fLv_{(\lambda,\ell')}$ in $\Fct(\cM,{\bZ[Q_{(\lambda,\ell')}]}\lmod)$ is a verbatim repetition of the first part of the proof of Theorem~\ref{thm:key_SES_mapping_class_groups}, again using the disjoint support argument of Lemma~\ref{lem:invariance_translation_block_MCG_action}. Then, the proof that $\mathfrak{i}^{\vrtcl}_{(\lambda,\ell')}$ and $\Delta'_{\one}\fLv_{(\lambda,\ell')}$ are natural transformations in $\Fct(\langle\cM\dv,\cM\dv\rangle,{\bZ[Q_{(\lambda,\ell')}]}\lmod)$ is the same as the second part of the proof of Theorem~\ref{thm:key_SES_mapping_class_groups}, except that we now use equations \eqref{eq:identity-braiding-or-vertical} and \eqref{eq:identity-braiding-nor-vertical} of Lemma~\ref{lem:computations_MCG_braiding} to understand the action of the braiding.
\end{proof}

\begin{rmk}\label{rmk:SES_alternative_dagger_MCG}
We could define the functors $\fLv_{(\lambda,1)}(\MCGo)$, $\fLv_{(\lambda,2)}(\MCGo)$, $\fLv_{(\lambda,1)}(\MCGno)$ and $\fLv_{(\lambda,2)}(\MCGno)$ on the categories $\langle\M^{+},\M^{+}\rangle$ and $\langle\M^{-},\M^{-}\rangle$ respectively (i.e.~without the opposite convention for the braiding of $\M$ induced by the $\dv$ endofunctor; see \S\ref{sss:representations_mapping_class_groups}). However, in this setting it is not clear that there are isomorphisms analogous to \eqref{eq:key_SES_MCG_orientable} and \eqref{eq:key_SES_MCG_non_orientable}.
\end{rmk}

\section{Polynomiality}\label{s:poly_homol_rep-func}

In this section, we recollect the theory of polynomial functors in \S\ref{s:notions_polynomiality}, then prove in \S\ref{ss:polynomiality_surface_braid_groups}--\S\ref{ss:poly_mcg} the polynomiality results of Corollary~\ref{coro:polynomiality} and Theorem~\ref{thm:polynomiality_vs_vertical}, and finally prove Corollary~\ref{coro:analyticity} in \S\ref{ss:analyticity}.
\subsection{Notions of polynomiality}\label{s:notions_polynomiality}

We review here the notions and basic properties of \emph{strong}, \emph{very strong}, \emph{split} and \emph{weak} polynomial functors.
The first definitions of polynomial functors date back to Eilenberg and Mac Lane in \cite{EilenbergMacLane} for functors on module categories. This notion has progressively been extended to deal with a more general framework, and has been the object of intensive study because of its applications in representation theory (see Djament, Touzé and Vespa \cite{djamenttouzevespa}), group cohomology (see Franjou, Friedlander, Scorichenko and Suslin \cite{FranjouFriedlanderScorichenkoSuslin}) and homological stability with twisted coefficients (see Randal-Williams and Wahl \cite{RWW}). In particular, Djament and Vespa \cite[\S 1]{DV3} introduce the notions of \emph{strong} and \emph{weak} polynomial functors in the context of a functor category $\Fct(\cC,\cA)$, where $\cC$ is a symmetric monoidal category where the unit is an initial object and $\cA$ is a Grothendieck category (see the definition below). They are then extended to the case where $\cC$ is pre-braided monoidal in \cite{soulieLM1,soulieLMgeneralised}, which also introduce the notion of \emph{very strong} polynomial functor. See also \cite{palmer2017comparison} for a comparison of the various instances of polynomial functors.
All these notions extend verbatim to the present slightly larger framework from the previous literature on this topic (see \cite[\S 4]{soulieLMgeneralised} for instance), the various proofs being mutatis mutandis generalisations of these previous works.
We also define the notion of \emph{split} polynomial functor, a particular kind of very strong polynomial functor, following an analogous notion from \cite{RWW}.

For the remainder of \S\ref{s:notions_polynomiality}, we fix a strict left-module groupoid $(\cM,\natural)$ over a braided strict monoidal groupoid $(\cG,\natural,\zero)$ satisfying the same assumptions as in \S\ref{sss:translation_background}: $(\cG,\natural,\zero)$ has no zero divisors, $\mathrm{Aut}_{\cG}(\zero)=\{ \id_{\zero}\}$, $\cM$ and $\cG$ are both small and skeletal, have the same set of objects identified with the non-negative integers $\bN$ with the standard notation $\tn$ to denote an object, and both the monoidal and module structures $\natural$ are given on objects by addition. For example, one quickly checks that all of the examples of $\cM$ and $\cG$ defined in \S\ref{ss:examples_homological_representations_functors} satisfy all of these assumptions. We also consider a Grothendieck category $\cA$, i.e.~a cocomplete abelian category, which admits a generator, and in which filtered colimits of exact sequences are exact. In particular, we recall from \cite[\S1.7, d)]{GrothendieckTohoku} that the functor category $\Fct(\langle \cG,\cM \rangle ,\cA)$ is a Grothendieck category.

\paragraph*{Strong, very strong and split polynomial functors.}

The category of \emph{strong} polynomial functors of degree at most $d\in \bN$, denoted by $\mathcal{P}ol^{str}_{d}(\langle \cG,\cM \rangle ,\cA)$, is the full subcategory of $\Fct(\langle \cG,\cM \rangle ,\cA)$ defined by $\mathcal{P}ol^{str}_{d}(\langle \cG,\cM \rangle ,\cA)=\{ 0\}$ if $d<0$ and the objects of $\mathcal{P}ol^{str}_{d}(\langle \cG,\cM \rangle ,\cA)$ for $d\in \bN$ are the functors $F$ such that the functor $\delta_{\one}(F)$ is an object of $\mathcal{P}ol_{d-1}^{str}(\langle \cG,\cM \rangle ,\cA)$. The smallest integer $d\in \bN$ for which an object $F$ of $\Fct(\langle \cG,\cM \rangle ,\cA)$ is an object of $\mathcal{P}ol^{str}_{d}(\langle \cG,\cM \rangle ,\cA)$ is called the strong degree of $F$, and is denoted by $\sdeg(F)$.

The category of \emph{very strong} polynomial functors of degree at most $d\in \bN$, denoted by $\mathcal{VP}ol_{d}(\langle \cG,\cM \rangle ,\cA)$, is the full subcategory of $\mathcal{P}ol^{str}_{d}(\langle \cG,\cM \rangle ,\cA)$ of the objects $F$ such that $\kappa_{\one}(F)=0$ and the functor $\delta_{\one}(F)$ is an object of $\mathcal{\mathcal{VP}}ol_{d-1}(\langle \cG,\cM \rangle ,\cA)$.

The category of \emph{split} polynomial functors of degree at most $d\in \bN$, denoted by $\mathcal{SP}ol_{d}(\langle \cG,\cM \rangle ,\cA)$, is the full subcategory of $\mathcal{P}ol^{str}_{d}(\langle \cG,\cM \rangle ,\cA)$ of the objects $F$ such that the translation map $i_{\one}F \colon F\to\tau_{\one}F$ is split injective in $\Fct(\langle \cG,\cM \rangle ,\cA)$.

\begin{rmk}\label{rmk:strong_polynomiality_comparisons}
It follows from the definitions that split polynomiality implies very strong polynomiality, while very strong polynomiality implies strong polynomiality. Also, for $F$ an object of $\Fct(\langle \cG,\cM \rangle ,\cA)$, if $F$ is very strong (resp.~split) polynomial, then its strong degree $\sdeg(F)$ is the smallest integer $d\in \bN$ such that $F$ is an object of $\mathcal{VP}ol_{d}(\langle \cG,\cM \rangle ,\cA)$ (resp.~of $\mathcal{SP}ol_{d}(\langle \cG,\cM \rangle ,\cA)$).
\end{rmk}

A valuable application of these notions of polynomiality is that of \emph{homological stability with twisted coefficients} for families of groups; see \cite{RWW} for a detailed introduction to this notion. Namely, let us consider any pair of groupoids $\cM$ and $\cG$ introduced in \S\ref{sss:category_mcg} and \S\ref{sss:category_surface_braid_groups}, such that the family of automorphism groups of the Quillen bracket construction $\langle \cG,\cM \rangle$ is $\{\B_{n}(S)\}_{n\in\bN}$ for a fixed $S\in\{\bD,\Sigma_{g,1},\N_{h,1}\}$, or $\{\MCGo_{n,1}\}_{n\in\bN}$, or $\{\MCGno_{n,1}\}_{n\in\bN}$.
\begin{thm}[{\cite[Theorems~D, I, $5.23$, $5.26$, $5.29$]{RWW}}]\label{thm:homological_stability}
Twisted homological stability holds for the family of automorphism groups of $\langle \cG,\cM \rangle$ with coefficients in any object $F$ of $\Fct(\langle \cG,\cM \rangle,\bZ\lmod)$ that is \emph{strong} polynomial (of finite degree $d\in \bN$), and for which there exists $\mathtt{N}\in \obj(\cM)$ such that, for all $r\in\{0,\ldots,d\}$, $\kappa_{\one}(\delta^{r}_{\one}F)(\tn)=0$ for each $\tn\geq \mathtt{N} - r$; see \textup{\cite[Def.~$4.10$]{RWW}} for further details on these conditions. For instance, these conditions hold whenever $F$ is \emph{very strong} polynomial or \emph{split} polynomial.
\end{thm}

\begin{rmk}
In the case of mapping class groups of orientable surfaces, similar twisted homological stability results to those of Theorem~\ref{thm:homological_stability} from \cite{RWW} were proven earlier by Ivanov \cite{Ivanov} and Boldsen \cite{Boldsen}.

On another note, the target category of a homological representation functor $\fL^{\star}_{(\lambda,\ell)}$ is generally of the form ${\bZ[Q^{\star}_{(\lambda,\ell)}]}\lmod^{\bullet}$ (see Convention~\ref{convention:twisted_polynomiality}). To rigorously apply Theorem~\ref{thm:homological_stability} (see Corollaries~\ref{coro:HS_surface_braids} and \ref{coro:HS_MCG}), we implicitly postcompose the homological representation functors by the forgetful functor ${\bZ[Q^{\star}_{(\lambda,\ell)}]}\lmod^{\bullet} \to {\bZ}\lmod$; such operations do not affect the polynomiality properties of $\fL^{\star}_{(\lambda,\ell)}$.
\end{rmk}

\paragraph*{Weak polynomial functors.}

Let $F$ be an object of $\Fct(\langle \cG,\cM \rangle ,\cA)$. For all objects $\tn$ and $\tm$ of $\cM$, it follows from a clear diagram chase, the universal property of a kernel and the $4$-lemma that the morphism $[\tm,\id_{\tm\natural\tn}]$ induces a canonical inclusion $\kappa_{\tn}(F)\hookrightarrow \kappa_{\tm\natural\tn}(F)$. These inclusions provide an ascending filtration on the evanescence functors $\{\kappa_{\tn}\}_{\tn\in\obj(\cM)}$. We denote by $\kappa(F)$ the filtered colimit $\sum_{\tn\in \obj(\cM)}\kappa_{\tn}(F)$, which is a subfunctor of $F$.

Let $K(\langle \cG,\cM \rangle ,\cA)$ be the full subcategory of $\Fct(\langle \cG,\cM \rangle ,\cA)$ of all those objects $F$ such that $\kappa(F)=F$. The category $K(\langle \cG,\cM \rangle ,\cA)$ is a thick subcategory of $\Fct(\langle \cG,\cM \rangle ,\cA)$ and it is closed under colimits; see \cite[Prop.~$4.6$]{soulieLMgeneralised}.

\begin{rmk}\label{rmk:Grothendieck}
It would be enough to assume that $\cA$ is abelian to define strong, very strong and split polynomiality. However, we need $\cA$ to be Grothendieck in order to work with the notion of weak polynomiality and the associated quotient category of the functor category $\Fct(\langle \cG,\cM \rangle ,\cA)$. Indeed, it is necessary to assume that the filtered colimits in the category $\Fct(\langle \cG,\cM \rangle ,\cA)$ are exact in order to prove \cite[Prop.~$4.6$]{soulieLMgeneralised}. See also \cite[Rem.~$4.7$]{soulieLMgeneralised} for further explanations.
\end{rmk}

Therefore, the subcategory $K(\langle \cG,\cM \rangle ,\cA)$ of the Grothendieck category $\Fct(\langle \cG,\cM \rangle ,\cA)$ is \emph{localising}, and we may define the associated quotient category denoted by $\mathbf{St}(\langle \cG,\cM \rangle ,\cA)$, along with the associated left adjoint quotient functor $\pi_{\langle \cG,\cM \rangle}$, which is exact,
essentially surjective and commutes with all colimits; see \cite[Chapitre~III, \S1]{gabriel}.

For each object $\tn$ of $\cM$, the translation functor $\tau_{\tn}$ and the difference functor $\delta_{\tn}$ in the category $\Fct(\langle \cG,\cM \rangle ,\cA)$ induce exact endofunctors of $\mathbf{St}(\langle \cG,\cM \rangle ,\cA)$, which commute with colimits, respectively called again the translation functor $\tau_{\tn}$ and the difference functor $\delta_{\tn}$. In addition, we have the commutation relations $\delta_{\tn}\circ\pi_{\langle \cG,\cM \rangle}=\pi_{\langle \cG,\cM \rangle}\circ\delta_{\tn}$ and $\tau_{\tn}\circ\pi_{\langle \cG,\cM \rangle}=\pi_{\langle \cG,\cM \rangle}\circ\tau_{\tn}$. Therefore, the exact sequence \eqref{eq:ESCaract} induces a short exact sequence $\Identity\hookrightarrow\tau_{\tn}\twoheadrightarrow\delta_{\tn}$ for the induced endofunctors of $\mathbf{St}(\langle \cG,\cM \rangle ,\cA)$.
Finally, the endofunctors $\delta_{\one}$, $\delta_{\tn}$, $\tau_{\one}$ and $\tau_{\tn}$ of $\mathbf{St}(\langle \cG,\cM \rangle ,\cA)$ pairwise commute up to natural isomorphism coming from the braiding.

We then define, inductively on $d\in\bN$, the category of polynomial functors of degree at most $d$, denoted by $\mathcal{P}ol_{d}(\langle \cG,\cM \rangle ,\cA)$, to be the full subcategory of $\mathbf{St}(\langle \cG,\cM \rangle ,\cA)$ as follows. If $d<0$, $\mathcal{P}ol_{d}(\langle \cG,\cM \rangle ,\cA)=\{ 0\}$; if $d\geq0$, the objects of $\mathcal{P}ol_{d}(\langle \cG,\cM \rangle ,\cA)$ are the functors $F$ such that the functor $\delta_{\one}(F)$ is an object of $\mathcal{P}ol_{d-1}(\langle \cG,\cM \rangle ,\cA)$. For an object $F$ of $\mathbf{St}(\langle \cG,\cM \rangle ,\cA)$ that is polynomial of degree at most $d\in\bN$, the smallest integer $n\leq d$ for which $F$ is an object of $\mathcal{P}ol_{n}(\langle \cG,\cM \rangle ,\cA)$ is called the degree of $F$. An object $F$ of $\Fct(\langle \cG,\cM \rangle ,\cA)$ is weak polynomial of degree at most $d$ if its image $\pi_{\langle \cG,\cM \rangle}(F)$ is an object of $\mathcal{P}ol_{d}(\langle \cG,\cM \rangle ,\cA)$.
The degree of polynomiality of $\pi_{\langle \cG,\cM \rangle}(F)$ is called the weak degree of $F$, and is denoted by $\wdeg(F)$.

\begin{lem}\label{lem:strong_weak_polynomiality_comparisons}
Let $F$ be an object of $\Fct(\langle \cG,\cM \rangle ,\cA)$. If $F$ is strong polynomial of strong degree $\sdeg(F)$, then it is weak polynomial of weak degree $\wdeg(F) \leq \sdeg(F)$.
\end{lem}
\begin{proof}
The result is a straightforward consequence of the commutation relation $\delta^{p}_{\one}(\pi_{\langle \cG,\cM \rangle}(F))=\pi_{\langle \cG,\cM \rangle}(\delta^{p}_{\one}(F))$ for all $p\geq 1$.
\end{proof}

\begin{rmk}\label{rmk:polynomiality_n=1}
Since each object $\tn$ is equal to $\one^{\natural n}$ in the category $\langle \cG,\cM \rangle$, our definitions of the notions of strong, very strong, split and weak polynomiality are equivalent to the more classical ones with the analogous criteria on the functors $\tau_{\tn}$, $\delta_{\tn}$ and $\kappa_{\tn}$ for all $\tn\in\obj(\cG)$ (instead of just $\tn=\one$); see \cite[Prop.~$1.8$]{DV3}, \cite[Prop.~$3.9$]{soulieLM1} and \cite[Prop.~$4.4$, (2)]{soulieLMgeneralised} for further details.
\end{rmk}

Furthermore, for a short exact sequence $0\to F\to G\to H\to 0$ in the category $\Fct(\langle \cG,\cM \rangle ,\mathcal{A})$, the snake lemma induces the following short exact sequence in $\mathbf{St}(\langle \cG,\cM \rangle ,\cA)$, which is the image of \eqref{eq:LESkappadelta}:
\begin{equation}\label{eq:LESdelta_stable}
0\longrightarrow\delta_{\tn}\circ\pi_{\langle \cG,\cM \rangle }(F)\longrightarrow\delta_{\tn}\circ\pi_{\langle \cG,\cM \rangle }(G)\longrightarrow\delta_{\tn}\circ\pi_{\langle \cG,\cM \rangle }(H)\longrightarrow0.
\end{equation}
Finally, we recall useful properties of the categories associated with the different types of polynomial functors, which are proven in \cite[Props.~$4.4$, $4.10$]{soulieLMgeneralised} (split polynomial functors are not considered there, but their study follows repeating mutatis mutandis this reference).

\begin{prop}\label{prop:properties_polynomiality}
Let $d\geq0$ be an integer. The categories $\mathcal{P}ol^{str}_{d}(\langle \cG,\cM \rangle ,\cA)$, $\mathcal{VP}ol_{d}(\langle \cG,\cM \rangle ,\cA)$ and $\mathcal{SP}ol_{d}(\langle \cG,\cM \rangle ,\cA)$ are closed under the translation functor, direct sum and direct summand.
The categories $\mathcal{P}ol^{str}_{d}(\langle \cG,\cM \rangle ,\cA)$ and $\mathcal{VP}ol_{d}(\langle \cG,\cM \rangle ,\cA)$ are closed under extensions.
The category $\mathcal{P}ol^{str}_{d}(\langle \cG,\cM \rangle ,\cA)$ is closed under colimits.
The categories $\mathcal{VP}ol_{d}(\langle \cG,\cM \rangle ,\cA)$ and $\mathcal{SP}ol_{d}(\langle \cG,\cM \rangle ,\cA)$ are closed under normal subobjects (i.e.~kernels of epimorphisms).
As a subcategory of $\mathbf{St}(\langle \cG,\cM \rangle ,\cA)$, the category $\mathcal{P}ol_{d}(\langle \cG,\cM \rangle ,\cA)$ is thick, complete and cocomplete.
\end{prop}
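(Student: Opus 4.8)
The plan is to reduce every clause of the statement to the corresponding results in \cite[Props.~4.4 and 4.10]{soulieLMgeneralised} (where split polynomial functors are not treated, so that case will need a short separate argument), after checking that those proofs use only formal properties of the translation, difference and evanescence functors that survive in the present slightly larger framework. The first thing I would do is isolate these inputs: the exactness of $\tau_{\one}$; the four-term exact sequence \eqref{eq:ESCaract}; the six-term snake sequence \eqref{eq:LESkappadelta}, which encodes in particular the right-exactness of $\delta_{\one}$ and the left-exactness of $\kappa_{\one}$; and the natural commutations of $\tau_{\one}$ with all limits and colimits and with $\delta_{\one}$ and $\kappa_{\one}$. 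All of these hold verbatim because $\cA$ is Grothendieck and $(\cM,\natural)$ over $(\cG,\natural,\zero)$ satisfies the hypotheses fixed at the start of \S\ref{s:notions_polynomiality}; so the arguments of \emph{loc.\ cit.}\ transcribe word for word, and it remains only to recall them and to add the split case.

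\textbf{Closure under translation.} I would induct on $d$, the case $d<0$ being vacuous. If $\delta_{\one}F$ is strong (resp.\ very strong, resp.\ split) polynomial of degree $\leq d-1$, then $\delta_{\one}(\tau_{\one}F)\cong\tau_{\one}(\delta_{\one}F)$ is so by the inductive hypothesis, which gives $\tau_{\one}F\in\mathcal{P}ol_{d}^{str}$; for the very strong case one adds $\kappa_{\one}(\tau_{\one}F)\cong\tau_{\one}(\kappa_{\one}F)=0$, and for the split case one uses that the exact functor $\tau_{\one}$ sends the split monomorphism $i_{\one}F$ to $i_{\one}(\tau_{\one}F)=\tau_{\one}(i_{\one}F)$, which therefore remains split.

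\textbf{Closure under extensions, normal subobjects and colimits.} For $\mathcal{P}ol_{d}^{str}$ the clean argument is that $\delta_{\one}$, hence $\delta_{\one}^{d+1}$, is right exact and commutes with colimits: applied to a short exact sequence $0\to F\to G\to H\to 0$ whose outer terms are strong polynomial of degree $\leq d$ this forces $\delta_{\one}^{d+1}G=0$, and applied to a colimit it gives $\delta_{\one}^{d+1}(\mathrm{colim}\,F_i)\cong\mathrm{colim}\,\delta_{\one}^{d+1}F_i=0$. For $\mathcal{VP}ol_d$ and $\mathcal{SP}ol_d$, one first notes that $\kappa_{\one}=0$ propagates along extensions and normal subobjects, directly from the left-exactness of $\kappa_{\one}$ in \eqref{eq:LESkappadelta}; then, in the extension case, \eqref{eq:LESkappadelta} collapses to a short exact sequence $0\to\delta_{\one}F\to\delta_{\one}G\to\delta_{\one}H\to 0$, so the inductive step is exactly closure of $\mathcal{VP}ol_{d-1}$ (resp.\ $\mathcal{SP}ol_{d-1}$) under extensions, and the normal-subobject case is handled by the analogous argument of \emph{loc.\ cit.} For $\mathcal{SP}ol_d$ one further checks, via the five lemma applied to the naturality square of $i_{\one}$, that splittings of $i_{\one}F$ and $i_{\one}H$ assemble into one of $i_{\one}G$ and that a splitting of $i_{\one}F$ restricts to one of $i_{\one}F'$ for a normal subobject $F'\subseteq F$. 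Finally, the assertion about $\mathcal{P}ol_d(\langle\cG,\cM\rangle,\cA)$ inside $\mathbf{St}(\langle\cG,\cM\rangle,\cA)$ is immediate once one observes that $\mathbf{St}$ is itself Grothendieck, being the quotient of a Grothendieck category by a localising subcategory, and that the induced endofunctor $\delta_{\one}$ on $\mathbf{St}$ is exact and commutes with colimits: $\mathcal{P}ol_d$ is then the kernel of $\delta_{\one}^{d+1}$, which is thick, complete and cocomplete.

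The step I expect to need the most care is not any single one of these but the bookkeeping in the extension and normal-subobject arguments: one must track precisely when the connecting homomorphism $\kappa_{\one}H\to\delta_{\one}F$ of \eqref{eq:LESkappadelta} vanishes — which is exactly where the hypothesis $\kappa_{\one}(-)=0$, rather than any statement about $\kappa(-)$, is used — and make sure that the natural isomorphisms $\tau_{\one}\delta_{\one}\cong\delta_{\one}\tau_{\one}$, $\tau_{\one}\kappa_{\one}\cong\kappa_{\one}\tau_{\one}$ and so on are coherent enough for the inductions to close up. By contrast, the split case carries no conceptual difficulty of its own: its definition is modelled directly on \cite{RWW}, and once the $\mathcal{VP}ol_d$ statements are in hand the extra requirement that $i_{\one}F$ be split injective is preserved under translation, extensions and normal subobjects by the elementary diagram chases indicated above.
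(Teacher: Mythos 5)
Your proposal takes essentially the same route as the paper: the paper's entire proof consists of citing \cite[Props.~4.4, 4.10]{soulieLMgeneralised} and asserting that the split case ``follows repeating mutatis mutandis this reference'', and your reduction to those results, after isolating the formal inputs (exactness of $\tau_{\one}$, the sequences \eqref{eq:ESCaract} and \eqref{eq:LESkappadelta}, commutation of $\tau_{\one}$ with $\delta_{\one}$, $\kappa_{\one}$ and colimits), is exactly that argument, fleshed out. The one point where your added detail is not quite right as stated is the extension-closure of $\mathcal{SP}ol_{d}$: the five lemma compares given maps but does not \emph{produce} a retraction, and in a general $3\times 3$ diagram of short exact sequences splittings of the outer rows need not assemble into a splitting of the middle row (already for abelian groups, with $0\to\bZ/2\to\bZ/4\to\bZ/2\to 0$ as middle row). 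So the claim that retractions of $i_{\one}F$ and $i_{\one}H$ combine into one of $i_{\one}G$ requires a genuine construction using the specific structure of the sequences $0\to X\to\tau_{\one}X\to\delta_{\one}X\to 0$, not just a diagram chase; the paper elides this too, but if you are writing the details out you should either supply that construction or restrict the split-case claims to what you actually use (closure under translation, direct sums and normal subobjects, which your arguments do cover).
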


\subsection{For surface braid group functors}\label{ss:polynomiality_surface_braid_groups}

In this section, we prove the polynomiality properties of Corollary~\ref{coro:polynomiality} and Theorem~\ref{thm:polynomiality_vs_vertical} for the homological representation functors for classical braid groups and surface braid groups defined in \S\ref{sss:representations_classical_braid_groups} and \S\ref{sss:representations_surface_braid_groups}.
Throughout \S\ref{ss:polynomiality_surface_braid_groups}, we consider homological representation functors indexed by an ordered partition $\lambda\vdash k$ of an integer $k\geq1$ and by the depth $\ell\geq1$ of a lower central series.

\subsubsection{Classical homological representation functors}\label{ss:poly_braids}

We prove here Corollary~\ref{coro:polynomiality} in the classical (i.e.~non-vertical) setting for homological representation functors. These polynomiality results actually hold both for the standard functors (i.e.~$\LB_{(\lambda,\ell)}$ and $\fL_{(\lambda,\ell)}(S)$) as well as for their \emph{untwisted} versions (i.e.~$\LB^{\unt}_{(\lambda,\ell)}$ and $\fL^{\unt}_{(\lambda,\ell)}(S)$).

\begin{proof}[Proof of Corollary~\ref{coro:polynomiality} for classical braid groups]
Following \S\ref{ss:SES_classical_braids}, we consider the functor $\LB^{\star}_{(\lambda,\ell)}$ where $\star$ either stands for the blank space or $\star=\unt$.

We first consider the case of $k=1$, and so $\lambda=(1)$. We recall that the preliminary study of \S\ref{sss:preliminary_SES_surface_braid_groups} (except \eqref{eq:definition-of-pkln} and \eqref{eq:isom-of-modules_braids}) holds for the functor $\LB^{\star}_{((1),\ell)}$. Following Notation~\ref{nota:genus_basis}, for each $\tn\in\obj(\Beta)$, the $\B_{n}$-representation $\delta_{\one}\LB^{\star}_{((1),\ell)}(\tn)$ is the free $\bZ[Q^{\star}_{((1),\ell)}(\bD)]$-module of rank one, with generators given by the tuples $(w_{0},w_{1},\ldots,w_{n-1})$ such that $\vert w_{0}\vert= 1$, while $\kappa_{\one}\LB^{\star}_{((1),\ell)}=0$.
By expressing $\delta_{\one}\LB^{\star}_{((1),\ell)}([\one,\id_{\one\natural\tn}])$ as a quotient map of $\LB^{\star}_{((1),\ell)}(\sigma_{1}^{-1})\circ\LB^{\star}_{((1),\ell)}([\one,\id_{\one\natural\tn}])$, it follows from Lemma~\ref{lem:cloud-splitting} and from the description \eqref{eq:image_injection_hom_rep_functor_surface} of $\LB^{\star}_{((1),\ell)}([\one,\id_{\one\natural\tn}])$ that $i_{\one}(\delta_{\one}\LB^{\star}_{((1),\ell)})_{\tn}$ is an isomorphism for $\tn\geq1$, while $i_{\one}(\delta_{\one}\LB^{\star}_{((1),\ell)})_{\zero}$ is the zero map. So $\kappa_{\one}\delta_{\one}\LB^{\star}_{((1),\ell)}=0$, while $\delta_{\one}^{2}\LB^{\star}_{((1),\ell)}$ is the atomic functor whose unique non-null value is $\delta_{\one}^{2}\LB^{\star}_{((1),\ell)}(\zero)=\bZ[Q^{\star}_{((1),\ell)}(\bD)]$. A fortiori, $\LB^{\star}_{((1),\ell)}$ is strong polynomial of strong degree $2$ and weak polynomial of weak degree $1$.
Furthermore, it follows from the commutation property of $\delta_{\one}$ with $\tau_{\one}$ that $\delta_{\one}(\tau_{\one}\LB^{\star}_{((1),\ell)})(\tn)=\bZ[Q^{\star}_{((1),\ell)}(\bD)]$ for each $\tn \geq \zero$. By repeating mutatis mutandis the above argument, we show that $\kappa_{\one}(\delta_{\one}(\tau_{\one}\LB^{\star}_{((1),\ell)}))=\delta^{2}_{\one}(\tau_{\one}\LB^{\star}_{((1),\ell)})=0$, and thus $\tau_{\one}\LB^{\star}_{((1),\ell)}$ is both very strong and weak polynomial, of both strong and weak degrees $1$. Moreover, by applying Lemma~\ref{lem:change_of_rings_SES} and then repeating verbatim the above argument, we deduce that this polynomiality result for $\LB^{\star}_{((1),\ell)}$ still holds after any non-zero change of rings operation.

Now, we proceed by induction on $k\geq2$, reasoning on each ordered partition $\lambda\vdash k$. First, we consider the case of $k=2$ (with an ordered partition $\lambda\vdash 2$). It follows from Theorem~\ref{thm:key_SES_classical_braids} that $\kappa_{\one}(\LB^{\star}_{(\lambda,\ell)})=0$, while the difference functor $\delta_{\one}\LB^{\star}_{(\lambda,\ell)}$ is determined by $\tau_{\one}\LB^{\star}_{((1),\ell)}$ (potentially up to a non-zero change of rings; see Convention~\ref{conv:transformation_group_summand}). By the above polynomiality results on $\tau_{\one}\LB^{\star}_{((1),\ell)}$, we deduce that the functor $\LB^{\star}_{(\lambda,\ell)}$ is both very strong and weak polynomial, of both strong and weak degrees $2$. By using Corollary~\ref{coro:change_of_ring} and repeating verbatim this argument, this polynomiality result for $\LB^{\star}_{(\lambda,\ell)}$ with $\lambda\vdash 2$ also holds after any non-zero change of rings operation.

We do the inductive step on any fixed $\lambda\vdash k\geq2$. Namely, we assume that for each $\lambda'\in\{\lambda - 1\}$ (see Notation~\ref{not:sets_partitions}), the functor $\LB^{\star}_{(\lambda',\ell)}$ is both very strong and weak polynomial, of both strong and weak degrees $k-1$, and that these properties still hold after any non-zero change of rings operation. Since $\mathcal{VP}ol_{d}(\langle \Beta,\Beta \rangle ,{\bZ[Q^{\star}_{(\lambda,\ell)}(\bD)]}\lmod^{\bullet})$ is closed under $\tau_{\one}$ and under normal subobjects by Proposition~\ref{prop:properties_polynomiality}, this inductive assumption implies that $\tau_{\one}\LB^{\star}_{(\lambda',\ell)}$ is both very strong and weak polynomial, of both strong and weak degrees $k-1$. Now, we deduce from Theorem~\ref{thm:key_SES_classical_braids} that $\kappa_{\one}\LB^{\star}_{(\lambda,\ell)}=0$, while the difference functor $\delta_{\one}\LB^{\star}_{(\lambda,\ell)}$ is a direct sum of functors of the form $\tau_{\one}\LB^{\star}_{(\lambda',\ell)}$ where $\lambda'\in\{\lambda - 1\}$ (potentially up to a non-zero change of rings; see Convention~\ref{conv:transformation_group_summand}). Therefore, the functor $\LB^{\star}_{(\lambda,\ell)}$ is both very strong and weak polynomial, of both strong and weak degrees $k$. This polynomiality result also holds after applying any non-zero change of rings operation by using Corollary~\ref{coro:change_of_ring} and repeating verbatim the above reasoning, which ends the induction.
\end{proof}

\begin{rmk}\label{rmk:recover_LM1}
The functors $\LB_{((1),2)}\otimes\bC[\bZ]$ and $\LB_{((2),2)}\otimes\bC[\bZ^{2}]$ correspond to the \emph{reduced Burau functor} $\overline{\mathfrak{Bur}}$ and the \emph{Lawrence-Krammer functor} $\mathfrak{LK}$ defined in \cite[\S 1.2]{soulieLM1}. By Corollary~\ref{coro:change_of_ring}, the polynomiality results of Corollary~\ref{coro:polynomiality} recover those of \cite[Props.~$3.25$ and $3.33$]{soulieLM1} for the functors $\LB_{((1),2)}\otimes\bC[\bZ]$ and $\LB_{((2),2)}\otimes\bC[\bZ^{2}]$. Also, the corresponding key short exact sequences using \cite[\S 1.2]{soulieLM1} (i.e.~\eqref{eq:key_SES_classical_braids_classical} for $\lambda\in\{1,2\}$ and $\ell=2$) are proven via an alternative method, which is more algebraic than that of Theorem~\ref{thm:key_SES_classical_braids}.
\end{rmk}

\begin{proof}[Proof of Corollary~\ref{coro:polynomiality} for surface braid groups]
Following \S\ref{ss:SES_surface_braids}, we use here the notation $\fL^{\star}_{(\lambda,\ell)}(S)$ where $\star$ either stands for the blank space or $\star=\unt$ and $S$ is either $\Sigma_{g,1}$ or $\N_{h,1}$ with $g,h\geq1$.

First, we consider the case of $k=1$ (with the trivial partition $\lambda=(1)$).
We recall that the preliminary study of \S\ref{sss:preliminary_SES_surface_braid_groups} (except \eqref{eq:definition-of-pkln} and \eqref{eq:isom-of-modules_braids}) holds for the functor $\fL^{\star}_{((1),\ell)}(S)$. Following Notation~\ref{nota:genus_basis}, for each $\tn\in\obj(\Beta^{S})$, the $\bZ[Q^{\star}_{((1),\ell)}(S)]$-module $\fL^{\star}_{((1),\ell)}(S)(\tn)$ is free of rank $g_{S}+n-1$, with generators denoted by tuples $(w_{1},\ldots,w_{g_{S}+n-1})$, where the $w_{i}\in\{0,1\}$ are integers such that $\sum_{1\leq i\leq g_{S}+n-1}w_{i}=1$. Also, the map $i_{\one}(\fL^{\star}_{((1),\ell)}(S))_{\tn}=\fL^{\star}_{((1),\ell)}(S)([\one,\id_{\one\natural\tn}])$ is the injection defined by $(w_{1},\ldots,w_{g_{S}+n-1})\mapsto (0,w_{1},\ldots,w_{g_{S}+n-1})$. Hence the cokernel $\delta_{\one}\fL^{\star}_{((1),\ell)}(S)(\tn)$ is the free $\bZ[Q^{\star}_{((1),\ell)}(S)]$-module of rank one generated by the element $(1,0,\ldots,0)$ if $\tn \geq \one$, while $\delta_{\one}\fL^{\star}_{((1),\ell)}(S)(\zero)$ is a free $\bZ[Q^{\star}_{((1),\ell)}(S)]$-module of rank $g_{S}$. By expressing $\delta_{\one}\fL^{\star}_{((1),\ell)}(S)([\one,\id_{\one\natural\tn}])$ as a quotient map of $\fL^{\star}_{((1),\ell)}(S)(\sigma_{1}^{-1})\circ\fL^{\star}_{((1),\ell)}(S)([\one,\id_{\one\natural\tn}])$, it follows from Lemma~\ref{lem:cloud-splitting} that $i_{\one}(\delta_{\one}\fL^{\star}_{((1),\ell)}(S))_{\tn}$ is an isomorphism for $\tn\geq \one$, and a fortiori $\delta^{2}_{\one}(\fL^{\star}_{((1),\ell)}(S))(\tn)=0$ when $\tn\geq1$. Therefore, the functor $\delta^{2}_{\one}(\fL^{\star}_{((1),\ell)}(S))$ is either null or atomic (with unique non-null value being the image of $\zero$), and so $\fL^{\star}_{((1),\ell)}(S)$ is weak polynomial of weak degree $1$. Moreover, the target of the map $i_{\one}(\delta^{2}_{\one}\fL^{\star}_{((1),\ell)}(S))_{\tn}$ is $0$ for all $\tn\geq\zero$, so $\delta^{3}_{\one}\fL^{\star}_{((1),\ell)}(S)=0$ and thus $\fL^{\star}_{((1),\ell)}(S)$ is strong polynomial, of strong degree at most $2$.

Now, we prove by induction on $k\geq1$ that, for each ordered partition $\lambda\vdash k$, the functor $L_{\lambda}:=(\tau_{\one}\fL^{\star}_{(\lambda,\ell)}(S))_{\mid\geq2}$ is weak polynomial with $\wdeg(L_{\lambda})=k$ and strong polynomial with $\sdeg(L_{\lambda})$ equal to $k$ or $k+1$, and that these properties still hold after any non-zero change of rings operation. The base case corresponds to studying the polynomiality of $L_{(1)}:=(\tau_{\one}\fL^{\star}_{((1),\ell)}(S))_{\mid\geq2}$. Repeating mutatis mutandis the first arguments above, we deduce that $\kappa_{\one}L_{(1)} = 0$ and $\delta_{\one}(L_{(1)})(\tn)$ is the free $\bZ[Q^{\star}_{((1),\ell)}(S)]$-module of rank one generated by the element $(1,0,\ldots,0)$ if $\tn \geq \one$, while $\delta_{\one}(L_{(1)})(\zero)$ is the free $\bZ[Q^{\star}_{((1),\ell)}(S)]$-module of rank $g_{S}+1$ on the generators $(w_{1},\ldots,w_{g_{S}+1})$ with $w_{i}\in\{0,1\}$. Viewing $\delta_{\one}L_{(1)}([\one,\id_{\one\natural\tn}])$ as a quotient map of $L_{(1)}(\sigma_{1}^{-1})\circ L_{(1)}([\one,\id_{\one\natural\tn}])$, we deduce from Lemma~\ref{lem:cloud-splitting} that $i_{\one}(\delta_{\one}L_{(1)})_{\tn}$ is an isomorphism for $\tn\geq \one$.
Hence we have $\delta^{2}_{\one}(L_{(1)})(\tn)=0$ when $\tn\geq1$, and the target of the map $i_{\one}(\delta^{2}_{\one}L_{(1)})$ is $0$ for all $\tn\geq\zero$. Therefore, as above, the functor $L_{(1)}$ is weak polynomial with $\wdeg(L_{(1)})=1$, and strong polynomial with $\sdeg(L_{(1)})$ equal to $1$ or $2$ by Lemma~\ref{lem:strong_weak_polynomiality_comparisons}. Furthermore, since $\delta_{\one}L_{(1)}(\tn)$ is a free $\bZ[Q^{\star}_{(\lambda,\ell)}(D)]$-module for each $\tn\in \obj (\Beta^{S})$ and $\kappa_{\one}L_{(1)} = 0$, we deduce that this polynomiality result for $L_{(1)}$ still holds after any non-zero change of rings operation by using Lemma~\ref{lem:change_of_rings_SES} and then repeating verbatim the above argument.

We do the inductive step on a fixed ordered partition $\lambda\vdash k$ with $k\geq1$. Namely, we assume that for each $\lambda'\in\{\lambda - 1\}$ (see Notation~\ref{not:sets_partitions}), the functor $L_{\lambda'}:=(\tau_{\one}\fL^{\star}_{(\lambda',\ell)}(S))_{\mid\geq2}$ is weak polynomial of weak degree $k-1$ and strong polynomial of strong degree $k-1$ or $k$, and that these properties still hold after any non-zero change of rings operation. By the inductive assumption on $L_{\lambda'}$, the commutation properties of $\delta_{\one}$ with colimits and with $\pi_{\langle \Beta,\Beta^{S}\rangle}$ and the right-exactness of $\pi_{\langle \Beta,\Beta^{S}\rangle}$, we deduce that the functor $\bigoplus_{1\leq j\leq r} L_{\lambda[j]}$ (as well as its versions after applying any non-zero change of rings operations to each one of the $L_{\lambda[j]}$ factors) is weak polynomial of weak degree $k-1$ and strong polynomial of strong degree $k-1$ or $k$.
By Theorem~\ref{thm:key_SES_surface_braid_groups}, the difference functor $\delta_{\one}(\fL^{\star}_{(\lambda,\ell)}(S)_{\mid\geq2})$ is an extension of the atomic functor $\tau_{\one}(\fL^{\star}_{(\lambda,\ell)}(S)_{\mid\geq2})(\one)$ by $\bigoplus_{1\leq j\leq r} L_{\lambda[j]}$ (potentially up to a non-zero change of rings; see Convention~\ref{conv:transformation_group_summand}).
It thus follows from the properties on extensions of Proposition~\ref{prop:properties_polynomiality} that the functor $\delta_{\one}(\fL^{\star}_{(\lambda,\ell)}(S)_{\mid\geq2})$ is weak polynomial of weak degree at most $k-1$, and strong polynomial of strong degree at most $k$.
In the stable category $\mathbf{St}(\langle \Beta,\Beta^{S}\rangle , {\bZ[Q^{\star}_{(\lambda,\ell)}(S)]}\lmod^{\bullet})$, we note that $\pi_{\langle \Beta,\Beta^{S}\rangle}(\tau_{\one}(\fL^{\star}_{(\lambda,\ell)}(S)_{\mid\geq2})(\one))=0$ because $\kappa(\tau_{\one}(\fL^{\star}_{(\lambda,\ell)}(S)_{\mid\geq2})(\one))=\tau_{\one}(\fL^{\star}_{(\lambda,\ell)}(S)_{\mid\geq2})(\one)$. Hence $\pi_{\langle \Beta,\Beta^{S}\rangle}(\delta_{\one}(\fL^{\star}_{(\lambda,\ell)}(S)_{\mid\geq2}))\cong \bigoplus_{1\leq j\leq r}\pi_{\langle \Beta,\Beta^{S}\rangle}(L_{\lambda[j]})$ and thus the weak degree of $\delta_{\one}(\fL^{\star}_{(\lambda,\ell)}(S)_{\mid\geq2})$ is equal to $k-1$. Also, by Lemma~\ref{lem:strong_weak_polynomiality_comparisons}, the strong degree of $\delta_{\one}(\fL^{\star}_{(\lambda,\ell)}(S)_{\mid\geq2})$ is thus $k-1$ or $k$. Therefore, the functor $\fL^{\star}_{(\lambda,\ell)}(S)_{\mid\geq2}$ is weak polynomial of weak degree $k$, and strong polynomial of strong degree $k$ or $k+1$. Then, by the properties on extensions of Proposition~\ref{prop:properties_polynomiality}, we deduce from Corollary~\ref{coro:key_SES_surface_braid_groups} that the functor $L_{\lambda}$ is weak polynomial of weak degree at most $k$, and strong polynomial of strong degree at most $k+1$. Since $\delta_{\one}$ commutes with colimits, the inductive assumption on the functors $L_{\lambda[j]}$ implies that $\delta_{\one}^{k}\pi_{\langle \Beta,\Beta^{S}\rangle}(\bigoplus_{1\leq j\leq r} L_{\lambda[j]})=0$. Then, by iterating the short exact sequence \eqref{eq:LESdelta_stable} on \eqref{eq:key_SES_braid_coro}, we deduce that the functor $\delta_{\one}^{k}\pi_{\langle \Beta,\Beta^{S}\rangle}(L_{\lambda})$ is isomorphic to $\delta_{\one}^{k}\pi_{\langle \Beta,\Beta^{S}\rangle}(\fL^{\star}_{(\lambda,\ell)}(S)_{\mid\geq2})$, which is non-null since $\wdeg(\fL^{\star}_{(\lambda,\ell)}(S)_{\mid\geq2})=k$. Hence $\wdeg(L_{\lambda})=k$, and thus $\sdeg(L_{\lambda})$ is equal to $k$ or $k+1$ by Lemma~\ref{lem:strong_weak_polynomiality_comparisons}. These polynomiality results for $\fL^{\star}_{(\lambda,\ell)}(S)_{\mid\geq2}$ and $L_{\lambda}$ also hold after applying any non-zero change of rings operation, by using Corollary~\ref{coro:change_of_ring} and repeating verbatim the above reasoning, which ends the induction.

In particular, we prove in the process of the induction that, for each $k\geq1$ and each ordered partition $\lambda\vdash k$, the functor $\fL^{\star}_{(\lambda,\ell)}(S)_{\mid\geq2}$ is weak polynomial of weak degree $k$, and strong polynomial of strong degree $k$ or $k+1$. Now, recall from \S\ref{ss:SES_surface_braids} that $\fL^{\star}_{(\lambda,\ell)}(S)$ is an extension of the atomic functor $\fL^{\star}_{(\lambda,\ell)}(S)(\one)$ by $\fL^{\star}_{(\lambda,\ell)}(S)_{\mid\geq2}$, i.e~there is a short exact sequence $\fL^{\star}_{(\lambda,\ell)}(S)_{\mid\geq2}\hookrightarrow\fL^{\star}_{(\lambda,\ell)}(S)\twoheadrightarrow \fL^{\star}_{(\lambda,\ell)}(S)(\one)$.
By the properties on extensions of Proposition~\ref{prop:properties_polynomiality}, the functor $\fL^{\star}_{(\lambda,\ell)}(S)$ is weak polynomial of weak degree with $\wdeg(\fL^{\star}_{(\lambda,\ell)}(S))\leq k$, and strong polynomial with $\sdeg(\fL^{\star}_{(\lambda,\ell)}(S))\leq k+1$.
Then we have $\pi_{\langle \Beta,\Beta^{S}\rangle}(\fL^{\star}_{(\lambda,\ell)}(S)(\one))=0$ because $\kappa(\fL^{\star}_{(\lambda,\ell)}(S)(\one))=\fL^{\star}_{(\lambda,\ell)}(S)(\one)$, so $\pi_{\langle \Beta,\Beta^{S}\rangle}(\fL^{\star}_{(\lambda,\ell)}(S))\cong \pi_{\langle \Beta,\Beta^{S}\rangle}(\fL^{\star}_{(\lambda,\ell)}(S)_{\mid\geq2})$. We thus deduce that $\wdeg(\fL^{\star}_{(\lambda,\ell)}(S))= k$, and so $\sdeg(\fL^{\star}_{(\lambda,\ell)}(S))= k$ or $k+1$ by Lemma~\ref{lem:strong_weak_polynomiality_comparisons}.
\end{proof}

\begin{coro}\label{coro:HS_surface_braids}
Twisted homological stability holds for the classical braid groups and surface braid groups with coefficients in the homological representation functors of Theorems~\ref{thm:key_SES_classical_braids} and \ref{thm:key_SES_surface_braid_groups}.
\end{coro}
\begin{proof}
Following \S\ref{sss:preliminary_SES_surface_braid_groups}, we use the generic notation $\fL^{\star}_{(\lambda,\ell)}(S)$ to study the functors $\LB^{\star}_{(\lambda,\ell)}$ and $\fL^{\star}_{(\lambda,\ell)}(S)$ of Theorems~\ref{thm:key_SES_classical_braids} and \ref{thm:key_SES_surface_braid_groups}. Recall from the description \eqref{eq:image_injection_hom_rep_functor_surface} of the map $i_{\one}(\fL^{\star}_{(\lambda,\ell)}(S))_{\tn}$ that $\kappa_{\one}\fL^{\star}_{(\lambda,\ell)}(S)(\tn)=0$ for all $\tn\geq\two$. Then, using the commutation property of $\delta_{\one}$ with $\tau_{\one}$ (and noting that $\delta_{\one}(\fL^{\star}_{(\lambda,\ell)}(S)_{\mid\geq2})(\tn)\cong \delta_{\one}(\fL^{\star}_{(\lambda,\ell)}(S))(\tn)$ by \eqref{eq:LESkappadelta} for $\tn\geq\two$ when $S\neq \bD$), it follows from a clear iteration of the short exact sequences \eqref{eq:key_SES_classical_braids_classical} and \eqref{eq:key_SES_braid_surface} that $\kappa_{\one}(\delta^{r}_{\one}\fL^{\star}_{(\lambda,\ell)}(S))(\tn)=0$ for all $\tn\geq\two$ and all $r\geq0$. By this last property along with Corollary~\ref{coro:polynomiality}, each functor $\fL^{\star}_{(\lambda,\ell)}(S)$ thus satisfies the condition of Theorem~\ref{thm:homological_stability} (as long as we choose $\mathtt{N} \geq \two + d$), whence the result.
\end{proof}

\begin{rmk}\label{rmk:polynomiality_L_one_functor}
In the above proof of Corollary~\ref{coro:polynomiality} for surface braid groups, we have not determined if the strong degree of $\fL^{\star}_{(\lambda,\ell)}(S)$ is $k$ or $k+1$, nor addressed the question of whether or not this functor is \emph{very} strong polynomial. This is actually an aftereffect of the difficulty of computing the first entries $\delta_{\one}^{m}(\fL^{\star}_{(\lambda,\ell)}(S))(\zero)$ and $\kappa_{\one}(\delta_{\one}^{m-1}(\fL^{\star}_{(\lambda,\ell)}(S)))(\zero)$ for $m\geq 2$ with the techniques of the present paper. Indeed, although it is not difficult to compute $\delta^{2}_{\one}(\fL^{\star}_{(\lambda,\ell)}(S))(\tn)$ and check that $\kappa_{\one}(\delta_{\one}\fL^{\star}_{(\lambda,\ell)}(S))(\tn)=0$ for $\tn \geq \one$ via the methods of \S\ref{sss:preliminary_SES_surface_braid_groups} and \S\ref{ss:SES_surface_braids}, the map
\[
i_{\one}(\delta_{\one}\fL^{\star}_{(\lambda,\ell)}(S))_{\zero}\colon (\delta_{\one}\fL^{\star}_{(\lambda,\ell)}(S))(\zero)\cong \fL^{\star}_{(\lambda,\ell)}(S)(\one) \to (\delta_{\one}\fL^{\star}_{(\lambda,\ell)}(S))(\one)
\]
is however much trickier to study. Let us illustrate this with the case of $k=1$, $\lambda = (1)$, $\ell=2$ and $S=\Sigma_{1,1}$, for which $Q_{((1)),2)}(S) \cong \bZ^{2}\cong \langle A\rangle \oplus \langle B\rangle$ (see Remark~\ref{rmk:properties_surface_braid_groups_functors}). We recall that the generators $\{[1,0], [0,1]\}$ form a basis of $\fL_{((1),2)}(\Sigma_{1,1})(\one)\cong\bZ[\bZ^{2}]^{\oplus 2}$. Using the further techniques of \cite{PSIIi}, one may then prove that the kernel of $i_{\one}(\delta_{\one}\fL_{((1),2)}(S))_{\zero}\colon \bZ[\bZ^{2}]^{\oplus 2} \to \bZ[\bZ^{2}]$ is isomorphic to the free $\bZ[\bZ^{2}]$-submodule of $\bZ[\bZ^{2}]^{\oplus 2}$ generated by $(1-A)[0,1]-(1-B)[1,0]$, while $\delta^{2}_{\one}(\fL_{((1),2)}(S))(\zero)\cong \bZ[\bZ^{2}]/(1-A,1-B)$.
This along with the above proof of Corollary~\ref{coro:polynomiality} proves that $\fL_{((1),2)}(\Sigma_{1,1})$ is strong polynomial of degree $2$ but not \emph{very} strong polynomial. Nevertheless, note that if we apply the change of rings functor for the homomorphism $q\colon\bZ[\bZ^{2}]\to \bQ(\bZ^{2})$ (where $\bQ(\bZ^{2})$ is the field of fractions of $\bZ[\bZ^{2}]$), we deduce that the functor $q_{!}\fL_{((1),2)}(\Sigma_{1,1})$ is strong polynomial of degree $1$. The strong degree thus decisively depends on the ground ring of the functor in this case. Furthermore, analogous results may be proved for more general $\fL^{\star}_{(\lambda,\ell)}(S)$.

In contrast, by similar work to \S\ref{ss:SES_surface_braids} and the proof of Corollary~\ref{coro:polynomiality}, it is routine to check that the shifted-by-$\one$ functor $\tau_{\one}\fL^{\star}_{(\lambda,\ell)}(S)$ is very strong and weak polynomial of both strong and weak degrees $k$. This exemplifies how the strong polynomial degree may be heavily affected by the low values of a given functor, thus not being optimal to describe its global behaviour, in particular for homological stability. Moreover, this shows the interest of the notion of weak polynomiality since it reflects more accurately the stable behaviour of functors.
\end{rmk}

\subsubsection{Vertical-type alternatives}\label{ss:SES_surface_braids_alternative}

We now deal with the vertical-type alternatives of the homological representation functors of the surface braid groups.
Following the framework of \S\ref{ss:SES_surface_braid_groups}, we consider the generic vertical homological representation functor $\fL^{\star,\vrtcl}_{(\lambda,\ell)}(S)$ where $\star$ either stands for the blank space or $\star=\unt$, $S\in\{\bD,\Sigma_{g,1},\N_{h,1}\}$ with $g\geq1$ and $h\geq1$ and the associated transformation group is denoted by $Q^{\star}_{(\lambda,\ell)}(S)$.
We recall from Proposition~\ref{lem:module_structure_BM_homology-check} that, for each $\tn\in\obj(\Beta^{S})$, the $\bZ[Q^{\star}_{(\lambda,\ell)}(S)]$-module $\fL^{\star,\vrtcl}_{(\lambda,\ell)}(S)(\tn)$ is free with basis indexed by the set of (``vertical'') tuples $\bw^{\vrtcl}$ as pictured in Figures~\ref{fig:model-or-braids-dual} and \ref{fig:model-nor-braids-dual}, which has the same dimension as the (``classical'') functor $\fL^{\star}_{(\lambda,\ell)}(S)(\tn)$.

First of all, we focus on a significant fact about the behaviour of the functor $\fL^{\star,\vrtcl}_{(\lambda,\ell)}(S)$ after applying the operation $\delta_{\one}$.

\begin{lem}
\label{rel:trivial_morphism_vertical_alternatives_surface_braid_groups}
The functor $\delta_{\one} \fL^{\star,\vrtcl}_{(\lambda,\ell)}(S)$ sends every morphism that is not an endomorphism to zero.
\end{lem}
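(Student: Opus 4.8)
The plan is to reduce the statement to a single computation involving the braiding generator $\sigma_{1}$, and then to settle that computation diagrammatically using the dual model of \S\ref{ss:dual-bases}. First I would observe that $\langle\Beta,\Beta^{S}\rangle$ is generated, as a category, by its automorphism groups $\B_{n}(S)$ together with the morphisms $[\one,\id_{\one\natural\tn}]\colon\tn\to\one\natural\tn$ (any $[\tm,\varphi]$ factors as $[\zero,\varphi]\circ[\tm,\id_{\tm\natural\tn}]$ and $[\tm,\id_{\tm\natural\tn}]$ is a composite of such $[\one,\id]$'s); since automorphisms are endomorphisms and $\delta_{\one}\fL^{\star,\vrtcl}_{(\bk,\ell)}(S)$ is a functor, it suffices to prove that it sends each $[\one,\id_{\one\natural\tn}]$ to zero.

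Next I would recast this vanishing in terms of images. As recalled in the proofs of Theorems~\ref{thm:key_SES_classical_braids} and \ref{thm:key_SES_surface_braid_groups}, formula \eqref{eq:formula_morphism_id_plus_morphism} gives $\tau_{\one}\fL^{\star,\vrtcl}_{(\bk,\ell)}(S)([\one,\id_{\one\natural\tn}])=\fL^{\star,\vrtcl}_{(\bk,\ell)}(S)(\sigma_{1}^{-1})\circ\fL^{\star,\vrtcl}_{(\bk,\ell)}(S)([\one,\id_{\two\natural\tn}])$ with $\sigma_{1}=b^{\Beta}_{\one,\one}\natural\id_{\tn}$, whereas $i_{\one}\fL^{\star,\vrtcl}_{(\bk,\ell)}(S)$ evaluated at $\one\natural\tn$ equals $\fL^{\star,\vrtcl}_{(\bk,\ell)}(S)([\one,\id_{\two\natural\tn}])$. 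Since $\kappa_{\one}\fL^{\star,\vrtcl}_{(\bk,\ell)}(S)=0$ (the translation map adds the empty word on a new arc, hence is injective), one has $\delta_{\one}\fL^{\star,\vrtcl}_{(\bk,\ell)}(S)(\tn)=\mathrm{coker}(i_{\one}\fL^{\star,\vrtcl}_{(\bk,\ell)}(S)(\tn))$, and unwinding the definition of $\delta_{\one}$ on morphisms shows that $\delta_{\one}\fL^{\star,\vrtcl}_{(\bk,\ell)}(S)([\one,\id_{\one\natural\tn}])=0$ is equivalent to the assertion that $\fL^{\star,\vrtcl}_{(\bk,\ell)}(S)(\sigma_{1}^{-1})$ preserves the submodule $\Image\bigl(\fL^{\star,\vrtcl}_{(\bk,\ell)}(S)([\one,\id_{\two\natural\tn}])\bigr)$ of $\fL^{\star,\vrtcl}_{(\bk,\ell)}(S)(\two\natural\tn)$.

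I would then identify this submodule using Proposition~\ref{lem:module_structure_BM_homology-check}: the module $\fL^{\star,\vrtcl}_{(\bk,\ell)}(S)(\tm)$ is free on the ``vertical'' labelled graphs of Figures~\ref{fig:model-or-braids-dual}--\ref{fig:model-nor-braids-dual}, and $\fL^{\star,\vrtcl}_{(\bk,\ell)}(S)([\one,\id_{\one\natural\tm}])$ — being induced by the boundary connected sum with a left-most copy of $\bD_{1}$, exactly as in the classical description in \S\ref{sss:preliminary_SES_surface_braid_groups} — is the inclusion that assigns the empty word to the new left-most dual arc. Hence the submodule in question is the span of the basis elements of $\fL^{\star,\vrtcl}_{(\bk,\ell)}(S)(\two\natural\tn)$ whose left-most arc-label is empty, and the whole lemma comes down to the following diagrammatic statement: if $\bw^{\vrtcl}$ is such a basis element, then $\fL^{\star,\vrtcl}_{(\bk,\ell)}(S)(\sigma_{1}^{-1})(\bw^{\vrtcl})$ is a $\bZ[Q^{\star}_{(\bk,\ell)}(S)]$-linear combination of basis elements with empty left-most arc-label.

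This last step is the crux, and I expect it to be the main obstacle: it is the only point where the specific geometry of the \emph{dual} surface $\check{\cS}_{2+n}$ is used, and it is exactly the feature that distinguishes the vertical surface-braid functors from the vertical mapping-class-group functors, where by equations \eqref{eq:identity-braiding-or-vertical}--\eqref{eq:identity-braiding-nor-vertical} the braiding instead displaces the empty slot. I would prove it in the style of \S\ref{sss:preliminary_SES_surface_braid_groups}: the mapping class $\sigma_{1}^{-1}$ is supported in a disc around the first two punctures, the left-most dual arc is, up to isotopy, a properly embedded arc cutting off precisely the first puncture, and a Borel--Moore cycle representing $\bw^{\vrtcl}$ avoids this arc by the emptiness hypothesis. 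Pairing $\fL^{\star,\vrtcl}_{(\bk,\ell)}(S)(\sigma_{1}^{-1})(\bw^{\vrtcl})$ against the dual basis of Corollary~\ref{coro:perfect-pairing} and computing the relevant intersection numbers as in the Cloud Lemma, one sees that $\sigma_{1}^{-1}$ only redistributes configuration points among the arcs \emph{other} than the left-most one and never pushes a point onto it; equivalently, the pairing of $\fL^{\star,\vrtcl}_{(\bk,\ell)}(S)(\sigma_{1}^{-1})(\bw^{\vrtcl})$ with any dual basis element whose left-most slot is non-empty vanishes, because the supporting cycles are disjoint near the first puncture. This yields the claim, and hence the lemma. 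The argument is uniform in $S\in\{\bD,\Sigma_{g,1},\N_{h,1}\}$ and in the twisted and untwisted variants, and it survives any change of rings operation by Lemma~\ref{lem:change_of_rings_SES}.
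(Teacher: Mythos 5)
Your proposal is correct and follows essentially the same route as the paper: reduce to the canonical morphisms $[\one,\id_{\one\natural\tn}]$, rewrite $\tau_{\one}\fL^{\star,\vrtcl}_{(\bk,\ell)}(S)([\one,\id_{\one\natural\tn}])$ as $\fL^{\star,\vrtcl}_{(\bk,\ell)}(S)(\sigma_{1}^{-1})\circ\fL^{\star,\vrtcl}_{(\bk,\ell)}(S)([\one,\id_{\two\natural\tn}])$, identify the image of the double stabilisation with the span of basis elements whose left-most label is empty, and exploit that a diffeomorphism representing $\sigma_{1}$ can be chosen with support disjoint from the vertical arcs carrying the configuration points (this is exactly Figure~\ref{fig:action-of-braiding}). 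The paper concludes by noting the slightly stronger fact that $\sigma_{1}$ therefore acts as the \emph{identity} on that image (giving the diagonal filler in \eqref{eq:action-on-delta1}), which renders your final intersection-pairing step superfluous --- and note that, strictly speaking, that step would need the analogue of Corollary~\ref{coro:perfect-pairing} for the dual surface $\check{S}$ rather than the corollary itself, since the vertical functor is built from $H_k^{\BM}(C_{\bk}(\check{S});\cL)$.
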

\begin{proof}
Recall that, by construction (see Lemma~\ref{lem:extension_Quillen_source_homological_rep_functors}), the morphism $\tn \to \one \natural \tn$ of the domain category $\langle \Beta,\Beta^{S}\rangle$ is sent, under each of our functors $\fL^{\star,\vrtcl}_{(\lambda,\ell)}(S)$, to the map on Borel-Moore homology induced by the evident inclusion of configuration spaces. Since every morphism of the domain category that is not an endomorphism factors through one of these canonical morphisms, it suffices to show that all of these are sent to zero under $\delta_{\one} \fL^{\star,\vrtcl}_{(\lambda,\ell)}(S)$. In other words, we wish to show that the map labelled by $(*)$ in the following diagram is zero, where the rows are exact:
\begin{equation}
\label{eq:action-on-delta1}
\begin{tikzcd}
\fL^{\star,\vrtcl}_{(\lambda,\ell)}(S)(\tn) \ar[r] \ar[d] & \tau_{\one} \fL^{\star,\vrtcl}_{(\lambda,\ell)}(S)(\tn) \ar[r] \ar[d,"(\dagger)"] \ar[dl,dashed] & \delta_{\one} \fL^{\star,\vrtcl}_{(\lambda,\ell)}(S)(\tn) \ar[d,"(*)"] \ar[r] & 0 \\
\fL^{\star,\vrtcl}_{(\lambda,\ell)}(S)(\one \natural \tn) \ar[r,"(\ddagger)",swap] & \tau_{\one} \fL^{\star,\vrtcl}_{(\lambda,\ell)}(S)(\one \natural \tn) \ar[r] & \delta_{\one} \fL^{\star,\vrtcl}_{(\lambda,\ell)}(S)(\one \natural \tn) \ar[r] & 0
\end{tikzcd}
\end{equation}
To do this, it suffices to show that there is a diagonal morphism making the two triangles commute. Recalling that $\tau_{\one} F(\tn) = F(\one \natural \tn)$ in general, we will be able to take the diagonal morphism to be the identity as long as the two maps labelled $(\dagger)$ and $(\ddagger)$ are equal (we note that the top-left horizontal map $\fL^{\star,\vrtcl}_{(\lambda,\ell)}(S)(\tn) \to \tau_{\one} \fL^{\star,\vrtcl}_{(\lambda,\ell)}(S)(\tn)$ and the left-most vertical map $\fL^{\star,\vrtcl}_{(\lambda,\ell)}(S)(\tn) \to \fL^{\star,\vrtcl}_{(\lambda,\ell)}(S)(\one \natural \tn)$ in \eqref{eq:action-on-delta1} are always equal by definition of the natural transformation $\Identity \to \tau_{\one}$).

By definition of $\tau_{\one} \fL^{\star,\vrtcl}_{(\lambda,\ell)}(S)$, its action on the canonical morphism $[\one,\id_{\one \natural \tn}]\colon \tn \to \one \natural \tn$ is given by the action of $\fL^{\star,\vrtcl}_{(\lambda,\ell)}(S)$ on the canonical morphism $[\one,\id_{\two \natural \tn}] \colon \one \natural \tn \to \two \natural \tn$ composed with $(b_{\one,\one}^{\Beta})^{-1} \natural \id_{\tn}$, where $b_{\one,\one}^{\Beta}$ is the braiding $\one\natural \one\cong \one\natural \one$ of the groupoid $\Beta$; see \eqref{eq:formula_morphism_{i}d_plus_morphism}. This describes the map $(\dagger)$; on the other hand, the map $(\ddagger)$ is given simply by the action of $\fL^{\star,\vrtcl}_{(\lambda,\ell)}(S)$ on the canonical morphism $\one \natural \tn \to \two \natural \tn$. It is therefore enough to prove that the automorphism $b_{\one,\one}^{\Beta} \natural \id_{\tn}$, which canonically identifies with the Artin generator $\sigma_{1}$, acts by the identity on the image of $(\ddagger)$. This is immediate from Figure~\ref{fig:action-of-braiding}, where the image of an arbitrary basis element $\bw^{\vrtcl}=(w_{1},\ldots,w_{n})$ under $(\ddagger)$ is depicted in green (supported on the vertical arcs) and the support of a diffeomorphism representing the mapping class $\sigma_{1}$ is shaded in grey. Since these supports are disjoint, the action of $b_{\one,\one}^{\Beta} \natural \id_{\tn}$ on the image of $(\ddagger)$ is trivial.
\end{proof}

\begin{figure}[tb]
    \centering
    \includegraphics[scale=0.65]{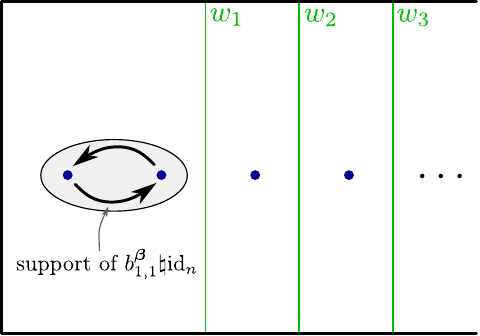}
    \caption{The support of a diffeomorphism representing the mapping class $\sigma_{1}=b_{\one,\one}^{\Beta} \natural \id_{\tn}$ and the image of an arbitrary basis element $\bw^{\vrtcl}=(w_{1},\ldots,w_{n})$, shown in green, under the map $(\ddagger)$ of \eqref{eq:action-on-delta1}.}
    \label{fig:action-of-braiding}
\end{figure}

\begin{rmk}
It is instructive to consider why the same argument does not also show that the functor $\delta_{\one} \fL^{\star}_{(\lambda,\ell)}(S)$ sends every canonical morphism $\tn \to \one \natural \tn$ to the zero morphism. This boils down to the fact that, in the analogue of Figure~\ref{fig:action-of-braiding} for the \emph{non-vertical} version $\fL^{\star}_{(\lambda,\ell)}(S)$ of the functor, the supports are not disjoint.
\end{rmk}

We are now ready to prove the (non-)polynomiality results of Theorem~\ref{thm:polynomiality_vs_vertical}, which actually hold for any one of these vertical-type alternatives $\fL^{\star,\vrtcl}_{(\lambda,\ell)}(S)$ (i.e.~also for the \emph{untwisted} versions). In particular, this shows that these vertical-type alternatives exhibit unexpected interesting behaviour with respect to polynomiality, which thoroughly differs from their ``classical'' (non-vertical) counterparts studied in \S\ref{ss:SES_classical_braids}--\S\ref{ss:SES_surface_braids}. Indeed, they are not strong polynomial, which is a counterintuitive property since the dimensions of the representations encoded by each $\fL^{\star,\vrtcl}_{(\lambda,\ell)}(S)$ grow in the same polynomial way as those of $\fL^{\star}_{(\lambda,\ell)}(S)$.

\begin{proof}[Proof of Theorem~\ref{thm:polynomiality_vs_vertical} for surface braid groups]
By similar reasoning to that of \S\ref{sss:preliminary_SES_surface_braid_groups}, we see that the map $\fL^{\star,\vrtcl}_{(\lambda,\ell)}(S)([\one,\id_{\one\natural\tn}])$ is the injection defined on basis elements by $(w_{1},\ldots,w_{g_{S}+n-1})^{v}\mapsto (\varnothing,w_{1},\ldots,w_{g_{S}+n-1})^{v}$, so it follows that $\delta_{\one}\fL^{\star,\vrtcl}_{(\lambda,\ell)}(S)(\tn)$ is a non-trivial free $\bZ[Q^{\star}_{(\lambda,\ell)}(S)]$-module with the same dimension as $\delta_{\one}\fL^{\star}_{(\lambda,\ell)}(S)(\tn)$ for all $\tn\geq\one$. Meanwhile it follows from Lemma~\ref{rel:trivial_morphism_vertical_alternatives_surface_braid_groups} that $\delta_{\one}\fL^{\star,\vrtcl}_{(\lambda,\ell)}(S)$ assigns the zero map to all morphisms of $\langle \Beta, \Beta^{S}\rangle(\tn,\tm)$ with $\tn\neq \tm$. So $\delta_{\one}\fL^{\star,\vrtcl}_{(\lambda,\ell)}(S)$ is isomorphic to a direct sum of infinitely many atomic functors. It follows that $\delta_{\one}^{m}\fL^{\star,\vrtcl}_{(\lambda,\ell)}(S)\neq 0$ for any $m\in \bN$ while $\pi_{\langle \Beta, \Beta^{S}\rangle}(\delta_{\one}\fL^{\star,\vrtcl}_{(\lambda,\ell)}(S))=0$ in the stable category $\mathbf{St}(\langle \Beta, \Beta^{S}\rangle ,{\bZ[Q^{\star}_{(\lambda,\ell)}(S)]}\lmod^{\bullet})$, whence the result.
\end{proof}

\begin{rmk}\label{rmk:SES_vertical_surface_braid_groups}
The first steps of the proofs of Theorems~\ref{thm:key_SES_classical_braids} and \ref{thm:key_SES_surface_braid_groups} do go through in the vertical setting, inducing a short exact sequence of functors defined on the groupoid $\Beta^{S}$ for each $\fL^{\star,\vrtcl}_{(\lambda,\ell)}(S)$, analogous to \eqref{eq:key_SES_classical_braids_classical} and \eqref{eq:key_SES_braid_surface} but only at the level of automorphism groups.
\end{rmk}

Finally, we briefly deal with the duals of the homological representations of Theorems~\ref{thm:key_SES_classical_braids} and \ref{thm:key_SES_surface_braid_groups}. Let us consider any one of the above homological representation functors $\fL^{\star}_{(\lambda,\ell)}(S)$.
By Corollary~\ref{coro:perfect-pairing}, the $\B_{n}(S)$-representation $H_{k}^{\partial}(C_{\lambda}(\bD_{n}\natural S); \bZ[Q^{\star}_{(\lambda,\ell)}(S)]\otimes \cO)$ of \S\ref{ss:dual-bases} is the dual representation of $\fL^{\star}_{(\lambda,\ell)}(S)(\tn)$.
Gathering these representations and assigning for each $[\tm,\id_{\tm\natural\tn}]\in \langle \Beta,\Beta^{S}\rangle$ the evident analogue of the map $\iota_{\tm,\tn}$ of \S\ref{sss:def_homological_rep_functors} for homology relative to the boundary, one may easily prove the analogue of Lemma~\ref{lem:extension_Quillen_source_homological_rep_functors} so that we define a functor $\fL^{\star,\vee}_{(\lambda,\ell)}(S)\colon \langle\Beta,\Beta^{S}\rangle \to \bZ[Q^{\star}_{(\lambda,\ell)}(S)]\lmod^{\bullet}$. Then the reasoning of the proof of Theorem~\ref{thm:polynomiality_vs_vertical} repeats verbatim:
\begin{thm}
\label{thm:dual_representation_functors_surface_braid_groups}
The functor $\fL^{\star,\vee}_{(\lambda,\ell)}(S)$ is not strong polynomial, but is weak polynomial of weak degree $0$.
\end{thm}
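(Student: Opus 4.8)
The plan is to transcribe, step for step, the proof of Theorem~\ref{thm:polynomiality_vs_vertical} for surface braid groups given in \S\ref{ss:SES_surface_braids_alternative}, replacing the vertical-type alternative $\fL^{\star,\vrtcl}_{(\bk,\ell)}(S)$ by the dual functor $\fL^{\star,\vee}_{(\bk,\ell)}(S)$ throughout. That proof rests on only two inputs, each of which has an exact counterpart here. First, by Corollary~\ref{coro:perfect-pairing} the underlying $(\bZ[Q^{\star}_{(\bk,\ell)}(S)]\otimes\cO)$-module of $\fL^{\star,\vee}_{(\bk,\ell)}(S)(\tn) = H_k^{\partial}(C_{\bk}(\bD_{n}\natural S);\bZ[Q^{\star}_{(\bk,\ell)}(S)]\otimes\cO)$ is free on the dual cycles of Definition~\ref{def:dual-basis-elements} (Figure~\ref{fig:models-dual}), indexed by the same combinatorial data as in Proposition~\ref{prop:module_structure_BM_homology}; and, exactly as in \S\ref{sss:preliminary_SES_surface_braid_groups}, the canonical map $\fL^{\star,\vee}_{(\bk,\ell)}(S)([\one,\id_{\one\natural\tn}])$ is the split injection adjoining the empty word as the label of the new left-most edge. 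Since $k\geq1$, its cokernel is non-zero, so $\delta_{\one}\fL^{\star,\vee}_{(\bk,\ell)}(S)(\tn)$ is a non-zero free module for $\tn$ large enough, and $\kappa_{\one}\fL^{\star,\vee}_{(\bk,\ell)}(S)=0$.

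Second, I would establish the analogue of Lemma~\ref{rel:trivial_morphism_vertical_alternatives_surface_braid_groups}: the functor $\delta_{\one}\fL^{\star,\vee}_{(\bk,\ell)}(S)$ sends every morphism of $\langle\Beta,\Beta^{S}\rangle$ that is not an endomorphism to zero. The diagram-chase of that lemma applies verbatim: since every non-endomorphism of $\langle\Beta,\Beta^{S}\rangle$ factors through a canonical morphism $\tn\to\one\natural\tn$, it suffices to show that in the analogue of diagram~\eqref{eq:action-on-delta1} the maps labelled $(\dagger)$ and $(\ddagger)$ agree, and by~\eqref{eq:formula_morphism_id_plus_morphism} this reduces to showing that the Artin generator $\sigma_{1}=b_{\one,\one}^{\Beta}\natural\id_{\tn}$ acts by the identity on the image of $(\ddagger)$. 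That image consists of boundary-relative cycles supported on the vertical arcs of Figure~\ref{fig:models-dual}, with no configuration point on the newly adjoined left-most disc $\bD_{1}$, hence disjoint from the support of a diffeomorphism representing $\sigma_{1}$ — precisely the configuration pictured in Figure~\ref{fig:action-of-braiding} — so the action is trivial. (To make $\fL^{\star,\vee}_{(\bk,\ell)}(S)$ into a functor on $\langle\Beta,\Beta^{S}\rangle$ in the first place, one uses the boundary-relative analogue of the stabilisation maps $\iota_{\tm,\tn}$ and the analogue of Lemma~\ref{lem:extension_Quillen_source_homological_rep_functors}, as recalled just before the statement.)

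Combining the two inputs, $\delta_{\one}\fL^{\star,\vee}_{(\bk,\ell)}(S)$ is isomorphic to a direct sum of infinitely many atomic functors (a family of atomic summands at each object $\tn$, of strictly growing rank). Consequently $\delta_{\one}^{m}\fL^{\star,\vee}_{(\bk,\ell)}(S)\neq0$ for every $m\in\bN$, since the atomic summands supported at large objects survive all iterations of $\delta_{\one}$; thus $\fL^{\star,\vee}_{(\bk,\ell)}(S)$ is not strong polynomial of any degree. On the other hand a direct sum of atomic functors lies in the localising subcategory $K(\langle\Beta,\Beta^{S}\rangle,(\bZ[Q^{\star}_{(\bk,\ell)}(S)]\otimes\cO)\lmod^{\bullet})$, which is closed under colimits, so $\pi_{\langle\Beta,\Beta^{S}\rangle}(\delta_{\one}\fL^{\star,\vee}_{(\bk,\ell)}(S))=\delta_{\one}\pi_{\langle\Beta,\Beta^{S}\rangle}(\fL^{\star,\vee}_{(\bk,\ell)}(S))=0$; hence $\fL^{\star,\vee}_{(\bk,\ell)}(S)$ is weak polynomial of degree~$0$ (of weak degree exactly $0$, since $\kappa(\fL^{\star,\vee}_{(\bk,\ell)}(S))=0$ while the functor is non-zero, so its image in the stable category is non-zero). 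As in \S\ref{ss:SES_surface_braids_alternative}, none of these steps is affected by a non-zero change of rings operation.

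The only genuinely non-formal point is the disjoint-support verification of the second paragraph: one must check that the boundary-relative analogue of $\iota_{\tm,\tn}$ really does produce, on dual basis elements, cycles concentrated on the vertical arcs away from the left-most handle/disc, so that the braiding $\sigma_{1}$ can be read off Figure~\ref{fig:action-of-braiding} as acting trivially on the relevant image. Everything else is a line-by-line transcription of the vertical-type argument.
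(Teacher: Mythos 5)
Your proposal is correct and follows exactly the paper's own route: the paper defines $\fL^{\star,\vee}_{(\bk,\ell)}(S)$ via the boundary-relative analogues of $\iota_{\tm,\tn}$ and Lemma~\ref{lem:extension_Quillen_source_homological_rep_functors}, and then states that "the reasoning of the proof of Theorem~\ref{thm:polynomiality_vs_vertical} repeats verbatim", which is precisely your transcription (free dual basis from Corollary~\ref{coro:perfect-pairing}, the disjoint-support analogue of Lemma~\ref{rel:trivial_morphism_vertical_alternatives_surface_braid_groups} giving that $\delta_{\one}$ kills all non-endomorphisms, hence a sum of atomic functors, hence not strong polynomial but weak polynomial of degree $0$). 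You merely spell out details the paper leaves implicit, including the correct observation that the only non-formal point is the disjoint-support check for the braiding on the image of the relative stabilisation map.
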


\subsection{For mapping class group functors}\label{ss:poly_mcg}

In this section, we prove the polynomiality results of Corollary~\ref{coro:polynomiality} and Theorem~\ref{thm:polynomiality_vs_vertical} for the homological representation functors for mapping class groups defined in \S\ref{sss:representations_mapping_class_groups}.
Following \S\ref{ss:SES_mcg}, we use the generic notation $\fL^{\star}_{(\lambda,\ell)}$ for any one of the functors \eqref{eq:hom_rep_functor_rep_mcg_o} and \eqref{eq:hom_rep_functor_rep_mcg_no_ell} indexed by an ordered partition $\lambda\vdash k$ of an integer $k\geq1$ and by the depth $\ell\geq1$ of a lower central series, $\fL^{\star,\vrtcl}_{(\lambda,\ell)}$ for the vertical-type alternative functor, $Q^{\star}_{(\lambda,\ell)}(\sS)$ with $\sS\in\{\bT,\bM\}$ for the associated transformation group and $\cM$ for either $\M^{+}$ or $\M^{-}$.

\begin{proof}[Proof of Corollary~\ref{coro:polynomiality} and Theorem~\ref{thm:polynomiality_vs_vertical} for mapping class groups]
We proceed by induction on $k\geq1$, reasoning on each ordered partition $\lambda\vdash k$ and considering the functor $\fL^{\star}_{(\lambda,\ell)}$.  First, we consider the case of $k=1$ with $\lambda=(1)$.
We recall that the preliminary study of \S\ref{ss:mcg_hom_rep_poly_preliminary} up to the paragraph ``Difference functor decomposition'' holds for $\fL^{\star}_{((1)),\ell)}$. Hence $\kappa_{\one}\fL^{\star}_{((1),\ell)}=0$, while the $\MCG(\sS^{\natural n})$-representation $\delta_{\one}\fL^{\star}_{((1),\ell)}(\tn)$ is a free $\bZ[Q^{\star}_{(\lambda,\ell)}(\sS)]$-module of rank $1$ for each $\tn\in\obj(\cM)$, with generating set given by the tuples $([w_{0},w'_{0}],[w_{1},w_{2}],\ldots,[w_{2n-1},w_{2n}])$ such that $\vert w_{0} \vert + \vert w'_{0}\vert = 1$ if $\sS=\bT$, and the tuples $([w_{0}],[w_{1}],\ldots,[w_{n}])$ such that $\vert w_{0}\vert= 1$ if $\sS=\bM$. By expressing $\delta_{\one}\fL^{\star}_{((1),\ell)}([\one,\id_{\one\natural\tn}])$ as a quotient map of $\fL^{\star}_{((1),\ell)}(\sigma_{1}^{-1})\circ\fL^{\star}_{((1),\ell)}([\one,\id_{\one\natural\tn}])$, it follows from equations \eqref{eq:identity-braiding-or} and \eqref{eq:identity-braiding-nor} of Lemma~\ref{lem:computations_MCG_braiding} and from the description \eqref{eq:image_injection_hom_rep_functor_mcg} of $\fL^{\star}_{((1),\ell)}([\one,\id_{\one\natural\tn}])$ that $i_{\one}(\delta_{\one}\fL^{\star}_{((1),\ell)})_{\tn}$ is an isomorphism for $\tn\geq\zero$. So $\delta_{\one}^{2}\fL^{\star}_{((1),\ell)}=0$ and $\kappa_{\one}\delta_{\one}\fL^{\star}_{((1),\ell)}=0$, and thus $\fL^{\star}_{((1),\ell)}$ is both very strong and weak polynomial, of both strong and weak degrees $1$.
Furthermore, the part of the proof of Theorem~\ref{thm:key_SES_mapping_class_groups} showing that the $\bZ[Q^{\star}_{(\lambda,\ell)}(\sS)]$-module injections $\{(\Delta'_{\one}\fL^{\star}_{(\lambda,\ell)})_{\tn}\}_{\tn\in \obj(\cM)}$ (see \eqref{eq:Delta'_def}) assemble into a natural transformation in $\Fct(\langle\cM,\cM\rangle,{\bZ[Q^{\star}_{(\lambda,\ell)}(\sS)]}\lmod^{\bullet})$ repeats verbatim for $\lambda=(1)$, because Lemmas~\ref{lem:invariance_translation_block_MCG_action} and \ref{lem:computations_MCG_braiding} also hold in this case. Then, we deduce from the definition of these injections that $(\Delta_{\one}\fL^{\star}_{((1),\ell)})_{\tn}\circ (\Delta'_{\one}\fL^{\star}_{((1),\ell)})_{\tn}=\id_{\delta_{\one}\fL^{\star}_{((1),\ell)}(\tn)}$ for each $\tn\in\obj(\cM)$, and so $\Delta'_{\one}\fL^{\star}_{((1),\ell)}$ is a section of $\Delta_{\one}\fL^{\star}_{((1),\ell)}$ in $\Fct(\langle\cM,\cM\rangle,{\bZ[Q^{\star}_{((1),\ell)}(\sS)]}\lmod^{\bullet})$. Since $\kappa_{\one}\fL^{\star}_{((1),\ell)}=0$ (because $i_{\one}(\fL^{\star}_{((1),\ell)})_{\tn}$ is clearly injective for each $\tn\in \cM$), the exact sequence \eqref{eq:ESCaract} for $\fL^{\star}_{((1),\ell)}$ is a split short exact sequence, and so $\fL^{\star}_{((1),\ell)}$ is split polynomial.
Furthermore, since $\delta_{\one}\fL^{\star}_{((1),\ell)}(\tn)$ is a free $\bZ[Q^{\star}_{(\lambda,\ell)}(\sS)]$-module for each $\tn\in \obj (\cM)$ and $\kappa_{\one}\fL^{\star}_{((1),\ell)} = 0$, we deduce that this polynomiality result for $\fL^{\star}_{((1),\ell)}$ still holds after any non-zero change of rings operation by using Lemma~\ref{lem:change_of_rings_SES} and then repeating verbatim the above argument for the splitting.

We do the inductive step on any fixed $\lambda\vdash k\geq2$. Namely, we assume that for each $k'\geq1$ and each $\lambda'\in\{\lambda - k'\}$ (see Notation~\ref{not:sets_partitions}), the functor $\fL^{\star}_{(\lambda',\ell)}$ is both split and weak polynomial, of both strong and weak degrees $k-k'$, and that this property still holds after any non-zero change of rings operation.
Since $\mathcal{SP}ol_{d}(\langle \cM,\cM \rangle ,{\bZ[Q^{\star}_{(\lambda,\ell)}(\sS)]}\lmod^{\bullet})$ is closed under $\tau_{\one}$ and under normal subobjects by Proposition~\ref{prop:properties_polynomiality}, this inductive assumption implies that each functor $\tau_{\one}\fL^{\star}_{(\lambda',\ell)}$, and also $(\tau_{\one}\fL^{\star}_{(\lambda',\ell)}(\MCGo))_{[\varnothing]}$ in the orientable setting, are split and weak polynomial of both strong and weak degrees $k-k'$, as are the versions of these functors after any non-zero change of rings operation.
Now, we deduce from Theorem~\ref{thm:key_SES_mapping_class_groups} that the translation functor $\tau_{\one}\fL^{\star}_{(\lambda,\ell)}$ is isomorphic to $\fL^{\star}_{(\lambda,\ell)}\oplus \delta_{\one}\fL^{\star}_{(\lambda,\ell)}$, where $\delta_{\one}\fL^{\star}_{(\lambda,\ell)}$ is determined by a direct sum of functors (potentially up to a non-zero change of rings, see Convention~\ref{conv:transformation_group_summand}) of the form $\tau_{\one}\fL^{\star}_{(\lambda',\ell)}$ with $\lambda'\in\{\lambda - 1,\lambda - 2 \}$, and also $(\tau_{\one}\fL^{\star}_{(\lambda'',\ell)}(\MCGo))_{[\varnothing]}$ with $\lambda''\in\{\lambda - 1,\lambda - 2 \}$ in the orientable setting. Therefore, the functor $\fL^{\star}_{(\lambda',\ell)}$ is both very strong and weak polynomial, of both strong and weak degrees $k$. This polynomiality result also holds after applying any non-zero change of rings operation, by using Corollary~\ref{coro:change_of_ring} and repeating verbatim the above reasoning, which ends the induction.

Fixing $\ell\in\{1,2\}$, the same polynomiality results follow for $\fL^{\star,\vrtcl}_{(\lambda,\ell)}$ by repeating mutatis mutandis the same arguments, using Theorem~\ref{thm:SES_MCG_alternatives} instead of Theorem~\ref{thm:key_SES_mapping_class_groups}.
\end{proof}

Furthermore, we briefly deal here with the duals of the homological representations of Theorem~\ref{thm:key_SES_mapping_class_groups}.
By Corollary~\ref{coro:perfect-pairing}, the $\MCG(\sS^{\natural n})$-representation $H_{k}^{\partial}(C_{\lambda}(\sS^{\natural n} \smallsetminus I); \bZ[Q^{\star}_{(\lambda,\ell)}(\sS)]\otimes \cO)$ of \S\ref{ss:dual-bases}, for $\sS \in \{\bT,\bM\}$, is the dual of the $\MCG(\sS^{\natural n})$-representation $H_{k}^{\BM}(C_{\lambda}(\sS^{\natural n} \smallsetminus I); \bZ[Q^{\star}_{(\lambda,\ell)}(\sS)])$. Assigning for each morphism $[\tm,\id_{\tm \natural \tn}]$ the obvious analogue of the map $\iota_{\tm,\tn}$ of \S\ref{sss:def_homological_rep_functors} for homology relative to the boundary, these collections of representations extend to functors $\fL^{\star,\vee}_{(\lambda,\ell)}(\MCGo)$ and $\fL^{\star,\vee}_{(\lambda,\ell)}(\MCGno)$ of the form $\langle\cM\dv,\cM\dv\rangle \to \bZ[Q^{\star}_{(\lambda,\ell)}(\sS)]\lmod$. We may then deduce analogous short exact sequences to those of Theorem~\ref{thm:SES_MCG_alternatives}, and Theorem~\ref{thm:polynomiality_vs_vertical} repeats verbatim for these functors:

\begin{thm}
\label{thm:dual_representation_functors_mcg}
For $\ell \in \{1,2\}$, the functors $\fL^{\star,\vee}_{(\lambda,\ell)}(\MCGo)$ and $\fL^{\star,\vee}_{(\lambda,\ell)}(\MCGno)$ are split polynomial and weak polynomial, of both strong and weak degrees $k$.
\end{thm}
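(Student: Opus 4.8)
The plan is to imitate, almost verbatim, the treatment of the vertical-type alternatives in \S\ref{sss:SES_MCG_alternative} and \S\ref{ss:poly_mcg}, replacing the functors $\fL^{\star,\vrtcl}_{(\bk,\ell)}$ by their linear-dual counterparts $\fL^{\star,\vee}_{(\bk,\ell)}$. First I would record the structure of the target functors. By Corollary~\ref{coro:perfect-pairing}, for $\sS\in\{\bT,\bM\}$ and $\ell\in\{1,2\}$ the $\MCG(\sS^{\natural n})$-module $H_k^{\partial}(C_{\bk}(\sS^{\natural n}\smallsetminus I');\bZ[Q^{\star}_{(\bk,\ell)}(\sS)]\otimes\cO)$ is free on a basis indexed by exactly the same combinatorial data as the basis of $\fL^{\star}_{(\bk,\ell)}(\tn)$ of Proposition~\ref{prop:module_structure_BM_homology} (labellings of the wedge graph of Notation~\ref{nota:tail_and_wedge} by words in the blocks of $\bk$), and it is the linear dual of the latter. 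Assigning to each morphism $[\tm,\id_{\tm\natural\tn}]$ of $\langle\cM\dv,\cM\dv\rangle$ the evident analogue for homology relative to the boundary of the inclusion-induced map $\iota_{\tm,\tn}$ of \S\ref{sss:def_homological_rep_functors}, the analogue of Lemma~\ref{lem:extension_Quillen_source_homological_rep_functors} (proved the same way via Lemma~\ref{lem:extend_functor_Quillen}) produces the functors $\fL^{\star,\vee}_{(\bk,\ell)}(\MCGo)$ on $\langle\M^{+}\dv,\M^{+}\dv\rangle$ and $\fL^{\star,\vee}_{(\bk,\ell)}(\MCGno)$ on $\langle\M^{-}\dv,\M^{-}\dv\rangle$, with values in modules over $\bZ[Q^{\star}_{(\bk,\ell)}(\sS)]\otimes\cO$.

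Second, I would establish for these functors the (split) short exact sequences that are the analogues of \eqref{eq:key_SES_MCG_orientable} and \eqref{eq:key_SES_MCG_non_orientable}. Exactly as in \S\ref{ss:mcg_hom_rep_poly_preliminary}, the map $i_{\one}$ adjoins an empty word as the label of the left-most handle or crosscap, so $\kappa_{\one}\fL^{\star,\vee}_{(\bk,\ell)}=0$ and $\delta_{\one}\fL^{\star,\vee}_{(\bk,\ell)}(\tn)$ is free on the labellings whose left-most word is non-empty; one then defines the module isomorphism $(\mathfrak{i}^{\vee}_{(\bk,\ell)})_{\tn}$ of the form \eqref{eq:isom-of-modules_mcg} and the section $(\Delta'_{\one}\fL^{\star,\vee}_{(\bk,\ell)})_{\tn}$ as in the proof of Theorem~\ref{thm:key_SES_mapping_class_groups}. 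Naturality over the groupoid $\cM\dv$ follows from the disjoint-support principle: the analogue of Lemma~\ref{rel:invariance_translation_block_MCG_action} holds verbatim for relative homology, since its intersection-pairing proof only uses that a mapping class supported away from a boundary-connected summand leaves unchanged the part of a representing cycle on that summand. Upgrading to $\langle\cM\dv,\cM\dv\rangle$ via Lemma~\ref{lem:criterionnaturaltransfo} requires only the action of $(b^{\cM}_{\one,\one})\dv\natural\id_{\tn}$ on the images of the canonical maps. Here I would invoke Remark~\ref{rmk:embedding-dual-into-vertical-alternative}: the embedding $\fL^{\star,\vee}_{(\bk,\ell)}(\tn)\hookrightarrow\fL^{\star,\vrtcl}_{(\bk,\ell)}(\tn)$ is $\MCG(\sS^{\natural n})$-equivariant and diagonal with respect to the free bases of Definition~\ref{def:dual-basis-elements} and Proposition~\ref{lem:module_structure_BM_homology-check}, so the braiding identities \eqref{eq:identity-braiding-or-vertical} and \eqref{eq:identity-braiding-nor-vertical} of Lemma~\ref{lem:computations_MCG_braiding}, valid for $\ell\in\{1,2\}$ thanks to Lemma~\ref{lem:transformation_groups_MCG_ell_2}, restrict to the required identities for the dual basis. (Alternatively one reproves them directly by the tether-comparison/writhe/crosscap-parity argument of \S\ref{ss:mcg_hom_rep_poly_preliminary}, now with the relative cycles of Definition~\ref{def:dual-basis-elements}.) This yields split exact sequences \eqref{eq:ESCaract}, i.e.\ isomorphisms $\tau_{\one}\fL^{\star,\vee}_{(\bk,\ell)}\cong\fL^{\star,\vee}_{(\bk,\ell)}\oplus\bigoplus_{\bk'}\tau_{\one}\fL^{\star,\vee}_{(\bk',\ell)}$, stable under any change of rings operation by Lemma~\ref{lem:change_of_rings_SES}.

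Third, with these split decompositions in hand, the polynomiality statement follows exactly as in the proof of Corollary~\ref{coro:polynomiality} and Theorem~\ref{thm:polynomiality_vs_vertical} for mapping class groups. Since $\delta_{\one}$ and $\tau_{\one}$ commute, induction on $m$ shows that $\delta_{\one}^{m}\fL^{\star,\vee}_{(\bk,\ell)}$ is a direct sum of functors $\tau_{\one}^{m}\fL^{\star,\vee}_{(\bk',\ell)}$ with $\bk'$ ranging over the relevant subpartitions of $\bk$; combined with $\kappa_{\one}\fL^{\star,\vee}_{(\bk,\ell)}=0$ and the commutation of $\kappa_{\one}$ with $\tau_{\one}$, an induction on $k$ (with base case $\delta_{\one}\fL^{\star,\vee}_{((1),\ell)}$ a constant functor) shows that $\fL^{\star,\vee}_{(\bk,\ell)}(\MCGo)$ and $\fL^{\star,\vee}_{(\bk,\ell)}(\MCGno)$ are split polynomial of degree $k$; weak polynomiality of degree exactly $k$ follows in the same way, since in the stable category $\delta_{\one}^{k}\pi(\fL^{\star,\vee}_{(\bk,\ell)})$ surjects onto a non-zero (eventually constant) functor. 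As in Remark~\ref{rmk:polynomiality_alternative_dagger_MCG}, the conclusion persists after any non-zero change of rings.

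The main obstacle I anticipate is the braiding computation underlying the second step: either verifying cleanly that the embedding of Remark~\ref{rmk:embedding-dual-into-vertical-alternative} has precisely the equivariance and diagonality needed to transport the identities \eqref{eq:identity-braiding-or-vertical}--\eqref{eq:identity-braiding-nor-vertical}, or else carrying out the diagrammatic tether argument of \S\ref{ss:mcg_hom_rep_poly_preliminary} afresh for the relative cycles. This is exactly where the hypothesis $\ell\in\{1,2\}$ is used, through the explicit structure of $Q_{(\bk,2)}(\sS)$ from Lemma~\ref{lem:transformation_groups_MCG_ell_2}; as in \S\ref{sss:SES_MCG_alternative}, for $\ell\geq 3$ over non-orientable surfaces the relevant transformation group is unknown and the argument would only give the braiding identities up to an undetermined unit scalar, so the degree-$k$ conclusion would remain conjectural there (cf.\ the conjecture recorded in \S\ref{sss:SES_MCG_alternative}).
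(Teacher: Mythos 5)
Your proposal is correct and follows essentially the same route as the paper, which itself only sketches this step: define $\fL^{\star,\vee}_{(\bk,\ell)}$ via Corollary~\ref{coro:perfect-pairing} and the relative-homology analogue of $\iota_{\tm,\tn}$, deduce split short exact sequences analogous to Theorem~\ref{thm:SES_MCG_alternatives}, and then repeat the polynomiality argument of \S\ref{ss:poly_mcg} verbatim. Your fallback of redoing the tether/writhe computation of Lemma~\ref{lem:computations_MCG_braiding} directly for the relative cycles of Definition~\ref{def:dual-basis-elements} is exactly what the paper's phrase ``analogous short exact sequences'' presupposes, and it is where the hypothesis $\ell\in\{1,2\}$ enters, just as you note.
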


Finally, as a direct consequence Theorem~\ref{thm:homological_stability}, we deduce the following result.
\begin{coro}\label{coro:HS_MCG}
Twisted homological stability holds for the mapping class groups of surfaces with coefficients in the homological representation functors of Theorems~\ref{thm:key_SES_mapping_class_groups} and \ref{thm:SES_MCG_alternatives}.
\end{coro}

\subsection{Analyticity of a quantum representation}\label{ss:analyticity}

Jackson and Kerler \cite{Jackson_Kerler} introduce a representation $\bV$ over the group ring $\bL:=\bZ[\mathfrak{s}^{\pm 1},\mathfrak{q}^{\pm 1}]$, called the \emph{generic Verma module}, of $\bU_{q}(\mathfrak{sl}_{2})$, the quantum enveloping algebra of the Lie algebra $\mathfrak{sl}_{2}$. Since $\bU_{q}(\mathfrak{sl}_{2})$ is a quasitriangular Hopf algebra, the representation $\bV$ comes equipped with an automorphism $S \in \mathrm{Aut}_{\bL}(\bV \otimes \bV)$.
This induces a $\B_{n}$-representation on $\bV^{\otimes n}$ given by sending $\sigma_{i} \in \B_{n}$ to $\id_{i-1} \otimes S \otimes \id_{n-i-1}$, which we call the \emph{Verma module representation}; see \cite[\S 1]{Jackson_Kerler}. For $k\geq0$, the \emph{weight space} $V_{n,k} \subseteq \bV^{\otimes n}$ is the eigenspace of the action of a certain generator $K \in \bU_q(\mathfrak{sl}_{2})$ corresponding to the eigenvalue $\mathfrak{s}^n \mathfrak{q}^{-2k}$. The $\B_{n}$-action on $\bV^{\otimes n}$ restricts to a sub-$\B_{n}$-representation on $V_{n,k}$ for each $k\geq0$, called the \emph{quantum representation} of $\B_{n}$ of \emph{weight} $k$. This provides a decomposition of the Verma module representation via the $\B_{n}$-equivariant isomorphism:
\begin{equation}\label{eq:Verma_decomposition}
\bV^{\otimes n}\;\cong\;\bigoplus_{k\geq0}V_{n,k}.
\end{equation}
The relation between the variables $\mathfrak{s}$ and $\mathfrak{q}$ and the generators $q$ and $t$ of $Q_{((2),2)}(\bD) = \bZ^{2} = \bZ\langle q,t \rangle$ (defining the representation $\LB_{((2),2)}(\tn)$) is given by the ring homomorphism $\Theta\colon \bK:=\bZ[q^{\pm 1},t^{\pm 1}] \to \bL$ defined by $(q,t)\mapsto (\mathfrak{s}^{2},-\mathfrak{q}^{-2})$.
(We note as a warning to the reader that the notation in the literature is not consistent; in particular \cite{Jackson_Kerler} and \cite{martel} use different notation from each other and from the notation used in this section, which is instead consistent with the notation of \cite{bigelow2001braid}.)
In particular, $\bL$ is a left $\bK$-module via $\Theta$; the change of rings operation $- \otimes_\bK \bL$ corresponds to adjoining square roots of $q$ and $t$.

A key relationship between these quantum $\B_{n}$-representations and the homological representation functors studied in this paper is the following lemma.

\begin{lem}\label{lem:Verma_tau_LB}
For $n,k\geq1$ there is an isomorphism of $\B_{n}$-representations $V_{n,k} \cong \tau_{\one} \LB_{k}(\tn)\otimes_{\bK} \bL$.
\end{lem}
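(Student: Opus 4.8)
The plan is to relate the combinatorial/algebraic description of the weight spaces $V_{n,k}$ coming from the quasitriangular structure of $\bU_q(\mathfrak{sl}_2)$ with the homological description of $\tau_{\one}\LB_k(\tn)$ coming from Borel--Moore homology of configuration spaces on the punctured disc, using the bridge established by Martel \cite{martel}. First I would recall precisely what \cite{martel} proves: there is an isomorphism of $\B_n$-representations between the weight space $V_{n,k}$ of the Verma module tensor power and a homological representation built from the Borel--Moore homology of the configuration space $C_k$ of $k$ points in the disc with $n$ punctures \emph{together with one extra marked point} (or equivalently, with a boundary puncture removed), with coefficients in the rank-one local system determined by the abelianisation data $(q,t)$. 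Concretely, Martel's homological model uses exactly the ``forked/noodle'' picture of Bigelow as reinterpreted in \cite{BigelowICM}, and the extra marked point is what accounts for the mismatch in dimension between $V_{n,k}$ (of dimension $\binom{n+k-1}{k}$) and the naive Lawrence--Bigelow space $\LB_k(\tn)$ (of dimension $\binom{n+k-2}{k}$ plus lower terms, according to the tail-graph count of Proposition~\ref{prop:module_structure_BM_homology} with $\Gamma = \bI_n$).

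Next I would identify this ``configuration space on the $n$-punctured disc with an extra boundary puncture'' with the space computing $\tau_{\one}\LB_k(\tn)$. By definition (see \S\ref{sss:translation_background}), $\tau_{\one}\LB_k(\tn) = \LB_k(\one\natural\tn) = \LB_{(k),2}(\mathtt{1+n}) = H_k^{\BM}(C_{(k)}(\bD_{1+n}); \bZ[Q_{((k),2)}(\bD)])$, i.e.\ the homological representation attached to the $(1+n)$-punctured disc. The key geometric observation is that $\bD_{1+n}$ — an extra puncture adjoined on the left via $\bD\natural(-)$ — is precisely Martel's $n$-punctured disc with one additional puncture playing the role of his auxiliary marked point, and that under this identification the tail graph $\bI_{1+n}$ (with $1+n$ vertices, $n$ edges) indexes the same set of basis vectors as Martel's homological basis for $V_{n,k}$: functions assigning non-negative integers summing to $k$ to the $n$ edges, i.e.\ $\binom{n+k-1}{k}$ of them, matching $\dim_{\bL} V_{n,k}$. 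So at the level of underlying modules both sides are free of the same rank over the appropriate ring. Then I would check that the local system matches: the group $Q_{((k),2)}(\bD) \cong \bZ^2 = \bZ\langle q,t\rangle$ (Remark~\ref{rmk:properties_Lawrence-Bigelow_functors}, with $r=1$, $r'=0$ for $k\ge 2$, resp.\ the Burau case for $k=1$), and the map $\theta\colon \bK \to \bL$, $(q,t)\mapsto(\mathfrak{s}^2,-\mathfrak{q}^{-2})$ is exactly the specialisation under which Martel's homological representation becomes $V_{n,k}$; so after applying $-\otimes_{\bK}\bL$ (equivalently, after the change of rings along $\theta$) the two local systems agree. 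Finally, one must verify that the $\B_n$-actions agree — not just the modules. Here I would either quote Martel's explicit identification of the braid action with the $R$-matrix action directly, or, to be self-contained, check compatibility on the Artin generators $\sigma_i$: the $\B_n$-action on $\tau_{\one}\LB_k(\tn)$ is (by the construction of \S\ref{sss:def_homological_rep_functors} together with \eqref{eq:formula_morphism_id_plus_morphism}) the restriction to $\B_n \hookrightarrow \B_{1+n}$ of the Lawrence--Bigelow action on $\bD_{1+n}$, where $\B_n$ acts on the last $n$ punctures and fixes the extra left-most one; and this is exactly the setting in which Martel proves his braid action coincides with the quantum $R$-matrix action on $V_{n,k}$.

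Carrying this out, the steps in order are: (1) state Martel's theorem in a normalised form, isolating the role of the extra marked point; (2) identify Martel's punctured-disc-with-marked-point with $\bD\natural \bD_n = \bD_{1+n}$ and hence his homology group with $\LB_k(\mathtt{1+n}) = \tau_{\one}\LB_k(\tn)$ as a module, via the basis count of Proposition~\ref{prop:module_structure_BM_homology} applied to $\Gamma = \bI_{1+n}$; (3) match local systems: $Q_{((k),2)}(\bD) = \bZ\langle q,t\rangle$ and the ring map $\theta$ is Martel's specialisation, so tensoring over $\bK$ with $\bL$ matches coefficients; (4) match $\B_n$-actions, using that on the $\tau_{\one}$ side $\B_n$ acts through $\B_n \hookrightarrow \B_{1+n}$ fixing the adjoined puncture — which is precisely Martel's configuration — and invoking his identification of that action with the $R$-matrix action on $V_{n,k}$. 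I expect the main obstacle to be step (4): namely, carefully matching conventions (orientation of braiding — recall Convention~\ref{convention:braiding_braids} fixes $\sigma_i$ to swap anticlockwise — normalisation of the $R$-matrix, choice of tethers/forks, and the direction of the specialisation $\theta$, cf.\ the warning in the paragraph preceding the lemma that \cite{Jackson_Kerler}, \cite{martel} and the present notation differ) so that the isomorphism of modules in steps (2)--(3) genuinely intertwines the two braid actions rather than differing by an outer automorphism or a sign/unit twist. Since different choices of tether change homology classes only by unit scalars (as used in the proof of Lemma~\ref{lem:cloud}) and $\eqref{eq:isom-of-modules_braids}$-type maps are module morphisms, such ambiguities will not affect the existence of \emph{some} isomorphism of $\B_n$-representations, which is all the lemma asserts; but locating the correct normalisation in \cite{martel} and transporting it cleanly into the present framework is where the real work lies.
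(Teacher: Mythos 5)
Your proposal follows essentially the same route as the paper: quote Martel's theorem identifying $V_{n,k}$ with his homological model after the specialisation $\theta$ (i.e.\ after $-\otimes_{\bK}\bL$), and then identify that model with $\tau_{\one}\LB_{k}(\tn)$ using the free bases supplied by Lemma~\ref{lem:free_generating_sets_rep}. The one point where the paper is more careful than your sketch is the identification of Martel's model with the Lawrence--Bigelow module: Martel works with the \emph{relative} Borel--Moore group $H_k^{\BM}(C_k(\bD_{n}),C^-_k(\bD_{n});\bK)$, relative to the configurations meeting a fixed boundary point, not literally with configurations in $\bD_{1+n}$, and since Borel--Moore homology is not homotopy-invariant one cannot pass between the two simply by declaring that ``the extra puncture plays the role of the marked point''. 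The paper interposes the surface $\bD'_{n}$ (the $n$-punctured disc minus a boundary point) and produces two explicit comparison maps: one induced by the proper embedding $C_k(\bD'_{n})\hookrightarrow C_k(\bD_{1+n})$, using covariant functoriality of $H^{\BM}$ for proper maps, and one induced by the open embedding of $C_k(\bD'_{n})$ into $C_k(\bD_{n})$ relative to $C^-_k(\bD_{n})$, using contravariant functoriality of relative $H^{\BM}$ for open embeddings. Because these maps are induced by inclusions of spaces compatible with the (homotopy) $\B_n$-action, equivariance comes for free and one only has to check bijectivity on the free bases; this is precisely what defuses the convention-by-convention verification of the braid actions that you correctly flag as the delicate step (4) of your plan.
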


\begin{proof}
Let $\bD'_{n}$ denote the closed disc minus $n$ interior points and minus one point on its boundary. An alternative description of $\tau_{\one} \LB_{k}(\tn)$ is given by the twisted Borel-Moore homology of the space of configurations of $k$ unordered points in $\bD'_{n}$, namely the $\B_{n}$-representation $H_{k}^{\BM}(C_{k}(\bD'_{n});\bZ[\bZ^{2}])$ over $\bZ[\bZ^{2}]$. Gluing $\bD'_{0}$ to $\bD'_{n}$ so that the two boundary punctures coincide induces an embedding $\bD'_{n} \hookrightarrow \bD_{1+n}$, which in turn induces an embedding $C_{k}(\bD'_{n})\hookrightarrow C_{k}(\bD_{1+n})$. This latter embedding defines a (covariant) map on Borel-Moore homology since its image is closed, thus it is a proper map, and Borel-Moore homology is covariantly functorial with respect to proper maps; see \cite[Proposition~V.4.5]{bredonsheaf}. We also note that the local coefficient system that we consider on $C_{k}(\bD'_{n})$ is the restriction of the one that we consider on $C_{k}(\bD_{1+n})$. There is therefore a well-defined map
\begin{equation}
\label{eq:quantum-isom-1}
H_{k}^{\BM}(C_{k}(\bD'_{n});\bZ[\bZ^{2}]) \longrightarrow \tau_{\one} \LB_{k}(\tn)
\end{equation}
of $\B_{n}$-representations over $\bZ[\bZ^2] = \bK$. The fact that this map is an isomorphism follows from the evident bijection that it induces on the free bases as $\bK$-modules obtained from Theorem~\ref{thm:free_generating_sets_rep}.

We now consider the subspace $C^-_{k}(\bD_{n}) \subset C_{k}(\bD_{n})$ of all configurations that intersect a particular fixed point on the boundary. In particular, we consider the $\B_{n}$-representation given by the $\bK$-module $H_{k}^{\BM}(C_{k}(\bD_{n}),C^-_{k}(\bD_{n});\bK)$, which is introduced in \cite[\S 2]{martel}.
Since $C_{k}(\bD'_{n})$ is an open subspace of $C_{k}(\bD_{n})$ with closed complement $C^-_{k}(\bD_{n})$, the inclusion $(C_{k}(\bD'_{n}) , \varnothing) \hookrightarrow (C_{k}(\bD_{n}) , C^-_{k}(\bD_{n}))$ is an open embedding. Relative Borel-Moore homology is contravariantly functorial with respect to open embeddings (since it is the composition of reduced homology with the contravariant functor from locally-compact, Hausdorff spaces and open embeddings to based spaces given by one-point compactification), so we have a map
\begin{equation}
\label{eq:quantum-isom-2}
H_{k}^{\BM}(C_{k}(\bD_{n}) , C^-_{k}(\bD_{n}) ; \bK) \longrightarrow H_{k}^{\BM}(C_{k}(\bD'_{n}) ; \bK).
\end{equation}
This is a map of $\B_{n}$-representations over $\bK$ since the $\B_{n}$-action (up to homotopy) on $C_{k}(\bD_{n})$ preserves its partition into $C^-_{k}(\bD_{n})$ and $C_{k}(\bD'_{n})$. The fact that this map is an isomorphism follows from the evident bijection that it induces on the free bases as $\bK$-modules obtained from Theorem~\ref{thm:free_generating_sets_rep} for the right-hand side and \cite[Prop.~$3.6$]{martel} for the left-hand side (see also \cite[Cor.~$3.9$]{martel}).

Now, \cite[Th.~$1.5$]{martel} provides an isomorphism
\begin{equation}
\label{eq:quantum-isom-3}
V_{n,k} \cong H_{k}^{\BM}(C_{k}(\bD_{n}),C^-_{k}(\bD_{n});\bK)\otimes_{\bK} \bL
\end{equation}
of $\B_{n}$-representations over $\bL$. The claimed isomorphism of the lemma is then the composition of \eqref{eq:quantum-isom-1}, \eqref{eq:quantum-isom-2} and \eqref{eq:quantum-isom-3} (tensoring the first two isomorphisms over $\bK$ with $\bL$).
\end{proof}

\begin{coro}\label{coro:Verma_tauLB}
For each $n\geq 2$, there is an isomorphism of $\B_{n}$-representations over $\bL$
\begin{equation}\label{eq:Verma}
\bV^{\otimes n}\;\cong\;\bigoplus_{k\geq0}\tau_{\one} \LB_{k}(\tn) \otimes_{\bK} \bL .
\end{equation}
We may therefore define the \textbf{Verma module representation functor} $\mathfrak{Ver}\colon\langle \Beta, \Beta\rangle\to{\bL}\lmod$ to be the colimit $\bigoplus_{k\geq0} \tau_{\one}\LB_{k} \otimes_{\bK} \bL$.
This functor $\mathfrak{Ver}$ is \textbf{analytic}, i.e.~it is a colimit of polynomial functors, and \textbf{exponential}, i.e.~it is a strong monoidal functor $(\langle \Beta, \Beta\rangle, \natural,\zero)\to ({\bL}\lmod, \otimes,\bL)$. However, the functor $\mathfrak{Ver}$ is \textbf{not polynomial}.
\end{coro}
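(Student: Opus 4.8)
The plan is to deduce Corollary~\ref{coro:Verma_tauLB} from Lemma~\ref{lem:Verma_tau_LB}, the short exact sequences of Theorem~\ref{thm:key_SES_classical_braids}, and the polynomiality statements of Corollary~\ref{coro:polynomiality}, together with standard structural facts about the quasitriangular Hopf algebra $\bU_q(\mathfrak{sl}_2)$. First I would establish the isomorphism \eqref{eq:Verma}. The generator $K$ acts semisimply on $\bV^{\otimes n}$, giving the weight-space decomposition $\bV^{\otimes n} = \bigoplus_{k\geq 0} V_{n,k}$ into finitely generated free $\bL$-modules; since the $R$-matrix commutes with $K\otimes K$, this is a decomposition of $\B_n$-representations, with $V_{n,0}$ the trivial one-dimensional representation spanned by $v_0^{\otimes n}$. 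For $n\geq 2$, Lemma~\ref{lem:Verma_tau_LB} identifies each $V_{n,k}$ with $k\geq 1$ with $\tau_\one\LB_k(\tn)\otimes_\bK\bL$ as $\B_n$-representations, and the $k=0$ summand matches $\tau_\one\LB_0(\tn)\otimes_\bK\bL$ (Notation~\ref{nota:LB_{0}}); summing over $k$ gives \eqref{eq:Verma}. To upgrade this to an identification of functors it then suffices, since $\langle\Beta,\Beta\rangle$ is generated by the automorphism groups $\B_n$ together with the morphisms $[\one,\id_{\one\natural\tn}]$, to check that $\mathfrak{Ver}$ sends $[\one,\id_{\one\natural\tn}]$ to the map $x\mapsto v_0\otimes x$: this is exactly the ``adjoin the empty word'' description of $\LB_k([\one,\id])$ recalled in \S\ref{sss:preliminary_SES_surface_braid_groups}, transported along Lemma~\ref{lem:Verma_tau_LB}. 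Exponentiality then follows because $\bV^{\otimes(m+n)} = \bV^{\otimes m}\otimes_\bL\bV^{\otimes n}$ with braid action compatible with this factorisation (coassociativity of the coproduct and the hexagon identities for the $R$-matrix), and $\bV^{\otimes 0}=\bL=\mathfrak{Ver}(\zero)$. I expect the verification that the functorial structure coming from the homological side (via Lemma~\ref{lem:Verma_tau_LB}, hence ultimately from Martel's results used there) genuinely coincides with the tensor-power structure on $\bV^{\otimes\bullet}$ to be the main technical obstacle.

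For analyticity, by Corollary~\ref{coro:polynomiality} the functor $\LB_1=\LB_{((1),2)}$ is strong polynomial and each $\LB_k$ with $k\geq 2$ is very strong polynomial of degree $k$, while $\tau_\one\LB_0$ is strong polynomial (it is eventually constant). Translation preserves polynomiality (Proposition~\ref{prop:properties_polynomiality}), and so does the change of rings $\otimes_\bK\bL$ (Remark~\ref{rmk:recover_LM1}), so each $\tau_\one\LB_k\otimes_\bK\bL$ is polynomial; as each finite partial sum is a finite direct sum of polynomial functors and hence polynomial, $\mathfrak{Ver}=\bigoplus_{k\geq 0}\tau_\one\LB_k\otimes_\bK\bL$ is a colimit of polynomial functors, i.e.\ analytic.

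Finally, the non-polynomiality is the conceptual heart, but is short once the above is in place. Suppose $\mathfrak{Ver}$ were weak polynomial of degree $\leq d$, so that $\pi_{\langle\Beta,\Beta\rangle}(\delta_\one^{d+1}\mathfrak{Ver})=0$ in $\mathbf{St}(\langle\Beta,\Beta\rangle,-)$. Since $\delta_\one$ commutes with $\tau_\one$, with colimits, and with the exact functor $\otimes_\bK\bL$, we have $\delta_\one^{d+1}\mathfrak{Ver}\cong\bigoplus_{k\geq 0}\tau_\one(\delta_\one^{d+1}\LB_k)\otimes_\bK\bL$. Iterating the short exact sequence \eqref{eq:key_SES_classical_braids_partitioned} for the trivial partition $(k)$ (for which $r=1$ and $\bk_1=(k-1)$) yields $\delta_\one^{j}\LB_{((k),2)}\cong\tau_\one^{j}\LB_{((k-j),2)}$ for $0\leq j\leq k-1$; taking $k=d+2$ and $j=d+1$ shows that the $k=d+2$ summand of $\delta_\one^{d+1}\mathfrak{Ver}$ is $\tau_\one^{d+2}\LB_{((1),2)}\otimes_\bK\bL$. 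The Burau functor $\LB_{((1),2)}$ is weak polynomial of degree exactly $1$ by Corollary~\ref{coro:polynomiality}, hence has non-zero image in $\mathbf{St}(\langle\Beta,\Beta\rangle,-)$; the short exact sequence $\Identity\hookrightarrow\tau_\one\twoheadrightarrow\delta_\one$ in the stable category then shows $\pi_{\langle\Beta,\Beta\rangle}(\tau_\one^{d+2}\LB_{((1),2)})\neq 0$, and $\otimes_\bK\bL$ preserves this ($\bL$ being free over $\bK$, this functor is faithful and exact). As $\pi_{\langle\Beta,\Beta\rangle}$ commutes with direct sums, $\pi_{\langle\Beta,\Beta\rangle}(\delta_\one^{d+1}\mathfrak{Ver})$ contains this non-zero summand, a contradiction. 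Therefore $\mathfrak{Ver}$ is not weak polynomial, and a fortiori not strong, very strong or split polynomial.
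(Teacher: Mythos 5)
Your proposal is correct and follows essentially the same route as the paper: \eqref{eq:Verma} from Lemma~\ref{lem:Verma_tau_LB} together with the weight-space decomposition, analyticity from the definition plus Corollary~\ref{coro:polynomiality}, exponentiality from the identification with $\bV^{\otimes n}$, and non-polynomiality by computing iterated differences of $\mathfrak{Ver}$ via Theorem~\ref{thm:key_SES_classical_braids}. The only (cosmetic) difference is in the last step: the paper uses the natural embedding $\mathfrak{Ver}\hookrightarrow\delta_{\one}^{m}\mathfrak{Ver}$ arising from $\delta_{\one}^{m}\mathfrak{Ver}\cong\bigoplus_{k\geq0}\tau_{\one}^{m+1}\LB_{k}\otimes_{\bK}\bL$, whereas you isolate the single stably non-trivial Burau-type summand $\tau_{\one}^{d+2}\LB_{((1),2)}\otimes_{\bK}\bL$ of $\delta_{\one}^{d+1}\mathfrak{Ver}$ -- both rest on the same computation.
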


\begin{proof}
The isomorphisms \eqref{eq:Verma} follow directly from Lemma~\ref{lem:Verma_tau_LB} and the decomposition of the Verma module representation \eqref{eq:Verma_decomposition}.
The analyticity of the functor $\mathfrak{Ver}$ follows from its definition and Corollary~\ref{coro:polynomiality}. We deduce from Theorem~\ref{thm:key_SES_classical_braids} (using Corollary~\ref{coro:change_of_ring}) that $\delta_{\one}^{m} \mathfrak{Ver} \cong \bigoplus_{k\geq0} \tau_{\one}^{m+1}\LB_{k} \otimes_{\bK} \bL$ for all $m\geq 1$. Hence there is a natural embedding $\mathfrak{Ver}\hookrightarrow\delta_{\one}^{m}\mathfrak{Ver}$ for all $m\geq 1$, which proves that the functor $\mathfrak{Ver}$ is not polynomial. That it is an exponential functor straightforwardly follows from the isomorphism $\mathfrak{Ver}(\tn)\cong \bV^{\otimes n}$.
\end{proof}

\begin{rmk}\label{rmk:Magnus_Heisenberg}
Analogous arguments to those of Corollary~\ref{coro:Verma_tauLB} may be repeated verbatim for functors for the mapping class groups of surfaces extending the Magnus representations (see for instance \cite[\S 4]{SakasaiMagnus} or \cite{SuzukiII} for the definition of these representations) or the representations constructed from actions on discrete Heisenberg groups introduced by \cite{BlanchetPalmerShaukat}.
\end{rmk}

\phantomsection
\addcontentsline{toc}{section}{References}
\renewcommand{\bibfont}{\normalfont\small}
\setlength{\bibitemsep}{0pt}
\printbibliography

\clearpage

\noindent Martin Palmer \\
{\itshape School of Mathematics, University of Leeds, Leeds, LS2 9JT, UK},\\ \texttt{m.d.palmer-anghel@leeds.ac.uk}\\
{\itshape Institutul de Matematic\u{a} Simion Stoilow al Academiei Rom{\^a}ne, 21 Calea Griviței, 010702 București, Romania},\\
\texttt{mpanghel@imar.ro}
\vspace{1ex}

\noindent Arthur Souli{\'e} \\
{\itshape Normandie Univ., UNICAEN, CNRS, LMNO, 14000 Caen, France},\\
\texttt{artsou@hotmail.fr}, \texttt{arthur.soulie@unicaen.fr}

\end{document}